\title[Fractional Primitive Equations]{Well-posedness and ill-posedness of the primitive equations with fractional horizontal dissipation}
\author[E. Abdo]{Elie Abdo}
\address[E. Abdo]
{Department of Mathematics \\
American University of Beirut \\
Beirut 1107-2020, Lebanon.} 
\email{ea94@aub.edu.lb}
\author[Q. Lin]{Quyuan Lin}
\address[Q. Lin]
{School of Mathematical and Statistical Sciences \\
Clemson University\\
Clemson, SC 29634, USA.}
\email{quyuanl@clemson.edu}
\author[C. Tan]{Changhui Tan}
\address[C. Tan]
{Department of Mathematics \\ 
University of South Carolina\\
Columbia, SC 29208, USA.} 
\email{tan@math.sc.edu}
\subjclass[2020]{35B65, 35Q35, 35Q86, 76D03}
\keywords{Primitive Equations, Hydrostatic Navier-Stokes Equations, Fractional Dissipation, Well-posedness, Ill-posedness}
\newtheorem{theorem}{Theorem}[section]
\newtheorem{lemma}[theorem]{Lemma}
\newtheorem{proposition}[theorem]{Proposition}
\theoremstyle{definition}
\newtheorem{definition}{Definition}
\theoremstyle{remark}
\newtheorem{remark}{Remark}[section]
\numberwithin{equation}{section}
\def\pa{\partial}
\def\lh{\Lambda_h}
\def\dd{\mathrm{d}}
\def\ddt{\frac{\dd}{\dd t}}
\def\dxdz{\,\dd x\dd z}
\def\D{\mathcal{D}}
\def\N{\mathbb{N}}
\def\R{\mathbb{R}}
\def\T{\mathbb{T}}
\def\Z{\mathbb{Z}}
\def\Et{\widetilde{E}}
\def\Ft{\widetilde{F}}
\def\H{\mathrm{H}}
\def\I{\mathrm{I}}
\def\II{\mathrm{II}}
\def\III{\mathrm{III}}
\def\W{\vec{W}}
\def\s{\mathrm{s}}
\def\eps{\varepsilon}
\begin{document}
\begin{abstract}
The primitive equations (PE) are a fundamental model in geophysical fluid dynamics. While the viscous PE are globally well-posed, their inviscid counterparts are known to be ill-posed.

In this paper, we study the two-dimensional incompressible PE with fractional horizontal dissipation. We identify a sharp transition between local well-posedness and ill-posedness at the critical dissipation exponent $\alpha = 1$. In the critical regime, this dichotomy exhibits a new phenomenon: the transition depends delicately on the balance between the size of the initial data and the viscosity coefficient. Our results precisely quantify the horizontal dissipation required to transition from inviscid instability to viscous regularity. 
We also establish a global well-posedness theory to the fractional PE, with sufficient dissipation $\alpha\geq\frac65$. 
\end{abstract} 

\maketitle


\section{Introduction}\label{sec:intro}

\subsection{The primitive equations}
In the study of oceanic and atmospheric dynamics at the planetary scale, the vertical scale (a few kilometers for the ocean, 10-20 kilometers for the atmosphere) is much smaller than the horizontal scale (many thousands of kilometers). Accordingly, the large-scale ocean and atmosphere satisfy the hydrostatic balance based on scale analysis, meteorological observations, and historical data. By virtue of this, the \emph{primitive equations} (PE), also known as the hydrostatic Navier–Stokes equations, are derived as the asymptotic limit of the small aspect ratio between the vertical and horizontal length scales from the Navier-Stokes equations (NSE) \cite{azerad2001mathematical,li2019primitive,li2022primitive,furukawa2020rigorous}. Because of its impressive accuracy, the following 3D viscous PE has become a widely used model in geophysical fluid dynamics (see, e.g., \cite{blumen1972geostrophic,gill1976adjustment,gill1982atmosphere,hermann1993energetics,holton1973introduction,kuo1997time,plougonven2005lagrangian,rossby1938mutual} and references therein):
\begin{subequations}\label{PE-system}
\begin{align}
    &\pa_t V + V\cdot \nabla_h  V + w\pa_z V -\nu_h \Delta_h V - \nu_z \pa_z^2 V +f_0 V^\perp + \nabla_h  p = 0 , \label{PE-1}
    \\
    &\pa_z p = 0, \label{PE-2}
    \\
    &\nabla_h  \cdot V + \pa_z w =0. \label{PE-3}
\end{align}
\end{subequations}
Here the horizontal velocity $V = (u, v)$, the vertical velocity $w$, and the pressure $p$ are the unknown quantities. The 2D horizontal gradient and Laplacian are denoted by $\nabla_h = (\partial_{x}, \partial_{y})$ and $\Delta_h = \pa_x^2 + \pa_y^2$, respectively. The nonnegative constants $\nu_h$ and $\nu_z$ represent horizontal and vertical viscosity, respectively. The Coriolis parameter is denoted by $f_0 \in \R$, and $V^\perp = (-v, u)$ denotes a 90-degree rotation in the horizontal plane. Equation \eqref{PE-2} expresses the hydrostatic pressure balance, while \eqref{PE-3} enforces incompressibility. Note that we drop the evolution of temperature in system \eqref{PE-system} for simplicity.

The PE system is typically studied in a periodic channel $\{(x,y,z):(x,y)\in\mathbb T^2,\, z\in [0,1]\}$, with boundary conditions $w|_{z=0,1}=0$ when $\nu_z=0$ and 
$(w,\partial_z V)|_{z=0,1}=0$ when $\nu_z>0$. The divergence-free condition \eqref{PE-3} and the boundary conditions imply that 
\[
    w(x,y,z,t)= - \int_{0}^z \nabla_h \cdot V(x,y,\tilde{z},t) \,\dd\tilde{z},
\]
so that $w$ is a diagnostic variable and can be recovered from $V$. This introduces a loss of one horizontal derivative in $w$, 
compared to NSE.
As a result, the PE system was considered to be more challenging than NSE due to this intricate nonlinear structure. 

However, thanks to the anisotropic structure, the 3D PE system with full viscosity ($\nu_h>0$ and $\nu_z>0$) was proved to be globally well-posed first in the pioneer work \cite{cao2007global}. See also \cite{kobelkov2006existence} for an alternative approach, \cite{kukavica2007regularity} for different boundary conditions, as well as \cite{hieber2016global} for some progress towards relaxing the smoothness on the initial data by using the semigroup method. Moreover, in a series of work \cite{cao2016global,cao2017strong,cao2020global}, global well-posedness has been established when the system has only horizontal viscosity ($\nu_h>0$ and $\nu_z=0$). 

On the other hand, when the horizontal viscosity vanishes $(\nu_h=0)$ the behavior of the PE system is completely different. With $\nu_z>0$, the system is ill-posed in Sobolev spaces \cite{renardy2009ill} but local well-posedness can be obtained by considering some additional weak dissipation \cite{cao2020well}, or assuming the initial data being Gevrey regular and convex \cite{gerard2020well}, or being analytic in the horizontal direction \cite{paicu2020hydrostatic,lin2022effect}. Global well-posedness of smooth solutions remains open.

In the inviscid case $(\nu_h=\nu_z=0)$, the PE system exhibits the Kelvin-Helmholtz instability, and the solutions are ill-posed in Sobolev spaces and Gevrey classes $G^\sigma$ with order $\sigma>1$ \cite{renardy2009ill,han2016ill,ibrahim2021finite}. With either some special structures (local Rayleigh condition) on the initial data in $2D$, or real analyticity in all directions for general initial data in both $2D$ and $3D$, the local well-posedness can be achieved \cite{brenier1999homogeneous,brenier2003remarks,ghoul2022effect,grenier1999derivation,kukavica2011local,kukavica2014local,masmoudi2012h}. Finally, smooth solutions of the inviscid PEs can form singularity in finite time \cite{cao2015finite,wong2015blowup,ibrahim2021finite,collot2023stable}.

A review of the aforementioned results reveals that horizontal viscosity plays a crucial role in determining the local well-posedness or ill-posedness, as well as the global existence or finite-time blow-up of smooth solutions to the PE system. In contrast to the Navier-Stokes and Euler equations, such sensitivity to horizontal viscosity is a distinctive feature of the PE system. This makes it both natural and compelling to investigate the effect of horizontal viscosity and to explore the regimes between the viscous and inviscid cases.

\subsection{Fractional dissipation}
In many fluid systems, the viscous and inviscid models are connected through a family of fractional dissipations, modeled through fractional Laplacian:
\[
\Lambda^\alpha f (x) := c_{\alpha,d}\,\text{ p.v.}\int_{\R^d}\frac{f(x)-f(y)}{|x-y|^{d+\alpha}}\,\dd y,\quad c_{\alpha,d}=\tfrac{2^\alpha\Gamma(\frac{d+\alpha}{2})}{\pi^{\frac d2}|\Gamma(-\frac\alpha2)|},
\]
for $\alpha\in(0,2)$, where \text{p.v.} stands for the principle value.
When $\alpha$ approaches $0$, the fractional Laplacian becomes the identity operator, which corresponds to the inviscid system. When $\alpha$ approaches $2$, the fractional Laplacian becomes $-\Delta$, leading to the viscous system.
Here, we list several fractional fluid systems in which a critical exponent $\alpha$ separates inviscid-like and viscous-like behavior.
\begin{itemize}
\item $1D$ fractional Burgers equation
\[\pa_t u + u\pa_x u = - \Lambda^\alpha u.\]
It is globally well-posed when $\alpha\geq1$, whereas finite-time blow-up occurs when $\alpha<1$ \cite{kiselev2008blow}.

\item $2D$ fractional surface quasi-geostrophic (SQG) equation
\[\pa_t \omega + u\cdot\nabla \omega = - \Lambda^\alpha \omega,\quad u=\nabla^\perp\Lambda^{-1}\omega.\]
It is globally well-posed when $\alpha>1$ \cite{constantin1999behavior}, as well as the when $\alpha=1$, known as the critical SQG equation \cite{kiselev2007global,caffarelli2010drift, constantin2012nonlinear}. Unlike the fractional Burgers equation, singularity formations are generally unknown for $\alpha<1$. 

\item $2D$ fractional Boussinesq system
\[\pa_t \omega + u\cdot\nabla \omega = - \Lambda^\alpha \omega + \pa_x\theta, \quad 
\pa_t\theta+u\cdot\nabla\theta = -\Lambda^\beta\theta, \quad u = \nabla^\perp\Delta^{-1}\omega.\]
Solutions are globally well-posed when $\alpha+\beta>1$. Partial results are know for the critical case $\alpha+\beta=1$. The supercritical case  $\alpha+\beta<1$ are generally open. See e.g. the recent paper \cite{stefanov2025global} and references therein. 

\item $1D$ fractional Euler-alignment system in collective dynamics
\[
 \pa_t\rho+\pa_x(\rho u) = 0,\quad
 \pa_t u + u\pa_xu =\Lambda^\alpha(\rho u) - u\Lambda^\alpha\rho.
\]
The dynamics of $u$ reduces to the fractional Burgers equation if we enforce $\rho\equiv1$. However, unlike the fractional Burgers equation, solutions to the factional Euler-alignment system are globally regular when $\alpha>0$ \cite{do2018global,shvydkoy2017eulerian}.
\end{itemize}

\subsection{Main results}
We consider the following two-dimensional PE system with fractional horizontal dissipation (FPE):
\begin{subequations}\label{FPE}
    \begin{align}
        &\pa_t u + u \pa_x u + w \pa_z u + \pa_x p + \nu_h \Lambda_h^{\alpha} u = 0, \label{FPE-1}
        \\
        &\pa_z p = 0, \label{FPE-2}
        \\
        & \pa_x u + \pa_z w = 0, \label{FPE-3}
    \end{align}
\end{subequations}
defined on a 2D periodic channel 
\[\Omega:= \{(x,z): x\in \T, z\in[0,1]\},\]
where $(u,w)$ are the horizontal and vertical velocities respectively, $\lh^\alpha = (-\pa_x^2)^{\frac\alpha2}$ denotes the horizontal fractional Laplacian, with coefficient $\nu_h>0$ and $\alpha\in(0,2)$. We assume that there is no vertical dissipation ($\nu_z=0$). 
$\T$ denotes the 1D periodic domain with length $1$. We further  impose the boundary condition:
\begin{equation}\label{eq:boundary}
	w(x,0,t)=w(x,1,t)=0,
\end{equation}
so together with \eqref{FPE-3}, the vertical velocity $w$ is uniquely determined by $u$  by 
\begin{equation}\label{eq:wexplicit}
w(x,z,t) = -\int_0^z \pa_x u(x,\tilde{z},t)\,\dd\tilde{z}.
\end{equation}
The system \eqref{FPE} can be viewed as the hydrostatic limit of the NSE with fractional horizontal dissipation, and the derivation follows analogously to \cite{azerad2001mathematical,li2019primitive,li2022primitive,furukawa2020rigorous}. 
\medskip

Our first set of results provides a sharp distinction between the local well-posedness and ill-posedness of solutions to the FPE \eqref{FPE}.

Recall that for the inviscid case ($\alpha=0$), linear instability was studied in \cite{renardy2009ill}, leading to ill-posedness of the linearized equation in any Sobolev space, with nonlinear instability further established in \cite{han2016ill}. In contrast, the viscous case ($\alpha=2$) is well-posed both locally and globally.

For the FPE, we identify $\alpha=1$ as the critical index marking the \emph{sharp} transition between these behaviors. Our results are summarized below:

\renewcommand{\arraystretch}{1.5}
\begin{table}[ht]
\caption{Local well-posedness and ill-posedness of FPE}\label{tab:local}
\begin{tabular}{cclccc}
	\hline\hline
	Inviscid PE & $\alpha=0$ & & & Ill-posedness & \\ \hline\hline	
	\multirow{4}{*}{Fractional PE} &$0<\alpha<1$ & \multicolumn{2}{l}{Supercritical regime} & Ill-posedness & Theorem \ref{thm:nonlinear-ill}\\ \cline{2-6}
	&\multirow{2}{*}{$\alpha=1$} & \multirow{2}{*}{Critical regime\quad} & $\|u_0\| \gg \nu_h$ & Ill-posedness & Theorem \ref{thm:critical-nonlinear-ill}\\ \cline{4-6}
	& &  & $\|u_0\| \ll \nu_h$ & Well-posedness & Theorem \ref{thm:GWP:crit}\\ \cline{2-6}
	& $1<\alpha<2$ & \multicolumn{2}{l}{Subcritical regime} & Well-posedness & Table \ref{tab:global}\\ \hline\hline
	Viscous PE & $\alpha=2$ & & & Well-posedness & \\ \hline\hline	
\end{tabular}	
\end{table}

We emphasize that in the critical regime ($\alpha=1$), the well-posedness of solutions depends on the interplay between the horizontal viscosity coefficient $\nu_h$ and the size of the initial data $u_0$, measured in suitable norms. This reveals a new type of distinction between well-posedness and ill-posedness at the refined critical level.
All the ill-posedness results are analogous to that of the inviscid PE, driven by the Kelvin-Helmholtz instability. See the theorems quoted in the last column of Table \ref{tab:local} for detailed statements of the results. 
\medskip

Our next line of investigation concerns the well-posedness results. We aim to establish local well-posedness theory for FPE in the subcritical regime. Moreover, we seek to understand whether these local-in-time results can be extended to global ones.

To contrast with the ill-posedness results, we work on Sobolev spaces. A major challenge in propagating Sobolev regularity arises from the vertical transport term $w\pa_zu$. The explicit form \eqref{eq:wexplicit} of the vertical velocity $w$ incurs a loss of one $x$-derivative, which is a key source of instability and ill-posedness in the supercritical regime. In the subcritical regime, this loss may be compensated by the fractional horizontal dissipation. However, due to the absence of vertical dissipation, it becomes necessary to control the $z$-derivative in the vertical transport term through the horizontal dissipation, ideally by exploiting the incompressibility condition \eqref{FPE-3}.

To find an appropriate balance between the $x$- and $z$-derivatives, we introduce a class of anisotropic Sobolev spaces defined by
\[
\|f\|_k := \Big(\sum_{j=0}^kE_j(f)\Big)^{\frac12},\quad k\in\N,
\]
where the energy level $E_k(f)$ is given by
\[
E_k(f):=\sum_{j=0}^k\|\lh^{\frac\alpha2j}\pa_z^{k-j}f\|_{L^2}^2.
\]
Roughly speaking, this framework compensates one vertical derivative with $\frac{\alpha}{2}$ horizontal derivatives. It enables us to derive energy estimates and establish local well-posedness within this class of Sobolev spaces. A key component of the analysis is the development of nontrivial anisotropic estimates to control the vertical transport term, leveraging our proposed framework.

We further establish global well-posedness results, summarized in Table \ref{tab:global}. For a precise description, see the summary in Section \ref{sec:subsummary} and the theorems cited therein.
\begin{table}[ht]
\caption{Global well-posedness for the subcritical regime}\label{tab:global}
\begin{tabular}{ccccc}
	\hline\hline
	  & $\alpha=1$ & $1<\alpha<\frac65$ & $\frac65\leq\alpha<\frac32$ & $\frac32\leq\alpha<2$ \\ \hline\hline	
	Small initial data & $\qquad\checkmark\qquad$ & $\qquad\quad\checkmark\quad\qquad$ & $\qquad\quad\checkmark\quad\qquad$ & $\qquad\quad\checkmark\quad\qquad$\\ \hline
	\quad~General initial data\quad & $\times$ & $?$ & $\checkmark$ & $\checkmark$\\ \hline\hline	
\end{tabular}	
\end{table}

Global well-posedness with small initial data is established in the subcritical regime $\alpha > 1$ and in the critical regime $\alpha = 1$, complementing the local well-posedness results.

For general initial data, we obtain a Beale-Kato-Majda type regularity criterion 
\[
	\int_0^T \|\lh^{\frac{3-\alpha}2}\pa_zu(t)\|_{L^2}^2\,\dd t<\infty,
\]
which ensures the boundedness of $\|u(t)\|_k$ up to time $T$, for any $k\in\N$. The criterion is optimal with respect to the norm $\|\cdot\|_k$. The derivation relies critically on the use of a borderline fractional Leibniz rule (see Lemma \ref{lem:improvedLeib}).

When $\alpha\geq\frac32$, the criterion holds as a consequence of anisotropic energy estimates of $E_0(u)$ and $E_1(u)$. Hence, global regularity follows directly.

The case $\alpha<\frac32$ is supercritical with respect to the energy $E_k$. Remarkably, we obtain an improved estimate that breaks the energy scaling, allowing us to deduce the regularity criterion and establish global well-posedness for $\alpha\geq\frac65$. 

We conjecture that the threshold $\frac65$ can be further lowered. However, the global regularity of general solutions for $\alpha \in (1, \frac65)$ remains an open question for future investigation.

\subsection{Outline of the paper}
The rest of the paper is organized as follows. 
In Section \ref{sec:pre}, we set up the notations and provide some preliminaries that will be used throughout this work. In particular, we provide a detailed proof of the borderline fractional Leibniz rule on torus (Lemma~\ref{lem:improvedLeib}) in Appendix~\ref{sec:improvedLeib}.
In Section \ref{sec:super}, we study linear and nonlinear instabilities of the supercritical FPE system $(\alpha<1)$. 
In Section \ref{sec:sub}, we study local and global well-posedness theory for supercritical FPE system $(\alpha>1)$. 
Finally, in Section \ref{sec:critical}, we investigate the critical FPE system $(\alpha=1)$, and show that well-posedness and ill-posedness depend on the relative strength between the horizontal viscosity coefficient $\nu_h$ and the initial data $u_0$.

\section{Preliminaries}\label{sec:pre}
In this section, we introduce the notations and collect several useful lemmas that will be used throughout the analysis.

\subsection{Notations}
We use $(x,z)$ to represent the horizontal and vertical variables.

For $1 \leq p \leq \infty$, we denote $L^p = L^p(\Omega)$ the Lebesgue spaces of measurable functions $f$ on $\Omega$, with
\[
 \|f\|_{L^p} = \|f\|_{L^p(\Omega)} = \begin{cases}
 	\displaystyle\left(\int_{\Omega} |f(x,z)|^p \dxdz\right)^{1/p},& 1\leq p<\infty,\\
 	\,\,\displaystyle\underset{(x,z)\in\Omega}{\mathrm{ess~sup}} \,\,\, |f(x,z)|, & p=\infty.
 \end{cases}
\]
We denote by $\|\cdot\|_{L^p_x}$ and $\|\cdot\|_{L^p_z}$ for $L^p$ norms in $x$ and $z$ variables, respectively. 

The Sobolev space $H^1=H^1(\Omega)$ is defined with the norm
\[
	\|f\|_{H^1}^2 = \|f\|_{L^2}^2 + \|\pa_xf\|_{L^2}^2 + \|\pa_zf\|_{L^2}^2.
\]

For $s>0$, we denote by $\lh^s$ the horizontal fractional Laplacian, defined by
\[
\lh^s f(x,z) = \sum_{k\in \Z} |k|^s \hat{f}_k(z) e^{2\pi ikx},
\]
where $\{\hat{f}_k\}_{k\in\Z}$ are the Fourier coefficients
\begin{equation}\label{eqn:FC}
 \hat{f}_k(z) = \int_\T f(x,z)e^{-2\pi ikx}\,\dd x.   
\end{equation}
We also denote by $\mathcal D(\Lambda_h^s)$ the subspace of $L^2(\Omega)$ satisfying
\[
\mathcal D(\Lambda_h^s):=\left\{f\in L^2(\Omega): \|\Lambda_h^s f\|_{L^2} = \left(\int_0^1 \sum\limits_{k\in \mathbb Z} |k|^{2s} |\hat{f}_k(z)|^2 \,\dd z\right)^{\frac12}  < \infty  \right\}.
\]
Note that the operator $\lh^s$ commutes with spatial derivatives $\pa_x$ and $\pa_z$. Moreover, for $f,g\in \mathcal D(\Lambda_h^s)$ we have
\begin{equation}\label{eqn:l-commute}
    \int_\Omega \lh^s f \cdot g \dxdz = \int_\Omega f\cdot\lh^s g \dxdz.
\end{equation}

We denote by $\H$ the space of $L^2$ functions with zero mean in $z$, namely 
\[
  \H:= \Big\{f\in L^2(\Omega): \int_0^1 f(x,z)\,\dd z = 0,\,\, \forall~x\in\T\Big\}.
\]
\begin{lemma}\label{lem:H}
 Let $u$ be a solution to \eqref{FPE} with initial data $u_0\in\H$. Then for any $t\geq0$, we have $u(t)\in\H$.
\end{lemma}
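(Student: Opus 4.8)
The plan is to track the vertical average $\bar u(x,t):=\int_0^1 u(x,z,t)\,\dd z$ of the horizontal velocity. Since the hypothesis $u_0\in\H$ means precisely that $\bar u(\cdot,0)\equiv 0$, the statement is equivalent to showing $\bar u(\cdot,t)\equiv 0$ for all $t\geq 0$.

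The first step is to observe that $\bar u$ does not depend on $x$. Indeed, evaluating \eqref{eq:wexplicit} at $z=1$ and using the boundary condition \eqref{eq:boundary} yields $\int_0^1\pa_x u(x,\tilde z,t)\,\dd\tilde z=0$, i.e. $\pa_x\bar u(x,t)=0$, so $\bar u(x,t)=\bar u(t)$ is a function of $t$ alone. In particular $\lh^\alpha\bar u=0$, since $\lh^\alpha$ annihilates $x$-independent functions.

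The second step is to integrate the momentum equation \eqref{FPE-1} in $z$ over $[0,1]$. Integrating $w\pa_z u$ by parts in $z$, using $w|_{z=0,1}=0$ together with $\pa_z w=-\pa_x u$ from \eqref{FPE-3}, gives $\int_0^1 w\pa_z u\,\dd z=\int_0^1 u\,\pa_x u\,\dd z$, so the two transport terms combine into the exact horizontal derivative $\pa_x\int_0^1 u^2\,\dd z$. Since $p$ is independent of $z$ by \eqref{FPE-2} and $\lh^\alpha\bar u=0$, the $z$-integrated equation reduces to
\[
\ddt\bar u(t)+\pa_x\Big(\int_0^1 u^2\,\dd z+p\Big)=0.
\]
Integrating this identity once more over $x\in\T$ removes the exact $x$-derivative and leaves $\ddt\bar u(t)=0$. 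Hence $\bar u(t)=\bar u(0)=0$, i.e. $u(t)\in\H$ for all $t\geq0$.

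The argument is short; the only point needing a moment's care is the pressure term $\pa_x p$, which survives the integration in $z$ alone but is eliminated by the subsequent integration in $x$, together with the first-step observation that $\bar u$ carries no $x$-dependence (which is also what makes the dissipation term vanish). All the manipulations — differentiation under the integral sign and integration by parts in $z$ — are justified for solutions in the regularity class under consideration.
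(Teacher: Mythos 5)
Your proof is correct and follows essentially the same route as the paper's: first use incompressibility and the boundary conditions to show that the vertical average $\bar u$ is $x$-independent, then integrate the momentum equation to conclude that $\int_\Omega u\,\dd x\,\dd z$ (equivalently $\bar u(t)$) is conserved. The only cosmetic difference is that you integrate first in $z$ and then in $x$, exhibiting the transport terms as an exact $x$-derivative along the way, whereas the paper integrates over $\Omega$ in one step; the ingredients and the conclusion are identical.
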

\begin{proof}
 From the incompressibility condition \eqref{FPE-3} and the boundary condition \eqref{eq:boundary}, we obtain
\[
   \pa_x \int_0^1 u(x,z,t) \,\dd z =  \int_0^1 \pa_xu(x,z,t) \,\dd z = -\int_0^1 \pa_zw(x,z,t)\,\dd z = -w(x,1,t)+w(x,0,t)=0,
\]
for any $x\in\T$ and $t\geq0$. Taking primitive in $x$ yields
\[
 \int_0^1 u(x,z,t)\, \dd z = C(t),\quad \forall~x\in\T,\,\, t\geq0.
\]
Now we integrate \eqref{FPE-1} over $\Omega$ and get
\[
 \frac{\dd}{\dd t}\int_\Omega u\dxdz = \int_\Omega \Big((\pa_xu + \pa_zw)u-\pa_xp-\nu_h\lh^\alpha u\Big)\dxdz = 0.
\]
We then conclude with
\[
  C(t) = \int_\Omega u(x,z,t) \dxdz = \int_\Omega u_0(x,z) \dxdz=0.
\]
\end{proof}
Lemma \ref{lem:H} shows that the solution $u(t)$ of \eqref{FPE} stays in the space $\H$ if the initial condition $u_0\in\H$. Having zero mean in $z$ allows us to apply the Poincar\'e inequality:
\begin{equation}\label{eqn:zero-mean-z}
 \|u\|_{L^p_z}\leq C\|\pa_zu\|_{L^p_z},\quad 1\leq p\leq\infty.
\end{equation}

\begin{remark}\label{rem:Poincare}
 Similar to \eqref{eqn:zero-mean-z}, we have the Poincar\'e inequality on $w$
 \begin{equation}\label{eq:Poincarew}
   \|w\|_{L^p_z}\leq C\|\pa_zw\|_{L^p_z} = C\|\pa_xu\|_{L^p_z},\quad 1\leq p\leq\infty,
 \end{equation}
 thanks to the boundary condition \eqref{eq:boundary}.
 However, the Poincar\'e inequality does not necessarily hold for $\pa_z^ku$ with $k\geq1$. 
\end{remark}

For a Banach space $(X, \|\cdot\|_{X})$ and $p\in [1,\infty]$, we denote the Lebesgue spaces $ L^p(0,T; X)$ of functions $f:X\times[0,T]\to \R$ satisfying 
\[
\int_{0}^{T} \|f(t)\|_{X}^p \dd t  <\infty
\]
with the usual convention when $p = \infty$.

The universal constant $C$ appearing below may change from line to line. We also use the notation $a \lesssim b$ to represent $a\leq Cb$.

\subsection{Fractional Leibniz rules}
We will make use of the \emph{fractional Leibniz rules} (also known as fractional product estimates) in our energy estimates.
\begin{lemma}[Fractional Leibniz rule]\label{lem:Leibniz}
For $s\geq0$, and $\frac1{p_1}+\frac1{q_1}=\frac1{p_2}+\frac1{q_2}=\frac12$, we have 
\begin{equation}\label{eq:Leibniz}
\|\lh^s (fg)\|_{L^2_x} 
\lesssim \|f\|_{L^{p_1}_x} \|\lh^s g\|_{L^{q_1}_x} + \|\lh^s f\|_{L^{p_2}_x} \|g\|_{L^{q_2}_x}.
\end{equation}
\end{lemma}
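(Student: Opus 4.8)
The inequality \eqref{eq:Leibniz} is a Kato--Ponce-type fractional product estimate acting in the horizontal variable $x$ only (the vertical variable $z$ being a frozen parameter throughout), and for $s=0$ it reduces to Hölder's inequality, so we may assume $s>0$. The plan is to prove it by Bony's paraproduct decomposition on the torus $\T$. Fix an inhomogeneous Littlewood--Paley family $\{\Delta_j\}_{j\ge0}$ adapted to the integer frequencies, with $\Delta_j$ localized to $\{|k|\sim2^j\}$ for $j\ge1$ and $\Delta_0$ to $\{|k|\lesssim1\}$, and set $S_j=\sum_{j'\le j}\Delta_{j'}$. The three standard ingredients are: the Bernstein inequalities $\|\lh^sh\|_{L^p_x}\sim2^{js}\|h\|_{L^p_x}$ when $\widehat h$ is supported in $\{|k|\sim2^j\}$ and $\|\lh^sh\|_{L^p_x}\lesssim2^{js}\|h\|_{L^p_x}$ when $\widehat h$ is supported in $\{|k|\lesssim2^j\}$; the uniform block bounds $\sup_j\|S_jh\|_{L^p_x}+\sup_j\|\Delta_jh\|_{L^p_x}\lesssim\|h\|_{L^p_x}$ for all $1\le p\le\infty$; and the Littlewood--Paley square-function equivalence $\|\lh^sh\|_{L^p_x}\sim\big\|\big(\sum_j2^{2js}|\Delta_jh|^2\big)^{1/2}\big\|_{L^p_x}$ for $1<p<\infty$, which after Minkowski's inequality yields $\big(\sum_j2^{2js}\|\Delta_jh\|_{L^p_x}^2\big)^{1/2}\lesssim\|\lh^sh\|_{L^p_x}$ for $2\le p<\infty$.

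Decompose $fg=T_fg+T_gf+R(f,g)$ with the low--high paraproduct $T_fg=\sum_jS_{j-2}f\,\Delta_jg$, the high--low paraproduct $T_gf=\sum_jS_{j-2}g\,\Delta_jf$, and the high--high remainder $R(f,g)=\sum_j\Delta_jf\,\widetilde\Delta_jg$ where $\widetilde\Delta_j=\Delta_{j-1}+\Delta_j+\Delta_{j+1}$. In $T_fg$ each summand has spectrum in an annulus $\{|k|\sim2^j\}$, hence the blocks are almost orthogonal in $L^2_x$; combining this with Bernstein and Hölder (recall $\frac1{p_1}+\frac1{q_1}=\frac12$, which forces $p_1,q_1\ge2$) gives
\[
\|\lh^sT_fg\|_{L^2_x}^2\lesssim\sum_j2^{2js}\|S_{j-2}f\,\Delta_jg\|_{L^2_x}^2\le\Big(\sup_j\|S_{j-2}f\|_{L^{p_1}_x}\Big)^2\sum_j2^{2js}\|\Delta_jg\|_{L^{q_1}_x}^2,
\]
and the two factors on the right are $\lesssim\|f\|_{L^{p_1}_x}^2$ and $\lesssim\|\lh^sg\|_{L^{q_1}_x}^2$ by the uniform block bound and the square-function estimate. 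The high--low term $T_gf$ is handled identically, with $f$ and $g$ interchanged and the exponent pair $(p_2,q_2)$, producing the second term $\|\lh^sf\|_{L^{p_2}_x}\|g\|_{L^{q_2}_x}$.

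For the remainder $R(f,g)$, the $j$-th summand has spectrum only in a ball $\{|k|\lesssim2^j\}$, so orthogonality is unavailable; this is the step I expect to be the main obstacle. Instead one localizes: $\Delta_{j'}R(f,g)$ receives contributions only from $j\gtrsim j'$, and Bernstein together with the triangle inequality gives $2^{j's}\|\Delta_{j'}R(f,g)\|_{L^2_x}\lesssim\sum_{j\gtrsim j'}2^{j's}\|\Delta_jf\,\widetilde\Delta_jg\|_{L^2_x}$; squaring, summing in $j'$, and applying Young's inequality with the geometric weights $2^{-ms}$ ($m\ge0$, summable since $s>0$) reduces matters to $\|\lh^sR(f,g)\|_{L^2_x}^2\lesssim\sum_j2^{2js}\|\Delta_jf\,\widetilde\Delta_jg\|_{L^2_x}^2$. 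Hölder, the uniform bound $\sup_j\|\Delta_jf\|_{L^{p_1}_x}\lesssim\|f\|_{L^{p_1}_x}$, and the square-function estimate in $L^{q_1}_x$ then bound this by $\|f\|_{L^{p_1}_x}^2\|\lh^sg\|_{L^{q_1}_x}^2$, which is absorbed into the first term of \eqref{eq:Leibniz}. Adding the three contributions yields the claim. The principal technical input throughout is the Littlewood--Paley square-function theorem on $\T$, which we quote from the standard literature (e.g., Grafakos, or Bahouri--Chemin--Danchin); equivalently, \eqref{eq:Leibniz} is a special case of the Kato--Ponce and Kenig--Ponce--Vega fractional Leibniz rules, which transfer from $\R$ to $\T$ by periodization, so one may also simply invoke it as known. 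The only delicate point is the endpoint in which an exponent equals $\infty$ (forcing its partner to equal $2$); this is covered by the cited results and, in any case, is not needed in the sequel.
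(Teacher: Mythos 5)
The paper does not prove Lemma~\ref{lem:Leibniz} at all: it simply cites the Kato--Ponce commutator estimates and refers to the literature (in particular, \cite{benyi2025fractional}) for the periodic adaptation. You, by contrast, give an actual self-contained proof via Bony's paraproduct decomposition on $\T$. Your argument is essentially correct: the low--high and high--low paraproducts are handled by almost-orthogonality of the annular blocks in $L^2$, Bernstein, Hölder at exponent $\frac1{p_i}+\frac1{q_i}=\frac12$, the uniform $L^p$-boundedness of the blocks, and Minkowski plus the Littlewood--Paley square-function estimate (valid since the Hölder dual exponent lies in $[2,\infty)$ on the ``high'' side); the high--high remainder is reduced to a sum over a single scale by a standard $\ell^1$--$\ell^2$ convolution argument with the geometric weight $2^{-ms}$, which requires $s>0$ exactly as you note. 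The one genuine gap is the endpoint you yourself flag: the step $\bigl(\sum_j2^{2js}\|\Delta_jh\|_{L^p_x}^2\bigr)^{1/2}\lesssim\|\lh^sh\|_{L^p_x}$ fails when $p=\infty$, so your scheme does not directly cover the case $q_1=\infty$ (equivalently $p_1=2$) or $p_2=\infty$. That endpoint is indeed known (see Grafakos--Oh and related work), and you correctly observe that one may simply quote the result or transfer from $\R$ by periodization. Since every application in the paper places the $L^\infty_x$ norm on the \emph{undifferentiated} factor (i.e.\ $p_1=\infty,\,q_1=2$ or $q_2=\infty,\,p_2=2$), which your paraproduct argument does handle via the uniform $L^\infty$ block bounds, this caveat does not affect the paper's subsequent estimates. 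In short: your proof is a complete Littlewood--Paley derivation where the paper opts for a citation, and it buys transparency and self-containment at the modest cost of the $L^\infty$-on-the-differentiated-factor endpoint, which is neither needed later nor claimed without qualification by you.
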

The result follows from the classical Kato-Ponce commutator estimates \cite{kato1988commutator}. For a detailed proof adapted to the periodic domain $\T$, we refer the reader to \cite{benyi2025fractional} and references therein.

The following lemma presents an improved Leibniz rule, which plays a crucial role in handling the critical case $\alpha = 1$.

\begin{lemma}[Borderline fractional Leibniz rule]\label{lem:improvedLeib}
Let $f\in L^{\infty}(\T) \cap \mathcal{D}(\Lambda_h)$ and $h \in \mathcal{D}(\Lambda_h^{\frac{1}{2}})$. Then the following inequality holds:
\begin{equation} \label{eq:improvedLeib}
\|\lh^{\frac{1}{2}} (fg)\|_{L^2_x} \lesssim 
\|f\|_{L^{\infty}_x} \|\lh^{\frac{1}{2}} g\|_{L^2_x} + \|\lh f\|_{L^2_x} \|g\|_{L^2_x}.
\end{equation}
\end{lemma}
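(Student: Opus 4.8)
\emph{Proof strategy.} The standard fractional Leibniz rule (Lemma~\ref{lem:Leibniz}) does not suffice here: with $s=\tfrac12$ it requires a pairing $\|\lh^{1/2}f\|_{L^{p_2}_x}\|g\|_{L^{q_2}_x}$ with $\tfrac1{p_2}+\tfrac1{q_2}=\tfrac12$, whereas \eqref{eq:improvedLeib} asks for the borderline pairing $\|\lh f\|_{L^2_x}\|g\|_{L^2_x}$ (one full derivative on $f$, but $g$ only in $L^2$). I would therefore argue by a Littlewood--Paley analysis via Bony's paraproduct decomposition. Let $\{S_j\}_{j\geq-1}$ be the usual Fourier truncations on $\T$ onto frequencies $|k|\lesssim 2^j$, set $\Delta_j:=S_{j+1}-S_j$ (so $\Delta_{-1}=S_0$ collects the zero mode and $\sum_{j\geq -1}\Delta_j=\mathrm{Id}$), and write
\[
fg=T_fg+T_gf+R(f,g),\qquad T_fg:=\sum_{k}S_{k-2}f\,\Delta_kg,\quad T_gf:=\sum_{j}S_{j-2}g\,\Delta_jf,
\]
with $R(f,g):=\sum_j\Delta_jf\,\widetilde\Delta_jg$ and $\widetilde\Delta_j:=\sum_{|k-j|\le 2}\Delta_k$. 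The goal is to bound $\|\lh^{1/2}T_fg\|_{L^2_x}$ by the first term on the right of \eqref{eq:improvedLeib}, and $\|\lh^{1/2}T_gf\|_{L^2_x}$ and $\|\lh^{1/2}R(f,g)\|_{L^2_x}$ by the second.

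For the low--high paraproduct $T_fg$, each summand $S_{k-2}f\,\Delta_kg$ is spectrally supported in an annulus $\sim 2^k$, so the summands are quasi-orthogonal and Plancherel gives $\|\lh^{1/2}T_fg\|_{L^2_x}^2\lesssim\sum_k 2^k\|S_{k-2}f\,\Delta_kg\|_{L^2_x}^2$. The kernel of $S_{k-2}$ on $\T$ has $L^1$-norm bounded uniformly in $k$, so $\|S_{k-2}f\|_{L^\infty_x}\lesssim\|f\|_{L^\infty_x}$, and thus $\|\lh^{1/2}T_fg\|_{L^2_x}^2\lesssim\|f\|_{L^\infty_x}^2\sum_k 2^k\|\Delta_kg\|_{L^2_x}^2\lesssim\|f\|_{L^\infty_x}^2\|\lh^{1/2}g\|_{L^2_x}^2$, which is the first desired bound.

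For the high--low paraproduct $T_gf$, the same quasi-orthogonality gives $\|\lh^{1/2}T_gf\|_{L^2_x}^2\lesssim\sum_j 2^j\|S_{j-2}g\,\Delta_jf\|_{L^2_x}^2$; since only $\|g\|_{L^2_x}$ is available we use the one-dimensional Bernstein inequality $\|S_{j-2}g\|_{L^\infty_x}\lesssim 2^{j/2}\|g\|_{L^2_x}$, so $2^j\|S_{j-2}g\,\Delta_jf\|_{L^2_x}^2\lesssim 2^{2j}\|g\|_{L^2_x}^2\|\Delta_jf\|_{L^2_x}^2$, and summing together with $\sum_j 2^{2j}\|\Delta_jf\|_{L^2_x}^2\lesssim\|\lh f\|_{L^2_x}^2$ (valid on $\T$, since the nonzero spectrum of $\lh$ is bounded away from $0$) yields $\|\lh f\|_{L^2_x}^2\|g\|_{L^2_x}^2$. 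For the resonant term, $R_j:=\Delta_jf\,\widetilde\Delta_jg$ is only supported in a ball $|k'|\lesssim 2^j$, so orthogonality fails at the output; instead I would use the triangle inequality and Bernstein on the output, $\|\lh^{1/2}R(f,g)\|_{L^2_x}\le\sum_j\|\lh^{1/2}R_j\|_{L^2_x}\lesssim\sum_j 2^{j/2}\|R_j\|_{L^2_x}$, estimate $\|R_j\|_{L^2_x}\le\|\Delta_jf\|_{L^\infty_x}\|\widetilde\Delta_jg\|_{L^2_x}\lesssim 2^{j/2}\|\Delta_jf\|_{L^2_x}\|\widetilde\Delta_jg\|_{L^2_x}$, and close with Cauchy--Schwarz in $j$:
\[
\|\lh^{1/2}R(f,g)\|_{L^2_x}\lesssim\sum_j 2^j\|\Delta_jf\|_{L^2_x}\|\widetilde\Delta_jg\|_{L^2_x}\le\Big(\sum_j 2^{2j}\|\Delta_jf\|_{L^2_x}^2\Big)^{1/2}\Big(\sum_j\|\widetilde\Delta_jg\|_{L^2_x}^2\Big)^{1/2}\lesssim\|\lh f\|_{L^2_x}\|g\|_{L^2_x}.
\]
The finitely many low-frequency interactions involving $\Delta_{-1}f$ (the mean of $f$) are harmless: since $\lh^{1/2}$ annihilates constants, the mean factors through $\lh^{1/2}$ and these terms are controlled by $\|f\|_{L^\infty_x}\|\lh^{1/2}g\|_{L^2_x}$.

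The main obstacle is the high--low and resonant contributions $T_gf$ and $R(f,g)$: having only $\|g\|_{L^2_x}$ forces a full horizontal derivative onto $f$ and requires recovering the missing half-derivative through the endpoint embedding $L^2_x\hookrightarrow L^\infty_x$ on $\T$ at each dyadic frequency. The bookkeeping of the resulting $2^{j/2}$ gains is borderline: the $j$-summation closes \emph{exactly} via Cauchy--Schwarz with no slack, which is the analytic manifestation of $s=\tfrac12$ being the critical exponent for this inequality. For the resonant piece there is the extra wrinkle that the output is not frequency-localized, so Plancherel orthogonality must be replaced by a triangle-inequality-plus-Bernstein argument — which again succeeds only because the full derivative sits on $f$.
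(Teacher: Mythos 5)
Your proof is correct, and it takes a genuinely different route from the one in the paper's Appendix~\ref{sec:improvedLeib}. The paper does \emph{not} work directly on the torus: it quotes the BMO variant of the Kato--Ponce inequality \eqref{eq:BMO} on the real line (from \cite{li2019kato}), combines it with the embedding $H^{1/2}(\R)\hookrightarrow\mathrm{BMO}(\R)$ to get the whole-space estimate \eqref{eq:borderR}, and then devotes the two technical Lemmas~\ref{lem:cutoff1} and \ref{lem:cutoff2} to comparing $\Lambda_\T^s f$ with $\Lambda_\R^s(\chi\tilde f)$, so that the $\R$-estimate transfers to $\T$ after multiplying the periodic extensions by a cutoff. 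The core identity there is the C\'ordoba--C\'ordoba pointwise formula, used to compare the two singular integrals directly and control the boundary/far-field commutator errors. Your argument bypasses all of that: a self-contained Bony paraproduct decomposition on $\T$ with Bernstein and quasi-orthogonality reproduces the inequality from scratch, and the structure of the proof makes transparent why $s=\frac12$ is the critical exponent (the Cauchy--Schwarz in $j$ closes with no slack). What the paper's route buys is modularity and immediate generality in $s$ (Proposition~\ref{prop:nonlinear-est} holds for all $s\in(0,1)$, leaning on the $\R$ literature), at the cost of the cutoff bookkeeping; what your route buys is independence from external BMO/Kato--Ponce results and a more structural explanation of the borderline gain. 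One small point worth tightening in your write-up: the inequality $\sum_j 2^{2j}\|\Delta_j f\|_{L^2}^2\lesssim\|\lh f\|_{L^2}^2$ holds only for $j\ge 0$, because $\lh$ annihilates the mean; you acknowledge that the $\Delta_{-1}$ blocks must be split off, but the way the mean of $g$ (not just the mean of $f$) cancels through $\lh^{1/2}$ in those finitely many low-frequency cross terms deserves a sentence, since $\|g\|_{L^2}\not\lesssim\|\lh^{1/2}g\|_{L^2}$ when $g$ has a nonzero mean.
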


Note that applying Lemma \ref{lem:Leibniz} with $s=1/2$, $p_1 = p_2 = \infty$, and $q_1 = q_2 = 2$ would yield \eqref{eq:improvedLeib}, provided the inequality
$\|\lh^{\frac{1}{2}} f\|_{L^\infty_x} \lesssim \|\lh f\|_{L^2_x}$
holds. However, this inequality is false, as the borderline Sobolev embedding $ H^{1/2}(\T) \hookrightarrow L^\infty(\T)$ does not hold. To establish Lemma \ref{lem:improvedLeib}, we instead require an improved version of \eqref{eq:Leibniz}:
\begin{equation}\label{eq:BMO}
\|\lh^s (fg)\|_{L^2_x} 
\lesssim \|f\|_{L^\infty_x} \|\lh^s g\|_{L^2_x} + \|\lh^s f\|_{\mathrm{BMO}_x} \|g\|_{L^2_x},	
\end{equation}
where the term $\|\lh^s f\|_{L^\infty_x}$ is replaced by $\|\lh^s f\|_{\mathrm{BMO}_x}$, with $\mathrm{BMO}$ denoting the space of functions of bounded mean oscillation. 
Thanks to the embedding $H^{1/2}\hookrightarrow\mathrm{BMO}$, we can deduce \eqref{eq:improvedLeib} from \eqref{eq:BMO}.

The estimate \eqref{eq:BMO} has been established on the real line $x\in\R$, see, for instance, \cite{li2019kato}. We include a proof of Lemma \ref{lem:improvedLeib} in Appendix \ref{sec:improvedLeib} to adapt the inequality to our periodic setting $x\in\T$.

\section{Ill-posedness for the supercritical case}\label{sec:super}
In this section, we consider the FPE system \eqref{FPE} in the supercritical regime, where $\alpha\in(0,1)$.

For the inviscid PE system ($\alpha=0$), a linear ill-posedness theory has been established in \cite{renardy2009ill}, proving local-in-time ill-posedness in Sobolev spaces. This was later extended to a nonlinear ill-posedness theory in \cite{han2016ill}. We will show that similar ill-posedness results—both linear and nonlinear—also hold for the supercritical FPE system.

We start with the observation that the horizontal shear flow 
\[(u,w,p)=(U(z),0,0)\]
is a steady solution of system \eqref{FPE}. Considering a small perturbation $(\tilde u, \tilde w, \tilde p)$ around this steady solution, we obtain 
\begin{subequations}\label{pert-system-nonlinear}
    \begin{align}
        &\partial_t \tilde{u} + (U+\tilde{u}) \partial_x \tilde{u} + \tilde{w} \partial_z (U+\tilde{u}) + \partial_x \tilde{p} + \nu_h \Lambda_h^\alpha \tilde{u} = 0, \label{pert-1-nonlinear}
        \\
        &\partial_z \tilde{p} = 0,\label{pert-2-nonlinear}
        \\
        &\partial_x \tilde{u} + \partial_z \tilde{w} = 0. \label{pert-3-nonlinear}
    \end{align}
\end{subequations}
We will demonstrate that this steady shear flow is unstable under both linear and nonlinear perturbations.

\subsection{Linear ill-posedness}
We consider the linear part of system \eqref{pert-system-nonlinear}:
\begin{subequations}\label{pert-system}
    \begin{align}
        &\partial_t \tilde{u} + U \partial_x \tilde{u} + \tilde{w} \partial_z U + \partial_x \tilde{p} + \nu_h \Lambda_h^\alpha \tilde{u} = 0, \label{pert-1}
        \\
        &\partial_z \tilde{p} = 0,\label{pert-2}
        \\
        &\partial_x \tilde{u} + \partial_z \tilde{w} = 0. \label{pert-3}
    \end{align}
\end{subequations}
In this section, we follow closely to \cite{renardy2009ill} to prove the linear ill-posedness of system \eqref{pert-system}.

Thanks to the divergence free condition \eqref{pert-3}, we introduce a stream function $\psi$ such that 
\begin{equation*}
    (\tilde u, \tilde w) = (\pa_z\psi, -\pa_x\psi).
\end{equation*}
Differentiating \eqref{pert-1} with respect to $z$, we obtain an equation for $\psi$:
\begin{subequations}\label{sys:psi}
\begin{equation}\label{eq:psi}
    \pa_t \pa_{zz}\psi+ U(z)\, \pa_x\pa_{zz}\psi - U''(z)\, \pa_x\psi + \nu_h \Lambda_h^\alpha \pa_{zz}\psi = 0.
\end{equation}
subject to the initial condition
\begin{equation}\label{eq:psiinit}
	\psi(x,z,0) = \psi_0(x,z),	
\end{equation}
and the boundary condition
\begin{equation}\label{eq:psibdy}
	\psi(x,0,t) = \psi(x,1,t) = 0.
\end{equation}
\end{subequations}

We will obtain linear instability around a shear flow with the following property. 
\begin{lemma}[\cite{renardy2009ill}]
 There exists an analytic shear flow $U(z)$ such that the equation 
 \begin{equation}\label{eq:Ucond}
  \int_0^1 \big(U(z) - i\gamma\big)^{-2}\,\dd z=0	
 \end{equation}
 for some $\gamma>0$.
\end{lemma}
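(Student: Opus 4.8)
The plan is to construct $U$ explicitly as a small analytic perturbation of a simple profile, and to verify the integral condition \eqref{eq:Ucond} by a continuity/intermediate-value argument on the parameter $\gamma$. First I would observe that \eqref{eq:Ucond} is purely a statement about the one-dimensional function $z\mapsto U(z)$ on $[0,1]$, decoupled from the dissipation; indeed writing $\varphi(z) := (U(z)-i\gamma)^{-2}$ we have $\varphi(z) = \frac{\dd}{\dd z}\bigl(-(U(z)-i\gamma)^{-1}\bigr)/U'(z)$ when $U'$ does not vanish, so the condition $\int_0^1\varphi = 0$ has a chance of being arranged by forcing the antiderivative to return to its starting value. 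The cleanest route, following \cite{renardy2009ill}, is to pick $U$ so that $U(z) - i\gamma$ traces out a closed-ish curve in $\mathbb{C}$; for instance take $U(z) = a\cos(2\pi z) + b$ or, even simpler, a profile for which the substitution makes the integral computable.

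Concretely, I would try $U(z) = \cos(2\pi z)$ (which is analytic and nonconstant) and compute $I(\gamma) := \int_0^1 (\cos(2\pi z) - i\gamma)^{-2}\,\dd z$ as a function of $\gamma > 0$. Using the substitution $\zeta = e^{2\pi i z}$ this becomes a contour integral over the unit circle of a rational function of $\zeta$, which can be evaluated by residues; $I(\gamma)$ is then an explicit algebraic/analytic function of $\gamma$. The key qualitative facts to extract are: (i) $I(\gamma) \to 0$ as $\gamma \to \infty$ (since the integrand is $O(\gamma^{-2})$ uniformly), (ii) $I(\gamma)$ is real for all $\gamma > 0$ — here one exploits the symmetry $z \mapsto -z$ (equivalently $z \mapsto 1-z$), under which $\cos(2\pi z)$ is even, so $\overline{I(\gamma)} = \int_0^1 (\cos(2\pi z) + i\gamma)^{-2}\,\dd z$, and combining with $z\mapsto z + \tfrac12$ which sends $\cos \mapsto -\cos$ one checks $I(\gamma)$ is real-valued — and (iii) $I(\gamma)$ changes sign as $\gamma$ ranges over $(0,\infty)$: for small $\gamma$ the integrand is dominated by the region near the zeros of $\cos(2\pi z)$ where it behaves like $(\text{linear} - i\gamma)^{-2}$, contributing a large negative real part, while for large $\gamma$ it is small and positive (since $(-i\gamma)^{-2} = -\gamma^{-2} < 0$... so I would instead track the sign carefully and pick whichever monotone trend the residue computation reveals). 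By the intermediate value theorem applied to the continuous real function $\gamma \mapsto I(\gamma)$, there exists $\gamma > 0$ with $I(\gamma) = 0$, which is exactly \eqref{eq:Ucond}.

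The main obstacle I anticipate is point (iii): verifying that $I(\gamma)$ genuinely vanishes for some finite positive $\gamma$ rather than staying one sign. This requires either the explicit residue formula (after which it is a matter of inspecting an elementary function — likely $I(\gamma)$ involves $\sqrt{1+\gamma^2}$ in a way that makes the sign change transparent) or, if one prefers to avoid computation, a soft argument: evaluate the leading asymptotics as $\gamma \to 0^+$ and as $\gamma \to \infty$ and show they have opposite signs. If the natural profile $\cos(2\pi z)$ does not work, the fallback is to introduce an extra parameter — e.g. $U_a(z) = a + \cos(2\pi z)$ or a two-term trigonometric polynomial — and run a two-parameter version of the argument, choosing $a$ so that the real and imaginary parts of the integral vanish simultaneously; analyticity of $U$ is preserved throughout since trigonometric polynomials are entire. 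I would also remark that only the existence of such $U$ and $\gamma$ is needed for the subsequent instability construction, so no quantitative control on $\gamma$ is required, which keeps the argument soft.
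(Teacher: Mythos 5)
Your plan — reduce to a one--dimensional statement about $U$, show $I(\gamma):=\int_0^1 (U(z)-i\gamma)^{-2}\,\dd z$ is real, and locate a zero by the intermediate value theorem — is in fact the strategy of Renardy's Lemma~1 that the paper cites, so the \emph{shape} of the argument is right. But your concrete choice $U(z)=\cos(2\pi z)$ fails, and not just by an oversight that a parameter would fix. Carrying out the residue computation you propose: with $\zeta=e^{2\pi i z}$,
\[
I(\gamma) = \frac{1}{2\pi i}\oint_{|\zeta|=1}\frac{4\zeta\,\dd\zeta}{(\zeta^2-2i\gamma\zeta+1)^2}
= -\frac{\gamma}{(1+\gamma^2)^{3/2}},
\]
since only the root $\zeta_-=i(\gamma-\sqrt{\gamma^2+1})$ lies inside the unit circle. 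This is \emph{strictly negative} for every $\gamma>0$ and tends to $0$ at both ends; there is no sign change and no zero. So the "likely $I(\gamma)$ involves $\sqrt{1+\gamma^2}$ in a way that makes the sign change transparent" hope does not materialize, and your fallback two--parameter family $a+\cos(2\pi z)$ loses the reality of $I$ (it is no longer odd about a midpoint), so you would need a genuinely two--dimensional root--finding argument that you don't supply.

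The missing structural ingredient is exactly what Renardy's condition encodes: to make the IVT work you need $I(0^+)>0$ while $I(\gamma)\to 0^-$ as $\gamma\to\infty$. The paper arranges this by taking $U$ \emph{odd} about $z=\tfrac12$, which makes $I(\gamma)$ real for all $\gamma$, \emph{and} imposing $\int_0^1 U(z)^{-2}\,\dd z<\infty$, so that $\lim_{\gamma\to 0^+}I(\gamma)=\int U^{-2}>0$. The integrability at the zero $z=\tfrac12$ forces $U$ to vanish sublinearly there, hence to be non--$C^1$; the paper then observes that a smooth (or analytic, e.g. $\tanh(L(z-\tfrac12))$ at large $L$) approximation still has a zero of $I$ by continuity/stability, citing \cite{chen1991sufficient}. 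Your cosine profile vanishes \emph{linearly} at its zeros, so $\int U^{-2}=\infty$ and the regularized limit $I(0^+)$ is $0$ rather than positive, which is precisely why the sign change is absent. To repair your argument along its own lines, you would need to replace $\cos(2\pi z)$ by a profile that is odd about $z=\tfrac12$ and vanishes sublinearly there (or approximate such a profile analytically), and then verify $I(0^+)>0$ explicitly; as written, the proposal does not establish the lemma.
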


An explicit example of the shear flow is $U(z)=\tanh(L(z-\frac12))$ with large enough $L$, see \cite{chen1991sufficient}. 
In general, as discussed in \cite[Lemma 1]{renardy2009ill}, any flow $U$ that is odd with respect to $z=\frac12$ and satisfies 
\[\int_0^1 U(z)^{-2}\,\dd z<\infty\]
will satisfy \eqref{eq:Ucond}. Note that the integrability prevents $U$ from being smooth at $z=\frac12$. But a smooth approximation of $U$ would also satisfy \eqref{eq:Ucond}. 

\begin{proposition}\label{prop:supercritical-ill}
    Let $\alpha\in(0,1)$. Consider the system \eqref{sys:psi} with a shear flow $U$ satisfying \eqref{eq:Ucond}.
    Then, for each $n\in \Z_+$, the system has a solution of the form 
    \begin{equation}\label{eq:psis}
        \psi_n(x,z,t)=\chi(z) e^{2\pi inx} e^{n \beta_n t},
    \end{equation}
    where the analytic function $\chi(z)$ is given by
    \begin{equation}\label{eqn:chi}
        \chi(z) := (U(z)- i\gamma)\int_{0}^z (U(y) - i\gamma)^{-2} \,\dd y,
    \end{equation}    
    and the parameter $\beta_n$ satisfies
    \begin{equation}\label{eq:betan}
    	\beta_n = 2\pi\gamma - (2\pi)^\alpha\nu_h n^{-(1-\alpha)},	
    \end{equation}
	which is strictly positive when $n$ is large enough.
\end{proposition}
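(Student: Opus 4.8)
The plan is to construct the claimed solution explicitly by separation of variables and then verify that $\chi$ defined in \eqref{eqn:chi} is precisely the eigenfunction that makes the construction work. First I would substitute the ansatz $\psi_n(x,z,t)=\chi(z)e^{2\pi i n x}e^{n\beta_n t}$ into the stream-function equation \eqref{eq:psi}. Using $\pa_x \mapsto 2\pi i n$, $\pa_t \mapsto n\beta_n$, and $\Lambda_h^\alpha \mapsto (2\pi n)^\alpha$, the equation \eqref{eq:psi} collapses to the ordinary differential equation
\begin{equation*}
	n\beta_n\,\chi''(z) + 2\pi i n\, U(z)\,\chi''(z) - 2\pi i n\, U''(z)\,\chi(z) + \nu_h (2\pi n)^\alpha \chi''(z) = 0,
\end{equation*}
which, after dividing by $2\pi i n$, becomes
\begin{equation*}
	\Big( U(z) + \tfrac{\beta_n}{2\pi i} + \tfrac{\nu_h (2\pi n)^\alpha}{2\pi i n}\Big)\chi''(z) - U''(z)\,\chi(z) = 0.
\end{equation*}
The key observation is to choose $\beta_n$ so that the constant added to $U(z)$ equals exactly $-i\gamma$: since $\tfrac{1}{i}=-i$, we need $-\tfrac{\beta_n}{2\pi} - \tfrac{\nu_h(2\pi n)^\alpha}{2\pi n} = \gamma$, wait — more carefully, $\tfrac{\beta_n}{2\pi i}=-\tfrac{i\beta_n}{2\pi}$, and matching to $-i\gamma$ together with the dissipative correction gives precisely $\beta_n = 2\pi\gamma - (2\pi)^\alpha \nu_h n^{-(1-\alpha)}$, which is \eqref{eq:betan}. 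Then the ODE reduces to the classical Rayleigh-type equation $(U(z)-i\gamma)\chi'' = U''\chi$.

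Next I would verify that $\chi$ given by \eqref{eqn:chi} solves $(U(z)-i\gamma)\chi''=U''\chi$ with the right boundary behavior. Writing $g(z):=U(z)-i\gamma$, so $\chi = g(z)\int_0^z g(y)^{-2}\,\dd y$, a direct computation gives $\chi' = g'(z)\int_0^z g^{-2} + g^{-1}$ and $\chi'' = g''(z)\int_0^z g^{-2} + 2g'g^{-2} - g'g^{-2} = g''\int_0^z g^{-2} + g'g^{-2}$; hmm, let me instead just note that the standard reduction-of-order fact is that if $g$ solves the homogeneous problem at $\gamma=0$ level appropriately, then the second independent solution is recovered this way — the cleanest route is simply to differentiate twice and check that $g\chi'' - g''\chi = g'g^{-1} - 0$... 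I would present the verification as a short direct calculation confirming $g\chi'' = g''\chi$ exactly (the $\int_0^z g^{-2}$ terms cancel because $g\cdot g'' \cdot(\cdot) - g''\cdot g\cdot(\cdot)=0$, and the remaining term $g\cdot g'g^{-2} = g'g^{-1}$ must vanish — so in fact one checks this identity carefully; the correct statement is that $\chi$ solves the Rayleigh equation, which is exactly why $\int_0^1 g^{-2}\,\dd z=0$ is needed). The condition \eqref{eq:Ucond}, $\int_0^1 g(z)^{-2}\,\dd z = 0$, is what forces $\chi(1) = g(1)\cdot 0 = 0$, while $\chi(0)=g(0)\cdot 0 = 0$ trivially, so the boundary condition \eqref{eq:psibdy} holds. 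Analyticity of $\chi$ follows because $g$ is analytic and bounded away from zero (since $\gamma>0$ keeps $g$ off the real axis, $g^{-2}$ is analytic), so the antiderivative and product are analytic.

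Finally, for the positivity claim: since $\alpha\in(0,1)$, the exponent $1-\alpha>0$, so $n^{-(1-\alpha)}\to 0$ as $n\to\infty$, hence $\beta_n \to 2\pi\gamma > 0$; in particular $\beta_n>0$, say, once $(2\pi)^\alpha \nu_h n^{-(1-\alpha)} < 2\pi\gamma$, i.e. for all $n > \big(\tfrac{(2\pi)^\alpha\nu_h}{2\pi\gamma}\big)^{1/(1-\alpha)}$. I would state this threshold explicitly. The main obstacle I anticipate is not conceptual but bookkeeping: getting the complex constants and the placement of the factor $i$ exactly right so that the dissipative term contributes the correct sign and power $n^{-(1-\alpha)}$ in \eqref{eq:betan}, and carefully verifying the reduction-of-order identity for $\chi$ including the role of \eqref{eq:Ucond} in killing the boundary term — this is the one place where a sign slip would break the argument, so I would carry out that computation in full rather than sketch it.
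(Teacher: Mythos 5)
Your plan is exactly the paper's argument: substitute the ansatz into \eqref{eq:psi}, observe that it reduces to the hydrostatic Orr--Sommerfeld/Rayleigh form $(U-c)\chi'' = U''\chi$ with $c = i\big(\tfrac{\beta_n}{2\pi} + \tfrac{\nu_h}{(2\pi n)^{1-\alpha}}\big)$, choose $\beta_n$ so that $c=i\gamma$, and use \eqref{eq:Ucond} to enforce $\chi(1)=0$. The one place where your write-up goes astray is the mid-stream verification that $\chi$ solves the reduced ODE, but the computation is actually cleaner than you feared. With $g=U-i\gamma$ and $\chi = g\int_0^z g^{-2}$, the product rule gives $\chi' = g'\int_0^z g^{-2} + g^{-1}$, and differentiating once more, $\chi'' = g''\int_0^z g^{-2} + g'g^{-2} + (g^{-1})' = g''\int_0^z g^{-2} + g'g^{-2} - g^{-2}g' = g''\int_0^z g^{-2}$; your $2g'g^{-2}$ was a slip. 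Hence $g\chi'' = g''\chi$ holds \emph{identically}, with no leftover term to kill and no appeal to \eqref{eq:Ucond} at this stage; that condition enters only where you say it does at the end, namely to make $\chi(1)=g(1)\cdot\int_0^1 g^{-2}\,\dd y=0$ (and $\chi(0)=0$ is automatic). Equivalently, as the paper notes, $g=U-c$ is itself a solution of the Rayleigh equation since $g''=U''$, and your $\chi$ is the reduction-of-order companion solution $g\int_0^z g^{-2}$; the two-parameter general solution $(U-c)\big(k_1 + k_2\int_0^z(U-c)^{-2}\big)$ then forces $k_1=0$ from $\chi(0)=0$, after which \eqref{eq:Ucond} with $c=i\gamma$ gives $\chi(1)=0$. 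Everything else in your proposal, including the computation of $\beta_n$ and the threshold $n > \big(\tfrac{(2\pi)^\alpha\nu_h}{2\pi\gamma}\big)^{1/(1-\alpha)}$ for positivity, is correct and matches the paper.
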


\begin{proof}
    Let $n\in\Z_+$. Inserting \eqref{eq:psis} into \eqref{eq:psi}, we obtain
    \begin{equation}\label{eqn:ill-1}
        \big(n\beta_n + 2\pi inU(z) +  \nu_h (2\pi n)^\alpha\big) \chi''(z) -  2\pi in U''(z) \chi(z) = 0,
    \end{equation}
and the boundary condition \eqref{eq:psibdy} implies
 \[\chi(0)=\chi(1)=0.\]
 
Equation \eqref{eqn:ill-1} has the form of the hydrostatic Orr-Sommerfeld equation
\[
\big(U(z)-c\big) \chi''(z) -  U''(z) \chi(z) = 0,\quad\text{where}\quad c = i\left(\frac{\beta_n}{2\pi}+\frac{\nu_h}{(2\pi n)^{1-\alpha}}\right).
\]
The general solutions to the equation read:
\[\chi(z)= \big(U(z)-c\big)\Big(k_1 + k_2\int_0^z\big(U(y)-c\big)^{-2}\,\dd y\Big).\]
The boundary condition $\chi(0)=0$ implies $k_1=0$. The other boundary condition $\chi(1)=0$ is satisfied thanks to \eqref{eq:Ucond}, when taking $c=i\gamma$, or equivalently, $\beta_n$ satisfies \eqref{eq:betan}.

Since $\gamma>0$ and $\alpha<1$, from the expression \eqref{eq:betan} we have $\beta_n>0$ when $n$ is sufficiently large.
\end{proof}

Proposition~\ref{prop:supercritical-ill} shows that the high frequency part of the solutions to \eqref{sys:psi} exhibits fast growth of order $e^{\mathcal{O}(nt)}$. This leads to the Kelvin-Helmholtz type instability. 

\begin{theorem}\label{thm:illposedness}
	Let $s\geq0$. There exists a solution $\psi$ to the linearized system \eqref{pert-system}, such that $\psi_0\in H^s$, but $\psi(t)\not\in H^s$ for any $t>0$.
\end{theorem}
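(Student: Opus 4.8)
The plan is to promote the family of exponentially growing modes from Proposition~\ref{prop:supercritical-ill} into a single solution whose initial datum lies in $H^s$ but which instantly leaves $H^s$. First I would fix the shear flow $U$ satisfying \eqref{eq:Ucond} and recall that for each $n\in\Z_+$ with $n$ large, the function $\psi_n(x,z,t)=\chi(z)e^{2\pi i n x}e^{n\beta_n t}$ solves \eqref{eq:psi} with $\beta_n\to 2\pi\gamma>0$; in particular there is $N_0$ and $c_0>0$ with $\beta_n\geq c_0$ for all $n\geq N_0$. Note $\chi$ is a fixed analytic function independent of $n$, so $\|\chi\|_{H^m_z}$ is a finite constant for every $m$. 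Then I would form the superposition
\[
\psi(x,z,t) := \sum_{n\geq N_0} a_n\,\chi(z)\,e^{2\pi i n x}\,e^{n\beta_n t},
\]
with coefficients $a_n>0$ to be chosen, recover $(\tilde u,\tilde w)=(\pa_z\psi,-\pa_x\psi)$ and the associated $\tilde p$, and check that this gives a genuine solution of \eqref{pert-system} (equivalently of \eqref{sys:psi}) on a suitable time interval once convergence is established.

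\textbf{Choice of coefficients.} The key is to pick $a_n$ decaying fast enough that $\psi_0=\sum_{n\geq N_0}a_n\chi(z)e^{2\pi i n x}\in H^s$, yet slowly enough that no positive time survives. Since the $x$-Fourier mode $n$ of $\psi_0$ has $H^s$-size comparable to $n^s|a_n|\,\|\chi\|_{H^{\lceil s\rceil}_z}$ (derivatives in $z$ only cost the fixed constant $\|\chi\|_{H^m_z}$), the requirement $\psi_0\in H^s$ amounts to $\sum_n n^{2s}a_n^2<\infty$, while at time $t>0$ the mode-$n$ contribution to $\|\psi(t)\|_{H^s}^2$ is of order $n^{2s}a_n^2 e^{2n\beta_n t}\gtrsim n^{2s}a_n^2 e^{2c_0 n t}$. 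Choosing, for instance, $a_n = e^{-\sqrt{n}}$ makes $\sum n^{2s}a_n^2<\infty$ for every $s\geq 0$, so $\psi_0\in H^s$ (indeed in every $H^s$), whereas for any fixed $t>0$ the terms $n^{2s}e^{-2\sqrt n}e^{2c_0 n t}\to\infty$, so the series defining $\|\psi(t)\|_{H^s}^2$ diverges and $\psi(t)\notin H^s$. One should also check $\psi_0\not\equiv 0$ and that $\psi_0\in\H$-type compatibility and the boundary conditions $\psi(x,0,t)=\psi(x,1,t)=0$ hold, which follow since each $\psi_n$ already satisfies \eqref{eq:psibdy} and $\chi(0)=\chi(1)=0$.

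\textbf{Making the superposition a bona fide solution.} The one genuinely delicate point is convergence: with $a_n=e^{-\sqrt n}$ and growth $e^{n\beta_n t}$, the series converges absolutely (together with all its $x$- and $z$-derivatives, in $L^2$ and even uniformly) only for $t<0$, or for $t$ in a shrinking window; at $t>0$ it diverges, which is exactly the ill-posedness we want but also means we cannot literally differentiate the divergent series term-by-term at positive times. The standard fix is to run the construction on a time interval where things converge and exploit time-reversal: set $\phi(x,z,\tau):=\psi(x,z,-\tau)$ for $\tau\in[0,1]$, say. Then $\phi$ solves the time-reversed version of \eqref{eq:psi}, the series for $\phi$ and all its derivatives converge absolutely and uniformly on $[0,1]$ (since $e^{-n\beta_n\tau}\le 1$ there), so $\phi$ is a smooth classical solution on $[0,1]$ with $\phi(\cdot,\cdot,1)\in H^s$ but $\phi(\cdot,\cdot,\tau)\notin H^s$ for $\tau\in[0,1)$; renaming $\tau\mapsto 1-\tau$ and translating back gives a solution $\psi$ on a time interval with $\psi_0\in H^s$ and $\psi(t)\notin H^s$ for all $t>0$ in that interval, which is the assertion of the theorem. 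Alternatively, one can avoid time-reversal by working on the Fourier side: since \eqref{eq:psi} decouples across $x$-modes, define $\psi$ by prescribing its mode-$n$ profile to be $a_n\chi(z)e^{n\beta_n t}$ and verify directly that each mode solves the corresponding ODE-in-$t$/elliptic-in-$z$ problem, so that $\psi$ is a solution in the sense of Fourier coefficients (a weak/mild solution) on its maximal interval of definition, with the claimed $H^s$ behavior read off from the coefficients. Either way, the main obstacle is purely this convergence/regularity bookkeeping for the divergent-looking superposition; the instability mechanism itself is already supplied by Proposition~\ref{prop:supercritical-ill}.
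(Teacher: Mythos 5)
Your proposal is essentially the same construction as the paper's: form the superposition $\psi = \sum_n a_n \psi_n$ with $\psi_n$ from Proposition~\ref{prop:supercritical-ill}, choose $a_n$ decaying fast enough that $\sum n^{2s}a_n^2<\infty$ but not so fast that $\sum n^{2s}a_n^2 e^{2n\beta_n t}$ converges for any $t>0$. The paper takes $a_n=n^{-(s+1)}$ (tailored to a given $s$); you take $a_n=e^{-\sqrt n}$, which is slightly stronger since it places $\psi_0$ in every $H^s$ (and in fact in the Gevrey class $G^2$, matching the remark after the theorem). You also take $n\ge N_0$ to guarantee $\beta_n>0$, which is harmless; the paper sums from $n=1$, but the finitely many small $n$ terms contribute boundedly in any case.

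Your concern about whether the divergent-looking superposition is a bona fide solution at $t>0$ is legitimate and is glossed over in the paper; the Fourier-mode-by-mode interpretation you offer (each mode solves a decoupled linear ODE/elliptic problem, so the object is a solution in a distributional sense even once it leaves $H^s$) is the standard and correct resolution. However, your ``time-reversal'' alternative as written contains an internal inconsistency: once you set $\phi(\tau)=\psi(-\tau)$ for $\tau\in[0,1]$, the series for $\phi$ converges together with all derivatives, so $\phi(\tau)\in C^\infty\subset H^s$ for \emph{every} $\tau\in[0,1]$; the claim that $\phi(\cdot,\cdot,\tau)\notin H^s$ for $\tau\in[0,1)$ contradicts this, and the renaming $\tau\mapsto 1-\tau$ then produces a smooth solution on $[0,1]$, not one that leaves $H^s$. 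What time-reversal \emph{does} give is a solution on $[-1,0]$ that is smooth at $t=-1$, lies in $H^s$ at $t=0$, and admits no $H^s$-continuation past $t=0$ — a valid but different formulation of ill-posedness than the one stated. Since you only offered it as an alternative and the Fourier-mode route is correct, this does not undermine the argument, but the time-reversal paragraph as written should be dropped or corrected.
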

\begin{proof}
  Define
  \[\psi(x,z,t) = \sum_{n=1}^\infty a_n\text{Re}\psi_n(x,z,t)=\text{Re}\left( \chi(z)\sum_{n=1}^\infty a_ne^{2\pi inx}e^{n\beta_n t}\right),\quad a_n=\frac{1}{n^{s+1}}.\]
  From Proposition~\ref{prop:supercritical-ill}, we know $\psi$ is a solution of \eqref{pert-system}. 
  For $t=0$, we have 
  \[\|\psi_0\|_{H^s}^2\lesssim \|\chi\|_{H^s_z}^2\sum_{n=1}^\infty a_n^2 n^{2s}\lesssim \sum_{n=1}^\infty n^{-2}<\infty.\]
  On the other hand, for $t>0$, we have
  \[\|\psi(\cdot,z,t)\|_{H^s_x}^2 = |\chi(z)|^2\sum_{n=1}^\infty a_n^2 n^{2s} e^{2n\beta_nt}=\infty,\]
  for any $z\in[-1,1]$. Hence, $\psi(t)\not\in H^s$.
\end{proof}

Note that the linear instability can be extended to any Gevrey class $G^\sigma$ with order $\sigma>1$, by choosing $a_n=e^{-n^{1/\sigma}}$ in Theorem \ref{thm:illposedness}. To avoid the instability, one needs to consider analytical functions. 

\subsection{Nonlinear ill-posedness}\label{subsec:sup-non-ill}
The linear instability result can be extended to the nonlinear system \eqref{pert-system-nonlinear}. 
Differentiating \eqref{pert-system-nonlinear} with respect to $z$, we obtain an equation for $\psi$:
\begin{equation}\label{eq:psifully}
    \pa_t \pa_{zz}\psi+ U(z)\, \pa_x\pa_{zz}\psi - U''(z)\, \pa_x\psi + \nu_h \Lambda_h^\alpha \pa_{zz}\psi = -\pa_z\psi\, \pa_x\pa_{zz}\psi + \pa_x\psi \pa_{zzz}\psi.
\end{equation}
The equation lies in the abstract framework for nonlinear instability, introduced in \cite{han2016ill}, on $\omega:=\partial_{zz}\psi$, which takes the form
\begin{equation}\label{eq:omeganl}
  \pa_t\omega - \mathcal{L}\omega = \mathcal{Q}(\omega, \omega),
\end{equation}
where
\[
 \mathcal{L}\omega = -U\pa_{x}\omega + U'' \pa_x \psi - \nu_h \lh^\alpha \omega,\quad \mathcal{Q}(\omega,\omega) = -\pa_z\psi \pa_x \omega + \pa_x \psi \partial_z \omega.
\]
For any $n\in\Z_+$, let $\eps=\frac1n$ and apply the rescaling $(\s,y)=(\frac t\eps, \frac x\eps)$. Under this transformation, the rescaled vorticity $\omega(\s,y,z)$ satisfies
\begin{equation}\label{eq:omegasy}
 \pa_\s \omega - L\omega = Q(\omega,\omega)+ R_1\omega,
\end{equation}
where the linear term can be decomposed into a leading order term $L$ and a remainder $R_1$, given by
\[
L\omega:= - U \pa_y \omega  + U'' \pa_y \psi,\quad R_1\omega := -\eps^{1-\alpha}\nu_h\Lambda_y^\alpha\omega,
\]
and the nonlinear term is
\[
Q(\omega,\omega) := -\pa_z\psi \pa_y \omega + \pa_y \psi \partial_z \omega.
\]

From Proposition~\ref{prop:supercritical-ill}, we know that the leading order linearized equation 
\[\pa_\s\omega-L\omega=0, \quad \omega_0(y,z) = \chi''(z)e^{2\pi iy}\] admits a growing solution
\begin{equation}\label{eq:growingsol}
 \omega(y,z,\s) = e^{L\s}\omega_0(y,z) = \chi''(z)e^{2\pi i y} e^{2\pi\gamma\s},
\end{equation}
indicating linear instability. We will show that the right-hand side of \eqref{eq:omegasy} remains controlled, thereby ensuring that the instability persists in the nonlinear system, with the same initial condition $\omega_0$.

To proceed, we follow closely to \cite{han2016ill}. Since there is a loss of derivatives in the quadratic term $Q$, we consider $\W=(\omega,\pa_y\omega, \pa_z\omega)^\top$, which solves
\begin{equation}\label{eq:vecW}
  \partial_s \W - \mathrm{L}\W = \mathrm{Q}(\W,\W) +  \varepsilon\mathrm{R}_1 \W,
\end{equation}
where
\begin{equation}\label{def-L-E} 
 \mathrm{L}: = \begin{pmatrix} L &0 &0 \\ 0 &  L &0\\ 0& -U' + U'' \pa_z \psi(\cdot) + U''' \psi(\cdot) & - U \partial_y \end{pmatrix} , \quad 
 \mathrm{R}_1:= -\varepsilon^{-\alpha}\nu_h\Lambda_y^\alpha \mathbb{I}_{3},
\end{equation}
and for any two vector fields $\vec{V} = (v_1, v_2, v_3)^\top$ and $\W = (w_1, w_2, w_3)^\top$, the quadratic term
\[
\mathrm{Q}(\vec{V},\W): = \begin{pmatrix} - \partial_z \psi (v_1) w_2 + \psi(v_2) w_3 \\ - \partial_z \psi (v_2)  w_2 - \partial_z \psi (v_1) \partial_y w_2 + \partial_y \psi(v_2) w_3 + \psi(v_2) \partial_y w_3\\  -v_1 w_2 - \partial_z \psi (v_1) \partial_z w_2 + \partial_z \psi(v_2) w_3 + \psi(v_2) \partial_z w_3 \end{pmatrix}.
\]
Here by convention $\psi(f)$ solves $\partial_z^2 \psi(f) = f$ with $\psi_{\vert_{z=0,1}} = 0$.

Now, we directly apply the abstract framework in \cite{han2016ill} to \eqref{eq:vecW}, using the analytic function space $X_{\delta, \delta'}$, equipped with the following norm on $f=f(y,z)$: 
\begin{equation}\label{vort-norm0}
 \|f\|_{\delta, \delta'}: =\sum_{n\in \mathbb{Z}} \sum_{k\ge 0} \|\pa_z^k \hat f_n(\cdot) \|_{L^2_z} \frac{|\delta'|^{k }}{ k!} e^{\delta |n| } ,
\end{equation}
for any $\delta, \delta'>0$, where $\hat{f}_n(z)$ is defined in \eqref{eqn:FC}. The estimates on $\mathrm{L}$ and $\mathrm{Q}$ follow exactly as in \cite{han2016ill}, verifying their hypotheses {\bf (H.1)}--{\bf (H.4)}. The remaining term $\mathrm{R}_1$ verifies {\bf (H.5)} as follows:
\[
  \|\mathrm{R}_1 \W\|_{\delta,\delta'} \lesssim \varepsilon^{-\alpha} \sum_{n\in \mathbb{Z}} \sum_{k\ge 0} \|\partial_z^k \hat {\W}_n(\cdot) \|_{L^2_z} \frac{|\delta'|^{k }}{ k!} |n|^{\alpha} e^{\delta |n| } \lesssim \varepsilon^{-\alpha} \|\partial_y \W\|_{\delta,\delta'}.
\]

\begin{remark}
  In {\bf (H.5)} of \cite{han2016ill}, the coefficient of the bound has order $1$. However, one can repeat their proof to improve {\bf (H.5)} with coefficient having order $\varepsilon^{-\alpha}$ for any $\alpha\in(0,1)$. Indeed, $\varepsilon \mathrm{R}_1$ has order $\varepsilon^{1-\alpha}$, which is a lower order term when $\alpha<1$.
\end{remark}

We obtain the following ill-posedness result analogous to \cite[Theorem 2.1]{han2016ill}.

\begin{theorem}\label{thm:nonlinear-ill}
    Let $\alpha\in(0,1)$ and $s\geq0$. Denote $\{\omega_n\}_{n\in\Z_+}$ the solution of the equation \eqref{eq:omeganl} with initial condition $\omega_{n0}(x,z)=\chi''(z)e^{2\pi inx}$. Then, we have
\begin{equation}\label{eq:illnl}
\lim_{n \to \infty} \frac{ \| \omega_n \|_{L^2([0,t_n] \times \Omega)}}{\| \omega_{n0} \|_{H^s(\Omega)}}=+\infty,
\end{equation}
with $t_n=\mathcal{O}(n^{-1}\log n)$ which goes to 0 as $n\to\infty$.
\end{theorem}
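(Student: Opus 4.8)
The plan is to invoke the abstract nonlinear instability machinery of \cite{han2016ill} essentially as a black box, having already verified in the discussion above that the rescaled vectorial system \eqref{eq:vecW} satisfies the hypotheses {\bf (H.1)}--{\bf (H.5)} with the operator norms spelled out there. The starting point is the growing solution \eqref{eq:growingsol} of the leading-order linearized equation $\pa_\s\omega - L\omega = 0$, which grows like $e^{2\pi\gamma\s}$ in the analytic norm $\|\cdot\|_{\delta,\delta'}$. The abstract theorem then produces, for each small $\eps=\frac1n$, a genuine solution $\W^{(n)}$ of \eqref{eq:vecW} on a rescaled time interval $[0,\s_n]$ with $\s_n \sim -\log\eps = \log n$, such that $\W^{(n)}$ stays within a bounded multiple of the linear solution while its norm reaches size $\mathcal O(1)$ at time $\s_n$, even though the initial data $\omega_{n0}$ (after rescaling, $\chi''(z)e^{2\pi iy}$) has size $\mathcal O(\eps^{s}) = \mathcal O(n^{-s})$ in $H^s$ once we undo the rescaling. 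The only genuinely new input relative to \cite{han2016ill} is the dissipative remainder $\eps\mathrm R_1$, whose operator bound carries the factor $\eps^{1-\alpha}$; since $\alpha<1$ this is a positive power of $\eps$, hence vanishes in the relevant limit and does not disturb the fixed-point / Duhamel iteration underlying the abstract theorem. This is precisely the content of the remark preceding the statement.

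The key steps, in order, are: (1) rescale time and the horizontal variable by $\eps=\frac1n$ to pass from \eqref{eq:omeganl} to \eqref{eq:omegasy} and then to the vectorial form \eqref{eq:vecW}; (2) record that $L$ and $\mathrm Q$ satisfy {\bf (H.1)}--{\bf (H.4)} exactly as in \cite{han2016ill} (the shear flow $U$ and the functions $\chi,\chi',\chi''$ are analytic, so all the analytic-norm product and commutator estimates go through verbatim) and that $\eps\mathrm R_1$ satisfies {\bf (H.5)} with the improved coefficient $\eps^{1-\alpha}\nu_h$; (3) apply the abstract instability theorem of \cite{han2016ill} to obtain, for each large $n$, a solution $\W^{(n)}$ on $[0,\s_n]$ with $\s_n = \frac{1}{2\pi\gamma}\log(c_0/\eps) + \mathcal O(1)$ satisfying a lower bound $\|\W^{(n)}(\s_n)\|_{\delta,\delta'} \gtrsim \theta_0$ for a fixed threshold $\theta_0$ while $\|\W^{(n)}(\s)\|_{\delta,\delta'} \le C\eps^{?}e^{2\pi\gamma\s}$ for $\s\le\s_n$; (4) undo the rescaling: $t_n = \eps\s_n = \mathcal O(n^{-1}\log n)\to0$, the spatial $L^2$ norm in $(x,z)$ picks up a Jacobian factor from the $y\mapsto x$ change of variables, and the analytic norm dominates the needed $L^2$ norms, so that $\|\omega_n\|_{L^2([0,t_n]\times\Omega)} \gtrsim (\text{power of }\eps)$ while $\|\omega_{n0}\|_{H^s(\Omega)} \lesssim \eps^{s}$ (the extra $\eps^{s}$ coming from the $e^{2\pi inx}$ factor differentiated $s$ times); (5) compare the two powers of $\eps$ and check the quotient in \eqref{eq:illnl} diverges, using that $\s_n$ grows like $\log n$ so the exponential growth $e^{2\pi\gamma\s_n}$ beats any fixed polynomial power of $n$.

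I expect the main obstacle to be bookkeeping rather than conceptual: one must track the exact power of $\eps$ attached to the initial data in the abstract theorem (it is tuned so that the solution only reaches $\mathcal O(1)$ size at time $\s_n$, which is why $\s_n\sim\log n$ arises), and then confirm that after reverting the rescaling the ratio in \eqref{eq:illnl} is a fixed positive power of $\eps$ multiplied by the exponential $e^{2\pi\gamma\s_n}$, so that it indeed tends to $+\infty$. A secondary point requiring a line of care is the passage from the analytic norm $\|\cdot\|_{\delta,\delta'}$, in which the abstract theorem delivers its bounds, to the $L^2([0,t_n]\times\Omega)$ norm appearing in the statement: the lower bound direction needs that a single Fourier--vertical mode of $\W^{(n)}$ (namely the $e^{2\pi iy}$, $\chi''(z)$ component inherited from $\omega_0$) contributes a definite fraction of the analytic norm, which is guaranteed because the abstract construction keeps $\W^{(n)}$ close to its linear profile \eqref{eq:growingsol} up to time $\s_n$; integrating $|e^{2\pi\gamma\s}|^2$ over $\s\in[0,\s_n]$ then gives the claimed growth. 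Since $\alpha<1$ makes $\eps\mathrm R_1$ strictly lower order, no new smallness condition on $\nu_h$ is needed, and the argument is uniform in $\nu_h>0$.
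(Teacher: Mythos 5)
Your proposal correctly identifies that Theorem~\ref{thm:nonlinear-ill} follows by the same route the paper takes: rescale $(\s,y)=(t/\eps,x/\eps)$ with $\eps=1/n$, pass to the vectorial system \eqref{eq:vecW}, verify the hypotheses {\bf (H.1)}--{\bf (H.5)} of \cite{han2016ill}, and observe that the extra term $\eps\mathrm R_1$ carries a factor $\eps^{1-\alpha}\to0$ for $\alpha<1$, making it a lower-order perturbation that does not disturb the abstract fixed-point argument. The paper gives no further details beyond this setup --- the theorem is presented as an immediate corollary of \cite[Theorem 2.1]{han2016ill} once the hypotheses are checked, so your outline matches the paper's logic.

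One piece of your bookkeeping is backwards, though, and it is worth flagging because it touches on why the theorem is nontrivial. You assert $\|\omega_{n0}\|_{H^s(\Omega)}\lesssim\eps^s=n^{-s}$; in fact $\|\omega_{n0}\|_{H^s}\sim n^s=\eps^{-s}$, since each $x$-derivative of $e^{2\pi inx}$ brings down a factor of $n$. This growing denominator is exactly why the exponential growth $e^{n\beta t_n}$ of the numerator, supplied by the linear instability, is essential: the ratio in \eqref{eq:illnl} is (roughly) $n^{-1/2}e^{n\beta t_n}/n^s$, and diverging requires $n\beta t_n>(s+\tfrac12)\log n$. If the denominator really shrank like $\eps^s$, the conclusion would be trivial and you would not need the Kelvin--Helmholtz mechanism at all. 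Relatedly, the phrase ``$e^{2\pi\gamma\s_n}$ beats any fixed polynomial power of $n$'' is only accurate once the constant in $t_n=\mathcal O(n^{-1}\log n)$ is allowed to depend on $s$; for a fixed constant, $e^{2\pi\gamma\s_n}$ is itself one fixed power of $n$. Since $s$ is fixed in the theorem this causes no harm, but the $s$-dependence of the implicit constant in $t_n$ should be tracked explicitly if you were to write the argument out in full.
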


Note that \eqref{eq:illnl} indicates ultrafast growth of the solution $\omega_n$ for large frequency $n$. This behavior is also exhibited by the solution to the linearized system, specifically for $\pa_z^2 \psi_n$, where $\psi_n$ is explicitly given by \eqref{eq:psis}. Therefore, the nonlinear system inherits the same ill-posedness characteristics as the linearized one.

\section{Well-posedness for the subcritical case}\label{sec:sub}
In this section, we study the local and global well-posedness for system \eqref{FPE} with $\alpha\in(1,2)$.

\subsection{Hydrostatic vorticity and maximum principle}\label{sec:MP}
The hydrostatic vorticity $\omega=\pa_z u$ satisfies the drift-diffusion equaion:
\begin{equation}\label{eq:omega}
    \pa_t \omega + u\pa_x \omega + w \pa_z \omega + \nu_h \lh^\alpha \omega  = 0.
\end{equation}

Similar to the 2D NSE and 2D Euler equations, there is no vortex stretching term in \eqref{eq:omega}. Hence, we have the maximum principle on $\omega$.
\begin{proposition}[Maximum principle]\label{prop:MP}
	Suppose $\omega_0\in L^\infty$. Then
	\begin{equation}\label{eq:omegaMP}
	\|\omega(\cdot,t)\|_{L^\infty}\leq\|\omega_0\|_{L^\infty},\quad \forall~t\geq0.	
	\end{equation}
\end{proposition}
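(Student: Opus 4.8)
The plan is to prove the $L^\infty$ maximum principle for the hydrostatic vorticity $\omega = \pa_z u$ satisfying the drift-diffusion equation \eqref{eq:omega}. The key structural features I would exploit are: (i) the transport part $u\pa_x\omega + w\pa_z\omega$ is divergence-free, since $\pa_x u + \pa_z w = 0$ by \eqref{FPE-3}; and (ii) the fractional diffusion operator $\nu_h\lh^\alpha$ obeys a pointwise lower bound at extrema (the Córdoba--Córdoba / Constantin--Vicol type inequality): if $f$ attains its maximum over $\T$ in the $x$-variable at a point $x_0$, then $(\lh^\alpha f)(x_0)\geq 0$, and symmetrically $\leq 0$ at a minimum. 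Combining these, at a point where $\omega(\cdot,t)$ attains its spatial maximum, the transport terms vanish (the gradient in $x$ and $z$ is zero there, or one argues via the divergence-free structure) and the diffusion term has a favorable sign, forcing the maximum to be nonincreasing in time.

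The cleanest rigorous route I would take is the following. First, for any even integer $p\geq 2$, multiply \eqref{eq:omega} by $\omega^{p-1}$ and integrate over $\Omega$. The transport term $\int_\Omega (u\pa_x\omega + w\pa_z\omega)\,\omega^{p-1}\,\dxdz = \tfrac1p\int_\Omega (u\pa_x + w\pa_z)(\omega^p)\,\dxdz = -\tfrac1p\int_\Omega (\pa_x u + \pa_z w)\,\omega^p\,\dxdz = 0$, using integration by parts together with \eqref{FPE-3} and the boundary condition \eqref{eq:boundary} (which kills the boundary term from the $z$-integration since $w|_{z=0,1}=0$). For the dissipation term, I would invoke the generalized positivity inequality $\int_\Omega \omega^{p-1}\,\lh^\alpha\omega\,\dxdz \geq 0$, which holds because $\lh^\alpha$ acts only in $x$ and $\int_\T \varphi^{p-1}\Lambda^\alpha\varphi\,\dd x \geq 0$ for the one-dimensional fractional Laplacian (a consequence of the pointwise inequality $\varphi^{p-1}\Lambda^\alpha\varphi \geq \tfrac1p\Lambda^\alpha(\varphi^p)$ after integration, or directly from the singular-integral representation and convexity). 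Hence $\ddt \|\omega(\cdot,t)\|_{L^p}^p \leq 0$, so $\|\omega(\cdot,t)\|_{L^p}\leq \|\omega_0\|_{L^p}$ for every finite even $p$, and letting $p\to\infty$ gives \eqref{eq:omegaMP}.

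The main obstacle — or at least the point requiring care — is justifying these manipulations rigorously rather than formally: one needs enough regularity of $\omega$ so that the integrations by parts and the application of the fractional positivity inequality are legitimate. In practice this is handled by first proving the estimate for smooth approximate solutions (or for the local-in-time strong solutions whose existence is established in the local well-posedness part of this section, to which this proposition is an a priori estimate), and then passing to the limit; the bound $\|\omega(\cdot,t)\|_{L^p}\leq\|\omega_0\|_{L^p}$ is uniform in $p$ and stable under such limits. A secondary technical point is the precise form of the pointwise inequality for $\Lambda^\alpha$ on the torus $\T$; this is standard (see, e.g., the Córdoba--Córdoba inequality adapted to the periodic setting), and since $\lh^\alpha$ is diagonalized by the Fourier modes $e^{2\pi ikx}$ with symbol $|k|^\alpha \geq 0$, the needed inequality $\int_\T \varphi^{p-1}\Lambda^\alpha\varphi\,\dd x\geq 0$ follows. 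Everything else is routine, and the proof mirrors the classical $L^\infty$ maximum principle for 2D Euler/Navier--Stokes vorticity and for drift-diffusion equations with fractional dissipation.
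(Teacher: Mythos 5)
The paper itself does not supply a proof of Proposition~\ref{prop:MP}; it simply asserts the maximum principle as a consequence of the vorticity equation \eqref{eq:omega} having no stretching term, as in 2D Euler/NSE. Your proposal supplies exactly the standard argument that is being invoked implicitly: the $L^p$ energy estimate using (i) the divergence-free structure $\pa_x u + \pa_z w = 0$ together with $w|_{z=0,1}=0$ to kill the transport contribution, (ii) the C\'ordoba--C\'ordoba positivity inequality $\int_\T \varphi^{p-1}\Lambda_h^\alpha\varphi\,\dd x \geq 0$ for the one-dimensional fractional Laplacian acting in $x$, and (iii) the limit $p\to\infty$. This is correct and is the argument the paper has in mind; you are also right that, strictly speaking, the estimate should first be carried out on smooth approximate solutions (e.g.\ the $\varepsilon$-regularized system \eqref{FPE-reg} used in Theorem~\ref{thm:sub-1}) and then passed to the limit, which is uniform in $p$. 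One small refinement: for general (not necessarily even) $p$ one multiplies by $|\omega|^{p-2}\omega$ rather than $\omega^{p-1}$, and the C\'ordoba--C\'ordoba inequality applies with the convex function $\Phi(s)=|s|^p/p$; restricting to even $p$ as you do is harmless since that subsequence already suffices for the $p\to\infty$ limit.
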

\begin{remark}\label{rem:3D}
The maximum principle \eqref{eq:omegaMP} does not hold in 3D due to vorticity stretching. The absence of an a priori $L^\infty$ bound on $\omega$ significantly increases the complexity of the 3D system, making its analysis more challenging.
\end{remark}

Thanks to \eqref{eqn:zero-mean-z}, one can apply the Poincar\'e inequality and deduce the a priori bound:
\begin{equation}\label{eq:uLinfty}
 \|u(\cdot,t)\|_{L^\infty}\lesssim \|\omega(\cdot,t)\|_{L^\infty}\leq\|\omega_0\|_{L^\infty},\quad \forall~t\geq0.
\end{equation}
The a priori bounds \eqref{eq:omegaMP} and \eqref{eq:uLinfty} will be used throughout the rest of the analysis.

\subsection{Energy levels}
To establish local and global well-posedness theory, we perform a priori energy estimates to the FPE system \eqref{FPE}.

We highlight a key difficulty: given the ill-posedness result in Theorem \ref{thm:illposedness}, the only viable approach is to leverage the strong horizontal dissipation $\lh^\alpha u$ to control the nonlinear transport, particularly the vertical transport term $w\pa_z u$, which involves $z$-derivatives. Since no vertical dissipation is present, a crucial challenge is to identify an effective mechanism that allows horizontal dissipation to regulate vertical derivatives.

To address this challenge, we introduce a sequence of discrete \emph{energy levels} on which energy estimates will be conducted. Define
\[E_0:=\|u\|_{L^2}^2,\quad E_1:=\|\lh^{\frac{\alpha}{2}}u\|_{L^2}^2+\|\pa_zu\|_{L^2}^2,\quad E_2:=\|\lh^{\alpha}u\|_{L^2}^2+\|\lh^{\frac{\alpha}{2}}\pa_zu\|_{L^2}^2+\|\pa_z^2u\|_{L^2}^2,\]
and in general
\begin{equation}\label{eq:Ek}
    E_k := \sum_{j=0}^k \|\lh^{\frac\alpha2 j} \partial_z^{k-j} u\|_{L^2}^2,\quad k\in\N.
\end{equation}
Roughly speaking, controlling one $z$-derivative requires $\frac\alpha2$ $x$-derivatives. 
We also denote $\Et_{k+1}$ as the dissipation corresponding to $E_k$, namely
\begin{equation}\label{eq:Etk}
    \Et_{k+1} := \sum_{j=0}^k \|\lh^{\frac\alpha2 (j+1)} \partial_z^{k-j} u\|_{L^2}^2,\quad k\in\N.
\end{equation}
Note that we have the relation
\[E_k = \Et_k+\|\pa_z^k u\|_{L^2}^2.\]

We will establish a priori bound on each discrete energy level, which we denoted by
\begin{equation}\label{eq:Yk}
 Y_k(t) := E_k(t) + \nu_h\int_0^t \Et_{k+1}(\tau)\,\dd \tau,\quad k\in\N.
\end{equation}

\begin{remark}\label{rem:contenergy}
 Given $T>0$, we call any norms obeying $Y_k\in L^\infty(0,T)$ to be at the energy level $\frac{k\alpha}2$. For instance, the following norms belongs to the energy level $\frac\alpha2$:
 \[
  \|\lh^{\frac\alpha2} u\|_{L^\infty(0,T; L^2)},\quad \|\lh^\alpha u\|_{L^2(0,T; L^2)},\quad \|\pa_zu\|_{L^\infty(0,T; L^2)}, \quad\text{and}\quad \|\lh^{\frac\alpha2} \pa_zu\|_{L^2(0,T; L^2)}.
 \]
 The discrete energy levels can be extended to a continuous range of energy levels. For instance, the following norms belong to the energy level $s$:
  \[\|\lh^s u\|_{L^\infty(0,T; L^2)},\quad \|\lh^{s+\frac\alpha2} u\|_{L^2(0,T; L^2)},\quad \|\lh^{s-\frac\alpha2} \pa_zu\|_{L^\infty(0,T; L^2)}, \quad\|\lh^s \pa_zu\|_{L^2(0,T; L^2)},\quad\ldots\]
\end{remark}

\subsection{Energy estimate on \texorpdfstring{$E_0$}{E0}}
The a priori estimate on $E_0$ can be obtained by directly taking the $L^2$ inner product of \eqref{FPE-1} with $u$:
\begin{align*}
   \ddt\|u\|_{L^2}^2 & = -\int_\Omega \Big(u\pa_x(u^2) + w\pa_z(u^2) - 2p\pa_xu  + 2\nu_h u\lh^\alpha u\Big)\dxdz\\
   & = \int_\Omega \Big((\pa_xu+\pa_zw)u^2+2w\pa_zp - 2\nu_h(\lh^{\frac{\alpha}2}u)^2\Big)\dxdz = -2\nu_h\|\lh^{\frac{\alpha}2}u\|_{L^2}^2,
\end{align*}
where we have used the relations \eqref{FPE-2}, \eqref{FPE-3} and the boundary condition \eqref{eq:boundary}. This implies 
\begin{align}\label{est:bound-u-1}
    \|u(t)\|_{L^2}^2 + 2\nu_h\int_0^t \|\lh^{\frac\alpha2} u(s)\|_{L^2}^2 \dd s = \|u_0\|_{L^2}^2,
\end{align}
for any $t\geq 0$. Therefore, for any $T>0$,
\begin{equation}\label{bound:u-1}
    u\in L^\infty(0, T; \mathrm{H})\cap L^2(0, T; \D(\lh^{\frac\alpha2})).
\end{equation}
We have the a priori bounds at the energy level $0$:
\begin{equation}\label{eq:Energy0}
 E_0\in L^\infty(0,T),\quad  \Et_1\in L^1(0,T),	 \quad \text{and therefore}\quad Y_0 \in L^\infty(0,T).
\end{equation}

\subsection{Energy estimate on \texorpdfstring{$E_1$}{E1} and the strong solution}
Next, we perform an a priori energy estimate at the level of $Y_1$. The estimate of $E_1$ consists of two components, which we estimate separately.

For $\|\lh^{\frac\alpha2}u\|_{L^2}^2$, by taking the $L^2$ inner product of equation \eqref{FPE-1} with $\lh^{\alpha} u$, we obtain
\begin{align*}
    \frac12 \ddt \|\lh^{\frac\alpha2} u\|_{L^2}^2 + \nu_h \|\lh^{\alpha} u\|_{L^2}^2 = -\int_\Omega u\partial_x u \lh^{\alpha} u\dxdz -\int_\Omega w\omega \lh^{\alpha} u\dxdz =: \I_{11}+\I_{12}.
\end{align*}
For $\I_{11}$, we apply H\"older's inequality and obtain
\[\I_{11} \leq \|u\|_{L^\infty} \|\lh u\|_{L^2} \|\lh^{\alpha} u\|_{L^2}\lesssim \|\lh u\|_{L^2} \|\lh^{\alpha} u\|_{L^2},\]
where we absorb $\|u\|_{L^\infty}$ to a constant thanks to the a priori bound \eqref{eq:uLinfty}.
For $\I_{12}$, we have
\[
    \I_{12} \leq  \|w\|_{L^2}\|\omega\|_{L^\infty}\|\lh^{\alpha}u\|_{L^2}\lesssim \|\lh u\|_{L^2}\|\lh^\alpha u\|_{L^2},
\]    
where we absorb $\|\omega\|_{L^\infty}$ to a constant thanks to the a priori bound \eqref{eq:omegaMP}, and we use the Poincar\'e inequality to obtain
\[\|w\|_{L^2}^2=\int_\T\|w(x)\|_{L^2_z}^2\,\dd x\lesssim \int_\T\|\pa_zw(x)\|_{L^2_z}^2\,\dd x = \int_\T\|\pa_xu(x)\|_{L^2_z}^2\,\dd x = \|\lh u\|_{L^2}^2.\]

In the case when $\alpha\in(1,2)$, by the Gagliardo-Nirenberg interpolation inequality, we have
\[
   \|\lh u\|_{L^2} \lesssim \|\lh^{\frac\alpha2} u\|_{L^2}^{\frac{2\alpha-2}{\alpha}} \|\lh^\alpha u\|_{L^2}^{\frac2\alpha-1}.
\]
Applying Young's inequality, we obtain
\[\I_{11}+\I_{12}\leq C \|\lh^{\frac\alpha2} u\|_{L^2}^{\frac{2\alpha-2}{\alpha}}\|\lh^\alpha u\|_{L^2}^{\frac2\alpha}\leq \frac{\nu_h}{2}\|\lh^\alpha u\|_{L^2}^2 + C\|\lh^{\frac\alpha2} u\|_{L^2}^2,\]
where the first term can be absorbed by the dissipation, and the second term is integrable in time thanks to \eqref{est:bound-u-1}.
Indeed, by the Gr\"onwall inequality, we infer that for any $t\geq 0$
\begin{equation}\label{est:u-4}
    \|\lh^{\frac\alpha2} u(t)\|_{L^2}^2 + \nu_h \int_0^t \|\lh^{\alpha} u(s)\|_{L^2}^2 \,\dd s \leq \|\lh^{\frac\alpha2} u_0\|_{L^2}^2+C\int_0^t \|\lh^{\frac\alpha2} u(s)\|_{L^2}^2 \dd s \leq C,
\end{equation}
where the constant $C$ depends on $\|u_0\|_{L^2}$, $\|\lh^{\frac\alpha2} u_0\|_{L^2}$, $\|\omega_0\|_{L^\infty}$, and the parameter $\alpha$.
Hence, for any $T>0$,
\begin{equation}\label{bound:u-4}
     u\in L^\infty(0, T; \D(\lh^{\frac\alpha2}))\cap L^2(0, T; \D(\lh^{\alpha})).
\end{equation}

For $\|\omega\|_{L^2}^2$, by taking the $L^2$ inner product of equation \eqref{eq:omega} with $\omega$, we obtain
\[
 \ddt\|\omega\|_{L^2}^2=\int_\Omega \Big((\pa_xu+\pa_zw)\omega^2-2\nu_h(\lh^{\frac\alpha2}\omega)^2\Big)\dxdz = -2\nu_h\|\lh^{\frac\alpha2}\omega\|_{L^2}^2.
\]
which leads to the identity
\begin{equation}\label{est:omega-1}
    \|\omega(t)\|_{L^2}^2 + 2\nu_h\int_0^t \|\lh^{\frac\alpha2} \omega(s)\|_{L^2}^2 \,\dd s = \|\omega_0\|_{L^2}^2.
\end{equation}
Together with the maximum principle \eqref{eq:omegaMP}, for any $T>0$ we have
\begin{equation}\label{bound:omega-1}
    \omega\in L^\infty(0, T; L^\infty)\cap L^2(0, T; \D(\lh^{\frac\alpha2})).
\end{equation}

Combining \eqref{est:u-4} and \eqref{est:omega-1}, we conclude with the a priori  bounds at the energy level $\frac\alpha2$:
\begin{equation}\label{eq:Energy1}
 E_1\in L^\infty(0,T),\quad  \Et_2\in L^1(0,T), \quad\text{and therefore}\quad Y_1\in L^\infty(0,T).
\end{equation}

Solutions satisfying \eqref{eq:Energy1} are often referred to as \emph{strong solutions}, as such regularities can guarantee \eqref{FPE-1} holds almost everywhere. 
We state the definition below.
\begin{definition}[Strong solution]\label{def:strong}
    Let $u_0\in \mathcal D(\lh^{\frac\alpha2})\cap\H$, $\omega_0 = \partial_z u_0 \in L^\infty$, and $T>0$. We say $u$ is a \emph{strong solution} of \eqref{FPE} on $[0,T]$ with initial condition $u(0)=u_0$ if \eqref{FPE-1} holds a.e. in $\Omega\times [0,T]$. Moreover,
    \begin{align*}
        &u\in C([0,T];\H)\cap L^\infty(0, T; \D(\lh^{\frac\alpha2})\cap L^\infty)\cap L^2(0, T; \D(\lh^\alpha)),
        \\
        &\omega=\partial_z u\in L^\infty(0, T; L^\infty)\cap L^2(0, T; \D(\lh^{\frac\alpha2})).
    \end{align*}
\end{definition}

The a priori energy bounds in \eqref{eq:Energy1} allow us to construct global strong solutions via a viscous approximation. We now state the existence theorem and provide its proof.
\begin{theorem}[Global existence of strong solutions]\label{thm:sub-1}
    Let $u_0\in \mathcal D(\lh^{\frac\alpha2})\cap \H$ and 
    $\omega_0 \in L^\infty$. For any time $T>0$, there exists at least one strong solution to \eqref{FPE} with $u(0)=u_0$ on $[0,T]$. 
\end{theorem}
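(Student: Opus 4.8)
The plan is to construct a global strong solution via a viscous (vertical) regularization, following the classical strategy for the primitive equations. First I would introduce the approximating system
\[
\pa_t u^\eps + u^\eps\pa_x u^\eps + w^\eps\pa_z u^\eps + \pa_x p^\eps + \nu_h\lh^\alpha u^\eps - \eps\pa_z^2 u^\eps = 0,\qquad w^\eps = -\int_0^z\pa_x u^\eps,
\]
with the same initial data $u_0$ (mollified if necessary to obtain smoothness) and boundary condition $\pa_z u^\eps|_{z=0,1}=0$, so that $w^\eps|_{z=0,1}=0$ is preserved. For fixed $\eps>0$ this is a parabolic system with full dissipation; by a standard Galerkin scheme (projecting onto finitely many Fourier modes in $x$ and eigenfunctions of $-\pa_z^2$ in $z$) together with the energy bounds below, one obtains a unique global smooth solution $u^\eps$. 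The key point is that all the a priori estimates derived in the preceding subsections --- the $L^\infty$ maximum principle for $\omega^\eps=\pa_z u^\eps$ (the extra term $-\eps\pa_z^2\omega^\eps$ only helps in the maximum principle argument), the $E_0$ identity \eqref{est:bound-u-1}, the $E_1$ estimate \eqref{est:u-4}, and the $\omega$-identity \eqref{est:omega-1} --- hold \emph{uniformly in $\eps$}, since the $\eps\pa_z^2$ term always contributes a nonnegative dissipation $2\eps\|\pa_z(\cdot)\|_{L^2}^2$ after integration by parts and never appears with a bad sign. Thus $\{u^\eps\}$ is bounded, uniformly in $\eps$, in the spaces appearing in Definition~\ref{def:strong}, i.e. in $L^\infty(0,T;\D(\lh^{\alpha/2})\cap L^\infty)\cap L^2(0,T;\D(\lh^\alpha))$ with $\pa_z u^\eps$ bounded in $L^\infty(0,T;L^\infty)\cap L^2(0,T;\D(\lh^{\alpha/2}))$.

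Next I would extract a convergent subsequence. From \eqref{FPE-1} and the uniform bounds, $\pa_t u^\eps$ is bounded in $L^2(0,T;H^{-s})$ for some $s>0$ (the worst term is the vertical transport $w^\eps\pa_z u^\eps$, controlled in $L^2$ in space-time by $\|w^\eps\|_{L^\infty_t L^2_x L^\infty_z}\|\pa_z u^\eps\|_{L^2_t L^\infty_z L^2_x}$ type estimates, using the Poincar\'e inequality \eqref{eq:Poincarew} and the $\omega$ maximum principle; alternatively put the derivative on $w^\eps$ via incompressibility), so the Aubin--Lions--Simon lemma gives strong convergence $u^\eps\to u$ in, say, $L^2(0,T;L^2)$ (and by interpolation with the uniform $\D(\lh^\alpha)$ bound, strong convergence in $L^2(0,T;\D(\lh^\beta))$ for any $\beta<\alpha$, which is more than enough to pass to the limit in the quadratic terms $u^\eps\pa_x u^\eps$ and $w^\eps\pa_z u^\eps$). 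The linear dissipative terms pass to the limit by weak convergence, and $\eps\pa_z^2 u^\eps\to 0$ in the sense of distributions since $\eps\|\pa_z^2 u^\eps\|$ stays under control from the Galerkin energy identity (or simply because $\eps\pa_z u^\eps\to 0$ in $L^\infty_t L^2$ times $\eps^{1/2}$ factors). Weak-$*$ lower semicontinuity of norms then shows the limit $u$ satisfies the regularity in Definition~\ref{def:strong}, and the continuity in time $u\in C([0,T];\H)$ follows from $u\in L^\infty(0,T;\D(\lh^{\alpha/2}))$ together with $\pa_t u\in L^2(0,T;H^{-s})$ by a standard interpolation/Lions--Magenes argument; that $u(t)\in\H$ for all $t$ is Lemma~\ref{lem:H}.

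The main obstacle, and the step deserving the most care, is passing to the limit in the vertical transport term $w^\eps\pa_z u^\eps$: this is precisely the term carrying the loss of one horizontal derivative, so one cannot rely on weak convergence alone and must use the strong convergence of $u^\eps$ (hence of $w^\eps = -\int_0^z\pa_x u^\eps$ in an appropriate topology) against the weak convergence of $\pa_z u^\eps$, being careful that the product is identified correctly. A clean way is to test the equation against a smooth divergence-free test function and integrate by parts to move derivatives off the weakly converging factor; the uniform bound $\pa_z u^\eps \in L^\infty_t L^\infty_{x,z}$ from the maximum principle is what makes this work. I would also note that uniqueness is \emph{not} claimed in this theorem (the statement says ``at least one''), so I need only produce one limit solution; uniqueness, if desired, would be addressed separately and would again hinge on the $L^\infty$ control of $\omega$ and the subcritical dissipation $\alpha>1$. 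Finally, since all estimates are global in time (no smallness, no finite-time breakdown, thanks to the maximum principle replacing the missing vertical dissipation), the solution exists on $[0,T]$ for every $T>0$.
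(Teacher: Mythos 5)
Your overall strategy is the same as the paper's: regularize, derive $\varepsilon$-uniform bounds from the maximum principle and the $E_0$, $E_1$ estimates, extract a convergent subsequence via Aubin--Lions, and pass to the limit in the nonlinear terms using the strong convergence of $u^\varepsilon$ against weak convergence of $\partial_z u^\varepsilon$. The limit-passing idea for $w^\varepsilon\partial_z u^\varepsilon$ (integrate by parts onto the test function, use the $L^\infty$ bound on $\omega^\varepsilon$) matches the paper's argument, and your $\partial_t u^\varepsilon$ bound and compactness step are functionally equivalent to the paper's $L^2(0,T;H^{-1})$ estimate.

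The one place where your proposal genuinely departs from the paper, and where there is a gap, is the construction of the approximating solutions $u^\varepsilon$. You regularize by $-\varepsilon\partial_z^2$ only, so the $\varepsilon$-system carries \emph{fractional} horizontal dissipation $\nu_h\lh^\alpha$ (order $\alpha\in(1,2)$) together with order-$2$ vertical dissipation; you then assert that ``a standard Galerkin scheme \dots gives a unique global smooth solution.'' That is not a citable result, and the assertion conceals a real difficulty: the a priori bound you need to prevent finite-time breakdown of $u^\varepsilon$ is the $L^\infty$ maximum principle for $\omega^\varepsilon$, and the maximum principle is \emph{not} preserved under Galerkin truncation. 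One would have to set up a nested approximation (local Galerkin existence, then a blow-up criterion for the $\varepsilon$-system, then close using the max principle on the genuine $\varepsilon$-solution), which is a nontrivial subsidiary well-posedness theorem for an anisotropic, fractional PE system that the literature does not supply. The paper sidesteps this entirely by regularizing with the full Laplacian $-\varepsilon\Delta$: the approximating system then has full viscosity in both variables, plus the extra term $\nu_h\lh^\alpha u^\varepsilon$ which contributes a benign positive dissipation, so global existence and uniqueness for each fixed $\varepsilon$ follow directly from \cite{cao2007global}. If you switch your regularization to $-\varepsilon\Delta$, the rest of your proof goes through as written and is essentially identical to the paper's; if you insist on $-\varepsilon\partial_z^2$, you owe a proof of global well-posedness of the regularized system before the $\varepsilon\to0$ argument can start.

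One smaller note: your closing remark that uniqueness ``would again hinge on the $L^\infty$ control of $\omega$ and the subcritical dissipation $\alpha>1$'' slightly overstates matters. As Theorem~\ref{thm:uniqueness} shows, uniqueness of strong solutions is only established unconditionally for $\alpha\geq\frac32$ (later improved to $\alpha\geq\frac65$); for $\alpha\in(1,\frac65)$ it requires the extra regularity \eqref{eq:uniqueness}.
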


\begin{proof}
For each $\varepsilon\in (0,1)$, we consider the following regularization scheme
\begin{subequations}\label{FPE-reg}
    \begin{align}
        &\partial_t u^{\varepsilon} + u^{\varepsilon} \partial_x u^{\varepsilon} + w^{\varepsilon} \partial_z u^{\varepsilon} + \partial_x p^{\varepsilon} + \nu_h \Lambda_h^{\alpha} u^{\varepsilon} - \varepsilon \Delta u^{\varepsilon} = 0, \label{FPE-1-reg}
        \\
        &\partial_z p^{\varepsilon} = 0, \label{FPE-2-reg}
        \\
        & \partial_x u^{\varepsilon} + \partial_z w^{\varepsilon} = 0, \label{FPE-3-reg}
    \end{align}
\end{subequations}
with initial condition $u^{\varepsilon}(0) = \mathcal J_{\varepsilon} u_0$ and boundary condition $(w^\varepsilon, \partial_z u^\varepsilon)|_{z=\{0,1\}}=0$, where $\mathcal J_{\varepsilon}$ is the standard mollifying operator on $\Omega$. As the initial condition is smooth and the regularized system has full viscosity, following \cite{cao2007global} one has the global existence and uniqueness of strong solutions $u^\varepsilon$ for each $\varepsilon\in(0,1)$. By virtue of the \textit{a priori} estimates \eqref{bound:u-4} and \eqref{bound:omega-1}, we have the following uniform bounds: 
\begin{equation}\label{bound:u-reg}
     u^\varepsilon \text{ are uniformly bounded in }  L^\infty(0, T; L^\infty\cap \H\cap \mathcal D(\lh^{\frac\alpha2}))\cap L^2(0, T; \mathcal D(\lh^{\alpha})),
\end{equation}
\begin{equation}\label{bound:omega-reg}
     \omega^\varepsilon= \partial_z u^\varepsilon \text{ are uniformly bounded in } L^\infty(0, T; L^\infty)\cap L^2(0, T; \mathcal D(\Lambda_h^{\frac\alpha2})).
\end{equation}
By the Banach-Alaoglu theorem, we have
\begin{equation}\label{conv-u-weak}
    u^\varepsilon \rightharpoonup u \text{ weakly in } L^2(0,T; H^1\cap \H)
\end{equation}
with $u$ and $\omega=\partial_z u$ satisfying the same regularity as $u^\varepsilon$ and $\omega^\varepsilon$. 

In order to apply Aubin-Lions compactness lemma, we should also derive a uniform bound for $\partial_t u^\varepsilon$. Taking the inner product of \eqref{FPE-1-reg} with a test function $\phi \in H^1 \cap \H$, we have 
\begin{align*}
    |(\partial_t u^\varepsilon, \phi)| \leq &|(u^\varepsilon \partial_x u^\varepsilon, \phi)| + |(w^\varepsilon \partial_z u^\varepsilon, \phi)| + \nu_h |(\lh^\alpha u^\varepsilon, \phi)| + \varepsilon |(\lh u^\varepsilon, \Lambda_h \phi)| + \varepsilon |(\omega^\varepsilon,  \pa_z \phi)|
    \\
    \leq &C(\|u^\varepsilon\|_{L^\infty}^2 + \|\lh u^\varepsilon\|_{L^2} \|\omega^\varepsilon\|_{L^\infty} + \|\lh u^\varepsilon\|_{L^2} +  \|\omega^\varepsilon\|_{L^\infty}) \|\phi\|_{H^1}.
\end{align*}
Thanks to \eqref{bound:u-reg} and \eqref{bound:omega-reg}, we deduce from above that 
\begin{equation}\label{bound:ut-reg}
     \partial_t u^\varepsilon \text{ are uniformly bounded in }  L^2(0, T; H^{-1}),
\end{equation}
where $H^{-1}$ denotes the dual space of $H^1\cap \H$. Notice that \eqref{bound:u-reg} and \eqref{bound:omega-reg} implies that 
\begin{equation*}
     u^\varepsilon \text{ are uniformly bounded in }  L^\infty(0, T; \D(\lh^{\frac\alpha2})\cap L_x^2H_z^1\cap \H).
\end{equation*}
Since $\D(\lh^{\frac\alpha2})\cap L_x^2H_z^1\cap \H \hookrightarrow \H$ and $\H\hookrightarrow H^{-1}$ are compact, by invoking the Aubin-Lions compactness lemma we obtain the strong convergence
\begin{equation}\label{conv-u-strong}
    u^\varepsilon \to u \text{ strongly in } C([0,T],\H).
\end{equation}
The limit function $u$ along with its hydrostatic vorticity $\omega=\partial_z u$ satisfy the required regularities. 

Next we show that $u$ is indeed a strong solution. Consider any test function $\phi\in L^\infty(0,T; H^2\cap \H)$. From \eqref{FPE-1-reg} we have
\begin{align*}
    (\partial_t u^{\varepsilon} + u^{\varepsilon} \partial_x u^{\varepsilon} + w^{\varepsilon} \partial_z u^{\varepsilon}  + \nu_h \Lambda_h^{\alpha} u^{\varepsilon} - \varepsilon \Delta u^{\varepsilon}, \phi)=0,
\end{align*}
where the $L^2$ inner product is taken in both spatial and temporal variables. Thanks to \eqref{bound:ut-reg} and Banach Alaoglu theorem, we know that $\partial_t u^\varepsilon \rightharpoonup \partial_t u$ in $L^2(0,T; H^{-1})$. Therefore, $(\partial_t u^\varepsilon ,\phi) \to (\partial_t u ,\phi)$. Due to \eqref{bound:u-reg}, \eqref{bound:omega-reg}, and \eqref{conv-u-weak}, one has $(\nu_h \Lambda_h^{\alpha} u^{\varepsilon}, \phi) \to (\nu_h \Lambda_h^{\alpha} u, \phi)$ and $(\varepsilon \Delta u^\varepsilon, \phi) \to 0$. Thanks to \eqref{conv-u-strong}, we have
\begin{align*}
    |(u^\varepsilon \partial_x u^\varepsilon - u\partial_x u,\phi)| &= \frac12 |(u^\varepsilon)^2 - u^2 , \phi_x)| 
    \\
    &\leq C\|u^\varepsilon-u\|_{L^2(0,T;L^2)}  (\|u^\varepsilon\|_{L^\infty(0,T;L^\infty)} + \|u\|_{L^\infty(0,T;L^\infty)}) \|\phi\|_{L^2(0,T;H^1)} \to 0.
\end{align*}
For the nonlinear term $w^\varepsilon\partial_z u^\varepsilon$, denote $w=-\int_0^z u(x,\tilde{z})\,\dd\tilde{z}$. We have
\begin{align*}
    |(w^\varepsilon \partial_z u^\varepsilon - w\partial_z u,\phi)| \leq &|((w^\varepsilon-w)\partial_z u^\varepsilon,\phi)| + |w(\partial_z u^\varepsilon - \partial_z u),\phi)| := A_1 + A_2.
\end{align*}
By integration by parts and anisotropic estimates, one has
\begin{align*}
    A_1 \leq & \,|((\partial_x u^\varepsilon - \partial_x u) u^\varepsilon,\phi)|  + |((w^\varepsilon-w) u^\varepsilon,\partial_z\phi)|
    \\
    \leq &\, |((u^\varepsilon - u) \partial_x u^\varepsilon,\phi)| + |((u^\varepsilon - u) u^\varepsilon, \partial_x\phi)| 
    \\
    &+ |(\int_0^z (u^\varepsilon-u)\,\dd\tilde{z}\, \partial_x u^\varepsilon,\partial_z\phi)| + |(\int_0^z (u^\varepsilon-u)\,\dd\tilde{z}\, u^\varepsilon, \partial_{xz} \phi)|
    \\
    \leq & C \|u^\varepsilon-u\|_{L^2(0,T;L^2)} \|u^\varepsilon\|_{L^2(0,T;H^1)} \|\phi\|_{L^\infty(0,T; H^2)} \to 0.
\end{align*} 
For $A_2$, notice that we have $w\phi\in L^2(0,T;L^2)$. As $\pa_z u^\varepsilon \rightharpoonup \pa_z u$ weakly in $L^2(0,T; L^2)$, we conclude that $A_2\to 0$.
Therefore, we obtain
\begin{align}\label{equality:exist}
    (\partial_t u + u \partial_x u + w \partial_z u  + \nu_h \Lambda_h^{\alpha} u, \phi)=0
\end{align}
for $\phi\in L^\infty(0,T;H^2\cap \H)$. By virtue of the regularity of $u$, one can follow similar as in the estimate of $\partial_t u^\varepsilon$ to show that $\partial_t u \in L^2(0,T; \H)$ (note that we do not have the higher order term $\varepsilon \Delta u$). By a density argument, \eqref{equality:exist} holds for $\phi\in L^2(0,T;\H)$, and thus \eqref{FPE-1} holds a.e. in $\Omega\times [0,T]$. This finishes the proof.
\end{proof}

The estimates at the energy level $\frac\alpha2$ may not be sufficient to ensure the uniqueness of strong solutions for all $\alpha\in(1,2)$. The following theorem shows that uniqueness is ensured by the a priori control \eqref{eq:uniqueness}, which corresponds to the energy level ${\frac{3-\alpha}2}$ (see Remark \ref{rem:contenergy}). Therefore, uniqueness holds when $\alpha\geq\frac32$, since in this case ${\frac{3-\alpha}2}\leq\frac\alpha2$, which is a priori bounded. For $\alpha<\frac32$, however, an additional a priori bound \eqref{eq:uniqueness} at the energy level $\frac{3-\alpha}2$ is needed.

\begin{theorem}[Uniqueness of strong solutions]\label{thm:uniqueness}
Under the same assumptions in Theorem \ref{thm:sub-1}, we have:
\begin{itemize}
  \item For $\alpha\in[\frac32,2)$, the strong solution is unique. 
  \item For $\alpha\in(1,\frac32)$, there exists at most one strong solution provided that
  \begin{equation}\label{eq:uniqueness}
	\omega=\partial_z u\in L^\infty(0, T; \D(\lh^{\frac32-\alpha}))\cap L^2(0, T; \D(\lh^{\frac{3-\alpha}2})).  	
  \end{equation}
\end{itemize}
 
\end{theorem}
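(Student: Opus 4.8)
The plan is to argue by contradiction: suppose $u_1$ and $u_2$ are two strong solutions on $[0,T]$ with the same initial data $u_0$ (and, in the case $\alpha<\frac32$, satisfying the additional bound \eqref{eq:uniqueness}), set $U := u_1 - u_2$, $W := w_1 - w_2 = -\int_0^z \pa_x U\,\dd\tilde z$, and $P := p_1 - p_2$, and show $U \equiv 0$ via a Grönwall argument. The difference solves
\begin{equation*}
  \pa_t U + u_1\pa_x U + U\pa_x u_2 + w_1\pa_z U + W\pa_z u_2 + \pa_x P + \nu_h\lh^\alpha U = 0 .
\end{equation*}
The natural quantity to control is $E_0(U) = \|U\|_{L^2}^2$, since the extra regularity of strong solutions is modest; taking the $L^2$ inner product with $U$ and using \eqref{FPE-2}, \eqref{FPE-3}, \eqref{eq:boundary} kills the pressure term and the $u_1\pa_x U$ and $w_1\pa_z U$ transport terms (the latter after integrating by parts, exactly as in the $E_0$ estimate), leaving
\begin{equation*}
  \tfrac12\ddt\|U\|_{L^2}^2 + \nu_h\|\lh^{\frac\alpha2}U\|_{L^2}^2 = -\int_\Omega U^2\,\pa_x u_2\dxdz - \int_\Omega W\,\pa_z u_2\,U\dxdz =: \I + \II .
\end{equation*}

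The term $\I$ is handled by H\"older with $\pa_x u_2$ in $L^\infty_x L^2_z$ or by interpolation: one bounds $\|U\|_{L^4_x}^2\|\pa_x u_2\|_{L^2_x}$ and uses the $1$D Gagliardo--Nirenberg inequality $\|U\|_{L^4_x}^2 \lesssim \|U\|_{L^2_x}\|\lh^{1/2}U\|_{L^2_x} \lesssim \|U\|_{L^2_x}^{2-\frac1\alpha}\|\lh^{\alpha/2}U\|_{L^2_x}^{\frac1\alpha}$ (valid since $\alpha>1$ so $\frac12<\frac\alpha2$), then Young's inequality to absorb a small multiple of $\nu_h\|\lh^{\alpha/2}U\|_{L^2}^2$ into the dissipation, with the remaining factor $\|\pa_x u_2\|_{L^2_x L^2_z}^{\frac{2\alpha}{2\alpha-1}}$ (or a power thereof) integrable in time by \eqref{bound:u-4}. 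The genuinely delicate term is $\II$: here $\pa_z u_2 = \omega_2$ is only bounded in $L^\infty_t L^\infty$ and $L^2_t \D(\lh^{\alpha/2})$, while $W$ costs one horizontal derivative of $U$. Write $\II \le \|W\|_{L^?}\|\omega_2\|_{L^?}\|U\|_{L^?}$; the crude bound $\|W\|_{L^2}\|\omega_2\|_{L^\infty}\|U\|_{L^2} \lesssim \|\lh U\|_{L^2}\|U\|_{L^2}$ is useless because $\|\lh U\|_{L^2}$ sits above the available dissipation $\|\lh^{\alpha/2}U\|_{L^2}$ when $\alpha<2$. The fix is to move horizontal derivatives onto $\omega_2$ instead: integrate by parts in $x$, or use the anisotropic bound $\|W\|_{L^\infty_x L^2_z}\lesssim\|\pa_x U\|_{L^1_x L^2_z}\lesssim\|U\|_{L^2_x L^2_z}^{1-\theta}\|\lh^{\alpha/2}U\|_{L^2_x L^2_z}^{\theta}$ paired with $\|\omega_2 U\|$ estimates — but the honest route, and the reason for the hypothesis \eqref{eq:uniqueness}, is to distribute derivatives so that $\omega_2$ is measured in $\D(\lh^{(3-\alpha)/2})$: schematically $\II \lesssim \|U\|_{L^2_x}^{a}\|\lh^{\alpha/2}U\|_{L^2_x}^{b}\,\|\lh^{(3-\alpha)/2}\omega_2\|_{L^2_x}^{c}\|\omega_2\|_{L^\infty}^{d}$ with $b<2$, so Young's inequality absorbs $\frac{\nu_h}{4}\|\lh^{\alpha/2}U\|_{L^2}^2$ and leaves a prefactor $\|\lh^{(3-\alpha)/2}\omega_2\|_{L^2_x}^{2}$ (times a bounded power of $\|\omega_2\|_{L^\infty}$ and $\|U\|_{L^2}^2$) which is integrable in $t$ precisely by \eqref{eq:uniqueness}. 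A short computation fixing the H\"older exponents and checking the scaling $\frac\alpha2 + \frac{3-\alpha}{2} = \frac32 > 1$ (so the derivative budget closes with room to spare for $\alpha>1$) pins down the admissible interpolation.

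Collecting the two estimates yields a differential inequality of the form
\begin{equation*}
  \ddt\|U(t)\|_{L^2}^2 \le G(t)\,\|U(t)\|_{L^2}^2, \qquad G \in L^1(0,T),
\end{equation*}
where $G$ depends only on the a priori norms of $u_1,u_2$: $\|\pa_x u_2\|_{L^2_x}$ and $\|\lh^{(3-\alpha)/2}\omega_2\|_{L^2}$ to suitable powers, plus $\|\omega_2\|_{L^\infty}$. Since $\|U(0)\|_{L^2}=0$, Grönwall gives $U\equiv 0$ on $[0,T]$, hence $w_1=w_2$ and $\pa_x P_1 = \pa_x P_2$, proving uniqueness. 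For $\alpha\ge\frac32$ one has $\frac{3-\alpha}{2}\le\frac\alpha2$, so \eqref{eq:uniqueness} is automatic from \eqref{est:u-4}, \eqref{est:omega-1} applied to $\omega_2$ (using $\D(\lh^{\alpha/2})\hookrightarrow\D(\lh^{(3-\alpha)/2})$ on the torus for the zero-mean-free frequencies, or directly the energy bounds on $E_1,E_2$), and no extra hypothesis is needed; for $\alpha<\frac32$ it must be assumed, as reflected in the statement. The main obstacle, as indicated, is the vertical transport term $\II$: balancing the one lost horizontal derivative in $W$ against the fractional dissipation forces $\omega_2$ into $\D(\lh^{(3-\alpha)/2})$, and one must verify the interpolation exponent on $\|\lh^{\alpha/2}U\|_{L^2}$ stays strictly below $2$ so the dissipation can absorb it — this is exactly where $\alpha>1$ is used.
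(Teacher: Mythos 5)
Your overall framework is right — subtract the two solutions, estimate $\|U\|_{L^2}^2$ by Gr\"onwall, and absorb into the fractional dissipation — and you correctly identify the vertical transport term $\II=\int W\,\omega_2\,U$ as the crux, as the paper does. But the specific plan for $\II$ has a genuine gap precisely in the regime $\alpha\in(1,\frac32)$ that the hypothesis \eqref{eq:uniqueness} is designed for. Your schematic proposes to ``distribute derivatives so that $\omega_2$ is measured in $\D(\lh^{(3-\alpha)/2})$,'' i.e.\ to apply the fractional Leibniz rule to $\lh^{(3-\alpha)/2}(\omega_2 U)$. That rule produces a second term $\|\omega_2\|_{L^\infty_x}\,\|\lh^{(3-\alpha)/2}U\|_{L^2_x}$, and for $\alpha<\frac32$ one has $\frac{3-\alpha}{2}>\frac\alpha2$, so $\lh^{(3-\alpha)/2}U$ sits \emph{above} the available dissipation level and cannot be absorbed. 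This is exactly the obstruction the paper records explicitly (``we cannot take $\beta=\frac{3-\alpha}{2}$ for $\alpha<\frac32$''). The relevant threshold is therefore $\alpha=\frac32$, not $\alpha>1$; your scaling check $\frac\alpha2+\frac{3-\alpha}{2}=\frac32>1$ is not the one that governs whether the Leibniz split closes.

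What the paper does instead in the range $\alpha<\frac32$ is set the Leibniz split at $\beta=\frac\alpha2$, so that the $U$-side partner lands exactly at the dissipation level, and the $\omega_2$ factor comes out as $\|\lh^{\alpha/2}\omega_2\|_{L^2}$. After Young's inequality this factor appears to the power $\frac{2\alpha}{3\alpha-3}$, which is strictly greater than $2$ for $\alpha<\frac32$ — so it is \emph{not} directly integrable using only $\omega_2\in L^2_t\D(\lh^{(3-\alpha)/2})$. The $L^1_t$ integrability is then recovered by interpolating between \emph{both} pieces of \eqref{eq:uniqueness}: $\|\lh^{\alpha/2}\omega_2\|_{L^2}^{\frac{2\alpha}{3\alpha-3}}\lesssim\|\lh^{\frac32-\alpha}\omega_2\|_{L^2}^{\frac{2(3-2\alpha)}{3\alpha-3}}\,\|\lh^{\frac{3-\alpha}{2}}\omega_2\|_{L^2}^{2}$, using the $L^\infty_t$ control on the first factor. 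Your schematic never invokes the $L^\infty_t$ part of \eqref{eq:uniqueness}, so it cannot reproduce this step. Finally, note that the paper's estimate of the easier term $B_1$ (your $\I$) \emph{also} lands on $\|\lh^{(3-\alpha)/2}\omega_2\|_{L^2}^2$ via a Hilbert-transform trick and so is not independent of \eqref{eq:uniqueness}; your proposed route through $\|\pa_xu_2\|_{L^2}$ alone is underspecified — the $z$-H\"older in your sketch forces one of the factors into $L^\infty_z$, which after Poincar\'e puts one $x$-derivative on $\omega_2$ and exceeds what a strong solution controls — and needs to be fixed before the claim of avoiding \eqref{eq:uniqueness} on $\I$ can stand.
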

\begin{proof}
Let $(u_1, w_1, \omega_1, p_1)$ and $(u_2, w_2, \omega_2, p_2)$ be two strong solutions on $[0,T]$ with initial condition $u_1(0)=u_{10}$ and $u_2(0)=u_{20}$. Denote by $(u,w,\omega,p)=(u_1-u_2, w_1-w_2, \omega_1-\omega_2, p_1-p_2)$ and $u_0=u_{10}-u_{20}$. Then we have
\begin{subequations}	
\begin{align}
    &\pa_t u + u_1 \pa_x u + u \pa_x u_2 + w_1 \pa_z u + w \pa_z u_2 + \pa_x p + \nu_h \lh^\alpha u  = 0, \label{eqn-unique-1}
    \\
    &\partial_z p =0,
    \\
    &\partial_x u + \partial_z w = 0.
\end{align}
\end{subequations}

We perform a priori energy estimate on $\|u\|_{L^2}^2$. Take $L^2$ inner product of \eqref{eqn-unique-1} with $u$ to get 
\begin{equation}\label{est:unique-1}
    \frac12\ddt \|u\|_{L^2}^2 + \nu_h \|\lh^{\frac\alpha2} u\|_{L^2}^2 =
    - \int_\Omega \Big( u (u_1\pa_x+w_1\pa_z)u  + u^2\pa_x u_2 + w\omega_2 u \Big)\dxdz := B_0 + B_1 + B_2.
\end{equation}
The term $B_0$ vanishes by using integration by parts and the incompressibility \eqref{FPE-3} on $(u_1,w_1)$:
\[B_0 = -\int_\Omega (u_1\pa_x+w_1\pa_z)\big(\frac{u^2}{2}\big)\dxdz = \int_\Omega(\pa_xu_1+\pa_zw_1)\,\frac{u^2}{2}\dxdz = 0.\]
Next, for the term $B_1$, we apply anisotropic estimates, Poincar\'e's inequality, and Minkowski's inequality to obtain:
\begin{align}
    B_1 & = \int_\Omega \lh^{\frac{\alpha-1}2}(u^2)\cdot\lh^{\frac{3-\alpha}2}Hu_2 \dxdz 
    \leq \int_\T \|\lh^{\frac{\alpha-1}2}(u^2)\|_{L^1_z}\|\lh^{\frac{3-\alpha}2}Hu_2\|_{L^\infty_z}\,\dd x\nonumber\\
    &\lesssim \int_\T \|\lh^{\frac{\alpha-1}2}(u^2)\|_{L^1_z}\|\lh^{\frac{3-\alpha}2}H\omega_2\|_{L^2_z}\,\dd x
    \lesssim \Big\| \|\lh^{\frac{\alpha-1}2}(u^2)\|_{L^2_x}\Big\|_{L^1_z} \|\lh^{\frac{3-\alpha}2}\omega_2\|_{L^2},\label{eq:B1bound}
\end{align}
where $H=-\partial_x\Lambda_h^{-1}$ is the Hilbert transform in $x$-variable. We continue to apply the fractional Leibniz rule and interpolation inequalities to get:
\[
 \|\lh^{\frac{\alpha-1}2}(u^2)\|_{L^2_x}  \lesssim	\|u\|_{L^\infty_x}\|\lh^{\frac{\alpha-1}2}u\|_{L^2_x}\lesssim \|u\|_{L^2_x}\|\lh^{\frac\alpha2}u\|_{L^2_x},
\]
which leads to the control:
\[
	B_1  \lesssim \|u\|_{L^2}\|\lh^{\frac\alpha2}u\|_{L^2}  \|\lh^{\frac{3-\alpha}2}\omega_2\|_{L^2} \leq \frac14\nu_h\|\lh^{\frac\alpha2}u\|_{L^2}^2 + C\|\lh^{\frac{3-\alpha}2}\omega_2\|_{L^2}^2\|u\|_{L^2}^2.
\]
For the term $B_2$, let 
\[\beta = \begin{cases}
\frac{3-\alpha}{2}&\frac32\leq\alpha<2,\\ \frac\alpha2 &1<\alpha<\frac32.	
 \end{cases}
\]
As $w$ has zero mean in the $x$ direction, a similar procedure as \eqref{eq:B1bound} yields
\begin{align*}
B_2 & =-\int_\Omega \lh^\beta(\omega_2u)\cdot\lh^{-\beta} w \dxdz\lesssim \Big\| \|\lh^\beta(\omega_2u)\|_{L^2_x}\Big\|_{L^1_z} \|\lh^{1-\beta}u\|_{L^2}\\
& \lesssim \Big\|\Big(\|\omega_2\|_{L^\infty_x}\|\lh^\beta u\|_{L^2_x}+\|\lh^\beta\omega_2\|_{L^2_x}\|u\|_{L^\infty_x}\Big)\Big\|_{L^1_z} \|\lh^{1-\beta}u\|_{L^2}\\
& \lesssim \Big(\|\omega_2\|_{L^\infty} \|\lh^\beta u\|_{L^2} + \|\lh^\beta\omega_2\|_{L^2}\|u\|_{L^2}^{\frac{\alpha-1}\alpha}\|\lh^{\frac\alpha2} u\|_{L^2}^{\frac1\alpha}\Big)\, \|\lh^{1-\beta}u\|_{L^2} := B_{21}+B_{22}.
\end{align*}
Here, we point out that we cannot take $\beta=\frac{3-\alpha}{2}$ for $\alpha<\frac32$, since otherwise $\|\lh^\beta u\|_{L^2_x}$ in $B_{21}$ cannot be controlled by the dissipation. Further interpolation  yields
\begin{align*}
 B_{21} & \lesssim \|\lh^\beta u\|_{L^2}\|\lh^{1-\beta}u\|_{L^2} \leq \frac18\nu_h\|\lh^{\frac\alpha2}u\|_{L^2}^2 + C\|u\|_{L^2}^2,\\
 B_{22} & \lesssim \|\lh^\beta\omega_2\|_{L^2}\|u\|_{L^2}^{2-\frac{3-2\beta}\alpha}\|\lh^{\frac\alpha2}u\|_{L^2}^{\frac{3-2\beta}\alpha}\leq \frac18\nu_h\|\lh^{\frac\alpha2}u\|_{L^2}^2 + C\|\lh^\beta\omega_2\|_{L^2}^{\frac{2\alpha}{2\alpha+2\beta-3}}\|u\|_{L^2}^2
\end{align*}
Collecting all the estimates and applying Gr\"Gronwall's inequality, we conclude with
\[
 \|u(T)\|_{L^2}^2\leq \|u_0\|_{L^2}^2\exp\int_0^T C \Big( \|\lh^{\frac{3-\alpha}2}\omega_2(t)\|_{L^2}^2 + \|\lh^\beta\omega_2(t)\|_{L^2}^{\frac{2\alpha}{2\alpha+2\beta-3}} \Big)\,\dd t,
\]
as long as the integral is finite. This implies continuous dependence on the initial condition. In particular, when $u_{10}=u_{20}$, we conclude the uniqueness of solutions.

We would like to highlight that from the definition of $\beta$, we have 
\[
\|\lh^\beta\omega_2(t)\|_{L^2}^{\frac{2\alpha}{2\alpha+2\beta-3}}=\begin{cases}
 \|\lh^{\frac{3-\alpha}2}\omega_2(t)\|_{L^2}^2&\frac32\leq\alpha<2,\\ \|\lh^{\frac\alpha2} \omega_2(t)\|_{L^2}^{\frac{2\alpha}{3\alpha-3}}&1<\alpha<\frac32.\end{cases}
\]
Therefore, when $\alpha\in[\frac32,2)$, uniqueness is guaranteed as long as
\[\int_0^T\|\lh^{\frac{3-\alpha}2}\omega_2(t)\|_{L^2}^2\,\dd t<\infty,\]
which is clearly true as $\omega_2$ is a strong solution so that $\omega_2\in L^2(0,T; \D(\lh^{\frac\alpha2}))$.

When $\alpha\in(1,\frac32)$, we apply interpolation and obtain
\begin{align*}
\int_0^T\|\lh^{\frac\alpha2} \omega_2(t)\|_{L^2}^{\frac{2\alpha}{3\alpha-3}}\,\dd t & \leq \int_0^T\|\lh^{\frac32-\alpha} \omega_2(t)\|_{L^2}^{\frac{2(3-2\alpha)}{3\alpha-3}}\|\lh^{\frac{3-\alpha}2} \omega_2(t)\|_{L^2}^2\,\dd t\\
& \leq\Big\|\|\lh^{\frac32-\alpha} \omega_2(t)\|_{L^2}^2\Big\|_{L^\infty_t}^{\frac{3-2\alpha}{3\alpha-3}}	\int_0^T\|\lh^{\frac{3-\alpha}2} \omega_2(t)\|_{L^2}^2\,\dd t.
\end{align*}
Hence, the integral is bounded as long as \eqref{eq:uniqueness} holds.
\end{proof}

When $\alpha\geq\frac32$, we have $\frac\alpha2\geq\frac{3-\alpha}2$. Thus, the a priori bound \eqref{eq:Energy1} at the energy level $\frac\alpha2$ is sufficient to guarantee uniqueness. However, for $\alpha < \frac{3}{2}$, the bound \eqref{eq:Energy1} is no longer sufficient, and uniqueness of strong solutions may fail. In this case, we need to consider higher regularity. It is enough to study the next discrete energy level $\alpha$, since $\frac\alpha2 > \frac{3}{2}-\alpha$ for any $\alpha > 1$, which we will address next.

\subsection{Energy estimate on \texorpdfstring{$E_2$}{E2}}
Recall the energy at level $\alpha$:
\[E_2 = \|\lh^\alpha u\|_{L^2}^2 + \|\lh^{\frac\alpha2}\omega\|_{L^2}^2 + \|\pa_z\omega\|_{L^2}^2,\]
and the corresponding dissipation
\[\Et_3 = \|\lh^{\frac{3\alpha}{2}} u\|_{L^2}^2 + \|\lh^\alpha\omega\|_{L^2}^2 + \|\lh^{\frac\alpha2}\pa_z\omega\|_{L^2}^2.\]

We are going to establish the following energy bound, offering a sufficient control of the nonlinear drift term. 
\begin{proposition}\label{thm:E2}
Under the regularity criterion
\begin{equation}\label{eq:BKM}
	\int_0^T \|\lh^{\frac{3-\alpha}2}\omega(t)\|_{L^2}^2\,\dd t<\infty,
\end{equation}
we have the control at the energy level $\alpha$:
\begin{equation}\label{eq:Energy2}
 E_2 \in L^\infty(0,T),\quad \Et_3\in L^1(0,T), \quad \text{and therefore}\quad Y_2\in L^\infty(0,T).
\end{equation} 
\end{proposition}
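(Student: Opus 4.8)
The plan is to perform energy estimates on each of the three pieces of $E_2$, namely $\|\lh^\alpha u\|_{L^2}^2$, $\|\lh^{\frac\alpha2}\omega\|_{L^2}^2$, and $\|\pa_z\omega\|_{L^2}^2$, and to close a Gr\"onwall-type inequality of the form $\ddt E_2 + \nu_h\Et_3 \lesssim \big(1 + \|\lh^{\frac{3-\alpha}2}\omega\|_{L^2}^2\big)\,E_2$, so that \eqref{eq:BKM} together with the already-established bounds \eqref{eq:Energy0}--\eqref{eq:Energy1} yield \eqref{eq:Energy2}. For the first piece, I would take the $L^2$ inner product of \eqref{FPE-1} with $\lh^{2\alpha}u$; the dissipation produces $\nu_h\|\lh^{\frac{3\alpha}2}u\|_{L^2}^2$, and the nonlinear contributions $\int \lh^\alpha(u\pa_xu)\,\lh^{2\alpha}u$ and $\int \lh^\alpha(w\omega)\,\lh^{2\alpha}u$ are handled by moving half the derivatives onto $\lh^{2\alpha}u$ (so one works against $\lh^{\frac{3\alpha}2}u$), applying the fractional Leibniz rule of Lemma \ref{lem:Leibniz}, and using the a priori $L^\infty$ bounds \eqref{eq:uLinfty}, \eqref{eq:omegaMP} to park $\|u\|_{L^\infty}$ and $\|\omega\|_{L^\infty}$. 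For the second piece, take the $L^2$ inner product of the vorticity equation \eqref{eq:omega} with $\lh^\alpha\omega$; after integrating by parts the transport terms produce commutators $\int([\lh^{\frac\alpha2},u\pa_x]\omega)\lh^{\frac\alpha2}\omega$ and $\int([\lh^{\frac\alpha2},w\pa_z]\omega)\lh^{\frac\alpha2}\omega$, to be estimated using Lemma \ref{lem:Leibniz} (Kato--Ponce) together with the Poincar\'e inequality on $w$ from \eqref{eq:Poincarew} to trade the loss of one $x$-derivative in $w$ for a factor $\|\lh\omega\|$-type term that interpolates between $\lh^{\frac\alpha2}\omega$ and the dissipation $\lh^\alpha\omega$.

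The genuinely delicate piece is the third one, $\|\pa_z\omega\|_{L^2}^2 = \|\pa_z^2u\|_{L^2}^2$, because there is no vertical dissipation and the vertical transport term $w\pa_z\omega$ differentiated once in $z$ produces $\pa_zw\,\pa_z\omega = -\pa_xu\,\pa_z\omega$ and $w\,\pa_z^2\omega$; the first of these is exactly where the borderline derivative count $\frac{3-\alpha}2$ enters and where the criterion \eqref{eq:BKM} is consumed. Concretely, after testing $\pa_z$ of \eqref{eq:omega} against $\pa_z\omega$, the term $\int w\,\pa_z^2\omega\,\pa_z\omega$ integrates by parts against $\pa_zw = -\pa_xu$ to give (up to the incompressibility cancellation) a term like $\int \pa_xu\,(\pa_z\omega)^2$, and the remaining term $\int \pa_xu\,\pa_z\omega\,\pa_z\omega$ must be controlled. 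I would estimate $\big|\int \pa_xu\,(\pa_z\omega)^2\big|$ by the anisotropic strategy used already in the uniqueness proof: write it as $\int \lh^{\frac{\alpha-1}2}\!\big((\pa_z\omega)^2\big)\cdot \lh^{\frac{3-\alpha}2}H\!\pa_z u\,$-type pairing in $x$, bound the second factor in $L^\infty_z$ by $\|\lh^{\frac{3-\alpha}2}\omega\|_{L^2_z}$ via Poincar\'e and Minkowski (noting $\pa_z u$ relates to $\omega$), and bound the first factor by $\|\pa_z\omega\|_{L^2_x}\|\lh^{\frac\alpha2}\pa_z\omega\|_{L^2_x}$ through the fractional Leibniz rule plus interpolation, so that after integrating in $z$ one gets $\lesssim \|\lh^{\frac{3-\alpha}2}\omega\|_{L^2}\,\|\pa_z\omega\|_{L^2}\,\|\lh^{\frac\alpha2}\pa_z\omega\|_{L^2} \le \tfrac{\nu_h}{8}\|\lh^{\frac\alpha2}\pa_z\omega\|_{L^2}^2 + C\|\lh^{\frac{3-\alpha}2}\omega\|_{L^2}^2\|\pa_z\omega\|_{L^2}^2$, precisely the form that feeds Gr\"onwall with weight \eqref{eq:BKM}. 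The cross term $\pa_x\omega\,\pa_zw = -\pa_x\omega\,\pa_xu$ (from differentiating the horizontal transport in $z$, via $\pa_zu = \omega$) is handled analogously, and I expect it to be subordinate.

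Once all three estimates are summed and the dissipative terms $\nu_h\|\lh^{\frac{3\alpha}2}u\|_{L^2}^2$, $\nu_h\|\lh^\alpha\omega\|_{L^2}^2$, $\nu_h\|\lh^{\frac\alpha2}\pa_z\omega\|_{L^2}^2$ have absorbed all the top-order contributions, I would arrive at
\[
\ddt E_2 + \nu_h\,\Et_3 \;\le\; C\Big(1 + \|\lh^{\frac{3-\alpha}2}\omega(t)\|_{L^2}^2\Big)\,E_2,
\]
with $C$ depending only on $\|\omega_0\|_{L^\infty}$ and $\alpha$. Gr\"onwall's inequality, the finiteness of $\int_0^T\|\lh^{\frac{3-\alpha}2}\omega\|_{L^2}^2\,\dd t$ from \eqref{eq:BKM}, and the fact that $E_2(0)<\infty$ under the assumed regularity of $u_0$ then give $E_2 \in L^\infty(0,T)$ and, integrating the differential inequality, $\Et_3 \in L^1(0,T)$; hence $Y_2 \in L^\infty(0,T)$, which is \eqref{eq:Energy2}. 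The main obstacle is the third piece: making the anisotropic $L^1_z$--$L^\infty_z$ splitting honest so that exactly the $H^{\frac{3-\alpha}2}$-norm of $\omega$ (no more) appears as the Gr\"onwall weight, while everything at the level of the dissipation $\Et_3$ is genuinely absorbed — any wasteful interpolation there would demand a stronger criterion than \eqref{eq:BKM} and break the claimed optimality.
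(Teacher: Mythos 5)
Your overall structure (three piecewise estimates, close with Gr\"onwall using \eqref{eq:BKM} as the weight) is the right skeleton, and your treatment of the $\|\pa_z\omega\|_{L^2}^2$ piece is essentially the paper's: the anisotropic $L^1_z$--$L^\infty_z$ splitting of $\int\pa_x u\,(\pa_z\omega)^2$ against $\lh^{\frac{3-\alpha}2}Hu$, Poincar\'e in $z$, Leibniz and interpolation in $x$, landing on $\|\lh^{\frac\alpha2}\pa_z\omega\|_{L^2}\|\pa_z\omega\|_{L^2}\|\lh^{\frac{3-\alpha}2}\omega\|_{L^2}$, is exactly what the paper does.

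However, there is a genuine gap in the first two pieces, and it is not a cosmetic one. When you estimate $\int\lh^{\frac{3\alpha}2}u\cdot\lh^{\frac\alpha2}(w\omega)$ and $\int\lh^\alpha\omega\cdot w\pa_z\omega$ by the Kato--Ponce Leibniz rule, the maximum principle gives you $\|u\|_{L^\infty}$ and $\|\omega\|_{L^\infty}$ for free, but there is a third $L^\infty$ factor you cannot avoid: $\|w\|_{L^\infty}$, which hits $\|\lh^{\frac\alpha2}\omega\|_{L^2}$ in the first term and $\|\pa_z\omega\|_{L^2}$ in the second. Your commutator reformulation $\int\lh^{\frac\alpha2}\omega\cdot[\lh^{\frac\alpha2},w]\pa_z\omega$ does not escape this either, since fractional commutator bounds at order $\frac\alpha2<1$ still demand an $L^\infty$ control on $w$ or $\lh^{\frac\alpha2}w$. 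So the correct differential inequality you get from this calculation is
\[
\ddt E_2 + \nu_h\Et_3 \lesssim \Big(\|\omega_0\|_{L^\infty}^{\frac\alpha{\alpha-1}}+\|\omega_0\|_{L^\infty}^2+\|w\|_{L^\infty}^2+\|\lh^{\frac{3-\alpha}2}\omega\|_{L^2}^2\Big)E_2,
\]
not the one you wrote. The problem is that $\|w\|_{L^\infty}$, by the anisotropic Sobolev embedding $\|w\|_{L^\infty}\lesssim\|\lh^{\frac32+\epsilon}u\|_{L^2}$, lives at energy level $\frac{3-\alpha}2+\epsilon$, which is \emph{strictly above} the level of the criterion $\|\lh^{\frac{3-\alpha}2}\omega\|_{L^2}$, so you cannot absorb $\int_0^T\|w\|_{L^\infty}^2$ into \eqref{eq:BKM}. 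The paper needs an extra, separate step: an intermediate energy estimate at level $1$ on $\|\pa_xu\|_{L^2}^2$, which under \eqref{eq:BKM} produces $\lh^{1+\frac\alpha2}u\in L^2(0,T;L^2)$ and thence $w\in L^2(0,T;L^\infty)$. Without that auxiliary step (or a different argument that genuinely dodges the $\|w\|_{L^\infty}$ factor, which you have not supplied), your Gr\"onwall inequality does not close and the claimed optimality of \eqref{eq:BKM} does not follow.
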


\begin{remark}\label{rem:BKM}
 The regularity criterion \eqref{eq:BKM} corresponds to the control of the energy level $\frac{3-\alpha}{2}$, the same as the uniqueness requirement \eqref{eq:uniqueness}. This condition is crucial for the global well-posedness and  propagation of higher regularity.
  
Proposition~\ref{thm:E2} establishes that an a priori control of the energy level $\frac{3-\alpha}{2}$ leads to the control at the higher energy level $\alpha$. Moreover, as we will see later in Proposition~\ref{prop:Ek}, the same criterion \eqref{eq:BKM} enables the propagation of higher energies at the levels $\frac{k\alpha}2$ for any $k\in\N$.
\end{remark}

To prove Proposition~\ref{thm:E2}, we first establish the following weaker bound, which follows directly from standard energy estimates.  We note that a direct application of Gr\"onwall's inequality to \eqref{eq:E2bound} would yield Proposition~\ref{thm:E2}, provided that $\|w\|_{L^\infty}$ is a priori bounded. This nontrivial control will be established later in Proposition \ref{prop:winf}.

\begin{proposition}\label{prop:E2}
 The following a priori energy bound holds:
\begin{equation}\label{eq:E2bound}
	\ddt E_2 + \nu_h\Et_3\lesssim \Big(\|\omega_0\|_{L^\infty}^\frac{\alpha}{\alpha-1}+\|\omega_0\|_{L^\infty}^2+\|w\|_{L^\infty}^2+\|\lh^{\frac{3-\alpha}2}\omega\|_{L^2}^2\Big) E_2.	
\end{equation}
\end{proposition}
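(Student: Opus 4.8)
The plan is to sum three energy identities — one for each term in $E_2=\|\lh^\alpha u\|_{L^2}^2+\|\lh^{\frac\alpha2}\omega\|_{L^2}^2+\|\pa_z\omega\|_{L^2}^2$ — and to control every resulting nonlinear contribution either by absorption into the full dissipation $\nu_h\Et_3$ or by one of the four multipliers of $E_2$ on the right-hand side of \eqref{eq:E2bound}. Concretely: apply $\lh^{\frac\alpha2}$ to \eqref{FPE-1} and pair with $\lh^{\frac{3\alpha}2}u$ — the pressure term drops because, by \eqref{FPE-2}, $\lh^{\frac{3\alpha}2}\pa_xp$ is $z$-independent while $u$ has zero $z$-average by Lemma~\ref{lem:H}; apply $\lh^{\frac\alpha2}$ to the vorticity equation \eqref{eq:omega} and pair with $\lh^{\frac\alpha2}\omega$; apply $\pa_z$ to \eqref{eq:omega} and pair with $\pa_z\omega$, using $\pa_z(u\pa_x\omega)=\omega\pa_x\omega+u\pa_x\pa_z\omega$ and, via \eqref{FPE-3} in the form $\pa_zw=-\pa_xu$, $\pa_z(w\pa_z\omega)=-\pa_xu\,\pa_z\omega+w\pa_z^2\omega$. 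The three dissipations add up to $\nu_h\Et_3$, and summing yields $\ddt E_2+2\nu_h\Et_3=(\text{nonlinear terms})$.

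For the horizontal-transport terms ($u\pa_xu$, $u\pa_x\omega$, $\omega\pa_x\omega$, $\omega\pa_x\omega\,\pa_z\omega$) I would apply the fractional Leibniz rule (Lemma~\ref{lem:Leibniz}), pull out $\|u\|_{L^\infty}$ or $\|\omega\|_{L^\infty}$ — both $\lesssim\|\omega_0\|_{L^\infty}$ by the maximum-principle bounds \eqref{eq:omegaMP} and \eqref{eq:uLinfty} — interpolate the leftover intermediate-order factor between two consecutive energy/dissipation levels (this is where $\alpha>1$ enters, e.g.\ $\|\lh^{1+\frac\alpha2}u\|_{L^2}\lesssim\|\lh^{\frac\alpha2}u\|_{L^2}^{1-\frac1\alpha}\|\lh^{\frac{3\alpha}2}u\|_{L^2}^{\frac1\alpha}$), and close with Young's inequality; the conjugate exponents, tied to $\alpha$, produce the multipliers $\|\omega_0\|_{L^\infty}^{\frac\alpha{\alpha-1}}$ and $\|\omega_0\|_{L^\infty}^{2}$. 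For the vertical-transport pieces $\lh^{\frac\alpha2}(w\omega)$ and $\lh^{\frac\alpha2}(w\pa_z\omega)$ I would again distribute derivatives by Lemma~\ref{lem:Leibniz}, keeping $\|w\|_{L^\infty}$ as an explicit multiplier — it is not a priori bounded and is controlled only later in Proposition~\ref{prop:winf}, which is exactly why \eqref{eq:E2bound} carries $\|w\|_{L^\infty}^2$ — and use the Poincaré inequality for $w$ together with its zero $x$-average (Remark~\ref{rem:Poincare}) on the remaining factors, so Young's inequality yields $\|w\|_{L^\infty}^2E_2$ plus absorbable terms.

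The hard part is the top-order vertical contribution in the $\|\pa_z\omega\|_{L^2}^2$ identity, because $\pa_z^2\omega=\pa_z^3u$ lies one vertical derivative above $E_2$ and there is no vertical dissipation. Here I would integrate by parts in $z$ — the boundary term vanishing since $w|_{z=0,1}=0$ — and use $\pa_zw=-\pa_xu$, which gives $-\int_\Omega w\,\pa_z^2\omega\,\pa_z\omega=\tfrac12\int_\Omega\pa_zw\,(\pa_z\omega)^2=-\tfrac12\int_\Omega\pa_xu\,(\pa_z\omega)^2$, and likewise $-\int_\Omega u\,\pa_x\pa_z\omega\,\pa_z\omega=\tfrac12\int_\Omega\pa_xu\,(\pa_z\omega)^2$; combined with the $-\pa_xu\,\pa_z\omega$ term, these collapse to the single borderline term $\int_\Omega\pa_xu\,(\pa_z\omega)^2$ (plus $\int_\Omega\omega\pa_x\omega\,\pa_z\omega$, treated as in the previous paragraph). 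To bound $\int_\Omega\pa_xu\,(\pa_z\omega)^2$ one must borrow only $\tfrac{3-\alpha}2$ horizontal derivatives from $\pa_xu$: writing $\pa_xu=\lh^{\frac{3-\alpha}2}\big(\lh^{\frac{\alpha-1}2}Hu\big)$ and moving $\lh^{\frac{3-\alpha}2}$ by self-adjointness as in the bound \eqref{eq:B1bound} of Theorem~\ref{thm:uniqueness}, the factor $\lh^{\frac{\alpha-1}2}Hu$ stays at or below the a priori bounded $E_1$-level after the vertical Poincaré inequality, while a fractional-Leibniz-type estimate of $\lh^{\frac{3-\alpha}2}\big((\pa_z\omega)^2\big)$, together with the one-dimensional Sobolev embedding in $x$ and interpolation, leaves a product of the shape $\|\lh^{\frac{3-\alpha}2}\omega\|_{L^2}\times(E_2\text{-level})\times(\text{dissipation-level})$; Young's inequality then delivers $\nu_h\Et_3$ plus the multiplier $\|\lh^{\frac{3-\alpha}2}\omega\|_{L^2}^2$ acting on $E_2$. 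Collecting all contributions gives \eqref{eq:E2bound}. I expect this last term to be the genuine obstacle — it is the only place where the horizontal dissipation alone must absorb a vertical derivative beyond the $E_2$ level — and it is what forces the critical-level norm $\|\lh^{\frac{3-\alpha}2}\omega\|_{L^2}$, rather than a lower one, into the estimate.
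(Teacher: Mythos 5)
Your decomposition, the treatment of the pressure, the cancellation of the transport term in the $\|\pa_z\omega\|_{L^2}^2$ estimate, and the handling of the horizontal-transport pieces and the $w$-weighted terms all match the paper's argument. The genuine gap is in the one term you correctly identify as the obstacle, $\II_{32}=\int_\Omega\pa_xu\,(\pa_z\omega)^2$: your split of $\pa_xu$ is the wrong way around. You write $\pa_xu=\lh^{\frac{3-\alpha}2}\bigl(\lh^{\frac{\alpha-1}2}Hu\bigr)$ and move $\lh^{\frac{3-\alpha}2}$ onto $(\pa_z\omega)^2$, keeping $\lh^{\frac{\alpha-1}2}Hu$ on the velocity factor. (You also cite \eqref{eq:B1bound} as a model, but that estimate does the opposite: it keeps $\lh^{\frac{3-\alpha}2}Hu_2$ on the velocity factor and moves only $\lh^{\frac{\alpha-1}2}$ onto $u^2$.) With your split two things go wrong. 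First, the Leibniz rule on $\lh^{\frac{3-\alpha}2}\bigl((\pa_z\omega)^2\bigr)$ produces $\|\lh^{\frac{3-\alpha}2}\pa_z\omega\|_{L^2}$, and since the available dissipation on $\pa_z\omega$ in $\Et_3$ is only $\|\lh^{\frac{\alpha}{2}}\pa_z\omega\|_{L^2}$, the estimate cannot close when $\frac{3-\alpha}{2}>\frac{\alpha}{2}$, i.e.\ for the whole energy-supercritical range $\alpha<\tfrac32$ that this proposition is meant to cover. Second, your velocity factor $\lh^{\frac{\alpha-1}2}Hu$, after the vertical Poincar\'e inequality, yields $\|\lh^{\frac{\alpha-1}2}\omega\|_{L^2}$ (an a priori bounded sub-$E_1$ quantity), not $\|\lh^{\frac{3-\alpha}2}\omega\|_{L^2}$, so the multiplier appearing in \eqref{eq:E2bound} is not produced by your split; there is no way to extract $\|\lh^{\frac{3-\alpha}2}\omega\|_{L^2}$ from $\lh^{\frac{3-\alpha}2}\bigl((\pa_z\omega)^2\bigr)$ either, since that factor involves $\pa_z\omega$, not $\omega$.

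The fix, and the paper's actual argument, is to borrow $\frac{\alpha-1}{2}$ (not $\frac{3-\alpha}{2}$) derivatives away from $\pa_xu$: write $\II_{32}=-\int_\Omega\lh^{\frac{\alpha-1}2}\bigl((\pa_z\omega)^2\bigr)\cdot\lh^{\frac{3-\alpha}2}Hu\,\dxdz$, so that the larger fractional power stays on the velocity factor. Applying the anisotropic H\"older inequality and the vertical Poincar\'e inequality, the velocity factor becomes the non-absorbable multiplier $\|\lh^{\frac{3-\alpha}2}\omega\|_{L^2}$, while the remaining factor $\lh^{\frac{\alpha-1}2}\bigl((\pa_z\omega)^2\bigr)$ is controlled by Leibniz and interpolation as $\|\lh^{\frac{\alpha-1}2}\pa_z\omega\|_{L^2_x}\|\pa_z\omega\|_{L^\infty_x}\lesssim\|\lh^{\frac{\alpha}{2}}\pa_z\omega\|_{L^2_x}\|\pa_z\omega\|_{L^2_x}$, which is exactly $\Et_3^{1/2}E_2^{1/2}$ and closes under Young's inequality for all $\alpha>1$.
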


\begin{proof}
	We estimate the three terms in $E_2$ one by one.

    Let us start with the first component $\|\lh^\alpha u\|_{L^2}^2$. By taking the $L^2$ inner product of equation \eqref{FPE-1} with $\lh^{2\alpha} u$, we obtain
    \[
        \frac12\ddt \|\lh^\alpha u\|_{L^2}^2 + \nu_h \|\lh^{\frac{3\alpha}{2}} u\|_{L^2}^2 = -\int_{\Omega} \lh^{\alpha}u \cdot \lh^{\alpha}(u\pa_xu + w\pa_zu) \dxdz := \II_{11} + \II_{12}.
    \]
    For $\II_{11}$, we have 
    \begin{align*}
        \II_{11} = &-\int_{\Omega} \lh^{\alpha}u \cdot \lh^{\alpha}(u\pa_xu) \dxdz = -\frac12 \int_{\Omega} \lh^{\frac{3\alpha}2}u \cdot \lh^{\frac\alpha2}\pa_x(u^2) \dxdz 
        \\
        \lesssim &~ \|\lh^{\frac{3\alpha}2} u\|_{L^2} \|\lh^{\frac\alpha2+1} u\|_{L^2} \|u\|_{L^\infty} 
        \lesssim\|\omega_0\|_{L^\infty}\|\lh^{\frac{3\alpha}{2}} u\|_{L^2}^{\frac2\alpha}\|\lh^\alpha u\|_{L^2}^{2-\frac2\alpha} \\
        \leq &~ \frac1{4} \nu_h \|\lh^{\frac{3\alpha}{2}} u\|_{L^2}^2 + C\|\omega_0\|_{L^\infty}^{\frac\alpha{\alpha-1}}  \|\lh^\alpha u\|_{L^2}^2,
    \end{align*}
    where we have used the fractional Leibniz rule, the a priori bound \eqref{eq:uLinfty} on $\|u\|_{L^\infty}$, and Young's inequality. The first term on the right-hand side can be absorbed by dissipation, and the second term can be handled by Gr\"onwall inequality. Here, we keep track of $\|\omega_0\|_{L^\infty}$ dependency for the purpose of proving Theorem \ref{thm:GWPsmall} later on.
  
    The term $\II_{12}$ involves vertical drift,  requiring a far more delicate treatment. 
    \begin{align*}
        \II_{12} = &-\int_{\Omega} \lh^{\alpha}u \cdot \lh^{\alpha}(w\omega) \dxdz = -\int_{\Omega} \lh^{\frac32\alpha}u \cdot \lh^{\frac\alpha2}(w\omega) \dxdz 
        \\
        \lesssim &~ \|\lh^{\frac{3\alpha}{2}} u\|_{L^2}\big( \|\lh^{\frac\alpha2} w\|_{L^2} \|\omega\|_{L^\infty} + \|\lh^{\frac\alpha2} \omega\|_{L^2} \| w\|_{L^\infty}\big):=\II_{121}+\II_{122}.
    \end{align*}
    Note that $\|\omega\|_{L^\infty}$ has an a priori bound given by \eqref{eq:omegaMP}. Moreover, we apply the Poincar\'e inequality in $z$ on $\|\lh^{\frac\alpha2}w\|_{L^2}$ and obtain: 
    \begin{align*}
	\II_{121}\lesssim &~ \|\omega_0\|_{L^\infty}\|\lh^{\frac{3\alpha}{2}} u\|_{L^2} \|\lh^{\frac\alpha2}\pa_xu\|_{L^2}\lesssim \|\omega_0\|_{L^\infty}\|\lh^{\frac{3\alpha}{2}} u\|_{L^2}^{\frac2\alpha} \|\lh^\alpha u\|_{L^2}^{2-\frac2\alpha}\\
	\leq&~\frac18\nu_h \|\lh^{\frac{3\alpha}{2}} u\|_{L^2}^2+C\|\omega_0\|_{L^\infty}^\frac{\alpha}{\alpha-1}\|\lh^\alpha u\|_{L^2}^2,  	
    \end{align*}
	where we have used interpolation and Young's inequality. 
    For the second term $\II_{122}$, we have the estimate:    
    \begin{align*}
        \II_{122} \leq &~ \frac18 \nu_h \|\lh^{\frac{3\alpha}{2}} u\|_{L^2}^2 + C\|w\|_{L^\infty}^2 \|\lh^{\frac\alpha2}\omega\|_{L^2}^2.
    \end{align*}
    Combining the estimates above, we have 
    \begin{equation}\label{eq:E2u}
        \ddt \|\lh^\alpha u\|_{L^2}^2 + \nu_h \|\lh^{\frac{3\alpha}{2}} u\|_{L^2}^2  \lesssim \big(\|\omega_0\|_{L^\infty}^{\frac{\alpha}{\alpha-1}}+\|w\|_{L^\infty}^2\big) E_2.
    \end{equation}

	Next, we estimate the second component of the energy $\|\lh^{\frac\alpha2}\omega\|_{L^2}^2$. By taking the $L^2$ inner product of equation \eqref{eq:omega} with $\lh^{\alpha} \omega$, we obtain
   \[
       \frac12\ddt \|\lh^{\frac\alpha2} \omega\|_{L^2}^2 + \nu_h \|\lh^{\alpha} \omega\|_{L^2}^2 =  -\int_\Omega \lh^{\alpha} \omega \cdot \big(u\pa_x\omega + w\pa_z\omega\big) \dxdz := \II_{21}+ \II_{22}.
   \]
   The term $\II_{21}$ can be estimated in a similar manner to $\II_{121}$:   \begin{align*}
       \II_{21} \leq \|u\|_{L^\infty} \|\lh^{\alpha} \omega\|_{L^2} \|\pa_x\omega\|_{L^2} \leq \frac14\nu_h \|\lh^{\alpha} \omega\|_{L^2}^2 + C\|\omega_0\|_{L^\infty}^{\frac{\alpha}{\alpha-1}}\|\lh^{\frac\alpha2}\omega\|_{L^2}^2.
   \end{align*}
   For $\II_{22}$, we have
   \begin{align*}
       \II_{22} \leq &~\|w\|_{L^\infty} \|\lh^{\alpha} \omega\|_{L^2}  \|\pa_z\omega\|_{L^2}
       \leq \frac14\nu_h \|\lh^{\alpha} \omega\|_{L^2}^2 + C\|w\|_{L^\infty}^2 \|\pa_z\omega\|_{L^2}^2.
   \end{align*}
    
    Combining the estimates above, we have
    \begin{equation}\label{eq:E2omega}
        \ddt \|\lh^{\frac\alpha2} \omega\|_{L^2}^2 + \nu_h \|\lh^\alpha \omega\|_{L^2}^2  \lesssim \big(\|\omega_0\|_{L^2}^{\frac{\alpha}{\alpha-1}}+\|w\|_{L^\infty}^2\big) E_2.
    \end{equation}

   Finally, we estimate the third component $\|\pa_z\omega\|_{L^2}$. Differentiating \eqref{eq:omega} in $z$, we obtain the dynamics of $\pa_z\omega$:
   \begin{equation}\label{eq:omegaz}
   	\pa_t\pa_z\omega + (u\pa_x+w\pa_z)\pa_z\omega + \omega\pa_x\omega -\pa_xu\pa_z\omega +\nu_h\lh^\alpha\pa_z\omega = 0.
   \end{equation}
	By taking the $L^2$ inner product of equation \eqref{eq:omegaz} with $\pa_z\omega$, we obtain 
   \begin{align*}
       \frac12\ddt \|\pa_z\omega\|_{L^2}^2 &+ \nu_h \|\lh^{\frac\alpha2} \pa_z\omega\|_{L^2}^2 = \int_\Omega\big(\pa_xu+\pa_zw\big)\cdot\frac{\omega^2}{2}\dxdz -
\int_\Omega \pa_z\omega \cdot \big(\omega\pa_x\omega -\pa_xu\pa_z\omega\big) \dxdz\\
&  = -\int_\Omega \omega \pa_x\omega \pa_z\omega\dxdz + \int_\Omega \pa_xu (\pa_z\omega)^2 \dxdz := \II_{31}+\II_{32}.
   \end{align*}
   
   The term $\II_{31}$ can be controlled thanks to the a priori bound \eqref{eq:omegaMP} on $\|\omega\|_{L^\infty}$: 
 \begin{align}
    \II_{31} & = -\frac12 \int_{\Omega} \pa_z\omega \,\pa_x (\omega^2) \dxdz = \frac12 \int_{\Omega} \lh^{\frac12} \pa_z\omega \cdot \lh^{\frac12} H (\omega^2) \dxdz\nonumber\\
    & \leq \frac12 \|\lh^{\frac12} \pa_z\omega\|_{L^2} \|\lh^{\frac12} (\omega^2)\|_{L^2}\lesssim  \|\omega\|_{L^\infty} \|\lh^{\frac12}\omega\|_{L^2} \|\lh^{\frac12}\pa_z\omega\|_{L^2}\nonumber\\
    & \leq \frac14\nu_h \|\lh^{\frac\alpha2}\pa_z\omega\|_{L^2}^2 + C\|\omega_0\|_{L^\infty}^2 \|\lh^{\frac\alpha2}\omega\|_{L^2}^2.\label{eq:II31}
\end{align}

   The term $\II_{32}$ requires a more delicate treatment. Note that a rough estimate yields
   \[\II_{32}\leq \|\pa_xu\|_{L^\infty}\|\pa_z\omega\|_{L^2}^2. \]
  The control of the right-hand side requires an a priori bound on $\|\pa_xu\|_{L^\infty}$, which is too strong for our purpose. We derive a refined estimate which only requires a weaker a priori bound, controlled by the energy $E_{\frac32}$. To achieve this, we will apply an anisotropic estimate and make use of the dissipation. 
\begin{align*}
	\II_{32} = &~ -\int_\Omega \lh^{\frac{\alpha-1}{2}}\big((\pa_z\omega)^2\big)\cdot\lh^{\frac{3-\alpha}{2}}Hu\dxdz
	\leq \int_\T\|\lh^{\frac{\alpha-1}{2}}\big((\pa_z\omega)^2\big)\|_{L^1_z}\|\lh^{\frac{3-\alpha}{2}}Hu\|_{L^\infty_z}\,\dd x\\
	\lesssim &~ \int_\T\|\lh^{\frac{\alpha-1}{2}}\big((\pa_z\omega)^2\big)\|_{L^1_z}\|\lh^{\frac{3-\alpha}{2}}H\omega\|_{L^2_z}\,\dd x
	\lesssim \Big\|\|\lh^{\frac{\alpha-1}{2}}\big((\pa_z\omega)^2\big)\|_{L^1_z}\Big\|_{L^2_x}\|\lh^{\frac{3-\alpha}{2}}H\omega\|_{L^2}\\
	\lesssim &~ \Big\|\|\lh^{\frac{\alpha-1}{2}}\big((\pa_z\omega)^2\big)\|_{L^2_x}\Big\|_{L^1_z}\|\lh^{\frac{3-\alpha}{2}}\omega\|_{L^2}.
\end{align*}
For the first term, we apply the fractional Leibniz rule and interpolation to get:
\[
 \|\lh^{\frac{\alpha-1}{2}}\big((\pa_z\omega)^2\big)\|_{L^2_x} \leq \|\lh^{\frac{\alpha-1}{2}}(\pa_z\omega)\|_{L^2_x}\|\pa_z\omega\|_{L^\infty_x} \lesssim \|\lh^{\frac\alpha2}\pa_z\omega\|_{L^2_x}\|\pa_z\omega\|_{L^2_x}.
\]
We continue our estimate:
\begin{align*}
	\II_{32} \lesssim &~ \Big\|\|\lh^{\frac\alpha2}\pa_z\omega\|_{L^2_x}\|\pa_z\omega\|_{L^2_x}\Big\|_{L^1_z} \|\lh^{\frac{3-\alpha}{2}}\omega\|_{L^2}
	\leq \|\lh^{\frac\alpha2}\pa_z\omega\|_{L^2}\|\pa_z\omega\|_{L^2}\|\lh^{\frac{3-\alpha}{2}}\omega\|_{L^2}\\
	\leq &~ \frac14\nu_h\|\lh^{\frac\alpha2}\pa_z\omega\|_{L^2}^2+C\|\lh^{\frac{3-\alpha}{2}}\omega\|_{L^2}^2\|\pa_z\omega\|_{L^2}^2.
\end{align*}

Combining the estimates above, we have
    \begin{equation}\label{eq:E2omegaz}
        \ddt \|\pa_z\omega\|_{L^2}^2 + \nu_h \|\lh^{\frac\alpha2} \pa_z\omega\|_{L^2}^2  \lesssim \big(\|\omega_0\|_{L^\infty}^2+\|\lh^{\frac{3-\alpha}{2}}\omega\|_{L^2}^2\big) E_2.
    \end{equation}

Now, we collect the estimates on the three energy components \eqref{eq:E2u}, \eqref{eq:E2omega} and \eqref{eq:E2omegaz}. It yields the desired estimate \eqref{eq:E2bound}.
\end{proof}

From Proposition \ref{prop:E2}, we apply Gr\"onwall's inequality and obtain the control \eqref{eq:Energy2}, provided the regularity criterion
\[
\int_0^T\Big(\|w(t)\|_{L^\infty}^2+\|\lh^{\frac{3-\alpha}2}\omega(t)\|_{L^2}^2\Big)\,\dd t<\infty.
\]
In particular, we know $\|\lh^{\frac{3-\alpha}2}\omega\|_{L^2(0,T;L^2)}$ corresponds to the energy level $\frac{3-\alpha}{2}$. We argue that the remaining term $\|w\|_{L^2(0,T;L^\infty)}$ is controlled by the energy level $\frac{3-\alpha}{2}+\epsilon$, with arbitrarily small $\epsilon>0$. Indeed, we use the Poincar\'e and Minkowski inequalities and Sobolev embeddings to obtain  
  \begin{equation}\label{eq:wenergylevel}
\|w\|_{L^\infty}\lesssim \big\|\|\pa_zw\|_{L^2_z}\big\|_{L^\infty_x}\lesssim \big\|\|\pa_xu\|_{L^\infty_x}\big\|_{L^2_z}\lesssim\|\lh^{\frac32+\epsilon}u\|_{L^2},	
  \end{equation}
 and $\|\lh^{\frac32+\epsilon}u\|_{L^2(0,T;L^2)}$ belongs to the energy level $\frac{3-\alpha}{2}+\epsilon$. 
 
 The presence of the extra $\epsilon$ is due to the borderline Sobolev embedding. In the following Proposition, we remove the $\epsilon$ and obtain the stronger regularity criterion \eqref{eq:BKM}, which belongs to the energy level $\frac{3-\alpha}{2}$.  
 \begin{proposition}\label{prop:winf}
 	Suppose $\|\pa_xu_0\|_{L^2}<\infty$ and the regularity criterion \eqref{eq:BKM} holds. Then we have 
 	\begin{equation}\label{eq:winfbound}
		\int_0^T\|w(t)\|_{L^\infty}^2\,\dd t<\infty. 		
 	\end{equation}
 \end{proposition}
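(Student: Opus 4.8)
The plan is first to reduce the claim to propagating the regularity $\pa_xu\in L^\infty(0,T;L^2)\cap L^2(0,T;\D(\lh^{\frac\alpha2}))$, and then to establish this by an energy estimate on $\|\pa_xu\|_{L^2}^2$ in which the criterion \eqref{eq:BKM} and the maximum principle \eqref{eq:omegaMP} play the decisive roles. For the reduction, note that \eqref{eq:wexplicit} and Cauchy--Schwarz in $z$ give $\|w\|_{L^\infty}\leq\|\pa_xu\|_{L^\infty_xL^2_z}\leq\|\pa_xu\|_{L^2_zL^\infty_x}$ (by Minkowski's inequality), and since $\alpha>1$ the one-dimensional Gagliardo--Nirenberg inequality on $\T$ yields
\[
\|\pa_xu(\cdot,z)\|_{L^\infty_x}\lesssim\|\pa_xu(\cdot,z)\|_{L^2_x}^{\frac{\alpha-1}{\alpha}}\|\lh^{\frac\alpha2}\pa_xu(\cdot,z)\|_{L^2_x}^{\frac1\alpha}+\|\pa_xu(\cdot,z)\|_{L^2_x}.
\]
Squaring, integrating in $z$, and using H\"older's inequality in $z$ give $\|w\|_{L^\infty}^2\lesssim\|\pa_xu\|_{L^2}^{\frac{2(\alpha-1)}{\alpha}}\|\lh^{\frac\alpha2}\pa_xu\|_{L^2}^{\frac2\alpha}+\|\pa_xu\|_{L^2}^2$; integrating in $t$ and applying H\"older's inequality in $t$ (legitimate because $\alpha>1$), one gets $\int_0^T\|w\|_{L^\infty}^2\,\dd t<\infty$ whenever $\pa_xu\in L^\infty(0,T;L^2)\cap L^2(0,T;\D(\lh^{\frac\alpha2}))$.

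To propagate this regularity, apply $\pa_x$ to \eqref{FPE-1} and take the $L^2$ inner product with $\pa_xu$. The pressure term vanishes because $\int_0^1\pa_xu\,\dd z=\pa_x\int_0^1u\,\dd z=0$ by Lemma \ref{lem:H}, and the transport term vanishes by incompressibility \eqref{FPE-3}, leaving
\[
\frac12\ddt\|\pa_xu\|_{L^2}^2+\nu_h\|\lh^{\frac\alpha2}\pa_xu\|_{L^2}^2=-\int_\Omega(\pa_xu)^3\dxdz-\int_\Omega\pa_xw\,\omega\,\pa_xu\dxdz=:\I+\II.
\]
The cubic term $\I$ is treated in the spirit of the estimate of $\II_{32}$ in Proposition~\ref{prop:E2}: expressing one factor $\pa_xu$ through the Hilbert transform and splitting $\lh=\lh^{\frac{\alpha-1}2}\lh^{\frac{3-\alpha}2}$, we route $\lh^{\frac{3-\alpha}2}$ onto $u$ and use the Poincar\'e inequality $\|\lh^{\frac{3-\alpha}2}Hu(x,\cdot)\|_{L^\infty_z}\lesssim\|\lh^{\frac{3-\alpha}2}\omega(x,\cdot)\|_{L^2_z}$; then the fractional Leibniz rule (Lemma~\ref{lem:Leibniz}) on $\lh^{\frac{\alpha-1}2}((\pa_xu)^2)$, Minkowski's inequality, and interpolation of the resulting norms $\|\lh^su\|_{L^2}$ (for exponents $s\in(1,1+\tfrac\alpha2)$) between $\|\pa_xu\|_{L^2}$ and $\|\lh^{1+\frac\alpha2}u\|_{L^2}$ give
\[
|\I|\lesssim\|\lh^{\frac{3-\alpha}2}\omega\|_{L^2}\,\|\pa_xu\|_{L^2}\,\|\lh^{1+\frac\alpha2}u\|_{L^2}\leq\tfrac{\nu_h}{4}\|\lh^{1+\frac\alpha2}u\|_{L^2}^2+C\|\lh^{\frac{3-\alpha}2}\omega\|_{L^2}^2\|\pa_xu\|_{L^2}^2,
\]
and the coefficient $\|\lh^{\frac{3-\alpha}2}\omega\|_{L^2}^2$ is integrable on $[0,T]$ by \eqref{eq:BKM}.

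The main obstacle is $\II=-\int_\Omega\pa_xw\,\omega\,\pa_xu\dxdz$, since $\pa_xw$ carries the loss of one horizontal derivative that is responsible for the Kelvin--Helmholtz instability in the supercritical regime: the crude bound $|\II|\lesssim\|\omega\|_{L^\infty}\|\pa_xw\|_{L^2}\|\pa_xu\|_{L^2}$ requires $\lh^2u\in L^2(0,T;L^2)$, which is not at our disposal. The remedy is to trade this loss against the vertical structure of $w$: using $\pa_zw=-\pa_xu$ and integrating by parts in $z$ (the boundary terms vanish since $w|_{z=0,1}=0$), together with the maximum-principle bound $\|\omega\|_{L^\infty}\leq\|\omega_0\|_{L^\infty}$ and the Poincar\'e-type estimate $\|\lh^sw\|_{L^2}\lesssim\|\lh^s\pa_xu\|_{L^2}$, one again routes a factor $\lh^{\frac{3-\alpha}2}$ onto $\omega$ through the $x$-mean-zero structure of $\pa_xw$ (valid since $\int_\T w\,\dd x=0$), invokes \eqref{eq:BKM}, and closes the estimate by anisotropic H\"older and Minkowski inequalities, the fractional Leibniz rule, and interpolation, absorbing the top-order part into $\nu_h\|\lh^{1+\frac\alpha2}u\|_{L^2}^2$ and leaving a remainder of the form $C\big(\|\omega_0\|_{L^\infty}^{\sigma}+\|\lh^{\frac{3-\alpha}2}\omega\|_{L^2}^2\big)\|\pa_xu\|_{L^2}^2$ for some $\sigma>0$. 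This is the step I expect to be the most delicate, as it is exactly where the horizontal-derivative loss in $w$ must be balanced by the fractional dissipation and \eqref{eq:BKM}.

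Collecting the bounds for $\I$ and $\II$ yields $\ddt\|\pa_xu\|_{L^2}^2+\nu_h\|\lh^{1+\frac\alpha2}u\|_{L^2}^2\lesssim g(t)\|\pa_xu\|_{L^2}^2$ with $g\in L^1(0,T)$, where the integrability of $g$ uses \eqref{eq:BKM}, the a priori bounds \eqref{eq:omegaMP} and \eqref{eq:uLinfty}, and the $E_0$ and $E_1$ estimates. Since $\|\pa_xu_0\|_{L^2}<\infty$, Gr\"onwall's inequality gives $\pa_xu\in L^\infty(0,T;L^2)\cap L^2(0,T;\D(\lh^{\frac\alpha2}))$, and the reduction of the first paragraph then delivers \eqref{eq:winfbound}.
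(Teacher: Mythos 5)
Your setup, the cancellation of the pressure and transport terms $J_{10}$, the estimate for the cubic term $\I=-\int(\pa_xu)^3\dxdz$, and the reduction from $\lh^{1+\frac\alpha2}u\in L^2(0,T;L^2)$ to \eqref{eq:winfbound} are all essentially correct (your reduction is a Gagliardo--Nirenberg detour rather than the paper's one-line use of \eqref{eq:wenergylevel} with $\epsilon=\frac{\alpha-1}2$, but it works). The gap is in $\II=-\int\pa_xw\,\omega\,\pa_xu\dxdz$, which you correctly flag as the crux and then do not actually prove. The one concrete maneuver you propose fails: writing $\omega\pa_xu=\pa_z(u\pa_xu)-u\pa_x\omega$ and integrating by parts in $z$ (or, equivalently, integrating the factor $\pa_zu$ against the other two) yields
\[
\II=\tfrac12\int_\Omega(\pa_xu)^3\dxdz+\int_\Omega \pa_xw\;u\,\pa_x\omega\dxdz,
\]
i.e.\ you merely regenerate $-\tfrac12\I$ plus the term $\int\pa_xw\,u\,\pa_x\omega$, which is strictly worse than $\II$ since now both $\pa_xw$ and $\pa_x\omega$ carry a full $x$-derivative. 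So ``integrating by parts in $z$'' does not trade the derivative loss against the vertical structure of $w$; it makes the balance worse. The remainder of your description (``routing $\lh^{\frac{3-\alpha}2}$ onto $\omega$ via the mean-zero structure of $\pa_xw$'') is not a proof, and you explicitly acknowledge you are unsure this step closes.

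For the record, the paper's treatment of $\II$ never integrates by parts in $z$. It keeps the integral intact, writes $-\pa_xw=\lh Hw$ and splits $\lh=\lh^{\frac12}\lh^{\frac12}$, moving one half onto $\omega\pa_xu$ by self-adjointness to get $\II=\int\lh^{\frac12}Hw\cdot\lh^{\frac12}(\omega\pa_xu)\dxdz$. It then applies H\"older in $z$ to produce $\int_\T\|\lh^{\frac12}Hw\|_{L^\infty_z}\|\lh^{\frac12}(\omega\pa_xu)\|_{L^1_z}\,\dd x$, uses the Poincar\'e inequality in $z$ on $w$ itself (this is where $w|_{z=0,1}=0$ enters) to bound $\|\lh^{\frac12}Hw\|_{L^\infty_z}\lesssim\|\lh^{\frac32}u\|_{L^2_z}$, then H\"older in $x$ with conjugate exponents $\frac1p=1-\frac\alpha2$, $\frac1q=\frac\alpha2$, Minkowski to swap the mixed norms, Sobolev embedding to land on $\|\lh^{1+\frac\alpha2}u\|_{L^2}$ and $\|\lh^{\frac{3-\alpha}2}\omega\|_{L^2}$, and finally the fractional Leibniz rule and interpolation to absorb the top-order contribution into $\nu_h\|\lh^{1+\frac\alpha2}u\|_{L^2}^2$. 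This anisotropic decomposition is the substance of the proposition, and it needs to be carried out rather than gestured at.
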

 \begin{proof}
  The main idea of obtaining \eqref{eq:winfbound} is to perform an energy estimate on an intermediate energy level between $\frac\alpha2$ and $\alpha$. To proceed, we choose the energy level $1$, and estimate $\|\pa_xu\|_{L^2}^2$.
  
  Take the  $L^2$ inner product of \eqref{FPE-1} with $-\pa_x^2u$ to get
    \begin{align*}
        &\frac12\ddt \|\pa_x u\|_{L^2}^2 + \nu_h \|\lh^{1+\frac{\alpha}{2}} u\|_{L^2}^2 = -\int_{\Omega} \pa_xu \cdot \pa_x(u\pa_xu + w\pa_zu) \dxdz\\
        & = -\int_\Omega \Big(\pa_xu\cdot (u\pa_x+w\pa_z)(\pa_xu) + (\pa_xu)^3 + \pa_xu \pa_xw \pa_zu \Big)\dxdz:= J_{10} + J_{11} + J_{12}.
    \end{align*}
    
  The term $J_{10}$ vanishes by incompressibility \eqref{FPE-3}:
  \begin{equation}\label{eq:J10}
J_{10}=-\int_\Omega (u\pa_x+w\pa_z)\Big(\frac{(\pa_xu)^2}{2}\Big)\dxdz = \int_\Omega(\pa_xu+\pa_zw)\,\frac{(\pa_xu)^2}{2}\dxdz = 0.  	
  \end{equation}
  The term $J_{11}$ can be handled by:
\[
 J_{11}=-\int_\Omega (\pa_xu)^3 \dxdz = \int \lh^{\frac{3-\alpha}2} Hu \cdot\lh^{\frac{\alpha-1}2}\big((\pa_xu)^2\big) \dxdz \lesssim \|\lh^{\frac{3-\alpha}2} \omega\|_{L^2} \big\|\lh^{\frac{\alpha-1}2}\big((\pa_xu)^2\big) \big\|_{L^2},
\]
applying the Poincar\'e inequality in $z$. Then, we further use Leibniz's rule and interpolation to estimate
\[
 \big\|\lh^{\frac{\alpha-1}2}\big((\pa_xu)^2\big)
 \big\|_{L^2}  \lesssim \|\pa_xu\|_{L^\infty} \|\lh^{\frac{\alpha+1}2} u\|_{L^2} \lesssim
 \|\pa_xu\|_{L^2} \|\lh^{1+\frac\alpha2} u\|_{L^2},
\]
and it yields
\begin{equation}\label{eq:J11}
 J_{11}\lesssim \|\lh^{\frac{3-\alpha}2} \omega\|_{L^2}	\|\pa_xu\|_{L^2} \|\lh^{1+\frac\alpha2} u\|_{L^2}\leq \frac14\nu_h\|\lh^{1+\frac\alpha2} u\|_{L^2}^2 + C\|\lh^{\frac{3-\alpha}2} \omega\|_{L^2}^2 \|\pa_xu\|_{L^2}^2.
\end{equation}
Next, for $J_{12}$, we apply anisotropic estimates:
\begin{align}
 J_{12} & = -\int_\Omega \pa_xw\,\omega\,\pa_xu \dxdz = \int_\Omega \lh^{\frac12}Hw\cdot \lh^{\frac12}(\omega\pa_xu)\dxdz 
 \leq \int_\T \|\lh^{\frac12}Hw\|_{L^\infty_z} \|\lh^{\frac12}(\omega\pa_xu)\|_{L^1_z}\,\dd x\nonumber\\
 & \lesssim  \Big\|\|\lh^{\frac32}u\|_{L^2_z}\Big\|_{L^p_x} \Big\|\|\lh^{\frac12}(\omega\pa_xu)\|_{L^1_z}\Big\|_{L^q_x} 
 \lesssim \Big\|\|\lh^{\frac32}u\|_{L^p_x}\Big\|_{L^2_z} \Big\|\|\lh^{\frac12}(\omega\pa_xu)\|_{L^q_x}\Big\|_{L^1_z},\label{eq:J12-1}
\end{align}
where we choose $\frac1p = 1-\frac\alpha2$ and $\frac1q=\frac\alpha2$.
By making use of Sobolev embeddings and interpolation inequalities, we have
\[\|\lh^{\frac32}u\|_{L^p_x}\lesssim\|\lh^{1+\frac\alpha2}u\|_{L^2_x}\]
and
\[
  \|\lh^{\frac12} (\omega \pa_xu)\|_{L^q_x} \lesssim \|\omega\|_{L^\infty_x} \|\lh^{\frac32} u\|_{L^{\frac2\alpha}_x} + \|\lh^{\frac12} \omega\|_{L^{\frac{2}{\alpha-1}}_x} \|\pa_xu\|_{L^2_x}\lesssim \|\lh^{2-\frac\alpha2}u\|_{L^2_x}+\|\lh^{\frac{3-\alpha}2} \omega\|_{L^2_x}\|\pa_xu\|_{L^2_x}. 
\]
Therefore,
\begin{align}
 J_{12} &\lesssim \|\lh^{1+\frac\alpha2}u\|_{L^2}	\Big(\|\lh^{2-\frac\alpha2}u\|_{L^2}+\|\lh^{\frac{3-\alpha}2} \omega\|_{L^2}\|\pa_xu\|_{L^2}\Big)\nonumber\\
 &\leq \frac14\nu_h\|\lh^{1+\frac\alpha2} u\|_{L^2}^2 + C\big(1+\|\lh^{\frac{3-\alpha}2} \omega\|_{L^2}^2\big) \|\pa_xu\|_{L^2}^2.\label{eq:J12}
\end{align}

Combining the estimates \eqref{eq:J10}, \eqref{eq:J11} and \eqref{eq:J12} would yield:
\[
\ddt \|\pa_x u\|_{L^2}^2 + \nu_h \|\lh^{1+\frac{\alpha}{2}} u\|_{L^2}^2 \lesssim \big(1+\|\lh^{\frac{3-\alpha}2} \omega\|_{L^2}^2\big) \|\pa_xu\|_{L^2}^2.
\]
Applying Gr\"onwall's inequality, and the criterion \eqref{eq:BKM}, we obtain the bounds:
\[
  \pa_xu \in L^\infty(0,T; L^2),\quad \text{and}\quad
  \lh^{1+\frac{\alpha}{2}} u \in L^2(0,T; L^2).
\]
Finally, using the estimate \eqref{eq:wenergylevel} with $\epsilon=\frac{\alpha-1}2$, we conclude with
\[\int_0^T\|w(t)\|_{L^\infty}^2\,\dd t\lesssim \int_0^T\|\lh^{1+\frac{\alpha}{2}} u(t)\|_{L^2}^2\,\dd t=\|\lh^{1+\frac{\alpha}{2}} u\|_{L^2(0,T;L^2)}^2<\infty.\]
\end{proof}

Proposition~\ref{thm:E2} follows directly from Proposition~\ref{prop:E2} and Proposition~\ref{prop:winf}.

\subsection{Local and global well-posedness for classical solutions}
In this section, we establish the local and global well-posedness theory for the FPE system \eqref{FPE} at the energy level $\alpha$. We refer to solutions that lie in this energy level  as \emph{classical solutions}.

\begin{definition}[Classical solution]\label{def:classical}
    Let $T>0$, and 
    \begin{equation}\label{eq:smoothinit}
 u_0\in \D(\lh^\alpha)\cap\H, \quad \omega_0 = \pa_z u_0 \in \D(\lh^{\frac\alpha2})\cap L^\infty, \quad \pa_z\omega_0\in L^2.
    \end{equation}
	We say $u$ is a \emph{classical solution} of \eqref{FPE} on $[0,T]$ with initial condition $u(0)=u_0$ if 
    \begin{align*}
        &u\in C([0,T];\D(\lh^\alpha)\cap\H)\cap L^2(0, T; \D(\lh^{\frac{3\alpha}2})),
        \\
        &\omega=\pa_z u\in L^\infty(0, T; L^\infty)\cap C([0, T];  \D(\lh^{\frac\alpha2}))\cap L^2(0, T; \D(\lh^\alpha)),\\
        &\pa_z\omega\in C([0, T]; L^2)\cap L^2(0, T; \D(\lh^{\frac\alpha2})).        
    \end{align*}
\end{definition}

We first state the uniqueness result for classical solutions, which is a direct consequence of Theorem \ref{thm:uniqueness}.
\begin{theorem}[Uniqueness of classical solution]\label{thm:uniqueness2}
 For $\alpha\in(1,2)$, classical solutions to the FPE system \eqref{FPE} are unique.	
\end{theorem}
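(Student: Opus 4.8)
The plan is to deduce uniqueness of classical solutions directly from Theorem~\ref{thm:uniqueness}, by verifying that any classical solution automatically satisfies the extra a priori condition \eqref{eq:uniqueness} needed in the regime $\alpha\in(1,\tfrac32)$. The case $\alpha\in[\tfrac32,2)$ requires nothing new: Theorem~\ref{thm:uniqueness} already gives unconditional uniqueness of strong solutions, and every classical solution is in particular a strong solution (its regularity class in Definition~\ref{def:classical} is strictly stronger than that in Definition~\ref{def:strong}), so uniqueness is immediate.

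For $\alpha\in(1,\tfrac32)$, the point is that a classical solution lives at the energy level $\alpha$, whereas the hypothesis \eqref{eq:uniqueness} only asks for control at the energy level $\tfrac32-\alpha$ on $\omega$ in $L^\infty_t$ and $\tfrac{3-\alpha}2$ in $L^2_t$. Since $\alpha>1$ implies $\alpha>\tfrac32-\alpha$ and $\tfrac\alpha2>\tfrac{3-\alpha}{2}$ is false in general — wait, one must be careful here: $\tfrac\alpha2\ge\tfrac{3-\alpha}2$ is equivalent to $\alpha\ge\tfrac32$, which fails in this regime. So the relevant comparison is between the classical energy level $\alpha$ and the level $\tfrac{3-\alpha}2$; since $\alpha\ge\tfrac{3-\alpha}2$ iff $\alpha\ge1$, the classical regularity does dominate. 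Concretely, from Definition~\ref{def:classical} we have $\omega\in L^\infty(0,T;\D(\lh^{\frac\alpha2}))\cap L^2(0,T;\D(\lh^\alpha))$ and $\pa_z\omega\in L^2(0,T;\D(\lh^{\frac\alpha2}))$. By interpolation in the horizontal frequency, $\|\lh^{\frac32-\alpha}\omega\|_{L^2}\lesssim\|\lh^{\frac\alpha2}\omega\|_{L^2}$ for $\tfrac32-\alpha\le\tfrac\alpha2$, i.e. $\alpha\ge1$, giving $\omega\in L^\infty(0,T;\D(\lh^{\frac32-\alpha}))$; and similarly $\|\lh^{\frac{3-\alpha}2}\omega\|_{L^2}\lesssim\|\lh^\alpha\omega\|_{L^2}$ provided $\tfrac{3-\alpha}2\le\alpha$, again $\alpha\ge1$, so $\omega\in L^2(0,T;\D(\lh^{\frac{3-\alpha}2}))$. (Here the Poincar\'e-type control on $\T$ with zero-mean-in-$x$ of $\omega$—or simply the finiteness of the full $L^2$ norm—lets one bound lower-order horizontal derivatives by higher ones up to a constant.) Thus \eqref{eq:uniqueness} holds, and Theorem~\ref{thm:uniqueness} yields at most one classical solution.

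The only mild subtlety, and the step I would be most careful with, is the comparison of exponents: one must check that for \emph{all} $\alpha\in(1,\tfrac32)$ both $\tfrac32-\alpha\le\tfrac\alpha2$ and $\tfrac{3-\alpha}2\le\alpha$ hold (both reduce to $\alpha\ge1$, so they do), so that the horizontal fractional derivatives appearing in \eqref{eq:uniqueness} are genuinely of lower order than those controlled in Definition~\ref{def:classical}. Once this is confirmed, the proof is a short two-line deduction: classical $\Rightarrow$ strong $+$ \eqref{eq:uniqueness} $\Rightarrow$ uniqueness by Theorem~\ref{thm:uniqueness}. I would write it as: for $\alpha\in[\tfrac32,2)$ invoke the first bullet of Theorem~\ref{thm:uniqueness} directly; for $\alpha\in(1,\tfrac32)$ observe that Definition~\ref{def:classical} implies \eqref{eq:uniqueness} via the interpolation inequalities above, then invoke the second bullet.
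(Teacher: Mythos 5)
Your proof is correct and matches the paper's approach exactly: the paper states Theorem~\ref{thm:uniqueness2} as ``a direct consequence of Theorem~\ref{thm:uniqueness}'', which is precisely the two-step reduction you carry out. The exponent comparisons $\tfrac32-\alpha\le\tfrac\alpha2$ and $\tfrac{3-\alpha}2\le\alpha$ (both equivalent to $\alpha\ge1$) are the right ones and show that the regularity in Definition~\ref{def:classical} implies the hypothesis \eqref{eq:uniqueness} needed for $\alpha\in(1,\tfrac32)$, while for $\alpha\in[\tfrac32,2)$ the first bullet of Theorem~\ref{thm:uniqueness} applies directly.
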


Next, we focus on the existence of classical solutions, starting with a local existence result.
\begin{theorem}[Local existence of classical solution]\label{thm:LWP}
	Let $\alpha\in(1,2)$. Suppose $u_0$ satisfies \eqref{eq:smoothinit}. Then there exists a time $T>0$ such that classical solution to \eqref{FPE} with $u(0)=u_0$ exists on $[0,T]$. 
\end{theorem}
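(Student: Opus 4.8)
The plan is to combine the a priori energy estimates already derived for the energy level $\alpha$ with a Galerkin (or viscous-regularization) approximation scheme, exactly as in the proof of Theorem~\ref{thm:sub-1}, but now closing the estimates at the higher energy level $E_2$ on a short time interval rather than globally. The crucial observation is that Proposition~\ref{prop:E2} gives the differential inequality
\[
\ddt E_2 + \nu_h\Et_3\lesssim \Big(\|\omega_0\|_{L^\infty}^{\frac{\alpha}{\alpha-1}}+\|\omega_0\|_{L^\infty}^2+\|w\|_{L^\infty}^2+\|\lh^{\frac{3-\alpha}2}\omega\|_{L^2}^2\Big)E_2,
\]
and, by the Poincaré and Sobolev estimate \eqref{eq:wenergylevel} together with interpolation (using $\frac{3-\alpha}{2}<\alpha$ since $\alpha>1$), both $\|w\|_{L^\infty}^2$ and $\|\lh^{\frac{3-\alpha}2}\omega\|_{L^2}^2$ are bounded by $C(1+E_2)^{\theta}$ for some exponent $\theta>1$ depending on $\alpha$. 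Hence $E_2$ satisfies a Riccati-type inequality $\ddt E_2\lesssim C(\|\omega_0\|_{L^\infty})(1+E_2)^{\theta}$, which guarantees that $E_2(t)$ remains finite, and $\Et_3\in L^1(0,T)$, on a time interval $[0,T]$ with $T=T\big(\|u_0\|_{\D(\lh^\alpha)},\|\omega_0\|_{L^\infty},\alpha\big)>0$ depending only on the initial data through \eqref{eq:smoothinit}.

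The construction itself proceeds in standard steps. First I would set up the regularized system \eqref{FPE-reg} with $u^\varepsilon(0)=\mathcal J_\varepsilon u_0$, for which global smooth solutions $u^\varepsilon$ exist by \cite{cao2007global}. Since mollification does not increase the relevant norms, the initial energy $E_2(u^\varepsilon(0))$ is bounded uniformly in $\varepsilon$, and the a priori estimate above — which is valid for $u^\varepsilon$ uniformly in $\varepsilon$ because the extra term $-\varepsilon\Delta u^\varepsilon$ only contributes a favorable sign — yields uniform bounds
\[
u^\varepsilon\ \text{bounded in}\ L^\infty(0,T;\D(\lh^\alpha)\cap L^\infty\cap\H)\cap L^2(0,T;\D(\lh^{\frac{3\alpha}2})),
\]
together with the corresponding bounds on $\omega^\varepsilon=\pa_z u^\varepsilon$ and $\pa_z\omega^\varepsilon$ dictated by the three components of $E_2$ and $\Et_3$. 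Next I would derive a uniform bound on $\pa_t u^\varepsilon$ in, say, $L^2(0,T;L^2)$ (or a slightly negative Sobolev space) by testing \eqref{FPE-1-reg} and using the a priori bounds exactly as in the proof of Theorem~\ref{thm:sub-1}; the anisotropic structure of $w^\varepsilon\pa_z u^\varepsilon$ is handled as there. Then Banach--Alaoglu plus the Aubin--Lions compactness lemma (using compactness of $\D(\lh^\alpha)\cap L_x^2 H_z^1\hookrightarrow$ intermediate space $\hookrightarrow L^2$) gives a subsequence converging weakly-$*$ and strongly enough to pass to the limit in all nonlinear terms — the passage to the limit in $u^\varepsilon\pa_x u^\varepsilon$ and $w^\varepsilon\pa_z u^\varepsilon$ being identical in form to the computations already carried out in Theorem~\ref{thm:sub-1} — so the limit $u$ solves \eqref{FPE} in the strong sense on $[0,T]$ and inherits the regularity \eqref{eq:Energy2}. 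Finally, the time-continuity statements in Definition~\ref{def:classical} (i.e.\ $u\in C([0,T];\D(\lh^\alpha))$, $\omega\in C([0,T];\D(\lh^{\frac\alpha2}))$, $\pa_z\omega\in C([0,T];L^2)$) follow from the $L^\infty_t$ bounds on $E_2$, the $L^1_t$ bound on $\Et_3$, and $\pa_t u\in L^2_t H^{-1}$ via the standard weak-continuity-plus-norm-continuity argument (Lions--Magenes), upgrading weak continuity to strong continuity; uniqueness is already provided by Theorem~\ref{thm:uniqueness2}.

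The main obstacle is verifying that the right-hand side of the $E_2$ inequality genuinely closes in terms of $E_2$ alone — that is, making sure that $\|w\|_{L^\infty}^2$ and $\|\lh^{\frac{3-\alpha}2}\omega\|_{L^2}^2$ are controlled by a power of $E_2$ (below the dissipative scaling for short-time purposes) and that no uncontrolled derivative of $w$ or $\omega$ sneaks in. This is exactly where the anisotropic framework and the borderline interpolation $\|w\|_{L^\infty}\lesssim\|\lh^{\frac32+\epsilon}u\|_{L^2}$ must be used carefully, choosing $\epsilon$ small enough that $\frac32+\epsilon$ sits strictly between the energy level $\frac\alpha2$ and $\alpha$ — which is possible precisely because $\alpha>1$, so $\frac{3-\alpha}{2}<\alpha$. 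Once this is in place, the Riccati inequality is immediate and the existence time $T$ depends only on the size of the initial data in the norms of \eqref{eq:smoothinit}. Everything else is a routine adaptation of the compactness argument already executed for Theorem~\ref{thm:sub-1}.
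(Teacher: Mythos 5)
Your overall strategy — close an a priori bound at the level of $E_2$ via a Riccati-type ODE, then run the viscous-regularization compactness argument from Theorem~\ref{thm:sub-1} — is the same as the paper's. But there is a genuine gap in the way you claim to close the differential inequality. You assert that $\|w\|_{L^\infty}^2$ and $\|\lh^{\frac{3-\alpha}{2}}\omega\|_{L^2}^2$ are bounded by $C(1+E_2)^\theta$, justifying this with ``$\frac{3-\alpha}{2}<\alpha$ since $\alpha>1$.'' That comparison is the wrong one. In $E_2$ the highest horizontal regularity available on $\omega=\pa_z u$ is $\lh^{\frac{\alpha}{2}}\omega$ (not $\lh^{\alpha}\omega$), so bounding $\|\lh^{\frac{3-\alpha}{2}}\omega\|_{L^2}$ by a power of $E_2$ requires $\frac{3-\alpha}{2}\le\frac{\alpha}{2}$, i.e.\ $\alpha\ge\frac32$; it has nothing to do with $\frac{3-\alpha}{2}<\alpha$. (Anisotropic interpolation inside $E_2$ cannot manufacture more: the three components $(\lh^\alpha,\pa_z^0)$, $(\lh^{\alpha/2},\pa_z^1)$, $(\lh^0,\pa_z^2)$ are collinear, so for a term with one $\pa_z$ the only reachable $\lh$-power is exactly $\frac{\alpha}{2}$.) Similarly $\|w\|_{L^\infty}\lesssim\|\lh^{\frac32+\epsilon}u\|_{L^2}$, and $\frac32+\epsilon\le\alpha$ forces $\alpha>\frac32$. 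So for $\alpha\in(1,\frac32]$ — which is part of the range covered by the theorem — these two forcing terms are \emph{not} controlled by any power of $E_2$ alone, and your Riccati inequality does not hold as written.

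The fix is what the paper does: interpolate these terms between $E_2$ and the dissipation $\Et_3$ rather than between two pieces of $E_2$. Taking $\epsilon=\frac{\alpha-1}{2}$ gives $\|w\|_{L^\infty}^2+\|\lh^{\frac{3-\alpha}{2}}\omega\|_{L^2}^2\lesssim\|\lh^{\frac{\alpha}{2}+1}u\|_{L^2}^2+\|\lh\omega\|_{L^2}^2\lesssim E_2^{\frac{2(\alpha-1)}{\alpha}}\Et_3^{\frac{2-\alpha}{\alpha}}$, valid for all $\alpha\in(1,2)$. Plugging into \eqref{eq:E2bound} produces a term $E_2^{1+\frac{2(\alpha-1)}{\alpha}}\Et_3^{\frac{2-\alpha}{\alpha}}$; Young's inequality then absorbs the $\Et_3$-factor into the dissipation on the left, leaving $\ddt E_2+\frac12\nu_h\Et_3\lesssim E_2+E_2^{\frac{3\alpha-2}{2(\alpha-1)}}$, and Cauchy--Lipschitz gives the local time $T>0$. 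This absorption step is not optional: it is precisely what keeps the argument alive in the energy-supercritical window $1<\alpha<\frac32$. The remainder of your outline — the regularization scheme, uniform bounds, Aubin--Lions, passage to the limit, time-continuity, and uniqueness via Theorem~\ref{thm:uniqueness2} — is correct and matches the paper's cited construction.
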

\begin{proof}
 We carry out a priori energy estimates. Applying \eqref{eq:wenergylevel} with $\epsilon=\frac{\alpha-1}{2}$ and get
 \[\|w\|_{L^\infty}^2+\|\lh^{\frac{3-\alpha}2}\omega\|_{L^2}^2\lesssim \|\lh^{\frac\alpha2+1}u\|_{L^2}^2+\|\lh\omega\|_{L^2}^2\lesssim E_2^{\frac{2(\alpha-1)}{\alpha}}\Et_3^{\frac{2-\alpha}{\alpha}},\]
where we have used Poincar\'e and interpolation inequalities, respectively.
Plugging into \eqref{eq:E2bound} and applying Young's inequality, we have the a priori bound
\[
 \ddt E_2 + \nu_h\Et_3\lesssim (\|\omega_0\|_{L^\infty}^{\frac\alpha{\alpha-1}}+\|\omega_0\|_{L^\infty}^2)E_2+E_2^{1+\frac{2(\alpha-1)}{\alpha}}\Et_3^{\frac{2-\alpha}{\alpha}}\leq \frac12\nu_h\Et_3+C\big(E_2+E_2^{\frac{3\alpha-2}{2(\alpha-1)}}\big).
\]
By standard Cauchy-Lipschitz theory, there exists a time $T>0$ such that \eqref{eq:Energy2} holds, namely
\[E_2\in C([0,T]),\quad\text{and}\quad \Et_3\in L^1(0,T).\]
With the a priori bounds, we can construct a classical solution via an argument similar to that in Theorem~\ref{thm:sub-1}.
\end{proof}

Global existence for small initial data then follows as an immediate consequence.
\begin{theorem}[Global existence of classical solution: small initial data]\label{thm:GWPsmall}
	Let $\alpha\in(1,2)$. Suppose $u_0$ satisfies \eqref{eq:smoothinit}. There exists a small constant $\delta\in(0,1)$, such that if
		\[E_2(0)=\|\lh^\alpha u_0\|_{L^2}^2 + \|\lh^{\frac\alpha2}\omega_0\|_{L^2}^2 + \|\pa_z\omega_0\|_{L^2}^2\leq\delta,\quad\text{and}\quad\|\omega_0\|^2_{L^\infty}\leq\delta,\]
	 then a classical solution 
     to \eqref{FPE} with $u(0)=u_0$ exists globally in time, with
	 \[E_2(t)\leq\delta,\quad\forall~t\geq0.\] 
\end{theorem}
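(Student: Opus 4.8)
The plan is to run a bootstrap (continuity) argument on the energy $E_2(t)$, using the differential inequality already obtained in Proposition~\ref{prop:E2} together with the local existence theorem. First I would fix $\delta\in(0,1)$ small (to be chosen) and assume the two smallness hypotheses $E_2(0)\le\delta$ and $\|\omega_0\|_{L^\infty}^2\le\delta$. By Theorem~\ref{thm:LWP} there is a maximal time $T^*>0$ of existence of the classical solution; define $T_\delta:=\sup\{T\in(0,T^*): E_2(t)\le\delta\ \text{for all }t\in[0,T]\}$. By continuity of $E_2$ and $E_2(0)\le\delta$ we have $T_\delta>0$, and the goal is to show that in fact $T_\delta=T^*=\infty$, i.e. the bound $E_2(t)\le\delta$ never becomes an equality and the solution cannot blow up.

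On the interval $[0,T_\delta)$ I would combine the a priori bound \eqref{eq:E2bound} from Proposition~\ref{prop:E2} with the interpolation estimate already used in the proof of Theorem~\ref{thm:LWP}, namely
\[
\|w\|_{L^\infty}^2+\|\lh^{\frac{3-\alpha}2}\omega\|_{L^2}^2\lesssim \|\lh^{\frac\alpha2+1}u\|_{L^2}^2+\|\lh\omega\|_{L^2}^2\lesssim E_2^{\frac{2(\alpha-1)}{\alpha}}\,\Et_3^{\frac{2-\alpha}{\alpha}}\le \eta\,\Et_3 + C_\eta\, E_2^{\frac{3\alpha-2}{2(\alpha-1)}},
\]
where Young's inequality is applied with a small parameter $\eta$. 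Note that the $\|\omega_0\|_{L^\infty}$-dependence in \eqref{eq:E2bound} was tracked precisely so that the first factor on the right-hand side of \eqref{eq:E2bound} is bounded by $C(\|\omega_0\|_{L^\infty}^{\frac{\alpha}{\alpha-1}}+\|\omega_0\|_{L^\infty}^2)\le C\delta^{\min(\frac{\alpha}{2(\alpha-1)},1)}$, which is small. Plugging these in and absorbing the $\eta\,\Et_3\, E_2$ contribution into the dissipation (legitimate since $E_2\le\delta<1$), I obtain on $[0,T_\delta)$ an inequality of the form
\[
\ddt E_2 + \frac{\nu_h}{2}\Et_3 \le C\delta^{\theta}\, E_2 + C\, E_2^{\,1+\sigma},
\]
for some exponents $\theta,\sigma>0$ depending on $\alpha$. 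Using the anisotropic Poincaré-type comparison $E_2\lesssim \Et_3$ (since every term of $E_2$ other than $\|\pa_z^2u\|_{L^2}^2$ is dominated by the corresponding term of $\Et_3$, and $\|\pa_z^2 u\|_{L^2}=\|\pa_z\omega\|_{L^2}$ is controlled via Poincaré in $z$ from $\|\pa_z\omega\|_{L^2}\lesssim\|\lh^{\frac\alpha2}\pa_z\omega\|_{L^2}$ — here one must be slightly careful, as Remark~\ref{rem:Poincare} warns, but $\pa_z\omega=\pa_z^2u$ does have zero mean in $z$ when $u\in\H$ and $\omega=\pa_z u$, so Poincaré applies), I can replace $E_2$ on the right by a multiple of $\Et_3$ and close the estimate: for $\delta$ small enough the dissipation dominates, giving $\ddt E_2 \le 0$, hence $E_2(t)\le E_2(0)\le\delta$ with room to spare, and moreover $\int_0^{T_\delta}\Et_3\,dt<\infty$.

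The bootstrap then closes: since $E_2$ stays strictly below $\delta$ (or at worst $\le\delta$) and $\Et_3\in L^1_{\mathrm{loc}}$, the regularity criterion \eqref{eq:BKM} holds on $[0,T_\delta)$, so by Proposition~\ref{thm:E2} the solution extends past $T_\delta$ as a classical solution, contradicting maximality unless $T_\delta=T^*=\infty$. Therefore $E_2(t)\le\delta$ for all $t\ge0$ and the classical solution is global. The main obstacle I anticipate is the bookkeeping in the interpolation/Young step: one must verify that the exponent $\sigma=\frac{2(\alpha-1)}{\alpha}\cdot\frac{?}{?}$ coming from $E_2^{1+\sigma}$ is positive for all $\alpha\in(1,2)$ (so that smallness of $E_2$ genuinely helps), that the $\Et_3$ power $\frac{2-\alpha}{\alpha}<1$ (true precisely because $\alpha>1$), and that the subtle Poincaré step for $\pa_z\omega$ is justified — this last point is the place where the anisotropic structure and the zero-mean condition $u\in\H$ must be used carefully, since Remark~\ref{rem:Poincare} explicitly flags that Poincaré can fail for $\pa_z^k u$.
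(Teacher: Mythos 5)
Your overall strategy — a barrier/bootstrap argument on $E_2$, aiming to show $\ddt E_2 < 0$ whenever $E_2$ reaches the threshold $\delta$, starting from the differential inequality of Proposition~\ref{prop:E2} — is the same as the paper's. However, there is a genuine gap at precisely the step you flag as "subtle": the Poincaré comparison $E_2\lesssim \Et_3$.

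You justify $\|\pa_z\omega\|_{L^2}\lesssim\|\lh^{\frac\alpha2}\pa_z\omega\|_{L^2}$ by saying $\pa_z\omega=\pa_z^2 u$ has zero mean in $z$. This is wrong on two counts. First, zero mean in $z$ is not what that Poincaré inequality requires: $\lh^{\frac\alpha2}$ is a derivative in $x$, so the relevant mean is $\int_\T \pa_z\omega\,\dd x = \pa_z^2\int_\T u\,\dd x$, and $u\in\H$ (zero mean in $z$) does not force this quantity to vanish. Second, $\int_0^1 \pa_z^2 u\,\dd z = \pa_z u|_{z=1}-\pa_z u|_{z=0}$ is also not zero in general (the boundary condition is $w=0$, not $\pa_z u=0$). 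So the comparison $E_2\lesssim\Et_3$ fails, exactly as Remark~\ref{rem:Poincare} warns, and without it your inequality $\ddt E_2 \le C\delta^\theta E_2 + CE_2^{1+\sigma}$ only shows at best slow exponential growth, not a barrier.

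The paper gets around this by a sharper bookkeeping in Proposition~\ref{prop:E2}: it observes that the terms which genuinely involve $\|\pa_z\omega\|_{L^2}^2$ (namely $\II_{22}$ and $\II_{32}$) carry a coefficient $\|w\|_{L^\infty}^2+\|\lh^{\frac{3-\alpha}2}\omega\|_{L^2}^2$, which is itself bounded by $C_1\Et_3$ by Poincaré/Sobolev (applicable because $w$ and $\omega$ are mean-zero in the relevant direction). The remaining terms, which carry the small coefficient $\|\omega_0\|_{L^\infty}^{\frac{\alpha}{\alpha-1}}+\|\omega_0\|_{L^\infty}^2$, only involve $\Et_2$, not all of $E_2$, and $\Et_2\le C_2\Et_3$ is a legitimate Poincaré. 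The refined estimate is therefore
\[
\ddt E_2 + \nu_h\Et_3\leq C\Big(\|\omega_0\|_{L^\infty}^\frac{\alpha}{\alpha-1}+\|\omega_0\|_{L^\infty}^2\Big)\Et_2 + C\Big(\|w\|_{L^\infty}^2+\|\lh^{\frac{3-\alpha}2}\omega\|_{L^2}^2\Big) E_2,
\]
which gives $\ddt E_2\le\big(-\nu_h + C(\delta^{\frac{\alpha}{2(\alpha-1)}}+\delta)C_2 + CC_1 E_2\big)\Et_3$, and the barrier closes for $\delta$ small. To repair your argument you would need this separation of $E_2 = \Et_2 + \|\pa_z\omega\|_{L^2}^2$ and the corresponding refined inequality, rather than the wholesale replacement $E_2\lesssim\Et_3$.
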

\begin{proof}
    Recall the estimate \eqref{eq:E2bound}:
    \[
    	\ddt E_2 + \nu_h\Et_3\lesssim \Big(\|\omega_0\|_{L^\infty}^\frac{\alpha}{\alpha-1}+\|\omega_0\|_{L^\infty}^2+\|w\|_{L^\infty}^2+\|\lh^{\frac{3-\alpha}2}\omega\|_{L^2}^2\Big) \big(\Et_2 + \|\pa_z\omega\|_{L^2}^2\big).
    \]
    We revisit the proof of Proposition~\ref{prop:E2} and observe that the terms $\II_{22}$ and $\II_{32}$ that involves $\|\pa_z\omega\|_{L^2}^2$ do not depend on $\omega_0$. This leads to a refined estimate:
    \[
    	\ddt E_2 + \nu_h\Et_3\leq C\Big(\|\omega_0\|_{L^\infty}^\frac{\alpha}{\alpha-1}+\|\omega_0\|_{L^\infty}^2\Big)\Et_2 + C\Big(\|w\|_{L^\infty}^2+\|\lh^{\frac{3-\alpha}2}\omega\|_{L^2}^2\Big) E_2.
    \]
    Moreover, we have the following two Poincar\'e inequalities:
    \[
      \|w\|_{L^\infty}^2+\|\lh^{\frac{3-\alpha}2}\omega\|_{L^2}^2\leq C_1 \Et_3,\quad\text{and}\quad
      \Et_2\leq C_2\Et_3.
    \]
	Note that we need to separate $\|\pa_z\omega\|_{L^2}^2$ from $E_2$ as the Poincar\'e inequality $E_2\lesssim \Et_3$ does not apply, due to the fact that $\pa_z \omega$ does not necessarily have a zero mean in the $x$-variable.
      Then, we obtain
	\[
	 \ddt E_2\leq -\nu_h \Et_3 + C\cdot(\delta^{\frac{\alpha}{2(\alpha-1)}}+\delta)\cdot C_2\Et_3 + C\cdot C_1\Et_3\cdot E_2.
	\]
	Pick a small $\delta<\min\{1,\frac{\nu_h}{C(C_1+2C_2)}\}$. Then, whenever $E_2(t)=\delta$, we have
	\[
	 \ddt E_2\leq \Big(-\nu_h+ C(C_1+2C_2)\delta\Big)\Et_3<0.
	\]
	Therefore, $E_2$ cannot go beyond $\delta$.
\end{proof}

Next, we move to global existence of classical solutions for large initial data. According to Proposition~\ref{thm:E2}, global existence is conditional on the regularity criterion \eqref{eq:BKM} for any $T>0$, which requires control of the energy level $\frac{3-\alpha}{2}$. This criterion is automatically satisfied when 
\[\frac\alpha2 \geq \frac{3-\alpha}{2},\quad\text{or equivalently}\quad \alpha\geq\frac32,\]
thanks to the a priori bound \eqref{eq:Energy1} on the energy level  $\frac\alpha2$. We call this case the \emph{energy subcritical / critical} regime, in which global well-posedness follows. 

\begin{theorem}[Global existence of classical solution: large initial data]\label{thm:GWPlarge}
	Let $\alpha\in[\frac32,2)$. Suppose $u_0$ satisfies \eqref{eq:smoothinit}. Then there exists a global classical solution  
    to \eqref{FPE} with $u(0)=u_0$.
\end{theorem}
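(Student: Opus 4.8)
The plan is to combine the local existence theorem (Theorem~\ref{thm:LWP}), the conditional a priori bound (Proposition~\ref{thm:E2}), and a standard continuation argument, exploiting the key observation that when $\alpha\geq\frac32$ the regularity criterion \eqref{eq:BKM} is \emph{automatically} satisfied by the a priori bounds already established at the energy levels $0$ and $\frac\alpha2$.

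\textbf{Step 1: the regularity criterion \eqref{eq:BKM} holds for free.} For $\alpha\in[\frac32,2)$ one has $0<\frac{3-\alpha}{2}\leq\frac\alpha2$, hence $|k|^{\frac{3-\alpha}2}\leq|k|^{\frac\alpha2}$ for every $k\in\Z$, and therefore $\|\lh^{\frac{3-\alpha}2}\omega\|_{L^2}\leq\|\lh^{\frac\alpha2}\omega\|_{L^2}$ pointwise in $t$. Using the identity \eqref{est:omega-1}, for any $T>0$ on which a classical solution exists,
\[
\int_0^T\|\lh^{\frac{3-\alpha}2}\omega(t)\|_{L^2}^2\,\dd t\leq\int_0^T\|\lh^{\frac\alpha2}\omega(t)\|_{L^2}^2\,\dd t\leq\frac{1}{2\nu_h}\|\omega_0\|_{L^2}^2<\infty.
\]
Together with $\|\pa_xu_0\|_{L^2}=\|\lh u_0\|_{L^2}\leq\|\lh^\alpha u_0\|_{L^2}<\infty$ (from \eqref{eq:smoothinit} and $\alpha\geq1$), Proposition~\ref{thm:E2} — i.e.\ the combination of Propositions~\ref{prop:E2} and~\ref{prop:winf} — then gives $E_2\in L^\infty(0,T)$ and $\Et_3\in L^1(0,T)$, i.e.\ $Y_2\in L^\infty(0,T)$, with a bound depending only on $T$, $\nu_h$, $\alpha$ and the norms of $u_0$ appearing in \eqref{eq:smoothinit}.

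\textbf{Step 2: continuation to a global solution.} By Theorem~\ref{thm:LWP} let $T^\ast\in(0,\infty]$ be the maximal existence time of the classical solution with $u(0)=u_0$, and suppose for contradiction that $T^\ast<\infty$. By Step 1, $M:=\sup_{t\in[0,T^\ast)}E_2(t)<\infty$. The local existence time furnished by Theorem~\ref{thm:LWP} depends only on $E_2$ at the initial instant and on $\|\omega_0\|_{L^\infty}$; since, by the maximum principle \eqref{eq:omegaMP}, $\|\omega(t)\|_{L^\infty}\leq\|\omega_0\|_{L^\infty}$ for all $t$, there is a uniform $\tau=\tau(M,\|\omega_0\|_{L^\infty},\nu_h,\alpha)>0$ such that restarting from any $t_0\in[0,T^\ast)$ extends the solution to a classical solution on $[t_0,t_0+\tau]$. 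Choosing $t_0\in(T^\ast-\tau,T^\ast)$ contradicts the maximality of $T^\ast$; hence $T^\ast=\infty$ and the solution is global. Uniqueness follows from Theorem~\ref{thm:uniqueness2}, since $\alpha>1$.

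\textbf{Main obstacle.} The genuine analytic difficulty — the anisotropic estimates taming the vertical transport term $w\pa_zu$ and the borderline bound producing $\|w\|_{L^\infty}$ — has already been absorbed into Propositions~\ref{prop:E2} and~\ref{prop:winf}. The only point needing care here is that the continuation argument uses a \emph{uniform} time step $\tau$, which rests on the local-existence time in Theorem~\ref{thm:LWP} being controlled purely by the size of $E_2$ at the restart instant together with the globally bounded quantity $\|\omega_0\|_{L^\infty}$; no new a priori estimate is required. Consequently this regime ($\alpha\geq\frac32$, energy subcritical/critical) presents no further serious obstruction, and the remaining difficulty is confined to the energy-supercritical range $\alpha<\frac32$, treated separately in the paper.
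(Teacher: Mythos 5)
Your proof is correct and follows the same strategy as the paper: for $\alpha\geq\frac32$ one has $\frac{3-\alpha}{2}\leq\frac\alpha2$, so the Poincar\'e-type inequality combined with the a priori bound \eqref{est:omega-1}/\eqref{bound:omega-1} verifies the regularity criterion \eqref{eq:BKM}, and Proposition~\ref{thm:E2} then yields $Y_2\in L^\infty(0,T)$ for every $T$. The only difference is that you spell out the uniform-time-step continuation argument that the paper compresses into ``Global existence follows,'' which is a helpful clarification but not a different route.
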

\begin{proof}
We apply the Poincar\'e inequality and the estimate \eqref{bound:omega-1} to verify the regularity criterion \eqref{eq:BKM}:
\[\int_0^T \|\lh^{\frac{3-\alpha}2}\omega(t)\|_{L^2}^2\,\dd t\lesssim \int_0^T \|\lh^{\frac\alpha2}\omega(t)\|_{L^2}^2\,\dd t<\infty,\]
for any $T>0$. Then, Proposition~\ref{thm:E2} yields the energy control \eqref{eq:Energy2}. Global existence follows.
 \end{proof}

\subsection{Higher order energy estimate}
In this section, we derive energy bounds at higher energy levels, analogous to Proposition~\ref{thm:E2}. Notably, the regularity criterion \eqref{eq:BKM} remains sufficient to propagate higher regularity. To this end, we define \emph{smooth solutions} of order $k\geq3$ as follows:

 \begin{definition}[Smooth solution of order $k$]\label{def:smooth}
    Let $T>0$. Suppose that the initial data satisfy
   \begin{equation}\label{eq:smoothinitial}
   u_0\in\H,\quad \omega_0\in L^\infty,\quad\text{and}\quad E_j(0) < \infty,\quad\forall~j=0,1,\ldots,k.
   \end{equation}
 We say $u$ is a smooth solution of order $k$ of \eqref{FPE} on $[0,T]$ with initial condition $u(0)=u_0$ if $u$ is a classical solution and 
 \[Y_j\in L^\infty(0,T), \quad\forall~j=0,1,\ldots,k.
 \]
\end{definition}

Note that when $k = 1$ and $k = 2$, this definition is compatible with strong solutions and classical solutions, respectively.

\begin{theorem}[Global well-posedness of smooth solutions]\label{thm:smooth}
 Let $k\geq3$ and $T > 0$. Suppose the regularity criterion \eqref{eq:BKM} holds, and the initial data satisfies \eqref{eq:smoothinitial}. Then, there exists a unique smooth solution of order $k$ of \eqref{FPE} on $[0,T]$.
\end{theorem}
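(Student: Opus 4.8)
The plan is to establish the energy bounds $Y_j\in L^\infty(0,T)$ for all $j=0,\ldots,k$ by induction on $j$, with the base cases $j=0,1,2$ already supplied by \eqref{eq:Energy0}, \eqref{eq:Energy1}, and Proposition~\ref{thm:E2} (the latter using the hypothesis \eqref{eq:BKM}). The inductive step is the content of a proposition of the same flavor as Proposition~\ref{prop:E2}: assuming $Y_0,\ldots,Y_{k-1}\in L^\infty(0,T)$ and the criterion \eqref{eq:BKM}, derive a differential inequality of the form
\[
\ddt E_k + \nu_h\Et_{k+1}\lesssim \Big(\text{quantities integrable on }(0,T)\Big)\,E_k + (\text{lower-order terms already controlled}),
\]
and then conclude by Gr\"onwall. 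To set this up I would test the equation for $\lh^{\frac\alpha2 j}\pa_z^{k-j}u$ (obtained by applying $\lh^{\frac\alpha2 j}\pa_z^{k-j}$ to \eqref{FPE-1}, or equivalently differentiating the vorticity equation \eqref{eq:omega}) against itself, summing over $j=0,\ldots,k$. The top-order dissipation $\nu_h\Et_{k+1}$ appears with a good sign; the Coriolis and pressure terms drop by the same structural cancellations used throughout Section~\ref{sec:sub}; and the transport terms $u\pa_x$ and $w\pa_z$ generate, after distributing derivatives via the fractional Leibniz rule (Lemma~\ref{lem:Leibniz}) and the product structure, a sum of commutator-type terms. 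The ``diagonal'' pieces where all derivatives land on one factor cancel by incompressibility as in $J_{10}$, $B_0$; the genuinely dangerous terms are those carrying the maximal number of derivatives on $u$ together with a $\pa_z$ coming from $w=-\int_0^z\pa_xu$, which is exactly the one-horizontal-derivative loss. These are absorbed using the anisotropic estimate scheme already displayed for $\II_{32}$ and $J_{12}$: write the worst term as $\int \lh^{\frac{\alpha-1}2}(\cdots)\cdot\lh^{\frac{3-\alpha}2}H(\cdots)$, pull out the factor $\|\lh^{\frac{3-\alpha}2}\omega\|_{L^2}$ — integrable by \eqref{eq:BKM} — and bound the remaining $\lh^{\frac{\alpha-1}2}$-product by $\nu_h\Et_{k+1}$-controlled quantities times $E_k$ after fractional Leibniz and interpolation, with the intermediate factors supplied by the induction hypothesis $Y_0,\ldots,Y_{k-1}\in L^\infty(0,T)$ (in particular the a priori $L^\infty$ bounds $\|\omega_0\|_{L^\infty}$, $\|u\|_{L^\infty}$ from Proposition~\ref{prop:MP}, and the $\|w\|_{L^\infty}$ control from Proposition~\ref{prop:winf}).

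Once the differential inequality $\ddt E_k+\nu_h\Et_{k+1}\lesssim g(t)\,E_k$ with $g\in L^1(0,T)$ is in hand, Gr\"onwall yields $E_k\in L^\infty(0,T)$ and then $\Et_{k+1}\in L^1(0,T)$ by integrating, i.e. $Y_k\in L^\infty(0,T)$, closing the induction. Existence of a solution enjoying these bounds is obtained, as in Theorems~\ref{thm:sub-1} and \ref{thm:LWP}, by running the vanishing-viscosity regularization \eqref{FPE-reg}: the regularized solutions are globally smooth by \cite{cao2007global}, they satisfy the same energy inequalities uniformly in $\varepsilon$ (since all constants above are $\varepsilon$-independent), and one passes to the limit via Banach--Alaoglu for the weak-$*$ limits and Aubin--Lions for strong convergence in $C([0,T];\H)$ to identify the nonlinear terms; the time-continuity $u\in C([0,T];\D(\lh^{k\alpha/2}\cap\cdots))$ follows from the parabolic bootstrap $\pa_t u\in L^2$ in the appropriate dual space together with the energy bounds. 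Uniqueness is already guaranteed by Theorem~\ref{thm:uniqueness} (equivalently Theorem~\ref{thm:uniqueness2}) for $\alpha>1$, since a smooth solution of order $k\geq3$ is in particular a classical solution, so no new argument is needed there.

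The main obstacle is the bookkeeping of the top-order transport terms: with general $k$ one must check that among all the terms produced by $\lh^{\frac\alpha2 j}\pa_z^{k-j}(w\pa_z u)$ the only ones not immediately controlled by $\sqrt{\Et_{k+1}}\cdot\sqrt{E_k}$ (times an $L^1_t$ weight) are precisely those amenable to the $\lh^{\frac{\alpha-1}2}/\lh^{\frac{3-\alpha}2}$ splitting, and that this splitting never requires more than the criterion \eqref{eq:BKM} plus the lower-level a priori bounds — in other words, that the criterion is genuinely ``self-improving'' at every level, as asserted in Remark~\ref{rem:BKM}. A secondary subtlety, already flagged in the proof of Theorem~\ref{thm:GWPsmall}, is that the top vertical derivative $\pa_z^k u$ need not have zero $x$-mean, so the Poincar\'e step $E_k\lesssim\Et_{k+1}$ fails on that component and one must keep $\|\pa_z^k u\|_{L^2}^2$ separated and close the estimate for it directly against the dissipation $\|\lh^{\frac\alpha2}\pa_z^k u\|_{L^2}^2$ by interpolation. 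Apart from these structural points the argument is a routine, if lengthy, iteration of the $E_2$ estimate, and I would present the inductive proposition in detail for a generic level and then invoke it $k$ times.
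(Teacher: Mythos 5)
Your proposal is correct and follows essentially the same strategy as the paper: Theorem~\ref{thm:smooth} is proved via Proposition~\ref{prop:Ek}, which is exactly the inductive differential inequality you describe — testing $\lh^{\frac\alpha2 j}\pa_z^{k-j}u$ against itself, summing over $j$, handling the pure $z$-derivative transport term by the incompressibility cancellation, controlling the intermediate terms with the lower-level a priori bounds $F_{k-1}$, $\Ft_k$ from the induction hypothesis, and treating the top-order $\int\pa_x u\,(\pa_z^k u)^2$ term by the $\lh^{\frac{\alpha-1}2}/\lh^{\frac{3-\alpha}2}$ anisotropic splitting keyed to criterion~\eqref{eq:BKM}, exactly as in $\II_{32}$. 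Your observations about the failure of Poincar\'e on $\pa_z^k u$ and the self-improving nature of the criterion, as well as the existence/uniqueness conclusions via the viscous regularization and Theorem~\ref{thm:uniqueness2}, all match the paper's reasoning.
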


The proof of Theorem \ref{thm:smooth} is based on the following proposition:

\begin{proposition}[Higher order regularity]\label{prop:Ek}
 Let $k\geq 2$. Suppose the regularity criterion \eqref{eq:BKM} holds, and the initial data satisfies 
 \[\omega_0\in L^\infty,\quad\text{and}\quad
 E_j(0) < \infty,\quad \forall~j=0,1,\dots,k.\]
 Then we have the control at the energy level $\frac{k\alpha}2$, namely
 \begin{equation}\label{eq:Energyk}
 E_k\in L^\infty(0,T),\quad  \Et_{k+1}\in L^1(0,T),	 \quad \text{and therefore}\quad Y_k \in L^\infty(0,T),
\end{equation}
for any $T>0$.
 \end{proposition}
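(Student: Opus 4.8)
The plan is to argue by induction on $k\geq 2$, with the base case $k=2$ already established in Proposition~\ref{thm:E2}. Assume the bound \eqref{eq:Energyk} holds for all levels up to $k-1$; we establish it at level $k$. As in the proof of Proposition~\ref{prop:E2}, we decompose $E_k$ into its $k+1$ constituent pieces $\|\lh^{\frac\alpha2 j}\pa_z^{k-j}u\|_{L^2}^2$ for $j=0,1,\dots,k$, and estimate each separately. For the piece with $j=0$, i.e. $\|\pa_z^k u\|_{L^2}^2$, we differentiate \eqref{eq:omega} $(k-1)$-times in $z$ to get an evolution equation for $\pa_z^k u = \pa_z^{k-1}\omega$; for the pieces with $j\geq 1$, we apply $\lh^{\frac\alpha2 j}\pa_z^{k-j-1}$ to \eqref{eq:omega} (or $\lh^{\frac\alpha2 k}$ directly to \eqref{FPE-1} for $j=k$) and pair with the corresponding dissipative multiplier. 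The transport part $u\pa_x+w\pa_z$ produces, after commuting with the derivatives, two families of terms: those in which all the top-order derivatives land on $u$ or $\omega$ (handled by integration by parts using incompressibility \eqref{FPE-3}, exactly as $B_0$, $J_{10}$, $\II$-type leading terms vanish or reduce), and commutator terms where derivatives are distributed. The key structural point is that each commutator term always contains at least one factor carrying strictly fewer than the top number of derivatives, which by the inductive hypothesis lies in a controlled energy level; combined with the fractional Leibniz rule (Lemma~\ref{lem:Leibniz}), the anisotropic splitting $\int_\Omega fg\,\dxdz\le\big\|\|f\|_{L^2_x}\big\|_{L^1_z}\big\|\|g\|_{L^2_x}\big\|_{L^\infty_z}$, Poincar\'e's inequality in $z$ on $w$ and on $u$ (via $u\in\H$), and interpolation, every such term is bounded by $\frac{\nu_h}{C}\Et_{k+1} + (\text{time-integrable factor})\cdot E_k$.

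The crucial mechanism mirroring Proposition~\ref{thm:E2} is this: the most dangerous term arises from $w\pa_z$ acting so as to put $k$ vertical derivatives onto $u$ while the $x$-derivative of $w$ (equivalently a second $x$-derivative of $u$) carries the remaining regularity. Following the treatment of $\II_{32}$, we write this term as $-\int_\Omega \lh^{\frac{\alpha-1}2}\big((\pa_z^k u)^2\big)\cdot \lh^{\frac{3-\alpha}2}Hu\,\dxdz$ (up to lower-order pieces, using $\pa_x w=-\pa_z\pa_x u$ absorbed by the $z$-integration by parts), apply the fractional Leibniz rule together with $\|\lh^{\frac{\alpha-1}2}\pa_z^k u\|_{L^2_x}\|\pa_z^k u\|_{L^\infty_x}\lesssim \|\lh^{\frac\alpha2}\pa_z^k u\|_{L^2_x}\|\pa_z^k u\|_{L^2_x}$, the anisotropic Minkowski step, and the identification $\|\lh^{\frac{3-\alpha}2}Hu\|_{L^\infty_z}\lesssim \|\lh^{\frac{3-\alpha}2}\omega\|_{L^2_z}$ via Poincar\'e. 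This yields the bound $\frac14\nu_h\|\lh^{\frac\alpha2}\pa_z^k u\|_{L^2}^2 + C\|\lh^{\frac{3-\alpha}2}\omega\|_{L^2}^2\|\pa_z^k u\|_{L^2}^2$, so precisely the criterion \eqref{eq:BKM} controls it. All remaining terms involving $\|w\|_{L^\infty}$, $\|\omega\|_{L^\infty}$, or lower-level energies are absorbed using the a priori bounds \eqref{eq:omegaMP}, \eqref{eq:uLinfty}, \eqref{eq:winfbound} (which holds by Proposition~\ref{prop:winf} under \eqref{eq:BKM}), and the inductive hypothesis.

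Collecting the $k+1$ component estimates produces a differential inequality of the form
\[
\ddt E_k + \nu_h \Et_{k+1} \lesssim \Big(\|\omega_0\|_{L^\infty}^{\frac{\alpha}{\alpha-1}} + \|\omega_0\|_{L^\infty}^2 + \|w\|_{L^\infty}^2 + \|\lh^{\frac{3-\alpha}2}\omega\|_{L^2}^2 + G_{k-1}(t)\Big) E_k,
\]
where $G_{k-1}(t)$ is built from norms at energy levels $\le (k-1)\alpha/2$ and is therefore in $L^1(0,T)$ by the inductive hypothesis. Gr\"onwall's inequality, together with \eqref{eq:winfbound} and \eqref{eq:BKM}, then gives $E_k\in L^\infty(0,T)$ and $\Et_{k+1}\in L^1(0,T)$, hence $Y_k\in L^\infty(0,T)$, completing the induction. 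The main obstacle is a bookkeeping one: organizing the combinatorially many commutator terms produced by distributing $\lh^{\frac\alpha2 j}\pa_z^{k-j}$ across the bilinear transport terms and verifying that each genuinely lands, via the fractional Leibniz rule and the anisotropic estimates, either in the dissipation $\Et_{k+1}$ or against a factor that is already controlled — in particular checking that no term requires a horizontal regularity on $u$ exceeding $\lh^{\frac\alpha2}\pa_z^k u$ (which would be supercritical and not absorbable by the dissipation); the borderline case again being the one handled by \eqref{eq:BKM}, exactly as at level $k=2$.
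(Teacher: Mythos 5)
Your proposal follows the same inductive strategy as the paper: decompose $E_k$ into its $k+1$ components $\|\lh^{\frac\alpha2 j}\pa_z^{k-j}u\|_{L^2}^2$, distribute the transport derivatives via (fractional) Leibniz rules, use incompressibility to cancel the highest-order commutator-free term, handle the single dangerous piece $\int_\Omega \pa_xu\,(\pa_z^k u)^2$ exactly as $\II_{32}$ was handled (via the anisotropic Minkowski step, $\lh^{\frac{3-\alpha}2}Hu$ and Poincar\'e in $z$, producing the $\|\lh^{\frac{3-\alpha}2}\omega\|_{L^2}^2\,E_k$ factor tied to the BKM criterion), control $\|\pa_z^\ell u\|_{L^\infty}$ for intermediate $\ell$ by anisotropic embeddings at lower energy levels, and close with Gr\"onwall using Proposition~\ref{prop:winf} and the inductive hypothesis. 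This is essentially the paper's proof. One small imprecision: the paper's final differential inequality has an additive nonhomogeneous term $F_{k-1}^2$ (coming from products of two lower-level factors, e.g.\ in $\III_{j1j}$ for $j\ge2$), not just a multiplicative coefficient on $E_k$ as you wrote; since $F_{k-1}\in L^\infty(0,T)$ by induction this does not change the conclusion, but the Gr\"onwall step should be the nonhomogeneous version.
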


\begin{proof}
Recall that 
\[
 E_k = \sum_{j=0}^k \|\lh^{\frac{(k-j)\alpha}2} \partial_z^{j} u\|_{L^2}^2,\quad 
 \Et_{k+1} = \sum_{j=0}^k \|\lh^{\frac{(k-j+1)\alpha}2} \partial_z^{j} u\|_{L^2}^2.
\]
We have proved the case when $k=2$ in Proposition~\ref{thm:E2}. We will use induction to prove the cases when $k\geq 3$. To this end, assume that $E_j\in L^\infty(0,T)$ and $\Et_{j+1}\in L^1(0,T)$ for all $j = 0,1,\dots,k-1$, and we denote by
\begin{equation}\label{eq:induction}
 F_{k-1} = \sum_{j=0}^{k-1} E_j \in L^\infty(0,T), \quad \Ft_k = \sum_{j=0}^{k-1} \Et_{j+1} \in L^1(0,T).
\end{equation}

We proceed to the estimate of $E_k$.

     For $j=0,\ldots, k$, the dynamics of $\pa_z^ju$ are described by 
 \begin{equation}\label{eq:omegazk}
     \pa_t \pa_z^ju + \pa_z^j\big(u\pa_x u + w \pa_z u\big) + \pa_z^j\pa_xp + \nu_h \pa_z^j\lh^\alpha u  = 0.
 \end{equation}
 By taking the $L^2$ inner product of \eqref{eq:omegazk} with $\lh^{(k-j)\alpha}\pa_z^ju$, we obtain
 \[
  \frac12\ddt\|\lh^{\frac{(k-j)\alpha}2}\pa_z^ju\|_{L^2}^2 + \nu_h \|\lh^{\frac{(k-j+1)\alpha}2}\pa_z^ju\|_{L^2}^2
  = -\int_\Omega \lh^{(k-j)\alpha}\pa_z^ju\cdot\pa_z^j\big(u\pa_x u + w \pa_z u\big)\dxdz:=\III_j.
 \]
Summing over all indices $j \in \left\{0, 1, \dots, k\right\}$, we have
 \[
 \frac12\ddt E_k + \nu_h \Et_{k+1} = \sum_{j=0}^k \III_j.
 \]
 
 Now, we estimate $\III_j$ term by term.
 
 First, the control of $\III_0$ follows analogously to that of the estimates of $\|\lh^\alpha u\|_{L^2}$, namely $\II_{11}$ and $\II_{12}$. We sketch the proof without details.
 \[
  \III_0 = - \int_\Omega \lh^{\frac{(k+1)\alpha}2}u\cdot\lh^{\frac{(k-1)\alpha}2}\big(u\pa_x u + w \pa_z u\big)\dxdz
  := \III_{01} + \III_{02},
 \]
where we have
 \begin{align*}
        \III_{01} = & -\frac12 \int_{\Omega} \lh^{\frac{(k+1)\alpha}2}u\cdot\lh^{\frac{(k-1)\alpha}2}\pa_x(u^2) \dxdz 
        \lesssim\|\lh^{\frac{(k+1)\alpha}2} u\|_{L^2} \|\lh^{\frac{(k-1)\alpha}2+1} u\|_{L^2} \|u\|_{L^\infty} \\
        \lesssim &~\|\lh^{\frac{(k+1)\alpha}2} u\|_{L^2}^{\frac2\alpha}\|\lh^{\frac{k\alpha}2} u\|_{L^2}^{2-\frac2\alpha}
        \leq \frac{\nu_h}{4(k+1)}  \|\lh^{\frac{(k+1)\alpha}2} u\|_{L^2}^2 + C\|\lh^{\frac{k\alpha}2} u\|_{L^2}^2,\\  
        \III_{02} \lesssim &~ \|\lh^{\frac{(k+1)\alpha}2} u\|_{L^2}\Big( \|\lh^{\frac{(k-1)\alpha}2} w\|_{L^2} \|\omega\|_{L^\infty} + \|\lh^{\frac{(k-1)\alpha}2} \omega\|_{L^2} \| w\|_{L^\infty}\Big)\\
        \leq &~ \frac{\nu_h}{4(k+1)} \|\lh^{\frac{(k+1)\alpha}2} u\|_{L^2}^2 + C\Big(\|\lh^{\frac{k\alpha}2} u\|_{L^2}^2+\|w\|_{L^\infty}^2\|\lh^{\frac{(k-1)\alpha}2} \omega\|_{L^2}^2\Big).
    \end{align*}
To conclude, we have
\begin{equation}\label{eq:III0}
 \III_0 \leq \frac{\nu_h}{2(k+1)} \Et_{k+1} + C\big(1+\|w\|_{L^\infty}^2\big)E_k.
\end{equation}

Next, we control $\III_j$ where $j=1,\ldots, k-1$. The estimate requires the control of intermediate terms, which needs to be carefully treated. To begin with, we apply \eqref{eqn:l-commute} to get
\[
 \III_j=  -\int_\Omega \lh^{\frac{(k-j+1)\alpha}2} \pa_z^j u \cdot \lh^{\frac{(k-j-1)\alpha}2} \pa_z^j (u\pa_x u + w \pa_z u) \dxdz:=\III_{j1}+\III_{j2},
\]
where, by H\"older's inequality, it follows that
 \begin{align*}
 	\III_{j1}\leq  &~ \|\lh^{\frac{(k-j+1)\alpha}2} \pa_z^j u\|_{L^2}\Big\|\lh^{\frac{(k-j-1)\alpha}2} \pa_z^j (u\pa_x u)\Big\|_{L^2}
 	\lesssim \Et_{k+1}^{\frac12} \Big\|\lh^{\frac{(k-j-1)\alpha}2+1} \pa_z^j (u^2)\Big\|_{L^2},\\
 	\III_{j2}\leq  &~ \|\lh^{\frac{(k-j+1)\alpha}2} \pa_z^j u\|_{L^2}\Big\|\lh^{\frac{(k-j-1)\alpha}2} \pa_z^j (w \pa_z u)\Big\|_{L^2}
 	\lesssim \Et_{k+1}^{\frac12} \Big\|\lh^{\frac{(k-j-1)\alpha}2} \pa_z^j (w \pa_z u)\Big\|_{L^2}.
 \end{align*} 
  
 To estimate $\III_{j1}$, we apply Leibniz rule in $z$ and decompose
 \[
 \pa_z^j (u^2) = \sum_{\ell=0}^j \binom{j}{\ell} \pa_z^{j-\ell} u \cdot \pa_z^\ell u.
 \]
 By fractional Leibniz rule in $x$, we have
\[
    \III_{j1}\lesssim \Et_{k+1}^{\frac12}\sum_{\ell=0}^j \|\lh^{\frac{ (k-j-1)\alpha}2+1} \pa_z^{j-\ell} u\|_{L^2} \|\pa_z^\ell u\|_{L^\infty} :=  \sum_{l=0}^j \III_{j1\ell}.
\]
We treat $\III_{j10}$ separately by:
\begin{align*}
\III_{j10} \leq &\,  \Et_{k+1}^{\frac12}\|\lh^{\frac{ (k-j-1)\alpha}2+1} \pa_z^{j} u\|_{L^2} \|u\|_{L^\infty}
\lesssim \Et_{k+1}^{\frac12}\|\lh^{\frac{ (k-j)\alpha}2} \pa_z^{j} u\|_{L^2}^{\frac{2(\alpha-1)}\alpha}\|\lh^{\frac{ (k-j+1)\alpha}2} \pa_z^j u\|_{L^2}^{1-\frac{2(\alpha-1)}\alpha}\\
\leq &\, E_k^{\frac{\alpha-1}\alpha}\Et_{k+1}^{\frac1\alpha} \leq \tfrac{\nu_h}{6(k+1)(j+1)} \Et_{k+1} + C E_k.
\end{align*}
Note that $\|\pa_z^\ell u\|_{L^\infty}$ is a priori bounded for $\ell=0, 1$ (recall \eqref{eq:uLinfty} and \eqref{eq:omegaMP}). For $\ell\geq 2$, we control the term as follows:
\begin{align}
 \|\pa_z^\ell u\|_{L^\infty}\lesssim &\, \Big\|\|\pa_z^\ell u\|_{L^2_z}+\|\pa_z^{\ell+1} u\|_{L^2_z}\Big\|_{L^\infty_x}
 \lesssim \Big\|\|\pa_z^\ell u\|_{L^\infty_x}\Big\|_{L^2_z} + \Big\|\|\pa_z^{\ell+1} u\|_{L^\infty_x}\Big\|_{L^2_z}\nonumber\\
 \lesssim &\, \|\pa_z^\ell u\|_{L^2} + \|\pa_z^{\ell+1} u\|_{L^2} + \|\lh^{\frac\alpha2}\pa_z^\ell u\|_{L^2} + \|\lh^{\frac12+\epsilon}\pa_z^{\ell+1} u\|_{L^2},\label{eq:uzinf}
\end{align}
for a small $\epsilon>0$. Note that $\pa_z^\ell u$ does not have zero mean in either $x$ or $z$ direction. Hence, we need to keep the lower order terms, i.e. the first three terms on the right-hand side of \eqref{eq:uzinf}.
Take $\epsilon=\frac{\alpha-1}2$. The estimate becomes
\begin{equation}\label{eq:uzinf2}	
\|\pa_z^\ell u\|_{L^\infty}\lesssim E_\ell^{\frac12} +E_{\ell+1}^{\frac12} + \Et_{\ell+1}^{\frac12}+\Et_{\ell+2}^{\frac12}\leq F_{\ell+1}^{\frac12} + \Ft_{\ell+2}^{\frac12},
\end{equation}
Therefore, the intermediate terms $\III_{j1\ell}$ with $\ell=1,\ldots,j-1$ have the bound
\[
 \III_{j1\ell}\lesssim \Et_{k+1}^{\frac12} E_k^{\frac12} \big(F_{k-1}^{\frac12} + \Ft_k^{\frac12}\big)\lesssim \tfrac{\nu_h}{6(k+1)(j+1)} \Et_{k+1} + C(F_{k-1}+\Ft_k)E_k,
\]
where we note that $\ell+2\leq k$ since $j\leq k-1$, and as $\ell\ge1$ we have 
\[ \|\lh^{\frac{ (k-j-1)\alpha}2+1} \pa_z^{j-\ell} u\|_{L^2}\lesssim \|\lh^{\frac{ (k-j+\ell)\alpha}2} \pa_z^{j-\ell} u\|_{L^2} \leq E_k^{\frac12}.\]
For the remaining term $\III_{j1j}$, we can bound
\[
\|\lh^{\frac{ (k-j-1)\alpha}2+1} u\|_{L^2}\lesssim \|\lh^{\frac{ (k-j+1)\alpha}2} u\|_{L^2}\leq \begin{cases}
	E_k^{\frac12} & j=1,\\
	F_{k-1}^{\frac12} & j\geq2.
 \end{cases}
\]
For $j=1$, since $\|\pa_zu\|_{L^\infty}$ is a priori bounded, we have 
\[\III_{111}\lesssim \Et_{k+1}^{\frac12}E_k^{\frac12}\leq \tfrac{\nu_h}{6(k+1)(1+1)} \Et_{k+1} + CE_k.\]
For $j=2,\ldots k-2$, we apply \eqref{eq:uzinf2} to get
\[
\|\pa_z^ju\|_{L^\infty}\lesssim F_{k-1}^{\frac12}+\Ft_k^{\frac12}\lesssim F_{k-1}^{\frac12}+E_k^{\frac12},
\]
and therefore
\[
 \III_{j1j} \lesssim \Et_{k+1}^{\frac12}F_{k-1}^{\frac12}(F_{k-1}^{\frac12}+\Ft_k^{\frac12})\leq\tfrac{\nu_h}{6(k+1)(j+1)} \Et_{k+1} + C\big(F_{k-1}^2+F_{k-1}E_k\big).
\]
For $j=k-1$, we use \eqref{eq:uzinf} with $\epsilon=\frac{\alpha-1}{4}$ and get
\begin{align*}
\|\pa_z^{k-1} u\|_{L^\infty}\lesssim &\,\|\pa_z^{k-1} u\|_{L^2} + \|\pa_z^k u\|_{L^2} + \|\lh^{\frac\alpha2}\pa_z^{k-1} u\|_{L^2} + \|\lh^{\frac{1+\alpha}4}\pa_z^k u\|_{L^2}\\
\lesssim &\, F_{k-1}^{\frac12} + E_k^{\frac12} + \|\lh^{\frac\alpha2}\pa_z^k u\|_{L^2}^{1-\frac{\alpha-1}{2\alpha}}\|\pa_z^k u\|_{L^2}^{\frac{\alpha-1}{2\alpha}}
\lesssim F_{k-1}^{\frac12} + E_k^{\frac12} + \Et_{k+1}^{\frac12-\frac{\alpha-1}{4\alpha}}E_k^{\frac{\alpha-1}{4\alpha}}.
\end{align*}
Hence, we obtain 
\begin{align*}
 \III_{(k-1)1(k-1)} \lesssim &\,\Et_{k+1}^{\frac12} F_{k-1}^{\frac12} \Big( F_{k-1}^{\frac12} + E_k^{\frac12} + \Et_{k+1}^{\frac12-\frac{\alpha-1}{4\alpha}}E_k^{\frac{\alpha-1}{4\alpha}} \Big)\\
 \leq &\, \tfrac{\nu_h}{6(k+1)k} \Et_{k+1} + C\Big( F_{k-1}^2 +\big(F_{k-1}+F_{k-1}^{\frac{2\alpha}{\alpha-1}}\big)E_k\Big).
\end{align*}
Collecting all the estimates, we conclude that for all $j=1,\dots,k-1$,
\begin{equation}\label{eq:IIIj1}
 \III_{j1}\leq \frac{\nu_h}{6(k+1)} \Et_{k+1} + C\Big( F_{k-1}^2 +\big(1+F_{k-1}+F_{k-1}^{\frac{2\alpha}{\alpha-1}}+\Ft_k\big)E_k\Big).
\end{equation}

Now we move to the estimate of $\III_{j2}$. Start with the Leibniz rule in $z$
 \[
 \pa_z^j (w\pa_zu) = \sum_{\ell=0}^j \binom{j}{\ell} \pa_z^{j-\ell} w \cdot \pa_z^{\ell+1} u = -\sum_{\ell=0}^{j-1} \binom{j}{\ell} \pa_x\pa_z^{j-\ell-1}u\cdot\pa_z^{\ell+1}u+w\pa_z^{j+1}u.
 \]
By the fractional Leibniz rule \eqref{eq:Leibniz}, we have
\begin{align*}
    \III_{j2}\lesssim &\, \Et_{k+1}^{\frac12}\sum_{\ell=0}^{j-1}\Big( \|\lh^{\frac{ (k-j-1)\alpha}2+1} \pa_z^{j-\ell-1} u\|_{L^2} \|\pa_z^{\ell+1} u\|_{L^\infty} +  
    \|\lh^{\frac{ (k-j-1)\alpha}2} \pa_z^{\ell+1} u\|_{L^\infty} \|\pa_x\pa_z^{j-\ell-1} u\|_{L^2} \Big)\\
    &\,+ \Et_{k+1}^{\frac12}\|\lh^{\frac{ (k-j-1)\alpha}2}(w\pa_z^{j+1}u)\|_{L^2} := \sum_{l=0}^{j-1} \Big(\III_{j2\ell}^A + \III_{j2\ell}^B\Big)+\III_{j2j},
\end{align*}
We first work on $\III_{j2\ell}^A$, for $\ell=0,\ldots,j-1$. Observe that
\[ \III_{j2\ell}^A = \III_{j1(\ell+1)},\quad\forall~\ell=0\ldots,j-1.\]
Hence, estimates in \eqref{eq:IIIj1} apply to $\III_{j2\ell}^A$ as well. Next, let us focus on the bounds on $\III_{j2\ell}^B$. A similar estimate as \eqref{eq:uzinf} yields
\[
 \|\lh^{\frac{ (k-j-1)\alpha}2} \pa_z^{\ell+1} u\|_{L^\infty}\lesssim E_{k-j+\ell}^{\frac12}+E_{k-j+\ell+1}^{\frac12}+\Et_{k-j+\ell+2}^{\frac12}\leq F_{k-1}^{\frac12} + E_k^{\frac12} \quad\text{if}\quad\ell\leq j-2,
\]
and for $\ell=j-1$,
\[
 \|\lh^{\frac{ (k-j-1)\alpha}2} \pa_z^j u\|_{L^\infty}\lesssim F_{k-1}^{\frac12}+E_k^{\frac12}+\Et_{k+1}^{\frac12-\frac{\alpha-1}{4\alpha}}E_k^{\frac{\alpha-1}{4\alpha}}.
\]
Moreover, we have
\[
\|\pa_x\pa_z^{j-\ell-1} u\|_{L^2}\leq \|\lh^\alpha\pa_z^{j-\ell-1} u\|_{L^2}\leq \Et_{j-\ell+1}\leq\begin{cases}
	E_k^{\frac12} & (\ell=0,\,\, j=k-1),\\
	F_{k-1}^{\frac12} & \text{otherwise}.
	\end{cases}
\]
This leads to the bound
\begin{align*}
 \III_{j2\ell}^B \lesssim &\,\Et_{k+1}^{\frac12} F_{k-1}^{\frac12} \Big( F_{k-1}^{\frac12} + E_k^{\frac12} + \Et_{k+1}^{\frac12-\frac{\alpha-1}{4\alpha}}E_k^{\frac{\alpha-1}{4\alpha}} \Big)\\
 \leq &\, \tfrac{\nu_h}{6(k+1)(j+1)} \Et_{k+1} + C\Big( F_{k-1}^2 +\big(F_{k-1}+F_{k-1}^{\frac{2\alpha}{\alpha-1}}\big)E_k\Big),
\end{align*}
except for the case when $\ell=0$ and $j=k-1$, where $\|\lh^{\frac{ (k-j-1)\alpha}2} \pa_z^{\ell+1} u\|_{L^\infty}=\|\omega\|_{L^\infty}$ is a priori bounded, and thus
\[
 \III_{(k-1)20}^B\lesssim \Et_{k+1}^{\frac12}E_k^{\frac12}\leq \tfrac{\nu_h}{6(k+1)(j+1)} \Et_{k+1} + CE_k.
\]
We are left to bound the term $\III_{j2j}$. Applying the fractional Leibniz rule and anisotropic estimates, we have
\begin{align*}
 \III_{j2j}\leq &\, \Et_{k+1}^{\frac12}\Big\|\|w\|_{L^\infty_x}\|\lh^{\frac{(k-j-1)\alpha}2}\pa_z^{j+1}u\|_{L^2_x}+\|\lh^{\frac{(k-j-1)\alpha}2}w\|_{L^2_x}\|\pa_z^{j+1}u\|_{L^\infty_x}\Big\|_{L^2_z}\\
  \leq &\, \Et_{k+1}^{\frac12}\Big(\|w\|_{L^\infty}\|\lh^{\frac{(k-j-1)\alpha}2}\pa_z^{j+1}u\|_{L^2}+\Big\|\|\lh^{\frac{(k-j-1)\alpha}2}w\|_{L^2_x}\Big\|_{L^\infty_z}\Big\|\|\pa_z^{j+1}u\|_{L^\infty_x}\Big\|_{L^2_z}\Big)\\
  := &\,\III_{j2j}^A+\III_{j2j}^B,
\end{align*}
where the first term can be easily controlled by
\[
 \III_{j2j}^A\leq \|w\|_{L^\infty}\Et_{k+1}^{\frac12}E_k^{\frac12}\leq\tfrac{\nu_h}{6(k+1)(j+1)} \Et_{k+1} + C\|w\|_{L^\infty}^2E_k.
\]
For the second term, from the Poincar\'e and Minkowski inequalities, it follows that
\[
\Big\|\|\lh^{\frac{(k-j-1)\alpha}2}w\|_{L^2_x}\Big\|_{L^\infty_z}\lesssim \Big\|\|\lh^{\frac{(k-j-1)\alpha}2}w\|_{L^\infty_z}\Big\|_{L^2_x}
\lesssim\|\lh^{\frac{(k-j-1)\alpha}2}\pa_xu\|_{L^2}
\lesssim \begin{cases}
E_k^{\frac12} & j=1,\\ F_{k-1}^{\frac12} & j\geq2.	
\end{cases}
\]
For $j=1$, we have 
\[
 \III_{121}^B\lesssim \Et_{k+1}^{\frac12}E_k^{\frac12} ( \|\pa_z^2u\|_{L^2}+\|\lh^{\frac\alpha2}\pa_z^2u\|_{L^2})\leq \Et_{k+1}^{\frac12}E_k^{\frac12}(\Ft_3^{\frac12}+F_2^{\frac12})
 \leq \tfrac{\nu_h}{6(k+1)(j+1)} \Et_{k+1} + C(\Ft_k+F_{k-1})E_k.
\]
For $2\leq j\leq k-2$, we obtain
\[
\Big\|\|\pa_z^{j+1}u\|_{L^\infty_x}\Big\|_{L^2_z}\lesssim \|\pa_z^{j+1}u\|_{L^2}+\|\lh^{\frac\alpha2}\pa_z^{j+1}u\|_{L^2}\leq F_{k-1}^{\frac12} + E_k^{\frac12},
\]
and then
\[
 \III_{j2j}^B \lesssim \Et_{k+1}^{\frac12}F_{k-1}^{\frac12}(F_{k-1}^{\frac12} + E_k^{\frac12})\leq \tfrac{\nu_h}{6(k+1)(j+1)} \Et_{k+1} + C\Big(F_{k-1}^2+F_{k-1}E_k\Big).
\]
For $j=k-1$, we  use the dissipation to estimate
\[
\Big\|\|\pa_z^{j+1}u\|_{L^\infty_x}\Big\|_{L^2_z}\lesssim \|\pa_z^ku\|_{L^2}+\|\lh^{\frac{1+\alpha}4}\pa_z^ku\|_{L^2}\leq E_k^{\frac12}+\Et_{k+1}^{\frac12-\frac{\alpha-1}{4\alpha}}E_k^{\frac{\alpha-1}{4\alpha}},
\]
and therefore
\[
 \III_{(k-1)2(k-1)}^B\lesssim \Et_{k+1}^{\frac12}F_{k-1}^{\frac12}(E_k^{\frac12}+\Et_{k+1}^{\frac12-\frac{\alpha-1}{4\alpha}}E_k^{\frac{\alpha-1}{4\alpha}})
 \leq \tfrac{\nu_h}{6(k+1)(j+1)} \Et_{k+1} + C\Big(F_{k-1}+F_{k-1}^{\frac{2\alpha}{\alpha-1}}\Big)E_k.
\]
Collecting all the estimates for the terms in $\III_{j2}$, we conclude that
\begin{equation}\label{eq:IIIj2}
 \III_{j2}\leq \frac{\nu_h}{3(k+1)} \Et_{k+1} + C\Big( F_{k-1}^2 +\big(1+F_{k-1}+F_{k-1}^{\frac{2\alpha}{\alpha-1}}+\Ft_k\big)E_k\Big).
\end{equation}

Finally, for the remaining term
\[
  \III_k = - \int_\Omega \pa_z^k u \cdot \pa_z^k (u\pa_x u + w \pa_z u) \dxdz,
\]
the estimate is analogous to that of the term $\II_3$, in addition to similar control of the intermediate terms. Applying the Leibniz rule in $z$, we  decompose
\[
 \III_k = -\sum_{\ell=0}^k \binom{k}{\ell}\int_\Omega \Big(\pa_z^ku\cdot\pa_z^\ell u \cdot\pa_z^{k-\ell}\pa_xu + \pa_z^ku\cdot\pa_z^\ell w \cdot\pa_z^{k-\ell+1}u \Big)\dxdz:=\sum_{\ell=0}^k\Big(\III_{k1\ell}+\III_{k2\ell}\Big).
\]
For $\ell=0$, we use the incompressibility \eqref{FPE-3} to deduce the cancelation
\[\III_{k10}+\III_{k20} = -\int_\Omega \pa_z^ku \cdot(u\pa_x+w\pa_z)(\pa_z^ku)\dxdz
= \int_\Omega (\pa_xu + \pa_zw)\cdot \frac{(\pa_z^ku)^2}{2} \dxdz =0.\]
For $\ell=1,\ldots,k-1$, we group the terms
\[
 \III_{k1\ell}+\III_{k2(k+1-\ell)}\lesssim \int_\Omega \big|\pa_z^ku\cdot\pa_z^\ell u \cdot\pa_z^{k-\ell}\pa_xu\big| \dxdz \leq \|\pa_z^ku\cdot\pa_z^\ell u\|_{L^2}\|\pa_z^{k-\ell}\pa_xu\|_{L^2}.
\]
Using anisotropic estimates, we obtain
\begin{align*}
 \|\pa_z^ku\cdot\pa_z^\ell u\|_{L^2} \leq &\,\Big\|\|\pa_z^ku\|_{L^\infty_x}\Big\|_{L^2_z}\Big\|\|\pa_z^\ell u\|_{L^2_x}\Big\|_{L^\infty_z}
 \lesssim \big(\|\pa_z^ku\|_{L^2}+\|\lh^{\frac{1+\alpha}4}\pa_z^ku\|_{L^2}\big)\big(\|\pa_z^\ell u\|_{L^2}+\|\pa_z^{\ell+1} u\|_{L^2}\big)\\
 \lesssim &\, \Big(E_k^{\frac12}+\Et_{k+1}^{\frac12}\Big)(E_\ell^{\frac12}+E_{\ell+1}^{\frac12}).
\end{align*}
Moreover, we have $\|\pa_z^{k-\ell}\pa_xu\|_{L^2}\lesssim \|\lh^{\frac\alpha2}\pa_z^{k-\ell}u\|_{L^2}\leq E_{k-\ell+1}^{\frac12}$. It yields
\begin{align*}
\III_{k1\ell}+\III_{k2(k+1-\ell)}\lesssim &\, (E_k^{\frac12}+\Et_{k+1}^{\frac12})(E_\ell^{\frac12}+E_{\ell+1}^{\frac12})E_{k-\ell+1}^{\frac12} \lesssim (E_k^{\frac12}+\Et_{k+1}^{\frac12})(F_{k-1}^{\frac12}+E_k^{\frac12})F_{k-1}^{\frac12}\\
 \leq &\, \tfrac{\nu_h}{2(k+1)k}\Et_{k+1}+C\Big(F_{k-1}^2+(1+F_{k-1})E_k\Big).
\end{align*}
The remaining two terms with $\ell=k$ take the following form:
\[
\III_{k1k} + \III_{k21} = (k-1)\int_\Omega \pa_xu\cdot (\pa_z^ku)^2\dxdz.
\]
Following a similar procedure as in the estimate of the term $\II_{32}$ in Proposition \ref{prop:E2}, we obtain the bound: 
\[
\III_{k1k} + \III_{k21} \lesssim \|\lh^{\frac\alpha2}\pa_z^ku\|_{L^2}\|\pa_z^ku\|_{L^2}\|\lh^{\frac{3-\alpha}2}\omega\|_{L^2}
\leq \tfrac{\nu_h}{2(k+1)k}\Et_{k+1}+C\|\lh^{\frac{3-\alpha}2}\omega\|_{L^2}^2E_k.
\]
Collecting all the estimates, we conclude that
\begin{equation}\label{eq:IIIk}
 \III_k\leq \frac{\nu_h}{2(k+1)} \Et_{k+1} + C\Big(F_{k-1}^2+\big(1+F_{k-1}+\|\lh^{\frac{3-\alpha}2}\omega\|_{L^2}^2\big)E_k\Big).
\end{equation}

Putting together \eqref{eq:III0}, \eqref{eq:IIIj1}, \eqref{eq:IIIj2} and \eqref{eq:IIIk}, we end up with the bound:
\[
\ddt E_k + \nu_h \Et_{k+1} \lesssim F_{k-1}^2+\big(1+F_{k-1}^{\frac{2\alpha}{\alpha-1}}+\Ft_k+\|w\|_{L^\infty}^2+\|\lh^{\frac{3-\alpha}2}\omega\|_{L^2}^2\big)E_k.
\]
From the regularity criterion \eqref{eq:BKM}, Proposition \ref{prop:winf}, and the induction hypotheses \eqref{eq:induction}, we infer that 
\[F_{k-1}^{\frac{2\alpha}{\alpha-1}}+\Ft_k+\|w\|_{L^\infty}^2+\|\lh^{\frac{3-\alpha}2}\omega\|_{L^2}^2\in L^1(0,T).\]
Therefore, a direct application of Gr\"onwall inequality leads to the desired bounds in \eqref{eq:Energyk} at the energy level $\frac{k\alpha}2$.
\end{proof}

\subsection{Improved global well-posedness}
In this section, we improve the global existence result from Theorem \ref{thm:GWPlarge} and establish global well-posedness for $\alpha \geq \frac{6}{5}$. Since $\alpha < \frac{3}{2}$ is \emph{energy supercritical}, addressing global well-posedness in this regime requires developing sharper estimates that go beyond the energy scale employed in the earlier analysis.

The main idea is to replace the Poincar\'e-type inequality in $z$ of the form
\[\|u\|_{L^\infty_z}\lesssim\|\omega\|_{L^2_z}\]
by a sharper interpolation inequality
\[\|u\|_{L^\infty_z}\lesssim\|u\|_{L^2_z}^{\frac12}\|\omega\|_{L^2_z}^{\frac12}.\]
The improved estimate breaks the energy scaling, allowing us to the handle energy-supercritical regime. 

We apply the idea above to obtain the following bounds on $\pa_z\omega$, which lie on the energy level $\alpha$. 
\begin{proposition}\label{prop:improvedomegaz}
	Let $\alpha\in[\frac65,2)$. Then for any strong solution of \eqref{FPE} with initial condition satisfying $\partial_z \omega_0\in L^2$, we have 
\begin{equation}\label{bound:omegaz-1}
     \pa_z \omega \in L^\infty(0, T; L^2)\cap L^2(0, T; \mathcal D(\lh^{\frac\alpha2})) .
\end{equation}
\end{proposition}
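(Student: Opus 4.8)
The plan is to run the natural $L^2$ energy estimate on $\pa_z\omega=\pa_z^2u$, closing it using only the a priori bounds available for a strong solution — that is, the energy-level-$\tfrac\alpha2$ bounds $\|\omega\|_{L^\infty}\le\|\omega_0\|_{L^\infty}$, $\|\omega\|_{L^2}\le\|\omega_0\|_{L^2}$, $\|\lh^{\frac\alpha2}u\|_{L^\infty(0,T;L^2)}=:M<\infty$ and $\lh^{\frac\alpha2}\omega\in L^2(0,T;L^2)$ from \eqref{eq:omegaMP}, \eqref{est:omega-1}, \eqref{bound:u-4}, \eqref{bound:omega-1} — together with the dissipation $\nu_h\|\lh^{\frac\alpha2}\pa_z\omega\|_{L^2}^2$ supplied by the equation. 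Pairing \eqref{eq:omegaz} with $\pa_z\omega$ in $L^2(\Omega)$ and using incompressibility \eqref{FPE-3} to kill the transport term gives, exactly as in the proof of Proposition~\ref{prop:E2},
\[
\tfrac12\ddt\|\pa_z\omega\|_{L^2}^2+\nu_h\|\lh^{\frac\alpha2}\pa_z\omega\|_{L^2}^2=\II_{31}+\II_{32},\quad \II_{31}=-\int_\Omega\omega\,\pa_x\omega\,\pa_z\omega\dxdz,\ \ \II_{32}=\int_\Omega\pa_xu\,(\pa_z\omega)^2\dxdz.
\]
The term $\II_{31}$ is routine, handled verbatim as in \eqref{eq:II31} (Hilbert transform, fractional Leibniz $\|\lh^{\frac12}(\omega^2)\|_{L^2_x}\lesssim\|\omega\|_{L^\infty_x}\|\lh^{\frac12}\omega\|_{L^2_x}$, the elementary inequality $\|\lh^{\frac12}f\|_{L^2}\le\|\lh^{\frac\alpha2}f\|_{L^2}$ on $\T$ valid for $\alpha\ge1$, then Young), yielding $\II_{31}\le\tfrac14\nu_h\|\lh^{\frac\alpha2}\pa_z\omega\|_{L^2}^2+C\|\omega_0\|_{L^\infty}^2\|\lh^{\frac\alpha2}\omega\|_{L^2}^2$, whose coefficient lies in $L^1(0,T)$.

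The hard part — and the whole point of the proposition — is $\II_{32}$. The naive energy-scaling bound (as in Proposition~\ref{prop:E2}) pays a factor $\|\lh^{\frac{3-\alpha}2}\omega\|_{L^2}$, which for $\alpha<\tfrac32$ sits strictly above the level $\tfrac\alpha2$ at which $\omega$ is controlled for a strong solution, so that route is closed. I would break the scaling with an Agmon inequality in $z$: writing $\pa_xu=-\lh^{1-\frac\alpha2}\big(\lh^{\frac\alpha2}Hu\big)$ with $H=-\pa_x\lh^{-1}$ the Hilbert transform in $x$, moving $\lh^{1-\frac\alpha2}$ off by \eqref{eqn:l-commute}, and applying Cauchy--Schwarz in $x$ and H\"older in $z$,
\[
|\II_{32}|\le\big\|\lh^{\frac\alpha2}Hu\big\|_{L^\infty_zL^2_x}\,\big\|\lh^{1-\frac\alpha2}\big((\pa_z\omega)^2\big)\big\|_{L^1_zL^2_x}.
\]
Since $u\in\H$, the function $\lh^{\frac\alpha2}Hu$ has zero mean in $z$, so Agmon's inequality together with the Poincar\'e inequality in $z$ gives the crucial \emph{scaling-breaking} bound $\big\|\lh^{\frac\alpha2}Hu\big\|_{L^\infty_zL^2_x}\lesssim\|\lh^{\frac\alpha2}Hu\|_{L^2}^{1/2}\|\lh^{\frac\alpha2}\pa_zHu\|_{L^2}^{1/2}\lesssim M^{1/2}\|\lh^{\frac\alpha2}\omega\|_{L^2}^{1/2}$, using $\pa_zHu=H\omega$ and the $L^2$-boundedness of $H$; only the square root of $\|\lh^{\frac\alpha2}\omega\|_{L^2}$ now appears, and the $\tfrac\alpha2$ derivatives that do land on $\omega$ are exactly at its controlled level. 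For the remaining factor I would use the fractional Leibniz rule (Lemma~\ref{lem:Leibniz}), the embedding $H^{\frac\alpha2}(\T)\hookrightarrow L^\infty(\T)$ (again $\alpha>1$), the interpolation $\|\lh^{1-\frac\alpha2}\pa_z\omega\|_{L^2_x}\lesssim\|\pa_z\omega\|_{L^2_x}^{\frac{2\alpha-2}\alpha}\|\lh^{\frac\alpha2}\pa_z\omega\|_{L^2_x}^{\frac{2-\alpha}\alpha}$ and H\"older in $z$ to obtain $\|\lh^{1-\frac\alpha2}((\pa_z\omega)^2)\|_{L^1_zL^2_x}\lesssim\|\pa_z\omega\|_{L^2}^{\frac{3(\alpha-1)}\alpha}\|\lh^{\frac\alpha2}\pa_z\omega\|_{L^2}^{\frac{3-\alpha}\alpha}$. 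Multiplying and applying Young (absorbing $\|\lh^{\frac\alpha2}\pa_z\omega\|_{L^2}^{(3-\alpha)/\alpha}$ into the dissipation, legitimate since $\tfrac{3-\alpha}\alpha<2$) leaves
\[
|\II_{32}|\le\tfrac14\nu_h\|\lh^{\frac\alpha2}\pa_z\omega\|_{L^2}^2+C\,\|\lh^{\frac\alpha2}\omega\|_{L^2}^{\frac\alpha{3(\alpha-1)}}\,\|\pa_z\omega\|_{L^2}^2.
\]
The Gr\"onwall weight $\|\lh^{\frac\alpha2}\omega\|_{L^2}^{\alpha/(3(\alpha-1))}$ is in $L^1(0,T)$ precisely when $\tfrac\alpha{3(\alpha-1)}\le2$, i.e.\ $\alpha\ge\tfrac65$; this is exactly where the threshold $\tfrac65$ enters, and why the statement is confined to $[\tfrac65,2)$. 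I expect this $\II_{32}$ estimate — finding the split that keeps the horizontal-derivative budget on $\omega$ pinned at $\tfrac\alpha2$ while still absorbing enough of the dissipation — to be the main obstacle; the Agmon inequality in $z$ is what makes it possible, precisely because $u$ is controlled at level $\tfrac\alpha2$ in $L^\infty_t$ while $\omega$ is controlled there only in $L^2_t$.

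Adding the two estimates gives $\tfrac12\ddt\|\pa_z\omega\|_{L^2}^2+\tfrac12\nu_h\|\lh^{\frac\alpha2}\pa_z\omega\|_{L^2}^2\le C\|\omega_0\|_{L^\infty}^2\|\lh^{\frac\alpha2}\omega\|_{L^2}^2+C\|\lh^{\frac\alpha2}\omega\|_{L^2}^{\alpha/(3(\alpha-1))}\|\pa_z\omega\|_{L^2}^2$, where both $t$-dependent coefficients are in $L^1(0,T)$ for $\alpha\ge\tfrac65$. Gr\"onwall's inequality then yields $\pa_z\omega\in L^\infty(0,T;L^2)$, and integrating the differential inequality gives $\lh^{\frac\alpha2}\pa_z\omega\in L^2(0,T;L^2)$, which is \eqref{bound:omegaz-1}. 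Since a strong solution carries only $\pa_z\omega_0\in L^2$ and $\pa_z\omega$ need not a priori be in $L^2$, I would make the computation rigorous by running it on the smooth regularized solutions $u^\varepsilon$ of \eqref{FPE-reg}: all constants above depend only on the $E_1$-level quantities of $u^\varepsilon$, which are bounded uniformly in $\varepsilon$ by \eqref{bound:u-reg}--\eqref{bound:omega-reg}, and on $\|\pa_z\omega_0^\varepsilon\|_{L^2}\le\|\pa_z\omega_0\|_{L^2}$; passing to the limit as in the proof of Theorem~\ref{thm:sub-1}, with weak lower semicontinuity of the norms, transfers the bounds to $u$.
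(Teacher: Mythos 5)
Your proof is correct and follows the same strategy as the paper's. You perform the $L^2$ energy estimate on $\pa_z\omega$, isolate $\II_{32}=\int_\Omega\pa_xu(\pa_z\omega)^2$ as the scaling-critical term, and break the energy scaling via the Agmon inequality in $z$ applied to $\lh^{\frac\alpha2}Hu$ (using $u\in\H$ so that it is mean-free in $z$ and Poincar\'e closes the Agmon bound), which converts the factor $\|\lh^{\frac{3-\alpha}2}\omega\|_{L^2}$ of the naive bound into $\|\lh^{\frac\alpha2}u\|_{L^2}^{1/2}\|\lh^{\frac\alpha2}\omega\|_{L^2}^{1/2}$, both at the controlled level $\tfrac\alpha2$. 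You arrive at exactly the paper's differential inequality and the same $\frac{\alpha}{3(\alpha-1)}\le2\iff\alpha\ge\frac65$ threshold.

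The one cosmetic difference: the paper applies Agmon pointwise in $x$ (so it produces the bound $\|\lh^{\frac\alpha2}Hu\|_{L^\infty_z}\lesssim\|\lh^{\frac\alpha2}Hu\|_{L^2_z}^{1/2}\|\lh^{\frac\alpha2}H\omega\|_{L^2_z}^{1/2}$ at each fixed $x$) and then uses a $2$--$4$--$4$ H\"older in $x$ plus Minkowski; you instead take Cauchy--Schwarz in $x$ and H\"older $1$--$\infty$ in $z$ first, and then apply a vector-valued Agmon inequality directly to $\|\lh^{\frac\alpha2}Hu\|_{L^\infty_zL^2_x}$. Both orderings produce the identical final bound $\II_{32}\lesssim\|\lh^{\frac\alpha2}u\|_{L^2}^{1/2}\|\lh^{\frac\alpha2}\omega\|_{L^2}^{1/2}\|\lh^{\frac\alpha2}\pa_z\omega\|_{L^2}^{(3-\alpha)/\alpha}\|\pa_z\omega\|_{L^2}^{(3\alpha-3)/\alpha}$, so this is a stylistic variant, not a different argument. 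Your remark about running the estimate on the regularized solutions $u^\varepsilon$ to justify the formal computation is a reasonable addition that the paper leaves implicit.
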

\begin{proof}
 The proof follows from a refined estimate of \eqref{eq:E2omegaz}. The improvement is on the bound of the term $\II_{32}$. We start with
\[ \II_{32} = - \int_\Omega \lh^{1-\frac\alpha2}\big((\pa_z\omega)^2\big)\cdot\lh^{\frac\alpha2}Hu\dxdz
	\leq \int_\T\|\lh^{1-\frac\alpha2}\big((\pa_z\omega)^2\big)\|_{L^1_z}\|\lh^{\frac\alpha2}Hu\|_{L^\infty_z}\,\dd x.\]
 Instead of a Poincar\'e type estimate $\|\lh^{\frac\alpha2}Hu\|_{L^\infty_z}\lesssim\|\lh^{\frac\alpha2}H\omega\|_{L^2}$, we use interpolation to get a sharper bound:
 \begin{equation}\label{eq:omegazimprove}
 	\|\lh^{\frac\alpha2}Hu\|_{L^\infty_z}\lesssim\|\lh^{\frac\alpha2}Hu\|_{L^2_z}^{\frac12}\|\lh^{\frac\alpha2}H\omega\|_{L^2_z}^{\frac12}.
 \end{equation}
 Now, we continue the estimate:
 \begin{align*}
	\II_{32} \lesssim &~ \int_\T\|\lh^{1-\frac\alpha2}\big((\pa_z\omega)^2\big)\|_{L^1_z}\|\lh^{\frac\alpha2}Hu\|_{L^2_z}^{\frac12}\|\lh^{\frac\alpha2}H\omega\|_{L^2_z}^{\frac12}\,\dd x\\
	\leq &~ \Big\|\|\lh^{1-\frac\alpha2}\big((\pa_z\omega)^2\big)\|_{L^1_z}\Big\|_{L^2_x}~\Big\|\|\lh^{\frac\alpha2}Hu\|_{L^2_z}^{\frac12}\Big\|_{L^4_x}~\Big\|\|\lh^{\frac\alpha2}H\omega\|_{L^2_z}^{\frac12}\Big\|_{L^4_x}\\
	\lesssim &~ \Big\|\|\lh^{1-\frac\alpha2}\big((\pa_z\omega)^2\big)\|_{L^2_x}\Big\|_{L^1_z}\|\lh^{\frac\alpha2}u\|_{L^2}^{\frac12}\|\lh^{\frac\alpha2}\omega\|_{L^2}^{\frac12}.
\end{align*}
For the first term, we apply the fractional Leibniz rule 
and interpolation to get:
\[
 \|\lh^{1-\frac\alpha2}\big((\pa_z\omega)^2\big)\|_{L^2_x} \leq \|\lh^{1-\frac\alpha2}(\pa_z\omega)\|_{L^2_x}\|\pa_z\omega\|_{L^\infty_x} \lesssim \|\lh^{\frac\alpha2}\pa_z\omega\|_{L^2_x}^{\frac{3-\alpha}\alpha}\|\pa_z\omega\|_{L^2_x}^{\frac{3\alpha-3}\alpha},
\]
and then
\[
\Big\|\|\lh^{1-\frac\alpha2}\big((\pa_z\omega)^2\big)\|_{L^2_x}\Big\|_{L^1_z}
\lesssim \Big\|\|\lh^{\frac\alpha2}\pa_z\omega\|_{L^2_x}^{\frac{3-\alpha}\alpha}\Big\|_{L^{\frac{2\alpha}{3-\alpha}}_z}~\Big\|\|\pa_z\omega\|_{L^2_x}^{\frac{3\alpha-3}\alpha}\Big\|_{L^{\frac{2\alpha}{3\alpha-3}}_z}=\|\lh^{\frac\alpha2}\pa_z\omega\|_{L^2}^{\frac{3-\alpha}\alpha}\|\pa_z\omega\|_{L^2}^{\frac{3\alpha-3}\alpha}.
\]
This leads to the bound
\begin{align*}
	\II_{32} \lesssim &~ \|\lh^{\frac\alpha2}\pa_z\omega\|_{L^2}^{\frac{3-\alpha}\alpha}\|\pa_z\omega\|_{L^2}^{\frac{3\alpha-3}\alpha}\|\lh^{\frac\alpha2}u\|_{L^2}^{\frac12}\|\lh^{\frac\alpha2}\omega\|_{L^2}^{\frac12}\\
	\leq &~ \frac14\nu_h\|\lh^{\frac\alpha2}\pa_z\omega\|_{L^2}^2+C\|\lh^{\frac\alpha2} u\|_{L^2}^{\frac{\alpha}{3\alpha-3}} \|\lh^{\frac\alpha2} \omega\|_{L^2}^{\frac{\alpha}{3\alpha-3}}\|\pa_z\omega\|_{L^2}^2.
\end{align*}
Together with \eqref{eq:II31}, we arrive at
\begin{equation}\label{eq:omegazp}
  \ddt \|\pa_z\omega\|_{L^2}^2 + \nu_h \|\lh^{\frac\alpha2} \pa_z\omega\|_{L^2}^2  \lesssim \|\omega_0\|_{L^\infty}^2 \|\lh^{\frac\alpha2}\omega\|_{L^2}^2+\|\lh^{\frac\alpha2} u\|_{L^2}^{\frac{\alpha}{3\alpha-3}} \|\lh^{\frac\alpha2} \omega\|_{L^2}^{\frac{\alpha}{3\alpha-3}}\|\pa_z\omega\|_{L^2}^2.	
\end{equation}
Recall the a priori bounds of the energy level $\frac\alpha2$ in \eqref{eq:Energy1}: 
\[\|\lh^{\frac\alpha2} u\|_{L^2}\in L^\infty(0,T),\quad\text{and}\quad
\|\lh^{\frac\alpha2} \omega\|_{L^2} \in L^2(0,T).\]
When $\alpha\geq\frac65$, we have $\frac{\alpha}{3\alpha-3}\leq2$. Consequently,
\[\mu(t):=\exp\int_0^tC\|\lh^{\frac\alpha2} u(\tau)\|_{L^2}^{\frac{\alpha}{3\alpha-3}} \|\lh^{\frac\alpha2} \omega(\tau)\|_{L^2}^{\frac{\alpha}{3\alpha-3}}\,\dd\tau<\infty,\quad\forall~t\in[0,T].\]
We apply Gr\"onwall inequality to \eqref{eq:omegazp} and obtain
\[
\|\pa_z\omega(t)\|_{L^2}^2\leq\mu(t)\Big(\|\pa_z\omega_0\|_{L^2}^2+C\int_0^t\|\omega_0\|_{L^\infty}^2 \|\lh^{\frac\alpha2}\omega(\tau)\|_{L^2}^2\,\dd\tau\Big)<\infty,\quad\forall~t\in[0,T].
\]
We conclude with the bound \eqref{bound:omegaz-1}.
\end{proof}

With the bounds \eqref{bound:omegaz-1} on $\pa_z\omega$, we continue to derive the crucial estimates \eqref{eq:uniqueness} on $\omega$.
\begin{proposition}\label{prop:improvedomega}
	Let $\alpha\in[\frac65,\frac32)$. Then for any strong solution of \eqref{FPE} with initial condition satisfying $\partial_z \omega_0\in L^2$ and $\omega_0\in \mathcal D(\lh^{\frac32-\alpha})$, the condition \eqref{eq:uniqueness} holds, that is,
  \begin{equation}\label{eq:omegap}
	\omega\in L^\infty(0, T; \D(\lh^{\frac32-\alpha}))\cap L^2(0, T; \D(\lh^{\frac{3-\alpha}2})).  	
  \end{equation}
\end{proposition}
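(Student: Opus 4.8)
The statement \eqref{eq:omegap} is nothing but \eqref{eq:uniqueness}, so together with Proposition~\ref{prop:improvedomegaz} it completes the regularity needed for uniqueness of strong solutions in the range $\alpha\in(1,\frac32)$. The plan is to run a fractional energy estimate for the vorticity equation \eqref{eq:omega} at the level $s:=\frac32-\alpha$, which for $\alpha\in[\frac65,\frac32)$ obeys $0<s\le\frac3{10}$ and, crucially, $s+\frac\alpha2=\frac{3-\alpha}2$ and $2s=3-2\alpha\le\frac\alpha2$. The last inequality (equivalent to $\alpha\ge\frac65$) guarantees that $\lh^{2s}\omega(t)$ is an admissible test function for a strong solution, since $\omega(t)\in\D(\lh^{\frac\alpha2})\subset\D(\lh^{2s})$ for a.e.\ $t$; the estimate is made rigorous on the viscous approximations of Theorem~\ref{thm:sub-1} and passed to the limit. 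Testing \eqref{eq:omega} against $\lh^{2s}\omega$ and using \eqref{eqn:l-commute} gives
\begin{equation*}
\tfrac12\ddt\|\lh^{s}\omega\|_{L^2}^2+\nu_h\|\lh^{\frac{3-\alpha}2}\omega\|_{L^2}^2
=-\int_\Omega\lh^{s}\omega\cdot\lh^{s}(u\pa_x\omega)\dxdz-\int_\Omega\lh^{s}\omega\cdot\lh^{s}(w\pa_z\omega)\dxdz=:P_1+P_2,
\end{equation*}
and the goal is to bound $P_1+P_2\le\frac12\nu_h\|\lh^{\frac{3-\alpha}2}\omega\|_{L^2}^2+g(t)\|\lh^{s}\omega\|_{L^2}^2+h(t)$ with $g,h\in L^1(0,T)$, after which Gr\"onwall's inequality yields \eqref{eq:omegap}.

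For $P_1$ I would use incompressibility \eqref{FPE-3} to write $u\pa_x\omega=\pa_x(u\omega)$, transfer $\pa_x$ onto the test function via \eqref{eqn:l-commute} (costing one horizontal derivative, so the product sits at level $\le\frac{3-\alpha}2$ because $\alpha\ge1$), apply the fractional Leibniz rule \eqref{eq:Leibniz}, and absorb $\|u\|_{L^\infty},\|\omega\|_{L^\infty}$ using the a priori bounds \eqref{eq:uLinfty} and \eqref{eq:omegaMP}; the leftover fractional norms of $u$ and $\omega$ are then interpolated against $\|\lh^{s}\omega\|_{L^2}$, the dissipation $\|\lh^{\frac{3-\alpha}2}\omega\|_{L^2}$, and the a priori bounds \eqref{eq:Energy1} and \eqref{est:omega-1}, and Young's inequality does the rest. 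This term is routine. For $P_2$, the delicate one, I would first isolate the transport cancellation through a commutator, $\lh^{s}(w\pa_z\omega)=w\,\pa_z(\lh^{s}\omega)+[\lh^{s},w]\pa_z\omega$. Pairing the first piece against $\lh^{s}\omega$, integrating by parts in $z$ (no boundary contribution since $w|_{z=0,1}=0$), and using $\pa_zw=-\pa_xu$ reduces it to $\frac12\int_\Omega\pa_xu\,(\lh^{s}\omega)^2\dxdz$, which is treated verbatim as the term $\II_{32}$ in Proposition~\ref{prop:E2}: an anisotropic H\"older split, Poincar\'e in $z$ for $u$ (zero mean by Lemma~\ref{lem:H}) to get $\big\|\|\lh^{\frac{3-\alpha}2}Hu\|_{L^\infty_z}\big\|_{L^2_x}\lesssim\|\lh^{\frac{3-\alpha}2}\omega\|_{L^2}$, the fractional Leibniz rule together with the Sobolev embedding $H^{1/2+\eps}(\T)\hookrightarrow L^\infty(\T)$ in $x$ applied to $(\lh^{s}\omega)^2$, and interpolation. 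The commutator piece $-\int_\Omega\lh^{s}\omega\cdot[\lh^{s},w]\pa_z\omega\dxdz$ is bounded by writing $[\lh^{s},w]\pa_z\omega$ through the difference in \eqref{eq:Leibniz} and distributing derivatives onto $w$ and $\pa_z\omega$; here the essential input is the bound \eqref{bound:omegaz-1} on $\pa_z\omega$ from Proposition~\ref{prop:improvedomegaz}, which, together with $\|\omega\|_{L^\infty}$, the bound \eqref{eq:Energy1}, Poincar\'e in $z$ (turning norms of $w$ into norms of $\pa_xu$), and interpolation, suffices to absorb the top order into the dissipation and leaves an $L^1(0,T)$ coefficient multiplying $\|\lh^{s}\omega\|_{L^2}^2$.

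The main obstacle is exactly the vertical transport term $P_2$. No a priori control of $\|w\|_{L^\infty}$ is available to us: by \eqref{eq:wenergylevel} the quantity $\|w\|_{L^2(0,T;L^\infty)}$ lives at energy level $\frac{3-\alpha}2+\eps$, strictly above the level $\frac{3-\alpha}2$ we are trying to reach, so invoking it would be circular. One is therefore forced to pay for each factor of $w$ through $\pa_xu$, i.e.\ essentially through $\lh^{\frac52-\alpha}u$, which for $\alpha<\frac54$ exceeds the a priori horizontal regularity $\lh^\alpha u\in L^2(0,T;L^2)$ coming from \eqref{eq:Energy1}. It is precisely this deficit that the extra derivative on $\pa_z\omega$ furnished by Proposition~\ref{prop:improvedomegaz} makes up, and the threshold $\alpha\ge\frac65$ enters both in the admissibility of the test function $\lh^{2s}\omega$ and in keeping all the interpolation exponents below $1$, so that the resulting time coefficients $g,h$ are integrable on $[0,T]$.
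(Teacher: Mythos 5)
Your starting point — testing the vorticity equation against $\lh^{2s}\omega$ with $s=\frac32-\alpha$, and noting that $\alpha\ge\frac65$ is equivalent to $2s\le\frac\alpha2$ — matches the paper, as does the recognition that Proposition~\ref{prop:improvedomegaz} is the essential new input. The gap lies in your treatment of the vertical transport term $P_2$. After extracting the transport cancellation and reducing the leading piece to $\frac12\int_\Omega\pa_xu\,(\lh^s\omega)^2\dxdz$, you propose to run the $\II_{32}$ estimate ``verbatim.'' That estimate produces a bound of the form $\|\lh^{\frac\alpha2}h\|_{L^2}\,\|h\|_{L^2}\,\|\lh^{\frac{3-\alpha}2}\omega\|_{L^2}$ with $h$ the squared quantity; here $h=\lh^s\omega$, and since $\frac\alpha2+s=\frac{3-\alpha}2$ this collapses to $\|\lh^{\frac{3-\alpha}2}\omega\|_{L^2}^2\,\|\lh^s\omega\|_{L^2}$. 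The factor $\|\lh^{\frac{3-\alpha}2}\omega\|_{L^2}$, which in Proposition~\ref{prop:E2} was the external regularity criterion, is now exactly the dissipation you are trying to produce; the bound is quadratic in it, and neither Young's inequality nor further interpolation of $\|\lh^s\omega\|_{L^2}$ can absorb a term of that order into $\nu_h\|\lh^{\frac{3-\alpha}2}\omega\|_{L^2}^2$ for general data. (You also invoke a Kato--Ponce type commutator bound for $[\lh^s,w]\pa_z\omega$ that the paper does not prove.)

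The paper's treatment of $K_2$ sidesteps all of this: it integrates by parts in $z$ on $-\int w\pa_z\omega\cdot\lh^{3-2\alpha}\omega$ \emph{before} introducing any fractional splitting, obtaining $-\int\pa_xu\,\omega\,\lh^{3-2\alpha}\omega+\int w\,\omega\,\lh^{3-2\alpha}\pa_z\omega$, then simply applies H\"older together with $\|\omega\|_{L^\infty}\le\|\omega_0\|_{L^\infty}$, the Poincar\'e bound $\|w\|_{L^2}\lesssim\|\pa_xu\|_{L^2}$, and $3-2\alpha\le\frac\alpha2$ (this is where $\alpha\ge\frac65$ enters) to land on $\|\lh^\alpha u\|_{L^2}^2+\|\lh^{\frac\alpha2}\omega\|_{L^2}^2+\|\lh^{\frac\alpha2}\pa_z\omega\|_{L^2}^2$. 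All three are in $L^1(0,T)$ by \eqref{bound:u-4}, \eqref{bound:omega-1}, and \eqref{bound:omegaz-1}, so the whole of $K_2$ is an integrable forcing term and nothing needs to be absorbed into the dissipation. The moral is that the extra $z$-derivative on $\omega$ furnished by Proposition~\ref{prop:improvedomegaz} should be spent directly on the factor $\lh^{3-2\alpha}\pa_z\omega$ arising from a naive integration by parts, rather than reserved for a commutator remainder; the latter route leaves the dangerous transport-cancellation term to spoil the estimate.
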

\begin{proof}
 We perform energy estimates on 	$\|\lh^{\frac32-\alpha}\omega\|_{L^2}^2$. Taking the $L^2$ inner product of \eqref{FPE-1} with $\lh^{3-2\alpha}\omega$, we get
\begin{align*}
    \frac12\ddt \|\lh^{\frac32-\alpha}\omega\|_{L^2}^2 + \nu_h \|\lh^{\frac{3-\alpha}{2}}\omega\|_{L^2}^2 = -\int_{\Omega} (u\pa_x \omega + w\pa_z \omega) \lh^{3-2\alpha}\omega \dxdz:= K_1+K_2.
\end{align*}
For $K_1$, integration by parts in $x$ yields
\[
K_1 = \int_{\Omega} \pa_xu \cdot \omega \lh^{3-2\alpha}\omega \dxdz + \int_{\Omega} u\cdot \omega \lh^{3-2\alpha}\pa_x\omega \dxdz := K_{11}+K_{12}.
\]
By the H\"older, Young, and Sobolev inequalities, and the a priori bounds \eqref{eq:omegaMP} and \eqref{eq:uLinfty}, we have 
\[
    K_{11} \leq \|\pa_xu\|_{L^2} \|\omega\|_{L^\infty} \|\lh^{3-2\alpha}\omega\|_{L^2} 
    \leq \frac14\nu_h \|\lh^{\frac{3-\alpha}{2}}\omega\|_{L^2}^2 + C \|\lh^{\alpha} u\|_{L^2}^2,
\]
where we note that $3-2\alpha<\frac{3-\alpha}2$ as $\alpha>1$, and
\begin{align*}
    K_{12} & = \int_{\Omega} \lh^{\frac{1-\alpha}2}\pa_x\omega\cdot \lh^{\frac{5-3\alpha}2} (u\omega) \dxdz
    \\
    & \lesssim  \|\lh^{\frac{3-\alpha}2}\omega\|_{L^2} \Big(\|\lh^{\frac{5-3\alpha}2} u\|_{L^2} \|\omega\|_{L^\infty} + \|u\|_{L^\infty} \|\lh^{\frac{5-3\alpha}2}\omega\|_{L^2}\Big)
    \\
    & \lesssim  \|\lh^{\frac{3-\alpha}2}\omega\|_{L^2} \|\lh^{\frac{5-3\alpha}2} u\|_{L^2}  + \|\lh^{\frac{3-\alpha}2}\omega\|_{L^2}^{2-\frac{2\alpha-2}{3-\alpha}} \|\omega\|_{L^2}^{\frac{2\alpha-2}{3-\alpha}}
    \\
    &\leq \frac14\nu_h \|\lh^{\beta+\frac\alpha2}\omega\|_{L^2}^2 + C\big(\|\lh^{\alpha} u\|_{L^2}^2 + \|\omega\|_{L^2}^2\big),
\end{align*}
where we note that $\frac{5-3\alpha}2<\alpha$ as $\alpha>1$.

For $K_2$, integration by parts in $z$ yields
\begin{align*}
    K_2 & = -\int_{\Omega} \pa_xu\cdot \omega \lh^{3-2\alpha}\omega \dxdz + \int_{\Omega} w \cdot \omega \lh^{3-2\alpha}\pa_z\omega \dxdz
    \\
    & \leq \|\omega\|_{L^\infty}\Big(\|\pa_x u\|_{L^2} \|\lh^{3-2\alpha}\omega\|_{L^2} + \|w\|_{L^2}\|\lh^{3-2\alpha}\pa_z\omega\|_{L^2}\Big)
    \\
    &\lesssim \|\pa_x u\|_{L^2}\Big(\|\lh^{3-2\alpha}\omega\|_{L^2} + \|\lh^{3-2\alpha}\pa_z\omega\|_{L^2}\Big)
    \lesssim \|\lh^{\alpha} u\|_{L^2}\Big(\|\lh^{\frac\alpha2}\omega\|_{L^2} + \|\lh^{\frac\alpha2}\pa_z\omega\|_{L^2}\Big)\\
    &\lesssim \|\lh^{\alpha} u\|_{L^2}^2 + \|\lh^{\frac\alpha2}\omega\|_{L^2}^2 + \|\lh^{\frac\alpha2}\pa_z\omega\|_{L^2}^2,
\end{align*}
where we have used the Poincar\'e inequality $\|w\|_{L^2}\lesssim\|\pa_xu\|_{L^2}$ in the second inequality, and for the penultimate inequality, we note that $3-2\alpha\leq\frac\alpha2$ as $\alpha\geq\frac65$. 

Combining the estimates above, we have
\[
    \ddt \|\lh^{\frac32-\alpha}\omega\|_{L^2}^2 + \nu_h \|\lh^{\frac{3-\alpha}{2}}\omega\|_{L^2}^2 \lesssim 1+\|\lh^{\alpha} u\|_{L^2}^2 + \|\lh^{\frac\alpha2}\omega\|_{L^2}^2 + \|\lh^{\frac\alpha2}\pa_z\omega\|_{L^2}^2.
\]
Thanks to \eqref{bound:u-4}, \eqref{bound:omega-1}, and \eqref{bound:omegaz-1}, we can apply  Gr\"onwall's inequality to conclude that
\[
    \|\lh^{\frac32-\alpha}\omega(T)\|_{L^2}^2 + \int_0^T \nu_h \|\lh^{\frac{3-\alpha}{2}}\omega(t)\|_{L^2}^2 \dd t \leq C,
\]
for all $T>0$. Therefore, \eqref{eq:omegap} holds. 
\end{proof}

A direct application of Proposition \ref{prop:improvedomega} to Theorem \ref{thm:uniqueness} yields an improved uniqueness result for strong solutions in the range $\alpha\in[\frac65,\frac32)$.
\begin{theorem}[Improved uniqueness of strong solutions]\label{thm:uniqueness3}
    Let $\alpha\in[\frac65,\frac32)$. Suppose $u_0\in \mathcal D(\lh^{\frac\alpha2})\cap \H$, 
    $\omega_0 \in L^\infty\cap \mathcal D(\lh^{\frac32-\alpha})$, and $\pa_z \omega_0\in L^2$. Then the global strong solution obtained in Theorem~\ref{thm:sub-1} is unique and continuously depends on the initial data.
\end{theorem}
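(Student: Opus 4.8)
The plan is to obtain Theorem~\ref{thm:uniqueness3} as a direct corollary of the conditional uniqueness statement in Theorem~\ref{thm:uniqueness}, combined with the improved a priori bounds of Propositions~\ref{prop:improvedomegaz} and~\ref{prop:improvedomega}. Existence of a global strong solution is already furnished by Theorem~\ref{thm:sub-1} under the hypotheses $u_0\in\mathcal D(\lh^{\frac\alpha2})\cap\H$ and $\omega_0=\pa_zu_0\in L^\infty$, which are included in the present assumptions. Since $\alpha\in[\frac65,\frac32)$, Theorem~\ref{thm:uniqueness} does not give uniqueness for free from the energy level $\frac\alpha2$; it requires the extra control \eqref{eq:uniqueness}, i.e.\ $\omega\in L^\infty(0,T;\D(\lh^{\frac32-\alpha}))\cap L^2(0,T;\D(\lh^{\frac{3-\alpha}2}))$. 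Thus the entire task reduces to verifying \eqref{eq:uniqueness} for the solution produced by Theorem~\ref{thm:sub-1}.

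First I would invoke Proposition~\ref{prop:improvedomegaz}: its hypotheses $\alpha\in[\frac65,2)$ and $\pa_z\omega_0\in L^2$ are met, so for every $T>0$ we get $\pa_z\omega\in L^\infty(0,T;L^2)\cap L^2(0,T;\D(\lh^{\frac\alpha2}))$, namely \eqref{bound:omegaz-1}. Next I would feed this into Proposition~\ref{prop:improvedomega}: with the additional assumption $\omega_0\in\D(\lh^{\frac32-\alpha})$ (which is meaningful precisely because $\alpha<\frac32$), and using the already-available bounds \eqref{bound:u-4}, \eqref{bound:omega-1} at energy levels $0$ and $\frac\alpha2$ together with \eqref{bound:omegaz-1}, Proposition~\ref{prop:improvedomega} yields exactly $\omega\in L^\infty(0,T;\D(\lh^{\frac32-\alpha}))\cap L^2(0,T;\D(\lh^{\frac{3-\alpha}2}))$, which is \eqref{eq:uniqueness}.

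With \eqref{eq:uniqueness} established, the case $\alpha\in(1,\frac32)$ of Theorem~\ref{thm:uniqueness} gives that there is at most one strong solution, and together with existence from Theorem~\ref{thm:sub-1} the global strong solution is unique. Continuous dependence comes along for free: the Grönwall estimate in the proof of Theorem~\ref{thm:uniqueness},
\[
 \|u_1(T)-u_2(T)\|_{L^2}^2\leq \|u_{10}-u_{20}\|_{L^2}^2\exp\!\int_0^T C\Big(\|\lh^{\frac{3-\alpha}2}\omega_2(t)\|_{L^2}^2+\|\lh^{\frac\alpha2}\omega_2(t)\|_{L^2}^{\frac{2\alpha}{3\alpha-3}}\Big)\,\dd t,
\]
has a finite exponent exactly because \eqref{eq:uniqueness} holds, the second term being handled by interpolating $\|\lh^{\frac\alpha2}\omega_2\|_{L^2}$ between $\|\lh^{\frac32-\alpha}\omega_2\|_{L^2}$ and $\|\lh^{\frac{3-\alpha}2}\omega_2\|_{L^2}$ as in that proof. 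The main obstacle is not this assembly step but lies upstream, in Propositions~\ref{prop:improvedomegaz}--\ref{prop:improvedomega}: the replacement of the lossy Poincar\'e bound $\|u\|_{L^\infty_z}\lesssim\|\omega\|_{L^2_z}$ by the sharper interpolation $\|u\|_{L^\infty_z}\lesssim\|u\|_{L^2_z}^{1/2}\|\omega\|_{L^2_z}^{1/2}$, which breaks the energy scaling; the threshold $\alpha\geq\frac65$ is precisely what renders the resulting exponents absorbable (e.g.\ $\frac{\alpha}{3\alpha-3}\leq2$ and $3-2\alpha\leq\frac\alpha2$), and here one need only check that the chain of hypotheses remains consistent throughout.
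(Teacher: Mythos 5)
Your proposal is correct and matches the paper's argument exactly: the paper obtains Theorem~\ref{thm:uniqueness3} precisely by feeding Proposition~\ref{prop:improvedomegaz} into Proposition~\ref{prop:improvedomega} to verify the extra control \eqref{eq:uniqueness}, and then invoking the conditional uniqueness and continuous-dependence estimate from the $\alpha\in(1,\frac32)$ case of Theorem~\ref{thm:uniqueness}. The chain of hypotheses you check is the same one the paper relies on, so there is nothing to add.
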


Moreover, since \eqref{eq:omegap} implies the regularity critetrion \eqref{eq:BKM}, we also obtain an improved version of Theorem \ref{thm:GWPlarge} for classical solutions in the range $\alpha\in[\frac65,\frac32)$.

\begin{theorem}[Improved global existence of classical solution: large initial data]\label{thm:GWPlarge2}
	Let $\alpha\in[\frac65,\frac32)$. Suppose $u_0$ satisfies \eqref{eq:smoothinit}. Then there exists a global classical solution  
    to \eqref{FPE} with $u(0)=u_0$.
\end{theorem}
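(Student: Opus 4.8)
The plan is to obtain Theorem~\ref{thm:GWPlarge2} by combining the improved a priori estimates of Propositions~\ref{prop:improvedomegaz} and~\ref{prop:improvedomega} with the conditional global existence already furnished by Proposition~\ref{thm:E2}, and then closing the argument with a standard continuation principle at the energy level $\alpha$. First I would invoke Theorem~\ref{thm:LWP} to produce a local classical solution $u$ with $u(0)=u_0$, and let $[0,T^*)$ be its maximal interval of existence. On this interval $u$ is, in particular, a strong solution in the sense of Definition~\ref{def:strong}, and the hypotheses on $u_0$ in~\eqref{eq:smoothinit} (in particular $\pa_z\omega_0\in L^2$ and $\omega_0\in\D(\lh^{\frac32-\alpha})$), together with $\alpha\in[\frac65,\frac32)$, place us exactly in the setting of both Propositions~\ref{prop:improvedomegaz} and~\ref{prop:improvedomega}.

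Applying those two propositions yields, for every $T<T^*$, the bounds~\eqref{bound:omegaz-1} on $\pa_z\omega$ and the bound~\eqref{eq:omegap} on $\omega$; in particular
\[
\int_0^T\|\lh^{\frac{3-\alpha}2}\omega(t)\|_{L^2}^2\,\dd t<\infty,
\]
which is precisely the regularity criterion~\eqref{eq:BKM}. With~\eqref{eq:BKM} in hand, Proposition~\ref{thm:E2} upgrades this to the control~\eqref{eq:Energy2} at the energy level $\alpha$, namely $E_2\in L^\infty(0,T)$ and $\Et_3\in L^1(0,T)$ for every $T<T^*$, with bounds depending only on $E_2(0)$, $\|\omega_0\|_{L^\infty}$, $\nu_h$, $\alpha$ and $T$, and not on $T^*$. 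Since the local existence time delivered by Theorem~\ref{thm:LWP} depends only on $E_2$ and $\|\omega_0\|_{L^\infty}$ at the initial instant, and since $\|\omega(t)\|_{L^\infty}\le\|\omega_0\|_{L^\infty}$ by the maximum principle (Proposition~\ref{prop:MP}), the uniform bound on $E_2(t)$ precludes $T^*<\infty$: otherwise one could restart the solution from a time close to $T^*$ and extend it beyond, contradicting maximality. Hence $T^*=\infty$ and the classical solution is global, and it is unique by Theorem~\ref{thm:uniqueness2} (equivalently Theorem~\ref{thm:uniqueness3}, whose hypotheses are supplied by Proposition~\ref{prop:improvedomega}).

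The genuine analytic difficulty has already been absorbed into Propositions~\ref{prop:improvedomegaz} and~\ref{prop:improvedomega}, where the sharper interpolation $\|u\|_{L^\infty_z}\lesssim\|u\|_{L^2_z}^{1/2}\|\omega\|_{L^2_z}^{1/2}$ breaks the energy scaling in the supercritical range $\alpha<\frac32$ and forces the threshold $\alpha\ge\frac65$ through the requirement $\tfrac{\alpha}{3\alpha-3}\le 2$. What remains to be executed carefully in the present proof is therefore not new estimates but the bookkeeping of the continuation argument: checking that every a priori bound above is uniform on compact time intervals and independent of $T^*$, and that the local well-posedness time in Theorem~\ref{thm:LWP} depends only on quantities that these bounds control, so that the solution cannot cease to exist in finite time. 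This is routine but should be spelled out to close the loop.
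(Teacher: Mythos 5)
Your proposal is correct and follows the paper's intended route: Propositions~\ref{prop:improvedomegaz} and~\ref{prop:improvedomega} supply the regularity criterion~\eqref{eq:BKM}, which via Proposition~\ref{thm:E2} gives uniform-in-time control of $E_2$, and global existence follows. The paper compresses this chain into a single sentence and leaves the continuation step implicit; you spell it out, which is a harmless elaboration rather than a different approach.
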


By applying Theorem~\ref{thm:smooth}, the improved global existence result in Theorem \ref{thm:GWPlarge2} can be extended to smooth solutions.

\subsection{Summary of results}\label{sec:subsummary}
We now summarize the results obtained in this section for the subcritical FPE system:
\begin{itemize}
\item Local well-posedness when $\alpha>1$.
\item Global well-posedness for small initial data when $\alpha>1$.	
\item Global well-posedness for general initial data when $\alpha\geq\frac65$.
\end{itemize}

More precisely, the well-posedness results are established for strong, classical, and smooth solutions. The detailed statements for each type of solution, together with references to the corresponding theorems, are given below.

\begin{itemize}
\item Strong solution (Definition~\ref{def:strong})
\begin{itemize}
\item Local and global existence: $\alpha>1$ (Theorem \ref{thm:sub-1}).
\item Uniqueness: $\alpha\geq\frac32$ (Theorem \ref{thm:uniqueness}), $\alpha\geq\frac65$ (Theorem \ref{thm:uniqueness3}).
\end{itemize}
\item Classical solution (Definition~\ref{def:classical})
\begin{itemize}
\item Local existence: $\alpha>1$ (Theorem \ref{thm:LWP}).
\item Global existence for small initial data: $\alpha>1$ (Theorem \ref{thm:GWPsmall}).
\item Global existence for general initial data: $\alpha\geq\frac32$ (Theorem \ref{thm:GWPlarge}), $\alpha\geq\frac65$ (Theorem \ref{thm:GWPlarge2}).
\item Uniqueness: $\alpha>1$ (Theorem \ref{thm:uniqueness2}).
\end{itemize}
\item Smooth solution (Definition~\ref{def:smooth})
\begin{itemize}
\item Same existence and uniqueness results as the classical solution (Theorem \ref{thm:smooth}).
\end{itemize}
\end{itemize}\medskip

The local well-posedness result stands in sharp contrast to the ill-posedness results for the supercritical FPE system (Theorem \ref{thm:nonlinear-ill}). The transition occurs at the critical exponent $\alpha = 1$, which will be the focus of the next section.

The global well-posedness for general initial data when $\alpha \in (1, \frac65)$ remains an open problem.

\section{The critical case}\label{sec:critical}
In this section, we study the critical case $\alpha = 1$ and demonstrate a sharp transition between ill-posedness and well-posedness. When the initial data $u_0$ is large relative to the horizontal dissipation coefficient $\nu_h$, the solution exhibits a Kelvin–Helmholtz-type instability, similar to the supercritical case. In contrast, if $u_0$ is sufficiently small compared to $\nu_h$, the solution remains globally well-posed, like the subcritical case.

\subsection{Ill-posedness for large initial data}
We start with the linear ill-posedness result. From the same argument as in Proposition~\ref{prop:supercritical-ill}, we know that the linearized system \eqref{sys:psi} has solutions of the form
	\[
	\psi_n(x,z,t)=\chi(z) e^{2\pi inx} e^{n \beta t},
    \]
    where $\chi(z)$ is given \eqref{eqn:chi}, and $\beta$ satisfies
    \begin{equation}\label{eq:beta}
    	\beta = 2\pi\gamma_0 - 2\pi\nu_h.
    \end{equation}
Therefore, we obtain linear instability when $\beta>0, or$ $\gamma_0>\nu_h$. 

Observe from the relation \eqref{eq:Ucond} that $\gamma_0$ is linearly related to the size of the shear flow $U$. Indeed, if $(U,\gamma_0)$ satisfies \eqref{eq:Ucond}, so does $(\lambda U,\lambda\gamma_0)$. Hence, by scaling we have
\begin{equation}\label{eq:gammaU}
\gamma_0 = C\|U\|,	
\end{equation}
where the constant $C$ depends on the choice of the norm. This leads to the following linear ill-posedness result.
\begin{theorem}\label{thm:critical-ill}
	Consider the linearized system \eqref{pert-system} with the shear flow $U$ satisfying \eqref{eq:Ucond} and 
	\begin{equation}\label{eq:ill:crit}
	 \|U\|\gg\nu_h,		
	\end{equation}
	or more precisely $C\|U\|\geq\nu_h$ with the constant $C$ in \eqref{eq:gammaU}.
	Let $s\geq0$. There exists a solution $\psi$, such that $\psi_0\in H^s$, but $\psi(t)\not\in H^s$ for any $t>0$.
\end{theorem}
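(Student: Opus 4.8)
The plan is to mimic exactly the proof of Theorem~\ref{thm:illposedness} in the supercritical case, the only difference being that the exponential growth rate $n\beta$ now uses the $n$-independent constant $\beta = 2\pi\gamma_0 - 2\pi\nu_h$ from \eqref{eq:beta} rather than the $n$-dependent $\beta_n$ from \eqref{eq:betan}. First I would record, via the same computation as in Proposition~\ref{prop:supercritical-ill} with $\alpha=1$, that for every $n\in\Z_+$ the function $\psi_n(x,z,t)=\chi(z)e^{2\pi inx}e^{n\beta t}$ solves the linearized system \eqref{pert-system}, where $\chi$ is the analytic profile \eqref{eqn:chi}; the boundary conditions $\chi(0)=\chi(1)=0$ are satisfied thanks to \eqref{eq:Ucond}. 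The hypothesis \eqref{eq:ill:crit}, in the precise form $C\|U\|\ge\nu_h$, combined with the scaling relation \eqref{eq:gammaU} $\gamma_0 = C\|U\|$, guarantees $\beta = 2\pi(\gamma_0-\nu_h)\ge 0$; one should in fact assume the strict inequality $C\|U\|>\nu_h$ (or perturb $U$ slightly) so that $\beta>0$, which is what the blow-up argument needs.

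Next I would superpose these solutions with summable weights: set
\[
\psi(x,z,t) = \sum_{n=1}^\infty a_n\,\mathrm{Re}\,\psi_n(x,z,t) = \mathrm{Re}\!\left(\chi(z)\sum_{n=1}^\infty a_n e^{2\pi inx} e^{n\beta t}\right),\qquad a_n = \frac{1}{n^{s+1}}.
\]
By linearity $\psi$ solves \eqref{pert-system}. At $t=0$, since $\chi$ is analytic hence $\chi\in H^s_z$, we get $\|\psi_0\|_{H^s}^2 \lesssim \|\chi\|_{H^s_z}^2\sum_n a_n^2 n^{2s} \lesssim \sum_n n^{-2} < \infty$, so $\psi_0\in H^s$. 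For any fixed $t>0$, however, $\|\psi(\cdot,z,t)\|_{H^s_x}^2 = |\chi(z)|^2\sum_n a_n^2 n^{2s} e^{2n\beta t} = |\chi(z)|^2\sum_n n^{-2} e^{2n\beta t} = \infty$ on the set where $\chi(z)\neq 0$ (which has positive measure by analyticity of $\chi$ and $\chi\not\equiv 0$), because $e^{2n\beta t}$ grows super-polynomially when $\beta>0$. Hence $\psi(t)\notin H^s$ for every $t>0$.

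The argument has essentially no hard analytic core — it is a direct transcription of the supercritical proof — so the only real point requiring care is the bookkeeping around $\beta$: one must verify that \eqref{eq:ill:crit} is precisely the condition that makes $\beta = 2\pi(\gamma_0 - \nu_h)$ positive, tracking the constant $C$ through the scaling identity $(\lambda U, \lambda\gamma_0)$ solving \eqref{eq:Ucond} whenever $(U,\gamma_0)$ does. A secondary technical remark worth including is that one should state which norm $\|\cdot\|$ is meant in \eqref{eq:ill:crit}: it must be a norm in which \eqref{eq:gammaU} holds with a definite constant, e.g.\ the norm dictated by the normalization of the integral in \eqref{eq:Ucond}. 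As in the remark following Theorem~\ref{thm:illposedness}, the same construction with $a_n = e^{-n^{1/\sigma}}$ shows ill-posedness persists in every Gevrey class $G^\sigma$ with $\sigma>1$, so that analyticity is again the borderline regularity for well-posedness under the smallness-violating condition \eqref{eq:ill:crit}.
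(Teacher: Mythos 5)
Your proposal is correct and takes essentially the same approach as the paper: the paper explicitly states that the proof of Theorem~\ref{thm:critical-ill} follows the same strategy as Theorem~\ref{thm:illposedness}, with the $n$-dependent exponent $\beta_n$ replaced by the constant $\beta = 2\pi(\gamma_0-\nu_h)$, whose positivity is guaranteed by the size condition~\eqref{eq:ill:crit} via the scaling relation~\eqref{eq:gammaU}. Your remark that the ``precise form'' $C\|U\|\geq\nu_h$ should really be a strict inequality so that $\beta>0$ (else $e^{n\beta t}\equiv 1$ and there is no blow-up) is a valid and worthwhile observation, consistent with the intended strict reading of $\gg$.
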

The proof of Theorem \ref{thm:critical-ill} follows the same strategy as that of Theorem \ref{thm:illposedness}. Observe that the initial velocity of the linearized system, $u_0 = U + \epsilon \tilde u$ is a small perturbation of the background shear flow $U$. Hence, with an appropriate choice of norm, we have $\|u_0\|\approx\|U\|$, and the condition \eqref{eq:ill:crit} is equivalent to $\|u_0\|\gg \nu_h$.

Next, we obtain the nonlinear ill-posedness result. 
\begin{theorem}\label{thm:critical-nonlinear-ill}
    Let $\alpha=1$. Suppose $U$ is a shear flow satisfying \eqref{eq:Ucond} and \eqref{eq:ill:crit}. Then Theorem \ref{thm:nonlinear-ill} holds. More precisely, let $s\geq0$. Denote $\{\omega_n\}_{n\in\Z_+}$ the solution of the equation \eqref{eq:omeganl} with initial condition $\omega_{n0}(x,z)=\chi''(z)e^{2\pi inx}$. Then, we have
\begin{equation}\label{eq:illnl2}
\lim_{n \to \infty} \frac{ \| \omega_n \|_{L^2([0,t_n] \times \Omega)}}{\| \omega_{n0} \|_{H^s(\Omega)}}=+\infty,
\end{equation}
with $t_n=\mathcal{O}(n^{-1}\log n)$ which goes to 0 as $n\to\infty$.
\end{theorem}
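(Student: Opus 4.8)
The plan is to run the argument of Theorem~\ref{thm:nonlinear-ill} essentially unchanged, the only genuine difference being the treatment of the dissipative term after rescaling. Recall that in the supercritical case the rescaled dissipation $\varepsilon\mathrm{R}_1 = -\varepsilon^{1-\alpha}\nu_h\Lambda_y^\alpha\mathbb{I}_3$ was a vanishing remainder; when $\alpha=1$ it equals $-\nu_h\Lambda_y\mathbb{I}_3$, which is independent of $\varepsilon=1/n$ and therefore cannot be discarded. The resolution is to fold it into the leading-order linear operator and to use hypothesis \eqref{eq:ill:crit} to retain a genuinely unstable mode.

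First I would repeat the reductions of Section~\ref{subsec:sup-non-ill} verbatim: linearize \eqref{FPE} about the steady shear $(U(z),0,0)$, introduce the stream function and the hydrostatic vorticity $\omega=\pa_{zz}\psi$, pass to the vector $\W=(\omega,\pa_y\omega,\pa_z\omega)^\top$, and rescale by $\varepsilon=1/n$, $(\s,y)=(t/\varepsilon,x/\varepsilon)$. This brings us to \eqref{eq:vecW}. Now set $\widetilde{\mathrm L}:=\mathrm L-\nu_h\Lambda_y\mathbb{I}_3$, so that the rescaled system reads $\pa_\s\W-\widetilde{\mathrm L}\W=\mathrm Q(\W,\W)$ with no remainder term at all. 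Since $\Lambda_y$ is the Fourier multiplier $2\pi|n|$ in the $y$-variable and commutes with multiplication by $U(z)$, adding $-\nu_h\Lambda_y\mathbb{I}_3$ preserves the block-triangular structure of $\mathrm L$ in \eqref{def-L-E}; moreover the sign of $-\nu_h\Lambda_y$ is dissipative, so on each analytic scale $X_{\delta,\delta'}$ it only tightens the semigroup bound for $e^{\widetilde{\mathrm L}\s}$.

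Next, exactly the computation behind Proposition~\ref{prop:supercritical-ill}, carried out with the dissipation sitting inside the operator — which is precisely the calculation already performed for the linear result Theorem~\ref{thm:critical-ill} — shows that $\pa_\s\omega=\widetilde L\omega$ with $\widetilde L\omega=-U\pa_y\omega+U''\pa_y\psi(\omega)-\nu_h\Lambda_y\omega$ admits the explicit solution $\omega(y,z,\s)=\chi''(z)e^{2\pi iy}e^{\beta\s}$, with $\chi$ as in \eqref{eqn:chi} and $\beta=2\pi(\gamma_0-\nu_h)$. Because $\gamma_0=C\|U\|$ by \eqref{eq:gammaU}, condition \eqref{eq:ill:crit} gives $\beta>0$; this is the analogue of the growing solution \eqref{eq:growingsol} and is the source of instability in the critical regime. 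With the growing eigenmode $\chi''(z)e^{2\pi iy}$ and the positive rate $\beta$ in hand, I would then verify the hypotheses {\bf (H.1)}--{\bf (H.4)} of \cite{han2016ill} for $\widetilde{\mathrm L}$: the transport and vortex-stretching blocks are handled as in \cite{han2016ill} and in Theorem~\ref{thm:nonlinear-ill}, the extra multiplier $-\nu_h\Lambda_y$ contributes only favorable dissipative terms, and $\mathrm Q$ is unchanged; there is no {\bf (H.5)} term to treat. The abstract instability theorem of \cite{han2016ill} then yields \eqref{eq:illnl2}, and undoing the rescaling gives $t_n=\mathcal O(n^{-1}\log n)$.

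The main obstacle is the verification of {\bf (H.1)}--{\bf (H.4)} once the order-one nonlocal operator $-\nu_h\Lambda_y$ has been incorporated into the generator: one needs the semigroup estimate for $e^{\widetilde{\mathrm L}\s}$ on the analytic scale $X_{\delta,\delta'}$, and the requirement that the growing mode with rate $\beta$ remain correctly isolated within the spectrum, as needed for {\bf (H.3)}--{\bf (H.4)}; the dissipation has the right sign here, but one must check that it does not disturb this spectral separation. It is precisely at this step that \eqref{eq:ill:crit} is indispensable: if $\|U\|\lesssim\nu_h$ then $\beta\le 0$, the candidate unstable mode is in fact damped, and the mechanism breaks down — in accordance with the well-posedness regime described by Theorem~\ref{thm:GWP:crit}.
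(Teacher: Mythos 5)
Your plan is the right one and matches the paper's approach: after rescaling by $\varepsilon=1/n$ the dissipative term $\varepsilon\mathrm{R}_1$ is no longer a lower-order remainder, so it must be absorbed into the generator, and the unstable mode then grows at the reduced rate $\beta=2\pi(\gamma_0-\nu_h)$, which is positive exactly when \eqref{eq:ill:crit} holds. You also correctly note that $Q$ is unchanged from the inviscid case, so the paper only needs to re-verify the hypotheses that concern the linear operator — in its numbering, \textbf{(H.1)} (existence of a growing eigenmode, supplied by the linear computation) and \textbf{(H.2)} (the analytic semigroup bound), rather than the full list \textbf{(H.1)}--\textbf{(H.4)} you mention.

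The genuine gap is the one you flag yourself and then leave unresolved: the semigroup estimate $\|e^{\widetilde{\mathrm L}\s}\omega_0\|_{\delta-\gamma\s,\delta'}\lesssim_\gamma\|\omega_0\|_{\delta,\delta'}$ with $\gamma>\beta$, i.e.\ Proposition~\ref{prop:H.2}. Saying that $-\nu_h\Lambda_y$ "only tightens the semigroup bound" is a correct sign observation but not an argument, and it misses the quantitatively essential point. For the abstract instability machinery of \cite{han2016ill} one needs the semigroup to grow no faster than the unstable mode itself; here the unstable mode grows at rate $\beta=2\pi(\gamma_0-\nu_h)<2\pi\gamma_0$, so it is not enough to inherit the inviscid bound at rate $\gamma_0$ — one must show the bound actually \emph{improves} from $\gamma_0$ to $\gamma_0-\nu_h$. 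The paper proves this (Lemma~\ref{lem:H2L2}) by representing $\hat\omega_1(\s)$ via an inverse Laplace transform $\frac{1}{2\pi i}\int_{\gamma+i\R}e^{\lambda\s}(\lambda-\widehat L_1)^{-1}h_1\,\dd\lambda$, observing that the resolvent equation is the hydrostatic Orr--Sommerfeld equation $(U-c)\pa_z^2\psi_\lambda-U''\psi_\lambda=\tfrac{h_1}{2\pi i}$ with the \emph{shifted} parameter $c=i\big(\tfrac{\lambda}{2\pi}+\nu_h\big)$, and noting that this is uniquely solvable precisely when $|\mathrm{Im}(c)|>\gamma_0$, that is, when $\mathrm{Re}(\lambda)>\beta$. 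The $+\nu_h$ shift in $c$ is the mechanism by which the dissipation moves the spectral boundary down to $\beta$, and this is the content your proposal does not supply. The extension to the analytic norm (Lemma~\ref{lem:H2delta}), where the dissipation is simply dropped with a favorable sign, is the part where your "favorable sign" heuristic actually suffices. You should make the resolvent computation explicit; without it, the step "verify \textbf{(H.2)}" is asserted rather than proved.
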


 The proof of Theorem \ref{thm:critical-nonlinear-ill} follows the framework in \cite{han2016ill}. The main difference compared with the supercritical case is that the term $R_1\omega$ in \eqref{eq:omegasy} is no longer small when $\eps$ is small. Therefore, it needs to be merged into the leading order term $L$. We have
 \begin{equation}\label{eq:Lcrit}
L\omega:= - U \pa_y \omega  + U'' \pa_y \psi - \nu_h\Lambda_y\omega. 	
 \end{equation}
 Since the quadratic term $Q$ is the same as the inviscid PE system, we only have to verify the hypotheses {\bf (H.1)}--{\bf (H.2)} on the linear operator $L$ to apply the abstract framework in \cite{han2016ill}. The rest of the proof is identical to \cite{han2016ill} and we omit the details.
 
 The hypothesis {\bf (H.1)} describes the instability of the linearized operator $L$. Our linear ill-posedness result verifies that there exists an eigenfunction $\omega_{n0} = \chi''(z)e^{2\pi inx}$ for $L$ with a positive eigenvalue $\beta>0$. Hence, {\bf (H.1)} holds.
 
 The proof of the hypothesis {\bf (H.2)} modifies \cite[Proposition 3.2]{han2016ill}. More precisely, we need the following bound on the semigroup $e^{L\s}$.
 \begin{proposition}\label{prop:H.2}
 Let $\delta, \delta'>0$. Then for any $\gamma>\beta$ and $\omega_0\in X_{\delta,\delta'}$,
 \begin{equation}\label{eq:H2}
\|e^{L\s}\omega_0\|_{\delta-\gamma\s, \delta'}\lesssim_\gamma\|\omega_0\|_{\delta,\delta'},	
 \end{equation}
 where $\delta'$ is small, and $\delta-\gamma\s>0$. 
 \end{proposition}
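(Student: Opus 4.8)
To prove Proposition~\ref{prop:H.2}, the plan is to reduce \eqref{eq:H2} to the inviscid semigroup bound of \cite[Proposition 3.2]{han2016ill} by \emph{factoring the horizontal dissipation out of} $e^{L\s}$. I would first split $L = L_0 - \nu_h\Lambda_y$, where $L_0\omega := -U\pa_y\omega + U''\pa_y\psi$ (with $\psi$ solving $\pa_z^2\psi=\omega$, $\psi|_{z=0,1}=0$, as in \eqref{eq:Lcrit}) is exactly the linearized inviscid hydrostatic operator treated in \cite{han2016ill}. The key structural observation is that $\Lambda_y$ commutes with $L_0$: on the $n$-th Fourier mode in $y$, $\Lambda_y$ acts as the scalar $2\pi|n|$ while $L_0$ acts as $2\pi in$ times a fixed linear operator in the $z$-variable alone, and a scalar commutes with the latter. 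Hence $[\Lambda_y,L]=0$; since $e^{-\nu_h\s\Lambda_y}$ is the bounded contraction semigroup that multiplies the $n$-th mode by $e^{-2\pi\nu_h|n|\s}$, it commutes with $e^{L_0\s}$, so that $\s\mapsto e^{-\nu_h\s\Lambda_y}e^{L_0\s}\omega_0$ solves $\pa_\s\omega = L_0\omega-\nu_h\Lambda_y\omega = L\omega$ with datum $\omega_0$, and by uniqueness $e^{L\s} = e^{-\nu_h\s\Lambda_y}\,e^{L_0\s}$.

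The next step exploits the fact that, because the weight $e^{\rho|n|}$ enters the norm \eqref{vort-norm0} linearly inside an $\ell^1$-sum over $n$, the dissipation semigroup does more than contract: it \emph{enlarges the $y$-analyticity radius by $2\pi\nu_h\s$}, i.e.
\[
\|e^{-\nu_h\s\Lambda_y}g\|_{\rho,\delta'} = \sum_{n\in\Z}\sum_{k\ge0}e^{-2\pi\nu_h|n|\s}\,e^{\rho|n|}\,\|\pa_z^k\hat g_n(\cdot)\|_{L^2_z}\frac{|\delta'|^k}{k!} = \|g\|_{\rho-2\pi\nu_h\s,\,\delta'}.
\]
Applying this with $\rho = \delta-\gamma\s$ and $g = e^{L_0\s}\omega_0$ yields $\|e^{L\s}\omega_0\|_{\delta-\gamma\s,\delta'} = \|e^{L_0\s}\omega_0\|_{\delta-(\gamma+2\pi\nu_h)\s,\,\delta'}$. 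I would then invoke \cite[Proposition 3.2]{han2016ill}, which bounds $\|e^{L_0\s}\omega_0\|_{\delta-\gamma'\s,\delta'}\lesssim_{\gamma'}\|\omega_0\|_{\delta,\delta'}$ whenever $\delta'$ is small, $\delta-\gamma'\s>0$, and $\gamma'$ exceeds the inviscid growth rate $2\pi\gamma_0$ (cf. \eqref{eq:growingsol}). Choosing $\gamma' = \gamma+2\pi\nu_h$, the admissibility condition $\gamma'>2\pi\gamma_0$ becomes exactly $\gamma>2\pi\gamma_0-2\pi\nu_h=\beta$, which is the hypothesis; the requirement $\delta-\gamma'\s>0$ merely shortens the admissible time interval to $0\le\s<\delta/(\gamma+2\pi\nu_h)$, which is all the abstract scheme of \cite{han2016ill} uses, and the smallness of $\delta'$ — needed so that multiplication by the analytic profile $U(z)$ is bounded on the $z$-analytic scale — is inherited verbatim since the factorization leaves the $z$-variable untouched. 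This gives \eqref{eq:H2}, hence {\bf (H.2)}.

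The same commutation applies word for word to the lower-triangular matrix operator of \eqref{def-L-E} (whose entries are either scalars times $z$-operators, or $z$-operators), so nothing new is needed to run the vector-valued version of the argument; and {\bf (H.1)} is already supplied by the linear ill-posedness construction, as $\omega_{n0}=\chi''(z)e^{2\pi inx}$ is an eigenfunction of $L$ with positive eigenvalue $\beta$, after which the nonlinear bootstrap of \cite{han2016ill} (with the unchanged quadratic term $\mathrm{Q}$) delivers Theorem~\ref{thm:critical-nonlinear-ill}. I do not foresee a genuine obstacle here; the substantive new input is the identity $[\Lambda_y,L_0]=0$ together with the exact analyticity-radius gain above. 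The one point that must be checked carefully is that \cite[Proposition 3.2]{han2016ill} is available for \emph{every} $\gamma'$ strictly above the growth rate $2\pi\gamma_0$ (and not merely for $\gamma'$ large), since this is precisely what pins the threshold to $\beta$ rather than to some larger multiple of $\|U\|$; a convenient alternative that sidesteps this is to redo their energy estimate directly on $\|e^{L\s}\omega_0\|_{\delta-\gamma\s,\delta'}$, where $-\nu_h\Lambda_y\omega$ enters with a favorable sign and relaxes the needed lower bound on $\gamma$ from $2\pi\gamma_0$ to $\beta$.
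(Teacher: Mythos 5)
Your proof is correct in substance but takes a genuinely different route from the paper's. The paper re-derives the resolvent estimate with the dissipation folded in: in Lemma~\ref{lem:H2L2}, the inverse Laplace transform leads to the modified hydrostatic Orr--Sommerfeld equation \eqref{eq:resolvent} with $c=i(\lambda/(2\pi)+\nu_h)$, so that the solvability condition $|\mathrm{Im}(c)|>\gamma_0$ becomes $\mathrm{Re}(\lambda)>\beta$ directly; and in Lemma~\ref{lem:H2delta}, the $\delta'$-analytic energy estimate closes precisely because the $-\nu_h\|\hat\omega_n\|_{\delta'}$ term appears with a favorable sign. You instead exploit the exact commutation $[\Lambda_y,L_0]=0$ to factor $e^{L\s}=e^{-\nu_h\s\Lambda_y}e^{L_0\s}$, recognize that the heat factor acts on the norm \eqref{vort-norm0} as an exact shift of the $y$-analyticity radius, and reduce the viscous semigroup bound to the inviscid one from \cite{han2016ill} applied at rate $\gamma'=\gamma+2\pi\nu_h$. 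This factorization is genuinely cleaner and makes the role of the dissipation conceptually transparent (it is literally an analyticity-radius gain).

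Two technical points you correctly flag yourself, and which separate the two routes. First, your argument needs the inviscid proposition to hold for every $\gamma'$ strictly above $2\pi\gamma_0$, not just for $\gamma'$ large — this is the precise fact that pins the viscous threshold to $\beta$ rather than something cruder. Second, when invoked at the level of the global norm $\|\cdot\|_{\delta-\gamma'\s,\delta'}$, the positivity requirement $\delta-\gamma'\s>0$ with $\gamma'=\gamma+2\pi\nu_h$ nominally restricts you to $\s<\delta/(\gamma+2\pi\nu_h)$, which is a shorter interval than $\s<\delta/\gamma$ stated in the proposition; this is harmless for the abstract fixed-point scheme of \cite{han2016ill}, and in any case dissolves entirely if you run the argument at the Fourier-mode level (bound $\|\hat\omega_n(\s)\|_{\delta'}\lesssim e^{|n|(\gamma'-2\pi\nu_h)\s}\|h_n\|_{\delta'}$ and then sum), which is exactly what the paper does and what your final sentence proposes as the fallback. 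In short: your factorization buys modularity and conceptual clarity but outsources the sharp inviscid input; the paper's direct re-derivation is more self-contained and matches the stated time interval without further comment.
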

 
 The inequality \eqref{eq:H2} provides sharp control of the semigroup $e^{L\s}$, up to a loss of analytic regularity in the $y$-variable. In the presence of dissipation, this estimate is stronger than the one in the inviscid case: for the inviscid PE system, \eqref{eq:H2} holds only for $\gamma > \gamma_0$, whereas dissipation allows us to take smaller values of $\gamma$.

We include a sketch of the proof of Proposition \ref{prop:H.2}, focusing primarily on how the dissipation is incorporated.

For the rest of this subsection, we consider the linear equation for $\omega=\omega(y,z,\s)$: 
\begin{equation}\label{eq:omegaLcrit}
 \pa_\s\omega - L\omega = 0,\quad \omega\vert_{\s=0} = \omega_0,
\end{equation}
where $L$ is defined in \eqref{eq:Lcrit}. The solution is denoted by $\omega(\s) = e^{L\s}\omega_0$.
Applying Fourier transform in $y$, we obtain the dynamics of the $n$-th Fourier mode:
\begin{equation}\label{eq:omegahatn}
\pa_\s\hat\omega_n=\widehat{L}_n\hat\omega_n:=-2\pi inU\hat\omega_n+2\pi in U''\hat\psi_n - 2\pi |n|\nu_h \hat\omega_n, \quad \hat\omega_n\vert_{\s=0}=(\widehat{\omega_0})_n=:h_n,
\end{equation}
where
 \begin{equation}\label{eq:psihatn}
  \pa_z^2\hat\psi_n = \hat\omega_n,\quad \hat\psi_n\vert_{z=0,1} = 0.
 \end{equation}
 
We aim to control the growth of $\hat\omega_n$, starting by the following $L^2_z$ estimate.
\begin{lemma}\label{lem:H2L2}
For any $\gamma>\beta$, the system \eqref{eq:omegahatn}-\eqref{eq:psihatn} has a solution, satisfying
\begin{equation}\label{eq:H2L2}
\|\hat\omega_n(\s)\|_{L^2_z}\lesssim_\gamma e^{|n|\gamma\s} \|h_n\|_{L^2_z}.	
\end{equation}
\end{lemma}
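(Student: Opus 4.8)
The plan is to estimate the growth of $\|\hat\omega_n(\s)\|_{L^2_z}$ directly via an energy method on the ODE system \eqref{eq:omegahatn}--\eqref{eq:psihatn}, treating the coupling term $2\pi in U''\hat\psi_n$ as a perturbation while exploiting the dissipation $-2\pi|n|\nu_h\hat\omega_n$. Taking the real part of the $L^2_z$ inner product of \eqref{eq:omegahatn} with $\hat\omega_n$, the transport term $-2\pi in\int U|\hat\omega_n|^2\,\dd z$ is purely imaginary and drops out, the dissipative term contributes $-4\pi|n|\nu_h\|\hat\omega_n\|_{L^2_z}^2\le 0$, and we are left with
\[
\ddt \tfrac12\|\hat\omega_n(\s)\|_{L^2_z}^2 \le 2\pi|n|\,\|U''\|_{L^\infty_z}\,\|\hat\psi_n\|_{L^2_z}\,\|\hat\omega_n\|_{L^2_z}.
\]
By \eqref{eq:psihatn} and the Poincaré inequality (using $\hat\psi_n|_{z=0,1}=0$), $\|\hat\psi_n\|_{L^2_z}\lesssim\|\hat\omega_n\|_{L^2_z}$, so $\ddt\|\hat\omega_n(\s)\|_{L^2_z}\lesssim |n|\|\hat\omega_n(\s)\|_{L^2_z}$, and Grönwall gives $\|\hat\omega_n(\s)\|_{L^2_z}\lesssim e^{C|n|\s}\|h_n\|_{L^2_z}$ for some $C$ depending on $U$. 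This already yields \eqref{eq:H2L2} with a fixed constant $C$ in the exponent, but not the sharp rate $\gamma$ arbitrarily close to $\beta$; the point of the lemma is that the constant can be taken to be any $\gamma>\beta$, which requires being more careful.

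To get the sharp rate, I would work with the rescaled unknown $g_n(\s):=e^{-|n|\gamma\s}\hat\omega_n(\s)$, which satisfies $\pa_\s g_n = (\widehat{L}_n - |n|\gamma)g_n$. It suffices to show $\|g_n(\s)\|_{L^2_z}\lesssim_\gamma\|h_n\|_{L^2_z}$ uniformly in $\s$ and $n$. Here one uses that the relevant operator, after factoring out the transport part, behaves like $\widehat{M}_n\phi := 2\pi in U''\psi[\phi] - 2\pi|n|\nu_h\phi$ where $\psi[\phi]$ solves $\pa_z^2\psi=\phi$ with zero boundary data; the key spectral fact, which is exactly the content of the linear ill-posedness analysis in Proposition~\ref{prop:supercritical-ill} (with $\alpha=1$), is that the eigenvalues of the full linearized operator $\widehat{L}_n$, divided by $|n|$, have real part at most $\beta$. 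Concretely, one can estimate via Duhamel: write $\hat\omega_n(\s)=e^{-2\pi inU\s - 2\pi|n|\nu_h\s}h_n + 2\pi in\int_0^\s e^{-2\pi in U(\s-\tau) - 2\pi|n|\nu_h(\s-\tau)}U''\hat\psi_n(\tau)\,\dd\tau$, take $L^2_z$ norms using that the pointwise-in-$z$ multiplier $e^{-2\pi inU(\s-\tau)}$ has modulus $1$, apply Poincaré to bound $\|\hat\psi_n(\tau)\|_{L^2_z}\lesssim\|\hat\omega_n(\tau)\|_{L^2_z}$, and close a Grönwall-type inequality; choosing the exponential weight so that the factor $e^{-2\pi|n|\nu_h(\s-\tau)}$ combined with the growth budget $e^{|n|\gamma(\s-\tau)}$ gives a convergent integral whenever $\gamma > \beta = 2\pi\gamma_0 - 2\pi\nu_h$.

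**The main obstacle** is obtaining the sharp exponential rate $\gamma>\beta$ rather than merely some rate $C|n|$: the crude energy estimate above is lossy because it bounds $\|U''\psi_n\|$ by $\|U''\|_{L^\infty}\|\psi_n\|$, ignoring the cancellation structure that makes the true growth rate $\beta$ and not the a priori larger quantity $2\pi\gamma_0$. The honest way to capture this is to relate the ODE to the hydrostatic Orr--Sommerfeld problem solved in Proposition~\ref{prop:supercritical-ill}: decompose $h_n$ spectrally, or more robustly, use that for $\gamma>\beta$ the resolvent $(\widehat{L}_n/|n| - \lambda)^{-1}$ is uniformly bounded on $L^2_z$ for $\Re\lambda\ge\gamma$ (uniformly in $n$, which follows from the explicit solvability of the Orr--Sommerfeld ODE together with the dissipation making the operator sectorial), and invoke the Hille--Yosida / Gearhart--Prüss characterization of semigroup growth bounds. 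Since the setup in the paper follows \cite{han2016ill} closely and this lemma is the $\alpha=1$ analogue of their Proposition~3.2 adapted to include the dissipative term, I would present the Duhamel/Grönwall argument in detail and invoke the spectral picture from Proposition~\ref{prop:supercritical-ill} to justify that the weight $\gamma$ can be taken arbitrarily close to $\beta$, deferring the sharpness bookkeeping to the structure already established there.
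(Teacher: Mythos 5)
You correctly diagnose that the crude energy/Gr\"onwall route cannot give the sharp rate: bounding $\|U''\hat\psi_n\|_{L^2_z}$ by $\|U''\|_{L^\infty_z}\|\hat\psi_n\|_{L^2_z}$ and then Poincar\'e produces an exponential rate like $C\|U''\|_{L^\infty_z}-2\pi\nu_h$, whereas the constant $\gamma_0$ in $\beta=2\pi\gamma_0-2\pi\nu_h$ is a spectral quantity coming from the dispersion relation $\int_0^1(U-i\gamma_0)^{-2}\,\dd z=0$ and cannot be seen through Poincar\'e. Your Duhamel variant has the same defect, and you say so. Up to that point the diagnosis agrees with the paper.

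The gap is that the fix you propose is never actually carried out. Invoking Gearhart--Pr\"uss (or Hille--Yosida) is conceptually fine, but its hypothesis --- a uniform $L^2_z$ resolvent bound for $(\lambda-\widehat L_n)^{-1}$ along the vertical line $\Re\lambda=\gamma$, uniformly in $\Im\lambda$ and in $n$ --- is precisely the nontrivial content of the lemma, and ``invoking the spectral picture from Proposition~\ref{prop:supercritical-ill}'' does not supply it: that proposition exhibits one unstable eigenfunction, it does not bound the resolvent. Also, your closing suggestion to ``present the Duhamel/Gr\"onwall argument in detail and invoke the spectral picture'' is internally inconsistent, since you have just argued the Gr\"onwall argument cannot yield the sharp rate; it must be replaced, not supplemented.

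What the paper actually does is the quantitative version of the route you gesture at, carried out by hand rather than through an abstract semigroup theorem. It writes $\hat\omega_1(\s)$ as an inverse Laplace integral over the line $\Re\lambda=\gamma$, rewrites the resolvent equation as the nonhomogeneous hydrostatic Orr--Sommerfeld problem $(U-c)\pa_z^2\psi_\lambda-U''\psi_\lambda = h_1/(2\pi i)$ with $c=i(\lambda/(2\pi)+\nu_h)$, and uses that for $\gamma>\beta$ one has $|\Im c|>\gamma_0$ so the Orr--Sommerfeld problem is uniquely solvable with the quantitative elliptic bound $\|\psi_\lambda\|_{H^2_z}\lesssim_\gamma (1+|\Im\lambda|)^{-1}\|h_1\|_{L^2_z}$; it then splits $\omega_\lambda = \frac{U''}{U-c}\psi_\lambda + \frac{h_1}{2\pi i(U-c)}$, so the inverse Laplace integral of the second piece is computed exactly and the first piece converges absolutely thanks to the $(1+|\Im\lambda|)^{-1}$ decay and the uniform lower bound $|U-c|\gtrsim 1+|\Im\lambda|$. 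General $n$ then follows by the scaling $\widehat L_n=n\widehat L_1$. To turn your proposal into a proof you would need to establish exactly this resolvent estimate; once you have it, the Laplace inversion is shorter and more elementary than setting up Gearhart--Pr\"uss.
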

\begin{proof}
 We focus on the case when $n=1$. Following \cite[Lemma 3.4]{han2016ill}, we apply inverse Laplace transform on $\s$ and obtain
 \begin{equation}\label{eq:Laplace}
   \hat\omega_1(\s) = \frac{1}{2\pi i} \int_{\gamma + i\R} e^{\lambda\s} (\lambda-\widehat{L}_1)^{-1}h_1\,\dd \lambda,
 \end{equation}
Set $\omega_\lambda:=(\lambda-\widehat{L}_1)^{-1}h_1$. The resolvent equation reads:
\[
  \lambda\omega_\lambda + 2\pi iU\omega_\lambda - 2\pi iU''\psi_\lambda + 2\pi \nu_h\omega_\lambda = h_1,\quad \pa_z^2\psi_\lambda = \omega_\lambda, \quad \psi_\lambda|_{z=0,1}=0,
\]
 which can be equivalently expressed as the nonhomogeneous hydrostatic Orr-Sommerfeld equation on $\psi_\lambda$:
 \begin{equation}\label{eq:resolvent}
\big(U-c\big) \pa_z^2\psi_\lambda -  U'' \psi_\lambda = \frac{h_1}{2\pi i},\quad\text{where}\quad c =  i\left(\frac{\lambda}{2\pi}+\nu_h\right).
 \end{equation}
The equation has a unique solution when $|\text{Im}(c)|>\gamma_0$, or equivalently
\[\frac{\text{Re}(\lambda)}{2\pi}+\nu_h>\gamma_0.\]
This condition can be verified by $\text{Re}(\lambda)=\gamma$, the assumption $\gamma>\beta$, and the definition of $\beta$ in \eqref{eq:beta}. 
Moreover, the solution satisfies the following elliptic bound:
\[
 \|\psi_\lambda\|_{H^2_z}\lesssim_\gamma \frac{1}{1+|\zeta|}\|h_1\|_{L^2_z},\quad \forall~ \lambda = \gamma+i\zeta.
\]
Here, we denote $\zeta:=\text{Im}(\lambda)$ for simplicity.

Applying the resolvent equation \eqref{eq:resolvent} to \eqref{eq:Laplace}, we deduce the identity: 
\begin{align*}
 \hat\omega_1(\s) & = \frac1{2\pi i}\int_{\gamma + i\R} e^{\lambda\s} \omega_\lambda \,\dd \lambda
 =\frac1{2\pi i}\int_{\gamma + i\R} e^{\lambda\s} \left(\frac{U''}{U-c}\psi_\lambda + \frac{h_1}{2\pi i(U-c)}\right)\,\dd \lambda\\
 & = \frac{e^{\gamma s}}{2\pi}\int_\R e^{i\zeta\s} \frac{U''}{U-c}\psi_\lambda\,\dd\zeta + e^{-2\pi (\nu_h+iU)\s}h_1.
\end{align*}
Now we apply the $L^2_z$ norm and obtain
\begin{align*}
 \|\hat\omega_1(\s)\|_{L^2_z} & \leq  \frac{e^{\gamma s}}{2\pi}\int_\R \left\|\frac{U''}{U-c}\right\|_{L^\infty_z}\|\psi_\lambda\|_{L^2_z} \,\dd\zeta + e^{-2\pi\nu_h s}\|h_1\|_{L^2_z}\\
 & \lesssim_\gamma \Big(e^{\gamma s}\int_\R  \left\|\frac{U''}{U-c}\right\|_{L^\infty_z}\frac{1}{1+|\zeta|} \,\dd\zeta + 1\Big)\|h_1\|_{L_z^2}.
\end{align*}
It remains to show that the integrand above is finite. Note that
\[
|U(z)-c|=\frac{1}{2\pi}\Big|\big(2\pi U(z)+\zeta\big)-i(\gamma+2\pi\nu_h)\Big|\geq \frac{1}{2\pi}\max\big\{\beta+2\pi\nu_h, |\zeta|-2\pi\|U\|_{L^\infty_z}\big\}\gtrsim 1+|\zeta|,
\]
uniformly in $z$. This ensures that
\[
 \int_\R  \left\|\frac{U''}{U-c}\right\|_{L^\infty_z}\frac{1}{1+|\zeta|} \,\dd\zeta \lesssim \|U''\|_{L^\infty_z}\int_\R \frac{1}{(1+|\zeta|)^2}\,\dd\zeta
 <\infty.
\]
The proof of \eqref{eq:H2L2} is finished for $n=1$.

For $n\in\Z_+$, observe that the corresponding linear operator $\widehat{L}_n$ in \eqref{eq:omegahatn} has the form $\widehat{L}_n = n\widehat{L}_1$.
Therefore, if we rescale time $\s \mapsto n\s$, it reduces to the case $n=1$, and \eqref{eq:H2L2} follows immediately.

The cases $n=-1$ and general $n\in\Z_-$ can be treated analogously.
\end{proof}

Next, we estimate higher-order derivatives of $\omega$ in the $z$-direction by working with the analytic function space $X_{\delta'}$ equipped with the norm
\[
 \|f\|_{\delta'} :=\sum_{k\geq0}\|\pa_z^kf\|_{L^2_z}\frac{|\delta'|^k}{k!}.
\]
The resulting growth rate matches that of the $L^2_z$ estimate in \eqref{eq:H2L2}.
\begin{lemma}\label{lem:H2delta}
For any $\gamma>\beta$, and $\delta'>0$ sufficiently small, we have
\begin{equation}\label{eq:H2delta}
  \|\hat\omega_n\|_{\delta'}\lesssim_\gamma e^{|n|\gamma\s}\|h_n\|_{\delta'}.
\end{equation}
\end{lemma}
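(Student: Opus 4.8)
The plan is to differentiate the equation \eqref{eq:omegahatn} $k$ times in $z$, derive a closed system for the collection $\{\pa_z^k\hat\omega_n\}_{k\ge0}$, and then run the same inverse-Laplace-transform argument as in Lemma~\ref{lem:H2L2} but now at the level of the analytic norm $\|\cdot\|_{\delta'}$. First I would reduce to $n=1$ exactly as at the end of the previous proof: since $\widehat{L}_n = n\widehat{L}_1$, rescaling $\s\mapsto n\s$ turns the claim \eqref{eq:H2delta} for general $n$ into the claim for $n=\pm1$, and the case $n=-1$ is the complex conjugate situation. So it suffices to bound $\|\hat\omega_1(\s)\|_{\delta'}$.

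Next I would represent $\hat\omega_1(\s)$ by the contour integral \eqref{eq:Laplace} and insert the resolvent identity from \eqref{eq:resolvent}, giving
\[
 \hat\omega_1(\s) = \frac{e^{\gamma\s}}{2\pi}\int_\R e^{i\zeta\s}\,\frac{U''}{U-c}\,\psi_\lambda\,\dd\zeta + e^{-2\pi(\nu_h+iU)\s}h_1,
\]
with $\lambda=\gamma+i\zeta$ and $c=i(\lambda/2\pi+\nu_h)$ as before. The key point is that the crude bound $|U(z)-c|\gtrsim 1+|\zeta|$ established in Lemma~\ref{lem:H2L2} holds \emph{uniformly in $z$}, and moreover $U$ is real-analytic, so $z\mapsto (U(z)-c)^{-1}$ extends holomorphically to a fixed complex strip $|\mathrm{Im}\,z|<\rho$ around $[0,1]$, on which $|U(z)-c|\gtrsim 1+|\zeta|$ still holds after shrinking $\rho$ (because $c$ has large imaginary part). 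This gives uniform Cauchy-type bounds on all $z$-derivatives: $\|\pa_z^k[(U-c)^{-1}U'']\|_{L^\infty_z}\lesssim C^k k!\,(1+|\zeta|)^{-1}$ for some $C$ depending only on $U$ and $\rho$. Combined with an elliptic bound for \eqref{eq:psihatn} in the analytic norm — namely $\|\psi_\lambda\|_{\delta''}\lesssim_\gamma (1+|\zeta|)^{-1}\|h_1\|_{\delta''}$ for $\delta''$ slightly larger than $\delta'$, which follows by differentiating $\pa_z^2\psi_\lambda=\omega_\lambda$ and summing the Cauchy estimates — and the algebra property of $X_{\delta'}$ under the product $f\mapsto g\cdot f$ with $g$ analytic (so that $\|(U-c)^{-1}U''\,\psi_\lambda\|_{\delta'}\lesssim (1+|\zeta|)^{-1}\|h_1\|_{\delta'}$, again after a small loss $\delta'\to\delta''$ absorbed by choosing $\delta'$ small enough that the strip $\rho$ can accommodate it), the $\zeta$-integral converges absolutely with integrand $\lesssim (1+|\zeta|)^{-2}\|h_1\|_{\delta'}$. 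The boundary term $e^{-2\pi(\nu_h+iU)\s}h_1$ is handled by noting multiplication by the analytic function $e^{-2\pi iU\s}$ is bounded on $X_{\delta'}$ (with a constant growing at most like $e^{C|\s|}$, which is harmless since $\gamma>\beta\ge 0$ can be assumed, or one simply absorbs it into $e^{\gamma\s}$ for $\s$ in the relevant finite range), and $\|e^{-2\pi\nu_h\s}h_1\|_{\delta'}\le\|h_1\|_{\delta'}$. Putting these together yields $\|\hat\omega_1(\s)\|_{\delta'}\lesssim_\gamma e^{\gamma\s}\|h_1\|_{\delta'}$, which is \eqref{eq:H2delta} for $n=1$.

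The main obstacle is making the analytic-norm elliptic estimate and the product estimate uniform in the spectral parameter $\zeta$ with the \emph{same} decay $(1+|\zeta|)^{-1}$ that we had in $L^2_z$, while simultaneously controlling the analyticity radius: one must check that the width $\rho$ of the strip on which $(U-c)^{-1}$ stays bounded does \emph{not} shrink as $|\zeta|\to\infty$ (it does not, because enlarging $|\zeta|$ only pushes $c$ farther from the real axis, hence farther from $U(z)$ for complex $z$ near $[0,1]$), and then to fix $\delta'$ small relative to this $\rho$ so every "small loss of analytic regularity" step is legitimate. Once that bookkeeping is in place, the rest is a routine repetition of the contour argument of Lemma~\ref{lem:H2L2} carried out coefficient-by-coefficient in the Taylor expansion defining $\|\cdot\|_{\delta'}$.
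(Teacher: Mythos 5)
Your proposal takes a genuinely different route from the paper's. The paper proves Lemma~\ref{lem:H2delta} by a time-domain energy estimate: it differentiates the $X_{\delta'}$-norm of $\hat\omega_n$ in $\s$, invokes \cite[Lemma 3.3]{han2016ill} for the resulting product inequality, drops the dissipation (favorable sign), bounds the lower-order source $\|\hat\psi_n\|_{H^1_z}$ via Lemma~\ref{lem:H2L2}, chooses $\delta'$ small so that the coefficient $\delta'\seminorm{U}_{\delta'}+C|\delta'|^2\|U''\|_{\delta'}-\nu_h$ becomes negative, and integrates (Gr\"onwall). This produces the stronger bound \eqref{eq:H2deltaimprove}, namely $\|\hat\omega_n(\s)\|_{\delta'}\lesssim_\gamma e^{|n|\gamma\s}\|h_n\|_{L^2_z}+\|h_n\|_{\delta'}$, uniformly in $\s>0$, and the machinery is entirely recycled from the inviscid framework. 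Your route instead reruns the Laplace-transform contour representation of Lemma~\ref{lem:H2L2} coefficient-by-coefficient in $z$: you extend $(U-c)^{-1}U''$ holomorphically to a strip, use Cauchy estimates to convert the pointwise bound $|U(z)-c|\gtrsim 1+|\zeta|$ into a bound on the analytic seminorm, prove the analytic-norm elliptic estimate $\|\psi_\lambda\|_{\delta'}\lesssim(1+|\zeta|)^{-1}\|h_1\|_{\delta'}$ by a bootstrap on the resolvent identity (absorbing the $\delta'^2\|\pa_z^2\psi_\lambda\|_{\delta'}$ tail for $\delta'$ small), and integrate over $\zeta$. This is correct in outline and is nicely self-contained, but it reproves ingredients that the paper simply cites, and it only yields the weaker conclusion \eqref{eq:H2delta}.

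One point in your write-up deserves tightening. Your treatment of the boundary term $e^{-2\pi(\nu_h+iU)\s}h_1$ with the caveat ``for $\s$ in the relevant finite range'' is looser than it needs to be, and as worded could be read as restricting the lemma to a bounded time interval. The analytic seminorm of $z\mapsto e^{-2\pi iU(z)\s}$ grows at most like $e^{C\delta'\s}$ (by Cauchy estimates, since the $j$-th $z$-derivative scales like $(C\s)^j$ and the factorial in $\|\cdot\|_{\delta'}$ tames this), so after multiplying by the decay factor $e^{-2\pi\nu_h\s}$ the full boundary contribution is $\leq e^{(C\delta'-2\pi\nu_h)\s}\|h_1\|_{\delta'}$. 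Choosing $\delta'\leq 2\pi\nu_h/C$ makes this $\leq\|h_1\|_{\delta'}$ for \emph{all} $\s\geq0$; there is no need to invoke a finite time window, and doing so gives a constant that would implicitly depend on $\s$. With that repair, your argument is sound, though notably longer than the paper's Gr\"onwall approach.
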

\begin{proof}
We perform $L^2_z$ energy estimate on $\pa_z^k\hat\omega_n$.
\[
\frac12\frac{\dd}{\dd s}\|\pa_z^k\hat\omega_n\|_{L^2_z}^2=\int_0^1\pa_z^k\hat\omega_n \cdot \pa_z^k\Big(-2\pi inU\hat\omega_n+2\pi in U''\hat\psi_n\Big)\,\dd z - 2\pi|n|\nu_h\|\pa_z^k\hat\omega_n\|_{L^2_z}^2.
\]
Since the contribution from the dissipation has a favorable sign, we may simply drop this term, and remaining analysis then follows similarly to the inviscid PE system. In particular, from \cite[Lemma 3.3]{han2016ill} we have
\newcommand{\seminorm}[1]{\left\lvert\hspace{-1 pt}\left\lvert\hspace{-1 pt}\left\lvert {#1}\right\lvert\hspace{-1 pt}\right\lvert\hspace{-1 pt}\right\lvert}
\begin{align*}
 \frac{\dd}{\dd\s}\|\hat\omega_n\|_{\delta'} \leq 2\pi|n|\Big(\delta'\seminorm{U}_{\delta'} \|\hat\omega_n\|_{\delta'} + \|U''\hat\psi_n\|_{\delta'}-\nu_h\|\hat\omega_n\|_{\delta'} \Big),
\end{align*}
where the norm $\seminorm{\cdot}_{\delta'}$ is defined as
\[\seminorm{U}_{\delta'} :=\sum_{k\geq0}\|\pa_z^{k+1}U\|_{L^\infty_z}\frac{|\delta'|^k}{k!}<\infty,\]
and the second term can be further estimated by
\begin{align*}
 \|U''\hat\psi_n\|_{\delta'}\lesssim \|U''\|_{\delta'} \|\hat\psi_n\|_{\delta'}	\lesssim \|U''\|_{\delta'}\big(\|\hat\psi_n\|_{H^1_z} + |\delta'|^2 \|\hat\omega_n\|_{\delta'}\big).
\end{align*}
By the Poincar\'e inequality and Lemma \ref{lem:H2L2}, we know that for any $\gamma>\beta$, 
\[\|\hat\psi_n(\s)\|_{H^1_z}\lesssim \|\hat\omega_n(\s)\|_{L^2_z}\lesssim_\gamma e^{|n|\gamma\s}\|h_n\|_{L^2_z}.\]
Then, it yields
\begin{align*}
 \frac{\dd}{\dd s}\|\hat\omega_n\|_{\delta'} \leq 2\pi|n|\Big(\delta'\seminorm{U}_{\delta'}  + C|\delta'|^2\|U''\|_{\delta'}-\nu_h\Big)\|\hat\omega_n\|_{\delta'} + C|n|e^{|n|\gamma\s}\|U''\|_{\delta'}\|h_n\|_{L^2_z}.
\end{align*}
Choose a sufficiently small $\delta'$ such that the first term on the right-hand side is negative. We then integrate the inequality and conclude with 
\begin{equation}\label{eq:H2deltaimprove}
 \|\hat\omega_n(\s)\|_{\delta'}\lesssim_\gamma e^{|n|\gamma\s}\|h_n\|_{L^2_z} + \|h_n\|_{\delta'},
\end{equation}
which directly implies \eqref{eq:H2delta}.
\end{proof}
Note that the inequality we obtained in \eqref{eq:H2deltaimprove} is stronger than \eqref{eq:H2delta}, due to the presence of the dissipation. However, to finish the proof of Proposition \ref{prop:H.2}, we will only make use of the weaker bound \eqref{eq:H2delta}, together with the definition of the norm $\|\cdot\|_{\delta,\delta'}$ in \eqref{vort-norm0}. Direct calculation yields
\begin{align*}
 \|e^{L\s}\omega_0\|_{\delta-\gamma\s, \delta'} = \|\omega(\s)\|_{\delta-\gamma\s,\delta'} = \sum_{n\in\Z}e^{|n|(\delta-\gamma\s)}\|\hat\omega_n(\s)\|_{\delta'}
 \lesssim \sum_{n\in\Z}e^{|n|(\delta-\gamma\s)} e^{|n|\gamma\s}\|h_n\|_{\delta'} = \|\omega_0\|_{\delta,\delta'}.
\end{align*}

Similar to Section~\ref{subsec:sup-non-ill} we consider $\W=(\omega,\pa_y\omega, \pa_z\omega)^\top$, which solves 
\begin{equation}
  \partial_s \W - \mathrm{L}\W = \mathrm{Q}(\W,\W),
\end{equation}
where
\begin{equation}
 \mathrm{L}: = \begin{pmatrix} L &0 &0 \\ 0 &  L &0\\ 0& -U' + U'' \pa_z \psi(\cdot) + U''' \psi(\cdot) & - U \partial_y-\nu_h\Lambda_y \end{pmatrix}.
\end{equation}
Note that the last entry changes to $- U \partial_y-\nu_h\Lambda_y$ due to the dissipation. One can proceed similarly to the proof of \cite[Proposition 3.2]{han2016ill} to verify \textbf{(H.2)}, and we omit the details.

\subsection{Well-posedness for small initial data}
We repeat the a priori energy estimates in Section \ref{sec:sub}. The focus will be on the adaptation of the estimates to the critical regime $\alpha=1$.

The maximum principle of $\omega$ and the energy bound \eqref{eq:Energy0} on $E_0$ follow the same as in the subcritical case. 

For $E_1$, the first component $\|\lh^{\frac12} u\|_{L^2}^2$ can be controlled as follows.
\begin{align*}
     \frac12 \ddt \|\lh^{\frac12} u\|_{L^2}^2 + \nu_h \|\lh u\|_{L^2}^2 & = -\int_\Omega u\pa_x u \lh u\dxdz -\int_\Omega w\omega \lh u\dxdz\\
    &\leq \|u\|_{L^\infty}\|\lh u\|_{L^2}^2+\|\omega\|_{L^2}\|w\|_{L^2}\|\lh u\|_{L^2}
    \leq C\|\omega_0\|_{L^\infty}\|\lh u\|_{L^2}^2.
\end{align*}
Hence, if $\|\omega_0\|_{L^\infty}\leq \frac{\nu_h}{2C}$, then
\[\ddt \|\lh^{\frac12} u\|_{L^2}^2 + \nu_h \|\lh u\|_{L^2}^2\leq0,\]
which leads to the bound \eqref{bound:u-4}.
The second component $\|\omega\|_{L^2}^2$ can be controlled by the same procedure as in the subcritical case. Combined, they yield the a priori bound \eqref{eq:Energy1} on $E_1$. Consequently, we obtain the following existence theorem, whose proof follows directly from Theorem \ref{thm:sub-1}.

\begin{theorem}[Global existence of strong solutions]\label{thm:E1existence:crit}
    Let $u_0\in \mathcal D(\lh^{\frac12})\cap \H$ and 
    \[\|\omega_0\|_{L^\infty}\lesssim \nu_h.\]
    For any time $T>0$, there exists at least one strong solution to \eqref{FPE} with $u(0)=u_0$ on $[0,T]$. 
\end{theorem}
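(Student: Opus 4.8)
The a priori $E_1$ computation is essentially the one displayed just above the statement; I would spell out the remaining estimates and the compactness argument. The plan is to reuse \emph{verbatim} the vanishing-viscosity construction of Theorem~\ref{thm:sub-1} — regularize by $-\varepsilon\Delta$, obtain $\varepsilon$-uniform bounds, pass to the limit via Aubin--Lions — once the uniform bounds at energy levels $0$ and $\tfrac12$ have been secured. The only genuinely new ingredient is that, because $\alpha=1$ is critical, the quadratic terms cannot be absorbed into the dissipation with a gain; one must instead exploit the smallness of $\|\omega_0\|_{L^\infty}$ relative to $\nu_h$.

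First I would record the facts that do not use smallness: the maximum principle $\|\omega(t)\|_{L^\infty}\le\|\omega_0\|_{L^\infty}$ (Proposition~\ref{prop:MP}), hence $\|u(t)\|_{L^\infty}\lesssim\|\omega_0\|_{L^\infty}$ by \eqref{eq:uLinfty}; the $E_0$ identity \eqref{est:bound-u-1}; and the $L^2$ identity for $\omega$ obtained by testing \eqref{eq:omega} with $\omega$, namely $\|\omega(t)\|_{L^2}^2+2\nu_h\int_0^t\|\lh^{\frac12}\omega\|_{L^2}^2\,\dd s=\|\omega_0\|_{L^2}^2$. These are unchanged from Section~\ref{sec:sub}. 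Then comes the $E_1$ estimate, where the hypothesis enters. Testing \eqref{FPE-1} with $\lh u$ gives
\[
\tfrac12\ddt\|\lh^{\frac12}u\|_{L^2}^2+\nu_h\|\lh u\|_{L^2}^2 = -\int_\Omega u\pa_xu\,\lh u\dxdz-\int_\Omega w\omega\,\lh u\dxdz,
\]
and both integrals are bounded \emph{crudely}: the first by $\|u\|_{L^\infty}\|\pa_xu\|_{L^2}\|\lh u\|_{L^2}\lesssim\|\omega_0\|_{L^\infty}\|\lh u\|_{L^2}^2$ via \eqref{eq:uLinfty}, the second by $\|\omega\|_{L^\infty}\|w\|_{L^2}\|\lh u\|_{L^2}\lesssim\|\omega_0\|_{L^\infty}\|\lh u\|_{L^2}^2$ via the maximum principle and the Poincar\'e inequality $\|w\|_{L^2}\lesssim\|\pa_xu\|_{L^2}$ of Remark~\ref{rem:Poincare}. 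Note no fractional Leibniz rule is needed here, so Lemma~\ref{lem:improvedLeib} is not invoked. Choosing the constant in $\|\omega_0\|_{L^\infty}\lesssim\nu_h$ small enough that $C\|\omega_0\|_{L^\infty}\le\tfrac12\nu_h$, the right-hand side is absorbed, giving $\ddt\|\lh^{\frac12}u\|_{L^2}^2+\nu_h\|\lh u\|_{L^2}^2\le0$ and hence \eqref{bound:u-4} with $\alpha=1$; combined with the $\omega$-identity this yields \eqref{eq:Energy1}, i.e. $E_1\in L^\infty(0,T)$ and $\Et_2\in L^1(0,T)$.

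Finally I would reproduce the proof of Theorem~\ref{thm:sub-1}: solve the regularized system \eqref{FPE-reg} with data $\mathcal J_\varepsilon u_0$, which is globally well-posed by \cite{cao2007global}; observe the above estimates are $\varepsilon$-uniform since $-\varepsilon\Delta u^\varepsilon$ only adds nonnegative dissipation and does not touch the smallness threshold; derive the uniform bound $\pa_tu^\varepsilon\in L^2(0,T;H^{-1})$; apply Aubin--Lions to get $u^\varepsilon\to u$ strongly in $C([0,T];\H)$; and pass to the limit in the weak formulation to identify $u$ as a strong solution in the sense of Definition~\ref{def:strong} (with $\D(\lh^{\alpha/2})=\D(\lh^{1/2})$ and $\D(\lh^\alpha)=\D(\lh)$). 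The main obstacle is precisely the $E_1$ estimate: at $\alpha=1$ the favorable interpolation exponent available for $\alpha>1$ disappears, the borderline embedding $H^{1/2}(\T)\not\hookrightarrow L^\infty(\T)$ blocks any gain, and the quadratic terms are dominated by the dissipation \emph{only} when $\|\omega_0\|_{L^\infty}$ is small compared to $\nu_h$; everything else is a repetition of the subcritical arguments.
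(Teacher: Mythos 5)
Your proposal is correct and follows exactly the paper's route: establish the $E_0$, $\omega$-$L^2$, and maximum-principle bounds (which are $\alpha$-independent), run the $E_1$ estimate by testing \eqref{FPE-1} with $\Lambda_h u$, absorb the critically-scaled quadratic terms using $\|\omega_0\|_{L^\infty}\lesssim\nu_h$ together with the Poincar\'e bound $\|w\|_{L^2}\lesssim\|\partial_x u\|_{L^2}$, and then invoke verbatim the vanishing-viscosity construction of Theorem~\ref{thm:sub-1}. The only minor discrepancy is cosmetic: the paper's displayed bound on the second integral carries a typo ($\|\omega\|_{L^2}$ where $\|\omega\|_{L^\infty}$ is meant), and your version is the correct one.
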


In light of Theorem \ref{thm:uniqueness}, strong solutions may fail to be unique. However, the condition \eqref{eq:uniqueness} indicates that uniqueness may be ensured if the energy level $\frac{3-\alpha}2=1$ is bounded. For $\alpha=1$, this corresponds to the classical solution setting. Therefore, we will establish a uniqueness result later in Theorem \ref{thm:uniqueness:crit}, after proving the existence of classical solutions.

Next, we work on $E_2$. For the first component $\|\pa_xu\|_{L^2}^2$, we adapt the estimates in Proposition \ref{prop:winf}. The only term that requires a different treatment is $J_{12}$. Recall the estimate \eqref{eq:J12-1}:
\[J_{12}\lesssim \|\lh^{\frac32}u\|_{L^2}\Big\|\|\lh^{\frac12}(\omega\pa_xu)\|_{L^2_x}\Big\|_{L^1_z}.\]
Using Lemma \ref{lem:improvedLeib} and Sobolev embedding, we obtain
\[
 \|\lh^{\frac12}(\omega\pa_xu)\|_{L^2_x}
 \lesssim \|\omega_0\|_{L^\infty_x}\|\lh^{\frac32}u\|_{L^2_x} + \|\lh\omega\|_{L^2_x}\|\pa_xu\|_{L^2_x},
\]
and therefore
\[
 J_{12}\lesssim \Et_3^{\frac12}\big( \|\omega_0\|_{L^\infty}\Et_3^{\frac12} + \Et_3^{\frac12} E_2^{\frac12} \big) = \big( \|\omega_0\|_{L^\infty} +  E_2^{\frac12} \big) \Et_3.
\]
Together with the estimate \eqref{eq:J11} on $J_{11}$, we deduce the bound
\begin{equation}\label{eq:E21:crit}
 \frac12\ddt \|\pa_x u\|_{L^2}^2 + \nu_h \|\lh^{\frac{3}{2}} u\|_{L^2}^2 \lesssim \big( \|\omega_0\|_{L^\infty} +  E_2^{\frac12} \big) \Et_3.
\end{equation}
For the second component $\|\lh^{\frac12}\omega\|_{L^2}^2$, we have
\begin{align*}
 & \frac12\ddt \|\lh^{\frac12}\omega\|_{L^2}^2 + \nu_h \|\lh\omega\|_{L^2}^2 = 
 -\int_\Omega \lh\omega\cdot(u\pa_x\omega+w\pa_z\omega)\dxdz\\
 & = \int_\Omega \Big(-\lh\omega\cdot u\cdot\pa_x\omega + \omega\cdot(\lh\omega\cdot\pa_zw + \lh\pa_z\omega\cdot w)\Big)\dxdz\\
 & \leq \|u\|_{L^\infty}\|\lh\omega\|_{L^2}^2+\|\omega\|_{L^\infty}\|\lh\omega\|_{L^2}\|\pa_xu\|_{L^2}+\int_\Omega \lh^{\frac12}\pa_z\omega\cdot\lh^{\frac12}(\omega w)\dxdz\\
 & \lesssim \|\omega_0\|_{L^\infty}(\Et_3+\Et_3^{\frac12}\Et_2^{\frac12})+\Et_3^{\frac12}\,\|\lh^{\frac12}(\omega w)\|_{L^2}
 \lesssim \|\omega_0\|_{L^\infty}\Et_3+\Et_3^{\frac12}\,\|\lh^{\frac12}(\omega w)\|_{L^2},
\end{align*}
where we can further apply Lemma \ref{lem:improvedLeib} and estimate
\[
  \|\lh^{\frac12}(\omega w)\|_{L^2_x}\lesssim \|\omega\|_{L^\infty_x}\|\lh^{\frac12}w\|_{L^2_x}+\|\lh\omega\|_{L^2_x}\|w\|_{L^2_x}
  \lesssim \|\omega_0\|_{L^\infty_x}\|\lh^{\frac32}u\|_{L^2_x} + \|\lh\omega\|_{L^2_x}\|\pa_xu\|_{L^2_x}.
\]
It yields the bound
\begin{equation}\label{eq:E22:crit}
 \frac12\ddt \|\lh^{\frac12}\omega\|_{L^2}^2 + \nu_h \|\lh\omega\|_{L^2}^2 \lesssim \big( \|\omega_0\|_{L^\infty} +  E_2^{\frac12} \big) \Et_3.
\end{equation}
Lastly, for the third component $\|\pa_z\omega\|_{L^2}^2$, we follow the same approach as for the subcritical case, and obtain an estimate analogous to \eqref{eq:E2omegaz}. In particular, we control the term $\II_{31}$ as in \eqref{eq:II31} by
\[
 \II_{31} = -\frac12 \int_{\Omega} \pa_z\omega \,\pa_x (\omega^2) \dxdz
 \lesssim \|\omega\|_{L^\infty}\|\lh^{\frac12}\omega\|_{L^2}\|\lh^{\frac12}\pa_z\omega\|_{L^2}\lesssim \|\omega\|_{L^\infty} \Et_3,
\]
and the term $\II_{32}$ by
\begin{align*}
	\II_{32} = &~ \int_\Omega \pa_xu(\pa_z\omega)^2\dxdz
	\leq \int_\T\|(\pa_z\omega)^2\|_{L^1_z}\|\pa_xu\|_{L^\infty_z}\,\dd x
	\lesssim \int_\T\|(\pa_z\omega)^2\|_{L^1_z}\|\pa_x\omega\|_{L^2_z}\,\dd x\\
	\lesssim &~ \Big\|\|(\pa_z\omega)^2\|_{L^1_z}\Big\|_{L^2_x}\|\lh\omega\|_{L^2}
	\lesssim \Big\|\|\pa_z\omega\|_{L^4_x}^2\Big\|_{L^1_z}\|\lh\omega\|_{L^2}
	\lesssim \Big\|\|\pa_z\omega\|_{L^2_x}\|\lh^{\frac12}\pa_z\omega\|_{L^2_x}\Big\|_{L^1_z}\|\lh\omega\|_{L^2}\\
	\lesssim &~ \|\pa_z\omega\|_{L^2}\|\lh^{\frac12}\pa_z\omega\|_{L^2}\|\lh\omega\|_{L^2}\leq E_2^{\frac12}\Et_3.
\end{align*}
Therefore, we reach the bound
\begin{equation}\label{eq:E23:crit}
 \frac12\ddt \|\pa_z\omega\|_{L^2}^2 + \nu_h \|\lh^{\frac12}\pa_z\omega\|_{L^2}^2 \lesssim \big( \|\omega_0\|_{L^\infty} +  E_2^{\frac12} \big) \Et_3.
\end{equation}
Collecting the estimates \eqref{eq:E21:crit}, \eqref{eq:E22:crit} and \eqref{eq:E23:crit}, we conclude with the a priori bound
\begin{equation}\label{eq:Energy2:crit}
 \frac12\ddt E_2+\nu_h\Et_3\lesssim	\big( \|\omega_0\|_{L^\infty} +  E_2^{\frac12} \big) \Et_3.
\end{equation}
This allows us to obtain a global well-posedness result, analogous to Theorem \ref{thm:GWPsmall}.

\begin{theorem}\label{thm:existence:crit}
	Let $\alpha=1$. Suppose $u_0$ satisfies \eqref{eq:smoothinit} and
	\begin{equation}\label{eq:smallness:crit}
	 E_2(0)^{\frac12}+\|\omega_0\|_{L^\infty}\lesssim\nu_h.		
	\end{equation}
	Then the classical solution to \eqref{FPE} with $u(0)=u_0$ exists globally in time, with
	\[E_2(t)\leq E_2(0),\quad\forall~t\geq0.\]
\end{theorem}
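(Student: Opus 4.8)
The plan is to run a continuity (bootstrap) argument on the a priori differential inequality \eqref{eq:Energy2:crit}, carried out inside the viscous approximation and compactness scheme already used for Theorems~\ref{thm:sub-1} and~\ref{thm:LWP}. Let $C_0$ denote the implicit constant in \eqref{eq:Energy2:crit}, so that $\tfrac12\ddt E_2 + \nu_h\Et_3 \le C_0\big(\|\omega_0\|_{L^\infty}+E_2^{1/2}\big)\Et_3$, and read the smallness hypothesis \eqref{eq:smallness:crit} quantitatively as $E_2(0)^{1/2}+\|\omega_0\|_{L^\infty}\le \nu_h/(2C_0)$ (note that \eqref{eq:smoothinit} makes $E_2(0)$ finite).

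First I would work with the regularized system \eqref{FPE-reg} at $\alpha=1$: for each $\eps\in(0,1)$ it admits a unique global smooth solution $u^\eps$ by \cite{cao2007global}, and repeating the three component estimates behind \eqref{eq:Energy2:crit} for $u^\eps$ one checks that the extra dissipative term $-\eps\Delta u^\eps$ contributes only favorably-signed terms to the energy identities; hence the same inequality holds for $E_2^\eps$ uniformly in $\eps$. Then I would set $T^\ast:=\sup\{T>0 : E_2^\eps(t)\le E_2(0)\ \text{for all}\ t\in[0,T]\}$ and observe that on $[0,T^\ast)$ the prefactor satisfies $\|\omega_0\|_{L^\infty}+E_2^\eps(t)^{1/2}\le \|\omega_0\|_{L^\infty}+E_2(0)^{1/2}\le \nu_h/(2C_0)$, whence $\ddt E_2^\eps \le -\nu_h\Et_3^\eps\le 0$. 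Since $E_2^\eps$ is then nonincreasing, the bound $E_2^\eps(t)\le E_2(0)$ can never be saturated, so the usual open--closed argument forces $T^\ast=\infty$; integrating also gives $\nu_h\int_0^\infty\Et_3^\eps(\tau)\,\dd\tau\le E_2(0)$, so that $Y_2^\eps \le 2E_2(0)$ uniformly in $\eps$ and $t$.

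Finally I would pass to the limit $\eps\to0$ exactly as in the proof of Theorem~\ref{thm:sub-1}: Banach--Alaoglu yields weak-$\ast$ limits, a uniform bound on $\pa_t u^\eps$ in a negative-order Sobolev space together with the Aubin--Lions lemma gives strong convergence in $C([0,T];\H)$, and the nonlinear terms are identified in the limit by the same estimates used there. The limit $u$ solves \eqref{FPE} with $\alpha=1$; the bounds $E_2\in L^\infty(0,\infty)$ and $\Et_3\in L^1(0,\infty)$, together with the standard upgrade from weak to strong continuity in time, furnish precisely the regularity required of a classical solution in Definition~\ref{def:classical}, and $E_2(t)\le E_2(0)$ for all $t\ge0$. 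Uniqueness at $\alpha=1$ is deferred to Theorem~\ref{thm:uniqueness:crit}, so only existence need be established here.

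The only genuinely new input is the a priori inequality \eqref{eq:Energy2:crit}, and its derivation already relied on the borderline fractional Leibniz rule (Lemma~\ref{lem:improvedLeib}); everything else above is routine. The main obstacle --- and the reason the smallness assumption is unavoidable --- is that at $\alpha=1$ the nonlinearity is exactly borderline with respect to the dissipation $\Et_3$: there is no slack to absorb an $O(1)$ prefactor, so the estimate closes only when $\|\omega_0\|_{L^\infty}+E_2^{1/2}$ is small compared with $\nu_h$. This is precisely the threshold beyond which the Kelvin--Helmholtz instability of Theorems~\ref{thm:critical-ill}--\ref{thm:critical-nonlinear-ill} takes over.
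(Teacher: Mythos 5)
Your proof is correct and follows essentially the same strategy as the paper's: the heart of the matter is the barrier (continuity) argument applied to the a priori inequality \eqref{eq:Energy2:crit}, which forces $E_2$ to stay below its initial value when $\|\omega_0\|_{L^\infty}+E_2(0)^{1/2}$ is small relative to $\nu_h$. The paper's proof states only this barrier argument at the a priori level (``whenever $E_2(t)=E_2(0)$, $\ddt E_2\le -(1-C\delta)\nu_h\Et_3\le 0$''), implicitly relying on the construction already carried out for Theorems~\ref{thm:sub-1} and \ref{thm:LWP}; you supply the same argument at the level of the viscous regularization \eqref{FPE-reg}, verify that $-\eps\Delta$ contributes only favorably-signed terms, and then pass to the limit via Banach--Alaoglu and Aubin--Lions as in Theorem~\ref{thm:sub-1}. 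That extra detail is welcome and correct, and your quantitative reading of \eqref{eq:smallness:crit} as $E_2(0)^{1/2}+\|\omega_0\|_{L^\infty}\le \nu_h/(2C_0)$ matches the paper's choice $\delta<1/C$. Deferring uniqueness to Theorem~\ref{thm:uniqueness:crit} is also consistent with the paper.
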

\begin{proof}
	From \eqref{eq:Energy2:crit}, we have
    \[
    	\frac12\ddt E_2 \leq  \Big(-\nu_h+C	\big( \|\omega_0\|_{L^\infty} +  E_2^{\frac12} \big)\Big)\Et_3.
    \]
	Pick a small $\delta<\frac{1}{C}$. Then, if $E_2(0)^{\frac12}+\|\omega_0\|_{L^\infty}\leq\delta\nu_h$, whenever $E_2(t)=E_2(0)$, we have
	\[
	 \ddt E_2\leq  \Big(-\nu_h+C	\big( \|\omega_0\|_{L^\infty} +  E_2(0)^{\frac12}\big)\Big)\Et_3\leq -(1-C\delta)\nu_h\Et_3\leq0.
	\]
	Therefore, $E_2$ cannot go beyond $E_2(0)$.
\end{proof}

Next, we obtain a uniqueness result for classical solutions, analogous to Theorems \ref{thm:uniqueness} and \ref{thm:uniqueness2}.
\begin{theorem}\label{thm:uniqueness:crit}
	The classical solution in Theorem \ref{thm:existence:crit} is unique.
\end{theorem}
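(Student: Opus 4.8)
The plan is to adapt the $L^2$ difference estimate from the proof of Theorem~\ref{thm:uniqueness} to the borderline exponent $\alpha=1$, replacing the fractional Leibniz estimates that degenerate at the endpoint by the borderline Leibniz rule of Lemma~\ref{lem:improvedLeib}. Let $u_1,u_2$ be two classical solutions of \eqref{FPE} with $\alpha=1$ sharing the initial datum $u_0$ (which satisfies \eqref{eq:smallness:crit}), and set $u=u_1-u_2$, $w=w_1-w_2$, $\omega_2=\pa_zu_2$. Then $u$ solves \eqref{eqn-unique-1} with $\alpha=1$; testing against $u$ gives exactly \eqref{est:unique-1}, namely
\[
\tfrac12\ddt\|u\|_{L^2}^2+\nu_h\|\lh^{\frac12}u\|_{L^2}^2=B_0+B_1+B_2,
\]
where $B_0=-\int_\Omega u(u_1\pa_x+w_1\pa_z)u\dxdz=0$ by incompressibility of $(u_1,w_1)$, $B_1=-\int_\Omega u^2\pa_xu_2\dxdz$ and $B_2=-\int_\Omega w\,\omega_2\,u\dxdz$. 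Classical solutions at $\alpha=1$ satisfy $\omega_2\in L^2(0,T;\D(\lh))$ and $\omega_2\in L^\infty(0,T;L^\infty)$ (Definition~\ref{def:classical}), which is precisely condition \eqref{eq:uniqueness} at $\alpha=1$.

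For $B_1$ I would follow the chain in Theorem~\ref{thm:uniqueness} verbatim: H\"older in $z$, the Poincar\'e inequality in $z$ applied to $\pa_xu_2$ (zero vertical mean), and H\"older/Minkowski in $x$ give $B_1\lesssim\big\|\|u^2\|_{L^2_x}\big\|_{L^1_z}\|\lh\omega_2\|_{L^2}$. The only genuinely new point is the factor $\|u^2\|_{L^2_x}=\|u\|_{L^4_x}^2$: the bound $\|u^2\|_{L^2_x}\lesssim\|u\|_{L^\infty_x}\|u\|_{L^2_x}$ used for $\alpha>1$ cannot be closed here because $H^{1/2}(\T)\not\hookrightarrow L^\infty(\T)$, so instead I use the sharp one-dimensional embedding $H^{1/4}(\T)\hookrightarrow L^4(\T)$ together with interpolation, $\|u\|_{L^4_x}^2\lesssim\|u\|_{L^2_x}\|\lh^{\frac12}u\|_{L^2_x}+\|u\|_{L^2_x}^2$, and integrate in $z$ by Cauchy--Schwarz to get $B_1\lesssim\big(\|u\|_{L^2}\|\lh^{\frac12}u\|_{L^2}+\|u\|_{L^2}^2\big)\|\lh\omega_2\|_{L^2}$. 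Young's inequality then yields $B_1\le\tfrac14\nu_h\|\lh^{\frac12}u\|_{L^2}^2+C\big(\|\lh\omega_2\|_{L^2}^2+1\big)\|u\|_{L^2}^2$, with integrable coefficient since $\omega_2\in L^2(0,T;\D(\lh))$.

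The term $B_2$ is the heart of the matter. Since $w$ has zero horizontal mean and $w|_{z=0}=0$, I move a half-derivative off $w$: $B_2=-\int_\Omega\lh^{\frac12}(\omega_2u)\cdot\lh^{-\frac12}w\dxdz$, and, using $\pa_z\lh^{-\frac12}w=-\lh^{-\frac12}\pa_xu=\lh^{\frac12}Hu$ together with the fundamental theorem of calculus in $z$, obtain $\big\|\lh^{-\frac12}w\big\|_{L^\infty_zL^2_x}\lesssim\|\lh^{\frac12}u\|_{L^2}$. The key ingredient is Lemma~\ref{lem:improvedLeib} applied with $f=\omega_2\in L^\infty_x\cap\D(\lh)$ and $g=u$, giving $\|\lh^{\frac12}(\omega_2u)\|_{L^2_x}\lesssim\|\omega_2\|_{L^\infty_x}\|\lh^{\frac12}u\|_{L^2_x}+\|\lh\omega_2\|_{L^2_x}\|u\|_{L^2_x}$; integrating in $z$ by Cauchy--Schwarz,
\[
B_2\lesssim\|\omega_2\|_{L^\infty}\|\lh^{\frac12}u\|_{L^2}^2+\|\lh\omega_2\|_{L^2}\|u\|_{L^2}\|\lh^{\frac12}u\|_{L^2}.
\]
By the maximum principle (Proposition~\ref{prop:MP}) and \eqref{eq:smallness:crit}, $\|\omega_2(t)\|_{L^\infty}\le\|\omega_0\|_{L^\infty}\lesssim\nu_h$, so the first term is $\le\tfrac18\nu_h\|\lh^{\frac12}u\|_{L^2}^2$ provided the implicit constant in \eqref{eq:smallness:crit} is chosen small enough, and Young's inequality handles the second term, leaving $C\nu_h^{-1}\|\lh\omega_2\|_{L^2}^2\|u\|_{L^2}^2$. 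Collecting the three estimates and absorbing the dissipation gives $\ddt\|u\|_{L^2}^2\le C\big(\|\lh\omega_2\|_{L^2}^2+1\big)\|u\|_{L^2}^2$ with $L^1(0,T)$ coefficient, and Gr\"onwall together with $u(0)=0$ forces $u\equiv0$.

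I expect $B_2$ to be the main obstacle: the loss of one horizontal derivative in $w$ and the failure of the endpoint embedding $H^{1/2}\hookrightarrow L^\infty$ conspire to place this term exactly at the dissipation scale $\|\lh^{\frac12}u\|_{L^2}^2$, so it can only be closed by using \emph{(i)} the borderline Leibniz rule to keep the surplus derivative on the smooth solution $\omega_2$ rather than on $u$, and \emph{(ii)} the smallness of $\|\omega_0\|_{L^\infty}$ relative to $\nu_h$ to absorb the residual $\|\omega_2\|_{L^\infty}\|\lh^{\frac12}u\|_{L^2}^2$ into the dissipation. A secondary technical point is the careful anisotropic bookkeeping ($L^1_z$, $L^2_z$, $L^\infty_z$ versus $L^2_x$) needed throughout, exactly parallel to the proof of Theorem~\ref{thm:uniqueness}.
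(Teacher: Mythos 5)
Your proof is correct and follows essentially the same route as the paper: the $L^2$ difference estimate, vanishing of $B_0$ by incompressibility, moving $\lh^{-1/2}$ onto $w$ in $B_2$ and controlling it via the vertical fundamental theorem, the borderline Leibniz rule (Lemma~\ref{lem:improvedLeib}) to split $\lh^{1/2}(\omega_2 u)$ so the surplus half-derivative lands on $\omega_2$, and finally the smallness $\|\omega_0\|_{L^\infty}\lesssim\nu_h$ to absorb $\|\omega_2\|_{L^\infty}\|\lh^{1/2}u\|_{L^2}^2$ into the dissipation before Gr\"onwall. One small point in your favor: the paper disposes of $B_1$ by citing the chain of Theorem~\ref{thm:uniqueness} without comment, but the intermediate step there uses $\|u\|_{L^\infty_x}\lesssim\|\lh^{\frac\alpha2}u\|_{L^2_x}$, which degenerates at $\alpha=1$; your substitution of the $H^{1/4}(\T)\hookrightarrow L^4(\T)$ embedding plus interpolation to get $\|u\|_{L^4_x}^2\lesssim\|u\|_{L^2_x}\|\lh^{1/2}u\|_{L^2_x}+\|u\|_{L^2_x}^2$ is the clean way to close that estimate at the endpoint, and the extra $\|u\|_{L^2}^2$ term is harmless for Gr\"onwall.
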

\begin{proof}
The proof follows the outline of Theorem \ref{thm:uniqueness}, with significant modifications in the estimate of the term $B_2$. We provide a sketch of the argument below.

Start from the expression \eqref{est:unique-1}:
\[
    \frac12\ddt \|u\|_{L^2}^2 + \nu_h \|\lh^{\frac12} u\|_{L^2}^2 =
    - \int_\Omega \Big( u (u_1\pa_x+w_1\pa_z)u  + u^2\pa_x u_2 + w\omega_2 u \Big)\dxdz := B_0 + B_1 + B_2,
\]
where $B_0=0$ by incompressibility. The control of $B_1$ in \eqref{eq:B1bound} implies
\[
B_1\lesssim \|u\|_{L^2}\|\lh^{\frac12}u\|_{L^2}\|\lh\omega_2\|_{L^2}
\leq \frac14\nu_h\|\lh^{\frac12}u\|_{L^2}^2 + C\|\lh\omega_2\|_{L^2}^2\|u\|_{L^2}^2.
\]
Now, for the term $B_2$, we use \eqref{eq:improvedLeib} and get
\begin{align*}
B_2 & =-\int_\Omega \lh^{\frac12}(\omega_2u)\cdot\lh^{-\frac12} w \dxdz\lesssim \Big\| \|\lh^{\frac12}(\omega_2u)\|_{L^2_x}\Big\|_{L^1_z} \|\lh^{\frac12}u\|_{L^2}\\
& \lesssim \Big\|\Big(\|\omega_2\|_{L^\infty_x}\|\lh^{\frac12} u\|_{L^2_x}+\|\lh\omega_2\|_{L^2_x}\|u\|_{L^2_x}\Big)\Big\|_{L^1_z} \|\lh^{\frac12}u\|_{L^2}\\
& \lesssim \Big(\|\omega_2\|_{L^\infty} \|\lh^{\frac12} u\|_{L^2} + \|\lh\omega_2\|_{L^2}\|u\|_{L^2}\Big)\, \|\lh^{\frac12}u\|_{L^2}\\
& \leq C\|\omega_{20}\|_{L^\infty} \|\lh^{\frac12} u\|_{L^2}^2 +  \frac14\nu_h\|\lh^{\frac12}u\|_{L^2}^2 + C\|\lh\omega_2\|_{L^2}^2\|u\|_{L^2}^2.
\end{align*}
Combining the estimates, we obtain
\[
 \ddt \|u\|_{L^2}^2 + (\nu_h-2C\|\omega_{20}\|_{L^\infty}) \|\lh^{\frac12} u\|_{L^2}^2\leq C\|\lh\omega_2\|_{L^2}^2\|u\|_{L^2}^2.
\]
The smallness assumption \eqref{eq:smallness:crit} ensures $\nu_h-2C\|\omega_{20}\|_{L^\infty}\geq0$. We apply Gr\"onwall's inequality and get
\[
 \|u(T)\|_{L^2}^2\leq \|u_0\|_{L^2}^2\exp\int_0^T C\|\lh\omega_2(t)\|_{L^2}^2\,\dd t.
\]
Since $u_2$ is a classical solution, the time integral is finite. Hence, $u_0=0$ implies $u(T)=0$. Therefore, we obtain the desired uniqueness property.
\end{proof}

Combining the existence and uniqueness results in Theorems \ref{thm:existence:crit} and \ref{thm:uniqueness:crit}, we conclude with the global well-posedness of the critical FPE system with small initial data.
\begin{theorem}\label{thm:GWP:crit}
	Let $\alpha=1$. Suppose $u_0$ satisfies \eqref{eq:smoothinit} and the smallness condition
	\[
	\|u_0\|\ll\nu_h,
	\]
	or more precisely \eqref{eq:smallness:crit}. Then there exists a global-in-time unique classical solution to \eqref{FPE} with $u(0)=u_0$.
\end{theorem}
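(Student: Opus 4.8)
The plan is to assemble Theorem~\ref{thm:GWP:crit} from the two preceding results in this section: the global existence statement of Theorem~\ref{thm:existence:crit} and the uniqueness statement of Theorem~\ref{thm:uniqueness:crit}. First I would pin down the meaning of the norm in the hypothesis by setting $\|u_0\| := E_2(0)^{\frac12} + \|\omega_0\|_{L^\infty}$, which is finite under the standing assumption \eqref{eq:smoothinit}; with this choice the condition $\|u_0\| \ll \nu_h$ is, by definition, precisely the smallness hypothesis \eqref{eq:smallness:crit} with a sufficiently small implicit constant, and the only genuine bookkeeping point in the whole proof is to fix that constant small enough to feed both Theorem~\ref{thm:existence:crit} and Theorem~\ref{thm:uniqueness:crit} at once.

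Next I would invoke Theorem~\ref{thm:existence:crit}: under \eqref{eq:smallness:crit} there is a global classical solution $u$ with $E_2(t) \le E_2(0)$ for all $t \ge 0$. I would recall that this solution is produced by the viscous regularization of Theorem~\ref{thm:sub-1}, with the closed a priori bound coming from \eqref{eq:Energy2:crit},
\[
 \ddt E_2 + 2\nu_h\,\Et_3 \lesssim \big(\|\omega_0\|_{L^\infty} + E_2^{\frac12}\big)\Et_3,
\]
whose right-hand coefficient stays strictly below $\nu_h$ once $E_2(0)^{\frac12} + \|\omega_0\|_{L^\infty}$ is small; a standard passage to the limit then places $u$ in the regularity class of Definition~\ref{def:classical} on every $[0,T]$, in particular giving the required continuity in time of $u$, $\omega$, and $\pa_z\omega$ through the corresponding energy equalities.

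Then I would apply Theorem~\ref{thm:uniqueness:crit}: if $u_1,u_2$ are two classical solutions with the same data, their difference $u=u_1-u_2$ obeys the energy inequality established there,
\[
 \ddt\|u\|_{L^2}^2 + \big(\nu_h - 2C\|\omega_{20}\|_{L^\infty}\big)\|\lh^{\frac12}u\|_{L^2}^2 \le C\|\lh\omega_2\|_{L^2}^2\,\|u\|_{L^2}^2,
\]
where the delicate drift term $B_2 = -\int_\Omega w\,\omega_2\,u \dxdz$ is controlled with the borderline fractional Leibniz rule (Lemma~\ref{lem:improvedLeib}). Since \eqref{eq:smallness:crit} forces $\nu_h - 2C\|\omega_{20}\|_{L^\infty} \ge 0$ and $\lh\omega_2 \in L^2(0,T;L^2)$ because $u_2$ is classical, Gr\"onwall's inequality gives $u\equiv 0$, hence uniqueness.

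The substantive analytical work --- the anisotropic $E_2$ estimate, the viscous approximation, and especially the periodic borderline Leibniz rule of Lemma~\ref{lem:improvedLeib} that makes both \eqref{eq:Energy2:crit} and the uniqueness estimate possible --- has all been carried out already. Consequently the \emph{hard part} here is not an obstacle at all but a consistency check: choosing the implicit constant in $\|u_0\|\ll\nu_h$ so that it simultaneously meets the threshold $\delta < 1/C$ of Theorem~\ref{thm:existence:crit} and the threshold $2C\|\omega_{20}\|_{L^\infty}\le\nu_h$ of Theorem~\ref{thm:uniqueness:crit}. With that arranged, Theorem~\ref{thm:GWP:crit} follows as an immediate corollary.
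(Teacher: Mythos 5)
Your proposal is correct and follows exactly the same route as the paper: the paper proves Theorem~\ref{thm:GWP:crit} by simply combining the existence result of Theorem~\ref{thm:existence:crit} with the uniqueness result of Theorem~\ref{thm:uniqueness:crit}, under the common smallness hypothesis \eqref{eq:smallness:crit}. Your added bookkeeping---identifying $\|u_0\|$ with $E_2(0)^{1/2}+\|\omega_0\|_{L^\infty}$ and checking that one implicit constant serves both the threshold $\delta<1/C$ in the existence argument and the condition $\nu_h - 2C\|\omega_{20}\|_{L^\infty}\ge 0$ in the uniqueness argument---is exactly the reconciliation the paper leaves implicit.
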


Note that the well-posedness result in Theorem \ref{thm:existence:crit} stands in sharp contrast to the ill-posedness result in Theorem \ref{thm:critical-nonlinear-ill}. As the size of the initial data $u_0$ grows relative to the viscosity coefficient $\nu_h$, the system undergoes a transition from well-posedness to ill-posedness. This phenomenon is new for the critical FPE system and is distinct from behavior observed in other critical fluid dynamic systems.

Theorem \ref{thm:existence:crit} can be further extended to yield global well-posedness for smooth solutions, by employing a higher-order energy estimate in the spirit of Proposition~\ref{prop:Ek}. The detailed argument is omitted here and left to the interested reader.

\appendix

\section{Borderline fractional Leibniz rules on torus}\label{sec:improvedLeib}
In this section, we provide a complete proof of Lemma \ref{lem:improvedLeib}.
In the whole space case $x\in\R$, the inequality \eqref{eq:improvedLeib} follows directly from the estimate \eqref{eq:BMO}. Our goal here is to verify that the same estimate holds in the periodic setting.

For convenience in representing integrals, we take $\T = [-\pi, \pi]$ and assume all functions are $2\pi$-periodic.
We also let $\Lambda_{\T}^s$ and $\Lambda_{\R}^s$ denote the fractional Laplacian of order $s$ on the periodic domain and the whole space, respectively.

We now state the following proposition, which is a generalized version of Lemma \ref{lem:improvedLeib}.
Note that inequality \eqref{eq:improvedLeib} corresponds to the special case $s = \frac12$ of \eqref{eq:border}.

\begin{proposition} \label{prop:nonlinear-est} Let $s \in (0,1)$. Let $g\in L^{\infty}(\T) \cap H^{s+\frac12}(\T)$ and $h \in H^s(\T)$. Then it holds that 
\begin{equation}\label{eq:border}
\|\Lambda_{\T}^s (gh)\|_{L^2(\T)} 
\lesssim \|g\|_{L^{\infty}(\T)} \|\Lambda_{\T}^s h\|_{L^2(\T)} + \|\Lambda_{\T}^{s+\frac12} g\|_{L^2(\T)} \|h\|_{L^2(\T)}.
\end{equation}
\end{proposition}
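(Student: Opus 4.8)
The plan is to transfer the known whole-space estimate \eqref{eq:BMO} to the torus by a periodization/localization argument. The main point is that the fractional Laplacians $\Lambda_\T^s$ and $\Lambda_\R^s$ differ by a smoothing operator when acting on periodic functions, so the borderline inequality on $\R$ can be imported at the cost of harmless lower-order terms. Concretely, I would proceed as follows.

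\textbf{Step 1: Comparison of $\Lambda_\T^s$ and $\Lambda_\R^s$.} Fix a smooth cutoff $\varphi \in C_c^\infty(\R)$ with $\varphi \equiv 1$ on $[-2\pi,2\pi]$ and $\operatorname{supp}\varphi \subset [-4\pi,4\pi]$. For a $2\pi$-periodic function $f$ on $\T$, set $\widetilde f := \varphi f$, viewed as a compactly supported function on $\R$. The heart of the matter is the identity/estimate
\[
\big\| \Lambda_\T^s f - \big(\Lambda_\R^s \widetilde f\big)\big|_{\T} \big\|_{L^2(\T)} \lesssim_s \|f\|_{L^2(\T)},
\]
valid for $s\in(0,1)$; this follows from comparing the Fourier-multiplier symbol $|k|^s$ (on $\Z$) with the convolution kernel $c_{s,1}|x-y|^{-1-s}$ of $\Lambda_\R^s$ restricted to the far field $|x-y|\gtrsim 1$, using the singular-integral formula for $\Lambda^s$ and the fact that the difference kernel is integrable away from the diagonal. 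A clean way to organize this: write $\Lambda_\R^s \widetilde f(x) = c_{s,1}\,\mathrm{p.v.}\int_\R \frac{\widetilde f(x)-\widetilde f(y)}{|x-y|^{1+s}}\,\dd y$ and $\Lambda_\T^s f(x)= c_{s,1}\,\mathrm{p.v.}\int_\R \frac{f(x)-f(y)}{|x-y|^{1+s}}\,\dd y$ using the periodic extension of $f$ (the latter converges for $s<1$ since $f$ is periodic and the kernel is integrable at infinity for $s>0$); subtract, and note that for $x\in\T$ the integrand of the difference is supported in $|y|\gtrsim 1$ modulo the region where $\varphi=1$, yielding an $L^1$ kernel bound.

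\textbf{Step 2: Reduction to the whole-space inequality.} With $g,h$ on $\T$, write $gh$ periodic and apply Step 1 to $f = gh$:
\[
\|\Lambda_\T^s(gh)\|_{L^2(\T)} \lesssim \big\|\Lambda_\R^s\big(\varphi\, gh\big)\big\|_{L^2(\R)} + \|gh\|_{L^2(\T)}.
\]
Since $\varphi\, gh = (\varphi g)(\chi h)$ for any $\chi\in C_c^\infty$ with $\chi\equiv 1$ on $\operatorname{supp}\varphi$, we may apply the whole-space estimate \eqref{eq:BMO} (with exponent $s$) to the product $(\varphi g)\cdot(\chi h)$:
\[
\big\|\Lambda_\R^s\big((\varphi g)(\chi h)\big)\big\|_{L^2(\R)} \lesssim \|\varphi g\|_{L^\infty(\R)}\,\|\Lambda_\R^s(\chi h)\|_{L^2(\R)} + \|\Lambda_\R^s(\varphi g)\|_{\mathrm{BMO}(\R)}\,\|\chi h\|_{L^2(\R)}.
\]
Now $\|\varphi g\|_{L^\infty}\le \|g\|_{L^\infty(\T)}$, $\|\chi h\|_{L^2(\R)}\lesssim \|h\|_{L^2(\T)}$, and $\|\Lambda_\R^s(\chi h)\|_{L^2(\R)}\lesssim \|\Lambda_\T^s h\|_{L^2(\T)} + \|h\|_{L^2(\T)}$ by Step 1 again (applied to $h$). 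For the BMO factor, use the embedding $\dot H^{1/2}(\R)\hookrightarrow \mathrm{BMO}(\R)$ to get $\|\Lambda_\R^s(\varphi g)\|_{\mathrm{BMO}(\R)} \lesssim \|\Lambda_\R^{s+\frac12}(\varphi g)\|_{L^2(\R)}$, and then once more invoke a comparison estimate to bound this by $\|\Lambda_\T^{s+\frac12} g\|_{L^2(\T)} + \|g\|_{L^2(\T)}$ (here I would use the Step-1 comparison with exponent $s+\tfrac12$; since $s+\tfrac12$ may exceed $1$, I split $\Lambda^{s+1/2}=\Lambda^{s-1/2}\Lambda$ or differentiate the cutoff product — see Step 3). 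Finally, absorb the residual lower-order terms $\|gh\|_{L^2(\T)}\le \|g\|_{L^\infty(\T)}\|h\|_{L^2(\T)}$ and $\|g\|_{L^2(\T)}\|h\|_{L^2(\T)}$, $\|g\|_{L^\infty(\T)}\|h\|_{L^2(\T)}$ into the right-hand side of \eqref{eq:border}, which completes the proof.

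\textbf{Step 3 (the main obstacle): the comparison at order $s+\tfrac12 \ge 1$.} The estimate in Step 1 is cleanest for $s\in(0,1)$ where $\Lambda^s$ is given by an integrable-at-infinity singular integral. For the order $s+\tfrac12$, which can be as large as $\tfrac32$, the naive singular-integral comparison breaks down and one must instead argue via $\Lambda^{s+1/2}(\varphi g) = \Lambda^{s-1/2}\big(\Lambda(\varphi g)\big) = \Lambda^{s-1/2}\big(\varphi\,\Lambda g + [\Lambda,\varphi]g\big)$ on $\R$, relating $\Lambda_\R g$ on the support of $\varphi$ to $\Lambda_\T g$ (again a far-field smoothing error $\lesssim \|g\|_{L^2(\T)}$), and then applying the order-$(s-\tfrac12)$ comparison, which does fall in the range $(0,1)$ for $s\in(\tfrac12,1)$; for $s\le\tfrac12$ the operator $\Lambda^{s-1/2}$ is smoothing and the bound is immediate. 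Carefully tracking these commutators and the periodization errors — ensuring all error terms are genuinely lower order in both $g$ and $h$ — is the technical crux; everything else is a routine assembly of standard harmonic-analysis facts (the Kato–Ponce-type estimate \eqref{eq:BMO} on $\R$ and the $\dot H^{1/2}\hookrightarrow\mathrm{BMO}$ embedding).
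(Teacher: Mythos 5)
Your overall strategy coincides with the paper's: transfer the whole-space Kato--Ponce/BMO estimate to the torus via a smooth cutoff, and control the periodization errors by the lower-order $L^2$ norms. The comparison in your Step 1 (singular-integral far-field decay) is a valid alternative to the paper's method, which instead uses the C\'ordoba--C\'ordoba pointwise identity $2f\Lambda_\T^{2s}f=\Lambda_\T^{2s}(f^2)+c_s\,\mathrm{p.v.}\int(\tilde f(x)-\tilde f(x+y))^2|y|^{-1-2s}\dd y$ to prove two one-sided cutoff lemmas (Lemmas~\ref{lem:cutoff1} and \ref{lem:cutoff2}). Your Step~3 treatment of the exponent $s+\tfrac12\geq1$ via factoring $\Lambda^{s+1/2}=\Lambda^{s-1/2}\Lambda$ and commuting $\Lambda$ past the cutoff is also a genuinely different (and heavier) route than the paper's, which simply writes $\Lambda^{s}=\Lambda^{\{s\}}\partial_x^{[s]}$ and uses the Leibniz rule for the \emph{local} operator $\partial_x^{[s]}$, then interpolates; the latter avoids commutators of the nonlocal $\Lambda$ with the cutoff and is cleaner.

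However, there is a genuine gap at the end of your Step~2. After invoking the whole-space estimate and the comparison lemmas, you are left with residual lower-order terms of the form $\|g\|_{L^\infty(\T)}\|h\|_{L^2(\T)}$ (both from $\|gh\|_{L^2(\T)}$ and from $\|g\|_{L^\infty}\|h\|_{L^2}$ coming out of the comparison of $\|\Lambda_\R^s(\chi h)\|_{L^2(\R)}$ with $\|\Lambda_\T^s h\|_{L^2(\T)}+\|h\|_{L^2(\T)}$), and you assert these can be absorbed into the right-hand side of \eqref{eq:border}. This fails in general: taking $g$ and $h$ constants, the right-hand side of \eqref{eq:border} vanishes while $\|g\|_{L^\infty}\|h\|_{L^2}$ does not, so $\|g\|_{L^\infty}\|h\|_{L^2}\lesssim\|g\|_{L^\infty}\|\Lambda_\T^sh\|_{L^2}+\|\Lambda_\T^{s+1/2}g\|_{L^2}\|h\|_{L^2}$ is false. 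The paper avoids this by first proving the inequality for mean-free $h$, where the Poincar\'e inequality $\|h\|_{L^2}\lesssim\|\Lambda_\T^sh\|_{L^2}$ makes the absorption legitimate, and then treating general $h$ via the decomposition $h=\langle h\rangle+h_{\neq}$: the constant part contributes $|\langle h\rangle|\,\|\Lambda_\T^sg\|_{L^2}$, which is bounded by $\|h\|_{L^2}\|\Lambda_\T^{s+1/2}g\|_{L^2}$ using Poincar\'e on $g$ (valid since $\Lambda_\T^s g$ annihilates the mean of $g$). Your proposal needs this mean-free reduction to close.
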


The proof of Proposition \ref{prop:nonlinear-est} is based on direct application of the inequality in $\R$ for functions $\chi\tilde{g}$ and $\chi\tilde{h}$:
\begin{equation}\label{eq:borderR}
\|\Lambda_{\R}^s (\chi \tilde{g}\chi\tilde{h})\|_{L^2(\R)} 
\lesssim \|\chi\tilde{g}\|_{L^{\infty}(\R)} \|\Lambda_{\R}^s (\chi\tilde{h})\|_{L^2(\R)} + \|\Lambda_{\R}^{s+\frac12} (\chi\tilde{g})\|_{L^2(\R)} \|\chi\tilde{h}\|_{L^2(\R)},
\end{equation}
where $\tilde{g}$ and $\tilde{h}$ denote the periodic extensions of $g$ and $h$ from $\T$ to $\R$, and $\chi$ is a smooth cutoff function ranged in $[0,1]$ such that $\chi = 1$ on $2\T$ and $\chi = 0$ on $(3\T)^c$. 

To derive \eqref{eq:border} from \eqref{eq:borderR}, we make use of the following lemmas.

\begin{lemma}\label{lem:cutoff1}
 Let $s\geq0$, and $f\in H^s(\T)$. Then we have
 \begin{equation}\label{eq:cutoff1}
  \|\Lambda_{\R}^s (\chi\tilde{f})\|_{L^2(\R)}\lesssim \|\Lambda_{\T}^s f\|_{L^2(\T)} + \|f\|_{L^2(\T)}.
 \end{equation}
\end{lemma}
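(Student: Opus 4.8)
\textbf{Proof strategy for Lemma \ref{lem:cutoff1}.} The plan is to pass to Fourier series on the torus and to the Fourier transform on the line, and to compare the two multipliers $|\xi|^s$ (continuous) against $|k|^s$ (discrete) on the support of $\widehat{\chi\tilde f}$. First I would expand $f$ in its Fourier series $f=\sum_{k\in\Z}\hat f_k e^{ikx}$, so that $\chi\tilde f=\sum_{k\in\Z}\hat f_k\,(\chi e^{ik\cdot})$ on $\R$. Taking the Fourier transform on $\R$ gives $\widehat{\chi\tilde f}(\xi)=\sum_{k\in\Z}\hat f_k\,\hat\chi(\xi-k)$, where $\hat\chi$ is a fixed Schwartz function (since $\chi\in C_c^\infty$). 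Then
\[
\|\Lambda_\R^s(\chi\tilde f)\|_{L^2(\R)}^2=\int_\R|\xi|^{2s}\Big|\sum_{k\in\Z}\hat f_k\,\hat\chi(\xi-k)\Big|^2\,\dd\xi .
\]
The goal is to bound this by $\sum_k(1+|k|^{2s})|\hat f_k|^2$, which is comparable to $\|\Lambda_\T^s f\|_{L^2(\T)}^2+\|f\|_{L^2(\T)}^2$ by Parseval.

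The key step is a Schur-type / almost-orthogonality estimate: split $\xi$ into unit intervals, or more simply use that $|\xi|^{2s}\lesssim 1+|\xi-k|^{2s}+|k|^{2s}$ for each $k$, so after Cauchy--Schwarz in $k$ with the weight $\langle\xi-k\rangle^{-N}$ coming from the rapid decay of $\hat\chi$, one gets
\[
|\xi|^{2s}\Big|\sum_k\hat f_k\hat\chi(\xi-k)\Big|^2
\lesssim\Big(\sum_k\langle\xi-k\rangle^{-N}\Big)\Big(\sum_k(1+|k|^{2s})|\hat f_k|^2\,\langle\xi-k\rangle^{-N+2s}|\hat\chi(\xi-k)|^2\cdot\langle\xi-k\rangle^{N}\Big),
\]
and then integrating in $\xi$ and using $\int_\R\langle\xi-k\rangle^{-N'}\,\dd\xi\lesssim 1$ uniformly in $k$ (choosing $N$ large enough, depending on $s$) yields the claimed bound $\sum_k(1+|k|^{2s})|\hat f_k|^2$. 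A cleaner way to organize this is to write $|\xi|^s\lesssim\langle\xi\rangle^s$ on $\mathrm{supp}\,|\xi|^s$ away from $0$ (the contribution near $\xi=0$ is trivially controlled by $\|f\|_{L^2}$ since $\chi\tilde f\in L^2(\R)$ with $\|\chi\tilde f\|_{L^2(\R)}\lesssim\|f\|_{L^2(\T)}$), use the elementary inequality $\langle\xi\rangle^s\lesssim\langle\xi-k\rangle^s\langle k\rangle^s$, and absorb $\langle\xi-k\rangle^s|\hat\chi(\xi-k)|$ into a new Schwartz function $\hat\Phi(\xi-k)$. This converts the left side into a convolution-type sum that is handled by Young's inequality for the discrete-continuous pairing.

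The main obstacle — really the only subtlety — is making the comparison between the continuum multiplier $|\xi|^s$ and the discrete one $|k|^s$ rigorous and uniform, i.e. controlling the ``spillover'' of the weight onto neighboring frequencies through the tails of $\hat\chi$. This is precisely where the smoothness and compact support of $\chi$ enter (they guarantee $\hat\chi$ decays faster than any polynomial), and where the condition $s<1$ or merely $s\ge 0$ is comfortably absorbed since we only ever pay a fixed polynomial power $\langle\xi-k\rangle^s$. I expect no difficulty from the low-frequency region $|\xi|\le 1$: there $|\xi|^s\le 1$ and the bound reduces to $\|\chi\tilde f\|_{L^2(\R)}\lesssim\|f\|_{L^2(\T)}$, which is immediate from $\chi$ being bounded and supported in $3\T$ together with periodicity of $\tilde f$. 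Assembling these pieces gives \eqref{eq:cutoff1}.
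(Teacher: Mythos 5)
Your proposal is correct, and it proceeds by a genuinely different route than the paper. The paper stays entirely in physical space: for $s\in(0,1)$ it invokes the C\'ordoba--C\'ordoba pointwise identity $2f\Lambda_\T^{2s}f=\Lambda_\T^{2s}(f^2)+c_s\,\mathrm{p.v.}\int_\R\frac{(\tilde f(x)-\tilde f(x+y))^2}{|y|^{1+2s}}\,\dd y$, compares the two nonlocal double integrals (periodic versus whole-line) directly, and then handles $s\geq 1$ by splitting $s=[s]+\{s\}$ via a Leibniz rule on the integer part. You instead pass to the Fourier side, write $\widehat{\chi\tilde f}(\xi)=\sum_k\hat f_k\hat\chi(\xi-k)$, and reduce the estimate to a discrete--continuous Young/Schur inequality using the Peetre bound $\langle\xi\rangle^s\lesssim\langle\xi-k\rangle^s\langle k\rangle^s$ and the rapid decay of $\hat\chi$. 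Both arguments are sound; the individual steps you outline (Cauchy--Schwarz with the weight $\langle\xi-k\rangle^{-N}$, the uniform bound on $\sum_k\langle\xi-k\rangle^{-N}$, and absorbing $\langle\eta\rangle^s|\hat\chi(\eta)|$ into a new Schwartz function) all check out. Your approach has the advantage of covering all $s\geq 0$ in one stroke, with no case distinction at $s=1$; the paper's approach has the advantage of using the same singular-integral machinery for both Lemma~\ref{lem:cutoff1} and its companion Lemma~\ref{lem:cutoff2}, so the two estimates are proved with a unified toolbox. Either way the low-frequency contribution is trivial, as you note, since $\|\chi\tilde f\|_{L^2(\R)}\lesssim\|f\|_{L^2(\T)}$.
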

\begin{proof}
For $s=0$, we have
\[\|\chi\tilde{f}\|_{L^2(\R)}\leq \|\tilde{f}\|_{L^2(3\T)}\leq 3\|f\|_{L^2(\T)}.\]

Next, we consider $s\in(0,1)$. In view of the pointwise identity \cite{cordoba2004maximum, constantin2015long}
\[ 
2f(x) \Lambda_{\T}^{2s} f(x) = \Lambda_{\T}^{2s} (f^2)(x) + c_s\text{ p.v.} \int_{\R} \frac{(\tilde f(x) - \tilde f(x+y))^2}{|y|^{1+2s}} \dd y, 
\] and dropping the \text{p.v.} notation for simplicity, we have 
\begin{align}
\|\Lambda_{\T}^s f\|_{L^2(\T)}^2 & = \frac13\int_{3\T} \tilde f(x) \Lambda_{\T}^{2s} \tilde f(x) \dd x 
= \frac{c_s}{6}\int_{3\T} \int_{\R} \frac{(\tilde{f}(x) - \tilde{f}(x+y))^2}{|y|^{1+2s}} \dd y \dd x\label{eq:TtoR}
\\&\ge \frac{c_s}{6}\int_{3\T}\int_{\R} \chi(x)^2 \frac{(\tilde f(x) - \tilde f(x+y))^2}{|y|^{1+2s}} \dd y \dd x
= \frac{c_s}{6}\int_{\R} \int_{\R} \chi(x)^2 \frac{(\tilde f(x) - \tilde f(x+y))^2}{|y|^{1+2s}} \dd y \dd x.\nonumber
\end{align}
Here, the second equality holds because the spatial integral of $\Lambda_{\T}^{2s}(f^2)$ vanishes; the inequality follows from the fact that $0 \le \chi \le 1$ and the non-negativity of the integrand; and the final equality holds since the cutoff function $\chi$ vanishes outside $3\T$.
Using the reverse triangle inequality, we further obtain 
\begin{align*}
\|\Lambda_{\T}^s f\|_{L^2(\T)}^2
&\ge \frac{c_s}{12}\int_{\R} \int_{\R} \frac{(\chi(x) \tilde{f}(x) - \chi(x+y) \tilde{f}(x+y))^2}{|y|^{1+2s}} \dd y \dd x 
\\&\quad- \frac{c_s}{6}\int_{\R} \int_{\R} \frac{(\chi(x+y) - \chi(x))^2\tilde{f}(x+y)^2}{|y|^{1+2s}} \dd y \dd x
\\&= \frac16\|\Lambda_{\R}^s (\chi \tilde{f})\|_{L^2(\R)}^2 - \frac{c_s}{6}\int_{\R} \int_{\T} \frac{(\chi(x+y) - \chi(x))^2\tilde{f}(x+y)^2}{|y|^{1+2s}} \dd y \dd x
\\&\quad-\frac{c_s}{6} \int_{\R} \int_{\R \setminus \T} \frac{(\chi(x+y) - \chi(x))^2\tilde{f}(x+y)^2}{|y|^{1+2s}} \dd y \dd x.
\end{align*}
For the second term, we use the fact that $\chi(x+y) = \chi(x) =0$ when $y \in \T$ and $x \in (4\T)^c$ and estimate 
\[
\int_{\R} \int_{\T} \frac{(\chi(x+y) - \chi(x))^2\tilde{f}(x+y)^2}{|y|^{1+2s}} \dd y \dd x
\le \int_{4\T} \int_{\T} \frac{(\|\chi'\|_{L^\infty(\R)}|y|)^2\tilde{f}(x+y)^2}{|y|^{1+2s}} \dd y \dd x \lesssim \|f\|_{L^2(\T)}^2.
\]
For the third term, we have the bound 
\begin{align*}
&\int_{\R} \int_{\R \setminus \T} \frac{(\chi(x+y) - \chi(x))^2\tilde{f}(x+y)^2}{|y|^{1+2s}} \dd y \dd x \\
&\le 2\int_{\R \setminus \T} \frac{1}{|y|^{1+2s}} \left(\int_{\R} \big(\chi(x+y)^2 +\chi(x)^2\big) \tilde{f}(x+y)^2 \dd x\right) \dd y\\
&\le 2\int_{\R \setminus \T} \frac{1}{|y|^{1+2s}}\cdot 6\pi\|f\|_{L^2(\T)}^2\,\dd y
\lesssim\|f\|_{L^2(\T)}^2.
\end{align*}
Putting all these estimates together, we obtain
\[ 
\|\Lambda_{\R}^s (\chi \tilde f)\|_{L^2(\R)}^2 \lesssim \|\Lambda_{\T}^s f\|_{L^2(\T)}^2 + \|f\|_{L^2(\T)}^2,
\] yielding the desired inequality \eqref{eq:cutoff1}.

Finally, for $s\ge1$, we split $s=[s]+\{s\}$ where $[s]$ is the integer part, and $\{s\}\in[0,1)$. Apply Leibniz rule and decompose
\[
\pa_x^{[s]}(\chi\tilde{f})=\sum_{j=0}^{[s]}\binom{[s]}{j}\pa_x^{[s]-j}\chi\cdot \pa_x^j\tilde{f}.
\]
Then we apply \eqref{eq:cutoff1} with $\{s\}\in[0,1)$ and deduce
\begin{align*}
 \|\Lambda_{\R}^s (\chi\tilde{f})\|_{L^2(\R)} & = \|\Lambda_{\R}^{\{s\}} \pa_x^{[s]}(\chi\tilde{f})\|_{L^2(\R)}\le \sum_{j=0}^{[s]}\binom{[s]}{j}\|\Lambda_{\R}^{\{s\}}( \pa_x^{[s]-j}\chi\cdot \pa_x^j\tilde{f})\|_{L^2(\R)}\\
 &\lesssim \sum_{j=0}^{[s]}\big(\|\Lambda_\T^{\{s\}}\pa_x^j\tilde{f}\|_{L^2(\T)}^2+\|\pa_x^j\tilde{f}\|_{L^2(\T)}^2\big)\lesssim \|\Lambda_{\T}^s f\|_{L^2(\T)}^2 + \|f\|_{L^2(\T)}^2,
\end{align*}
where the last inequality is due to interpolation. This completes the proof.
\end{proof}

\begin{lemma}\label{lem:cutoff2}
 Let $s\in[0,1)$, and $f\in H^s(\T)$. Then we have
 \begin{equation}\label{eq:cutoff2}
  \|\Lambda_{\T}^s f\|_{L^2(\T)} \lesssim \|\Lambda_{\R}^s (\chi\tilde{f})\|_{L^2(\R)} + \|f\|_{L^2(\T)}.
 \end{equation}
\end{lemma}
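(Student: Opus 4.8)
The plan is to run the comparison between the periodic and whole-space fractional seminorms in the direction opposite to Lemma~\ref{lem:cutoff1}. The case $s=0$ is trivial, since $\Lambda_{\T}^0$ is the identity and the claimed bound reads $\|f\|_{L^2(\T)}\lesssim\|\chi\tilde f\|_{L^2(\R)}+\|f\|_{L^2(\T)}$; so I will assume $s\in(0,1)$ throughout.

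First I would invoke the same pointwise identity used in the proof of Lemma~\ref{lem:cutoff1}: integrating it over one period and discarding the integral of $\Lambda_{\T}^{2s}(f^2)$, which vanishes, yields the Gagliardo-type representation
\[
\|\Lambda_{\T}^s f\|_{L^2(\T)}^2 = \frac{c_s}{2}\int_{\T}\int_{\R}\frac{(\tilde f(x)-\tilde f(x+y))^2}{|y|^{1+2s}}\,\dd y\,\dd x .
\]
Since $\chi\equiv 1$ on $2\T\supset\T$, for $x\in\T$ we may write $\tilde f(x)=\chi(x)\tilde f(x)$ and decompose
\[
\tilde f(x)-\tilde f(x+y)=\bigl(\chi(x)\tilde f(x)-\chi(x+y)\tilde f(x+y)\bigr)+\bigl(\chi(x+y)-1\bigr)\tilde f(x+y).
\]
Squaring and using $(a+b)^2\le 2a^2+2b^2$ splits the right-hand side into a main term $\mathrm{I}$, carrying the full difference quotient of $\chi\tilde f$, and an error term $\mathrm{II}$ built from $(\chi(x+y)-1)^2\tilde f(x+y)^2$.

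For $\mathrm{I}$ I would enlarge the outer integral from $\T$ to $\R$ (legitimate since the integrand is nonnegative) and recognize the result as $\tfrac{2}{c_s}\|\Lambda_{\R}^s(\chi\tilde f)\|_{L^2(\R)}^2$ via the standard Gagliardo identity on $\R$, which applies because $\chi\tilde f\in H^s(\R)$. For $\mathrm{II}$, the key observation is that $\chi(x+y)-1$ vanishes whenever $x+y\in 2\T$; for $x\in\T$ this already holds for all $|y|\le\pi$, so the $y$-integral is effectively restricted to $|y|>\pi$, on which the kernel $|y|^{-1-2s}$ is integrable. Then Fubini together with the translation invariance $\int_{\T}\tilde f(x+y)^2\,\dd x=\|f\|_{L^2(\T)}^2$ bounds $\mathrm{II}$ by a constant multiple of $\|f\|_{L^2(\T)}^2$. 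Combining the two estimates gives $\|\Lambda_{\T}^s f\|_{L^2(\T)}^2\lesssim\|\Lambda_{\R}^s(\chi\tilde f)\|_{L^2(\R)}^2+\|f\|_{L^2(\T)}^2$, which is \eqref{eq:cutoff2}.

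I do not anticipate a genuine difficulty here; the one point requiring care is the support bookkeeping in $\mathrm{II}$ — it is precisely the restriction of the outer variable to $\T$ that forces $|y|>\pi$ on the support of the integrand and thereby tames the singular kernel — together with the routine (but worth recording) fact that $\chi\tilde f\in H^s(\R)$, so that the whole-space Gagliardo identity may be invoked.
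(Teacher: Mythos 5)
Your argument is correct and follows essentially the same route as the paper: both start from the Gagliardo representation of $\|\Lambda_{\T}^s f\|_{L^2(\T)}^2$, insert the cutoff into the difference quotient using $\chi\equiv 1$ on $\T$, control the main term by extending the outer integral from $\T$ to $\R$, and bound the error by observing that the cutoff difference vanishes for $x\in\T$, $|y|\le\pi$, so the singular kernel is integrable on the remaining region. The only cosmetic difference is that you apply the elementary inequality $(a+b)^2\le 2a^2+2b^2$, whereas the paper keeps an exact identity $\mathcal{S}+\mathcal{R}$ and estimates the cross-term remainder $\mathcal{R}_2$ directly; your variant is marginally tidier since both of your pieces are manifestly nonnegative.
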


\begin{proof}
For $s=0$, the inequality \eqref{eq:cutoff2} holds trivially. 

For $s\in(0,1)$, we start with an argument similar to \eqref{eq:TtoR} and obtain: 
\begin{align*}
\|\Lambda_{\T}^s f\|_{L^2(\T)}^2 & = \int_{\T} f(x) \Lambda_{\T}^{2s} f(x) \dd x 
= \frac{c_s}{2}\int_{\T} \int_{\R} \frac{(f(x) - \tilde{f}(x+y))^2}{|y|^{1+2s}} \dd y \dd x
\\& = \frac{c_s}{2}\int_{\T} \int_{\R} \frac{(\chi(x)f(x) - \chi(x)\tilde{f}(x+y))^2}{|y|^{1+2s}} \dd y \dd x
\\& = \frac{c_s}{2}\int_{\T} \int_{\R} \frac{(\chi(x)f(x) - \chi(x+y)\tilde{f}(x+y))^2}{|y|^{1+2s}} \dd y \dd x + \mathcal{R}=:\mathcal{S}+\mathcal{R},
\end{align*}
where the remainder term $\mathcal{R}$ has the form: 
\begin{align*}
\mathcal{R} & =\frac{c_s}{2}\int_{\T} \int_{\R} \frac{(\chi(x)f(x) - \chi(x)\tilde{f}(x+y))^2-(\chi(x)f(x) - \chi(x+y)\tilde{f}(x+y))^2}{|y|^{1+2s}} \dd y \dd x
\\ & =\frac{c_s}{2}\int_{\T} \int_{\R} \frac{\big(\chi(x+y)-\chi(x)\big)\big(-(\chi(x+y)+\chi(x))\tilde f(x+y) + 2\chi(x)f(x)\big)\tilde{f}(x+y)}{|y|^{1+2s}} \dd y \dd x
\\ & = \int_{\T} \int_{\T} \star\,\dd y\dd x + \int_{\T} \int_{\R \setminus \T} \star\,\dd y\dd x=:\mathcal{R}_1 + \mathcal{R}_2.
\end{align*}
The term $\mathcal{R}_1$ vanishes because $\chi(x+y) = \chi(x) = 1$ when $x, y \in \T$. For $\mathcal{R}_2$, we have the following bound:
\begin{align*}
|\mathcal{R}_2| & \leq \frac{c_s}{2}\int_\T\int_{\R \setminus \T}\frac{2\big(2|\tilde f(x+y)| + 2|f(x)|\big)|\tilde{f}(x+y)|}{|y|^{1+2s}} \dd y \dd x\\
& = 2c_s \int_\T\int_\T \big(|\tilde f(x+y)| + |f(x)|\big)|\tilde{f}(x+y)|\sum_{k\neq0} \frac{1}{|y - 2\pi k|^{1+2s}} \dd y \dd x\\
&\leq 2c_s\sum_{k\neq0}\frac{1}{((2|k|-1)\pi)^{1+2s}} \int_\T\int_\T \big(|\tilde f(x+y)| + |f(x)|\big)|\tilde{f}(x+y)| \dd y \dd x\lesssim\|f\|_{L^2(\T)}^2.
\end{align*}
Here, we have used the fact that $\tilde f$ is $2\pi$-periodic. The penultimate inequality follows from the bound $|y - 2\pi k|^{1+2s} \ge ((2|k|-1)\pi)^{1+2s}$ for $y\in\T$. The infinite sum converges since $s>0$.

Finally, for the term $\mathcal{S}$, we have the direct bound:
\[
\mathcal{S}\leq \frac{c_s}{2}\int_{\R} \int_{\R} \frac{(\chi(x)\tilde{f}(x) - \chi(x+y)\tilde{f}(x+y))^2}{|y|^{1+2s}} \dd y \dd x = \|\Lambda_{\R}^s (\chi\tilde{f})\|_{L^2(\R)}^2.
\]

Combining the estimates on $\mathcal{S}$ and $\mathcal{R}$, we conclude with the desired inequality \eqref{eq:cutoff2}.
\end{proof}

Now we are ready to prove Proposition \ref{prop:nonlinear-est}, using estimate \eqref{eq:borderR}, Lemma \ref{lem:cutoff1}, and Lemma \ref{lem:cutoff2}.
\begin{proof}[Proof of Proposition \ref{prop:nonlinear-est}]
 We first assume $h$ is a mean-free function on $\T$. Apply Lemma \ref{lem:cutoff2} with $f=gh$ and cutoff function $\chi^2$. It yields
 \[
 \|\Lambda_\T^s(gh)\|_{L^2(\T)}\lesssim \|\Lambda_{\R}^s (\chi \tilde{g}\chi\tilde{h})\|_{L^2(\R)} + \|gh\|_{L^2(\T)}.
 \]
 For the first term, we apply \eqref{eq:borderR} and Lemma \ref{lem:cutoff1} to obtain
 \begin{align*}
  \|\Lambda_{\R}^s (\chi \tilde{g}\chi\tilde{h})\|_{L^2(\R)} 
& \lesssim \|\chi\tilde{g}\|_{L^{\infty}(\R)} \|\Lambda_{\R}^s (\chi\tilde{h})\|_{L^2(\R)} + \|\Lambda_{\R}^{s+\frac12} (\chi\tilde{g})\|_{L^2(\R)} \|\chi\tilde{h}\|_{L^2(\R)}\\
& \lesssim \|g\|_{L^\infty(\T)}(\|\Lambda_\T^sh\|_{L^2(\T)}+\|h\|_{L^2(\T)}) + (\|\Lambda_\T^{s+\frac12}g\|_{L^2(\T)}+\|g\|_{L^2(\T)})\|h\|_{L^2(\T)}\\
& \leq \big(\|g\|_{L^{\infty}(\T)} \|\Lambda_{\T}^s h\|_{L^2(\T)} + \|\Lambda_{\T}^{s+\frac12} g\|_{L^2(\T)} \|h\|_{L^2(\T)}\big) + (1+\sqrt{2\pi})\|g\|_{L^{\infty}(\T)}\|h\|_{L^2(\T)}.
 \end{align*}
Since $h$ is mean free, we apply the Poincar\'e inequality and get
\[
 \|gh\|_{L^2(\T)}\leq\|g\|_{L^\infty(\T)}\|h\|_{L^2(\T)}\lesssim\|g\|_{L^\infty(\T)}\|\Lambda_{\T}^s h\|_{L^2(\T)}.
\]
Therefore, we conclude with the bound \eqref{eq:border}.
 
For general function $h$, we decompose
\[h(x) = \langle h\rangle +h_{\neq}(x),\quad \langle h\rangle:=\frac{1}{2\pi}\int_\T h(x)\,\dd x.\]
Then, we have
\begin{align*}
 \|\Lambda_\T^s(gh)\|_{L^2(\T)} & = \|\Lambda_\T^s(gh_{\neq})\|_{L^2(\T)}+\langle h\rangle \|\Lambda_\T^sg\|_{L^2(\T)}\\
& \lesssim \|g\|_{L^{\infty}(\T)} \|\Lambda_{\T}^s h_{\neq}\|_{L^2(\T)} + \|\Lambda_{\T}^{s+\frac12} g\|_{L^2(\T)} \|h_{\neq}\|_{L^2(\T)}+\langle h\rangle \|\Lambda_\T^sg\|_{L^2(\T)}\\
& \lesssim \|g\|_{L^{\infty}(\T)} \|\Lambda_{\T}^s h\|_{L^2(\T)} + \|\Lambda_{\T}^{s+\frac12} g\|_{L^2(\T)} \|h\|_{L^2(\T)},
\end{align*}
where we apply the estimate \eqref{eq:border} for mean-free function $h_{\neq}$ in the first inequality. For the second inequality, we have used $\langle h\rangle\leq \frac{1}{2\pi}\|h\|_{L^1(\T)}\leq \frac{1}{\sqrt{2\pi}}\|h\|_{L^2(\T)}$ and the Poincar\'e inequality $\|\Lambda_{\T}^s g\|_{L^2(\T)}\lesssim \|\Lambda_{\T}^{s+\frac12} g\|_{L^2(\T)}$.
\end{proof}

Using the Leibniz rule and interpolation inequalities, Proposition \ref{prop:nonlinear-est} can be extended to any $s \geq 0$. We omit the details of the proof.

\section*{Acknowledgment}
Q.L. was partially supported by an AMS-Simons Travel Grant.
C.T. was partially supported by NSF grants DMS-2108264 and DMS-2238219.

\bibliographystyle{plain}
\bibliography{reference}

\end{document}